\title{\vspace{-1cm}\semiLARGE
On the refined 
`Birch--Swinnerton-Dyer type'\\
conjectures of Mazur and Tate
}
\date{}
\author{Dominik Bullach \and Matthew H.\@ L.\@ Honnor} 
\begin{document}

\maketitle

\vspace{-1cm}
\begin{abstract}
    We prove a substantial part of conjectures of Mazur and Tate that refine the conjecture of Birch and Swinnerton-Dyer. 
    Our approach, which also leads to some results even finer than the predictions of Mazur and Tate, is via the `rank-zero component' of the relevant case of the equivariant Tamagawa Number conjecture. 
\end{abstract}

\section{Introduction}

\subsection{Refined conjectures of `Birch--Swinnerton-Dyer type'}
The conjectures of Birch and Swinnerton-Dyer, originating from \cite{BSD2} and developed into their final form by Tate in \cite{Tate95}, connect arithmetic invariants of an elliptic curve $E$ defined over $\Q$ with the order of vanishing and the leading term of its Hasse--Weil $L$-series at $s = 1$. These arithmetic invariants include the rank, as an abelian group, of the group of $\Q$-rational points $E (\Q)$ of $E$ and the cardinality of the, conjecturally finite, Tate--Shafarevich group $\sha_{E / \Q}$ of $E$. 
For a more detailed introduction, and further reading, the reader may for example consult the survey articles \cite{BurungaleSkinnerTian21, Gross11,  SwinnertonDyer, Tate95, Wiles06, Zhang14}.\\
In the 1980s Mazur and Tate formulated refinements of this conjecture that relate a certain group-ring-valued element $\theta_K^\mathrm{MT}$, which is constructed from modular symbols and interpolates the values of twisted Hasse--Weil $L$-series, to Galois-equivariant invariants of the base change of $E$ to an abelian number field $K$. In particular, the element $\theta_K^\mathrm{MT}$ is expected to encode precise information about the Galois module structure of $E ( K)$ and $\sha_{E / K}$. \\
It is convenient to subdivide the predictions of Mazur and Tate into three separate statements that we will refer to as as their `order of vanishing', `weak main conjecture', and `leading term' component.
Each of these components has been very influential and has inspired a range of similar conjectures in a variety of different contexts. This includes the analogue for Heegner points formulated by Darmon \cite{Darmon92} and extended by Bertolini and Darmon \cite{BertoliniDarmon94}, conjectures in the setting of the multiplicative group such as the `integral Gross--Stark conjecture' (from \cite{gross88}) and its refinements and generalisations due to Tate \cite{Tate04}, Darmon \cite{Darmon95}, Sano \cite{Sano14}, and Mazur--Rubin \cite{MazurRubin16}, as well as an analogue for the Rankin--Selberg convolution of two modular forms formulated by Cauchi and Lei \cite{CauchiLei}. Generalisations to modular forms of higher weight have moreover been studied by Ota \cite{Ota23}, Kim \cite{kim2023refined}, and Emerton--Pollack--Weston \cite{emerton2022explicit}.
Kurihara \cite[Conj.\@ 0.3]{Kurihara02} has also formulated a `strong main conjecture' as a strengthening of the `weak main conjecture' of Mazur and Tate, and results towards this are due to Kurihara \cite{Kurihara02}, Pollack \cite{Pollack}, and Kim--Kurihara \cite{KimKurihara}. 
\\ 
To describe the conjectures of Mazur and Tate in more detail, we set $G = G_K \coloneqq \gal{K}{\Q}$ and let $\cR$ be a ring with the property that $\cR [G]$ contains $\theta_K^\MT$. Work of Stevens \cite{Stevens89} shows that one can often take $\cR= \Z$ in practice\footnote{Recent numerical computations of Llerena-C\'ordova \cite{llerenacórdova2024} suggest that one should always choose $\cR$ big enough such that $|E (K)_\tor| \in \cR^\times$ in order for the conjectures of Mazur and Tate to be valid. 
This subtlety will however not be relevant to the present article and so we have chosen to give the original statements of the conjectures.
}, and -- even though we will not need it -- a self-contained discussion of this integrality question is given in Appendix \ref{integrality Mazur Tate appendix}.\\
Write $I_{\cR, G}$ for the augmentation ideal of $\cR [G]$ and $S (E / K )$ for the `integral Selmer group' defined by Mazur and Tate in \cite[\S\,1.7]{MT87}. If $\sha_{E / K}$ is finite, then $S ( E / K)$ is a finitely generated $\Z [G]$-module with the property that, for every prime number $p$, the `$p$-component' $S ( E / K ) \otimes_\Z \Z_p$ identifies with the Pontryagin dual $\Sel_{p, E / K}^\vee$ of the classical $p$-primary Selmer group $\Sel_{p, E / K}$ which fits into the fundamental exact sequence (cf., for example, \cite[\S\,X.4]{Silverman}) 
\begin{equation} \label{Selmer exact sequence}
    \begin{tikzcd}[column sep=small]
        0 \arrow{r} & (\sha_{E / K} [p^\infty])^\vee \arrow{r} & \Sel_{p, E / K}^\vee \arrow{r} & 
        \Hom_{\Z_p} ( E (K) \otimes_\Z \Z_p, \Z_p) \arrow{r} & 0.
    \end{tikzcd}
\end{equation}
We also write $r \coloneqq \mathrm{rk}_\Z (E (\Q))$ for the rank of $E$ and $\mathrm{sp} (m)$ for the number of primes dividing the conductor $m$ of $K$ at which $E$ has split-multiplicative reduction.

\begin{conj}[Mazur--Tate] \label{Mazur--Tate conj 1}
The following claims are valid.
\begin{liste}
    \item (`order of vanishing', \cite[Conj.\@ 4]{MT87}) $\theta_K^\MT \in I_{\cR, G}^{r + \mathrm{sp} (m)}$ 
    \item (`weak main conjecture', \cite[Conj.\@ 3]{MT87}) $\theta_K^\MT \in  \Fitt^0_{\cR [G]} (S (E / K) \otimes_\Z \cR)$
\end{liste}
\end{conj}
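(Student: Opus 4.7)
The plan is to deduce both parts of Conjecture~\ref{Mazur--Tate conj 1} from the `rank-zero component' of the equivariant Tamagawa Number conjecture (eTNC) for the motive $h^1(E)(1)$ over $K/\Q$. By this I mean the (already highly non-trivial) assertion that the equivariant $L$-value at $s=1$, once normalised by the appropriate equivariant period, is a generator of the zeroth Fitting ideal of a Selmer complex $C$ attached to $h^1(E)(1)$ over $K$ whose cohomology, after tensoring with $\cR$, recovers $S(E/K) \otimes_\Z \cR$ via the exact sequence \eqref{Selmer exact sequence}.

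The first step is to identify $\theta_K^\MT$ with precisely the analytic object that enters this formulation of the eTNC. Using the construction of $\theta_K^\MT$ via modular symbols and its interpolation of the values $L(E, \chi, 1)$ as $\chi$ ranges over the characters of $G$, one shows that $\theta_K^\MT$ agrees, up to explicit correction factors arising from primes dividing the conductor of $E$, with the image of a zeta element (in the spirit of Kato's Euler system) under an equivariant period map. Granting this identification, part~(b) follows at once from the rank-zero eTNC as formulated above. The requisite case of the rank-zero eTNC itself is to be established by combining the Iwasawa main conjecture for modular forms (due to Kato and, in suitable generality, Skinner--Urban) with a descent argument along the cyclotomic $\Z_p$-tower, carried out one prime $p$ at a time.

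Part~(a) is then deduced from (b) via a general algebraic principle about Fitting ideals: for a finitely presented $\cR[G]$-module $M$, one has $\Fitt^0_{\cR[G]}(M) \subseteq I_{\cR, G}^{d}$ whenever the coinvariants $M_G \coloneqq M / I_{\cR, G} M$ have $\cR$-rank at least $d$. Applied to $M = S(E/K) \otimes_\Z \cR$, a direct computation shows that $M_G$ carries a free $\cR$-summand of rank $r$ coming from $E(\Q)$, together with an additional contribution of $\mathrm{sp}(m)$ arising via Tate's uniformisation at primes of split multiplicative reduction that divide $m$. The main obstacle throughout is the fine integral analysis at these split multiplicative primes and at primes dividing $|E(K)_\tor|$, where the precise relationship between $\theta_K^\MT$ and the Selmer complex is most delicate to pin down; dealing with these may force the introduction of mild hypotheses (for instance on the residual Galois representation of $E$) that would ideally be relaxed in subsequent refinements.
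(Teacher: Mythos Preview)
First, a point of scope: Conjecture~\ref{Mazur--Tate conj 1} is stated as a conjecture and the paper does not prove it in full. What the paper establishes are the $p$-parts (Theorem~\ref{mazur--tate main result 1}) under Hypothesis~\ref{hyp}, and your strategy, being inherently $p$-adic, could at best yield the same. Your high-level plan via the eTNC and Kato's Euler system does match the paper's, though the paper's input on the eTNC side is the equivariant Euler-system machinery of \cite{BB} (Theorem~\ref{etnc result 2}) rather than Skinner--Urban plus cyclotomic descent, and a substantial portion of the work (\S\ref{approximation complexes section}) goes into constructing auxiliary perfect Selmer complexes precisely because $\mathrm{R}\Gamma_f(K,\mathrm{T}_pE)$ is generally not perfect over $\Z_p[G]$ and so cannot be fed directly into determinant-theoretic arguments.

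The real gap is in your deduction of part~(a) from part~(b). The algebraic principle you invoke is correct, but the claim that the coinvariants of $S(E/K)\otimes_\Z\cR$ have $\cR$-rank at least $r+\mathrm{sp}(m)$ is not: the extra order of vanishing at split-multiplicative primes dividing $m$ is a trivial-zero phenomenon that the ordinary integral Selmer group does not detect, and under reasonable control hypotheses its coinvariants have rank only $r$. Accordingly the paper does \emph{not} deduce (a) from (b). Instead it introduces a separate Nekov\'a\v{r}--Selmer structure $\cF^{\mathrm{sp}}$ (\S\ref{definition split selmer structure section}) whose local condition at each split-multiplicative prime $\ell$ ramified in $K$ is $\mathrm{R}\Gamma(\Q_\ell,\Z_p(1)_{K/\Q})$, supplied by Tate uniformisation. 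It is the cohomology of \emph{this} modified complex whose $G$-invariants have rank at least $|\Pi|+r_p$ (Lemma~\ref{split multiplicative selmer complex properties lemma}\,(a)), and relating $\theta_K^{\mathrm{MT}}$ to its Fitting ideal requires the cohomological interpretation of Otsuki's local elements carried out in \S\ref{multiplicative group section}. Parts~(a) and~(b) are thus handled by parallel but genuinely distinct arguments, not one from the other.
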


This conjecture holds for a fixed ring $\cR$ if and only if its `$p$-part', namely the conjecture for $\cR \otimes_\Z \Z_p$, holds for every prime number $p$. 
In this article, we will focus on the $p$-parts of Conjecture~\ref{Mazur--Tate conj 1} for prime numbers $p$ that satisfy the following mild hypothesis, which, for any given pair $(E, K)$, is known to be valid for all but finitely many prime numbers $p$ (see Remarks~\ref{big image hyp rk} and \ref{anomalous primes rk} for more details). In the statement we write $\widetilde E$ for the reduction of $E$ modulo $p$. 

\begin{hyp} \label{hyp}
    Assume $p > 3$ is a prime number with the following properties:
\begin{romanliste}
    \item The image of the Galois representation $\rho_{E, p} \: \gal{\overline{\Q}}{\Q} \to \mathrm{Aut} ( \mathrm{T}_p E) \cong \mathrm{GL}_2 (\Z_p)$ attached to $E$ contains $\mathrm{SL}_2 (\Z_p)$.
    \item At least one of the following conditions is satisfied:
    \begin{liste}
        \item $K$ contains no primitive $p$-th root of unity.
        \item $E$ has potentially good reduction at $p$ and $\widetilde{E} (\mathbb{F}_p)$ contains no point of order $p$. 
    \end{liste}
    \item If $E$ has additive reduction at $p$, then $p$ is unramified in $K$.
\end{romanliste}
\end{hyp}

In particular, $p$ is allowed to be an `anomalous' prime (in the terminology used by Mazur \cite{Mazur72}) if Hypothesis \ref{hyp}\,(ii)\,(a) is valid.
\begin{rk} \label{big image hyp rk}
    If $E$ does not have CM, then 
Serre has proved in \cite{Serre72} that $\rho_{E, p}$ is surjective for all but finitely many prime numbers $p$, and asked if in fact $\rho_{E, p}$ is always surjective when $p > 37$. It is conjectured that surjectivity is implied by $p \not \in \{ 2,3,5,11,13,17,37\}$ and the following is known in this direction.
\begin{itemize}
    \item Zywina has proved in \cite[Thm.\@ 1.10]{Zywina} that a prime that fails surjectivity is bounded from above by $\max \{ 37, N \}$, where $N$ denotes the conductor of $E$.
    \item If $E$ is a semistable elliptic curve and $p \geq 11$, then $\rho_{E, p}$ is surjective by a result of Mazur \cite[Thm.\@ 4]{Mazur78}.
    \item Zywina has proved in \cite[Thm.\@ 1.5]{Zywina} that if $\rho_{E,p}$ is not surjective (with $p > 13$) and $\ell \neq p$ is a prime at which $E$ does not have potentially good reduction, then $\ell \equiv \pm 1 \mod p$ and $p$ divides the Tamagawa number $\mathrm{Tam}_\ell$ at $\ell$. 
\end{itemize}
\end{rk}

To state our first main result, we set $r_p \coloneqq \mathrm{rk}_{\Z_p} (\Sel_{p, E / K}^\vee)$ and write $\# \: \Z_p [G] \to \Z_p [G]$ for the involution that sends $\sigma \in G$ to $\sigma^{-1}$.

\begin{thm} \label{mazur--tate main result 1}
Fix an abelian number field $K$ of conductor $m$.
    If the pair $(K, p)$ satisfies Hypothesis \ref{hyp}, then the following claims are valid.
    \begin{liste}
        \item $\theta^\mathrm{MT}_K \in I_{\Z_p, G}^{r_p + \mathrm{sp} (m) + 2 c^{(p)} (K)}$ with the integer $c^{(p)} (K) \geq 0$ defined in Remark \ref{mazur--tate main result 1 rk}\,(c) below.
        \item $\theta^{\mathrm{MT}, \#}_K \in \Fitt^0_{\Z_p [G]} ( \mathrm{Sel}_{p, E / K}^\vee)$.
     \end{liste}   
\end{thm}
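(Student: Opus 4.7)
The strategy is to deduce both claims from the \emph{rank-zero component} of the equivariant Tamagawa Number conjecture (eTNC) for the motive $h^1(E_K)(1)$, repackaged as a $\Z_p[G]$-valued statement. The central identification one needs to establish is that $\theta^{\mathrm{MT},\#}_K$ coincides, up to the involution $\#$ and explicit period/Euler-factor normalisations, with the image of (the descent to level $K$ of) Kato's zeta element $z_K \in \Ho^1(\mathcal{O}_{K,S}, \mathrm{T}_p E)$ under an equivariant Coleman/Perrin-Riou-type map at $p$. Hypothesis \ref{hyp}\,(iii) ensures the local representation at $p$ is sufficiently well-behaved to apply the reciprocity law at finite level, while Hypothesis \ref{hyp}\,(ii) prevents extra denominators entering at anomalous primes.

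For claim (b) one works with the compactly supported Selmer complex
\[
C_K \coloneqq R\Gamma_c(\mathcal{O}_{K,S}, \mathrm{T}_p E),
\]
modified by Bloch--Kato local conditions at primes above $p$ and by finite local conditions at primes of bad reduction. Under Hypothesis \ref{hyp}\,(i), $C_K$ is perfect of tor-amplitude $[0,1]$ over $\Z_p[G]$, so its cohomology Fitting ideals are well-defined. A Poitou--Tate analysis combined with the fundamental exact sequence \eqref{Selmer exact sequence} identifies $\Ho^2(C_K)$ with $\Sel_{p,E/K}^\vee$ up to local error terms (unramified cohomology of $\mathrm{T}_p E$ at primes above $p$, Tamagawa cohomology at primes of additive reduction, and the $p$-part of $|\widetilde{E}(\mathbb{F}_p)|$) that are all rendered trivial by Hypothesis \ref{hyp}. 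The Kolyvagin / Mazur--Rubin--Sakamoto--Sano derivative machinery, applied to Kato's Euler system, then yields
\[
\theta_K^{\mathrm{MT},\#} \in \Fitt^0_{\Z_p[G]}\bigl( \Ho^2(C_K)\bigr) = \Fitt^0_{\Z_p[G]}\bigl( \Sel_{p,E/K}^\vee \bigr),
\]
which is the desired statement.

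Claim (a) is then deduced from (b) by the elementary containment $\Fitt^0_{\Z_p[G]}(M) \subseteq I_{\Z_p,G}^{d(M)}$, where $d(M)$ is the $\Z_p$-rank of the $G$-coinvariants of $M$. Applied to $\Sel_{p,E/K}^\vee$ this gives an order of vanishing of at least $r_p$. The additional $\mathrm{sp}(m)$ augmentation factors arise because at each prime $\ell \mid m$ of split-multiplicative reduction the local Euler factor governing the descent formula is explicitly divisible by a local augmentation element, and the further contribution $2 c^{(p)}(K)$ records a second-order vanishing of local terms detected by a higher Bockstein on $C_K$ at the exceptional primes measured by $c^{(p)}(K)$.

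The principal obstacle is the first step: producing an \emph{integral} and \emph{$G$-equivariant} identification of $\theta^{\mathrm{MT}}_K$ with the image of Kato's zeta element at finite level $K$. Previous approaches have typically worked either in the limit up the cyclotomic $\Z_p$-tower (where descent introduces controllable but non-trivial error terms) or only rationally. Performing the reciprocity-law comparison integrally at each finite $K$ satisfying Hypothesis \ref{hyp} — and in particular tracking periods, Euler factors, and local conditions at primes of bad reduction with enough precision to preserve the $G$-equivariant structure — is what permits the sharp Fitting-ideal statement in (b) rather than merely an annihilation statement, and thus the sharp augmentation-ideal statement in (a).
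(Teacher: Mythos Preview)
Your outline has the right skeleton --- Kato's zeta element, an equivariant reciprocity law, and Fitting ideals of a Selmer complex --- but two steps conceal genuine difficulties that the paper spends most of its length resolving.

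\textbf{The Bloch--Kato complex is not perfect.} You assert that $C_K$, the compactly supported complex modified by Bloch--Kato local conditions, is perfect over $\Z_p[G]$ under Hypothesis~\ref{hyp}\,(i). This is precisely what fails: the finite-support complex $\mathrm{R}\Gamma_f(K,\mathrm{T}_pE)$ is \emph{rarely} perfect over $\Z_p[G]$, because the local groups $E(K_v)^\wedge$ need not be $G$-cohomologically trivial. The paper works around this by building auxiliary perfect complexes $\widetilde D^\bullet_{K,\Sigma,\Pi,Q}$ (Section~4) whose $H^2$ \emph{surjects onto} $\Sel_{p,E/K}^\vee$, which suffices for a Fitting-ideal containment. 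Your claim that the ``local error terms \dots\ are all rendered trivial by Hypothesis~\ref{hyp}'' is too strong: Hypothesis~\ref{hyp} does not assume $p\nmid\Tam(E)$ or $p\nmid N$, and at primes $\ell\mid m$ where $\Eul_\ell(\tilde\sigma_\ell)$ is not a unit in $\Z_p[G]$ the reciprocity formula (Theorem~\ref{local points main result}\,(a)) produces genuine denominators. The paper absorbs these via the Euler-system norm relations for $y^{\mathrm{Kato}}$ in Lemma~\ref{inductive argument}, an argument your outline does not contain.

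\textbf{Claim~(a) is not a corollary of~(b).} The containment $\Fitt^0_{\Z_p[G]}(\Sel_{p,E/K}^\vee)\subseteq I_G^{d}$ with $d=\rank_{\Z_p}((\Sel_{p,E/K}^\vee)_G)$ gives only $d=r_p$; it cannot see the exceptional-zero contribution $\mathrm{sp}(m)$, because $\Sel_{p,E/K}^\vee$ has no extra corank at split-multiplicative primes. The paper obtains the additional $\mathrm{sp}(m)$ factors from a \emph{different} Nekov\'a\v{r}--Selmer complex (Section~6), with local condition $\mathrm{R}\Gamma(\Q_\ell,\Z_p(1)_{K/\Q})$ at each $\ell\in\mathrm{Sp}(m)$; this complex has $H^2$ surjecting onto $Y_{K,\Pi}=\bigoplus_{\ell\in\Pi}\bigoplus_{v\mid\ell}\Z_p$, whose Fitting ideal is $\prod_{\ell\in\Pi}I(\cD^{(\ell)}_K)$. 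The $2c^{(p)}(K)$ term comes from explicit congruences for Otsuki's elements $\nu^{(\ell)}_m$ modulo $I(\cD^{(\ell)})^2$ (Theorem~\ref{local points main result}\,(d)), not from a Bockstein on $C_K$. Your sentence about ``local Euler factor \dots\ divisible by a local augmentation element'' gestures at the right phenomenon but does not supply the mechanism; the divisibility is a consequence of a cohomological interpretation of $\nu^{(\ell)}_m\Eul_\ell(\tilde\sigma_\ell)^{-1}$ via the multiplicative-group complex $\mathrm{R}\Gamma(\Q_\ell,\Z_p(1)_{K/\Q})$ developed in Section~5.
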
        

\begin{rk} \label{mazur--tate main result 1 rk}
\begin{liste}
    \item The adornment $\#$ in Theorem \ref{mazur--tate main result 1}\,(b) can often be removed. To explain this, we write $D(m) \coloneqq \mathrm{gcd} ( m , N)$ and $\delta (m) \coloneqq \mathrm{gcd} (D (m), \frac{N}{D(m)})$. If $\delta (m) = 1$, which is automatically satisfied if $E$ is semistable, then $\theta^\MT_K$ satisfies a 
    `functional equation' (stated in Remark \ref{functional equation rk}) that implies that $\theta^\MT_K$ and $\theta^{\mathrm{MT}, \#}_K$ only differ by a unit in $\Z [G]$. In this case, therefore, Theorem \ref{mazur--tate main result 1}\,(b) also shows that $\theta^{\mathrm{MT}}_K$ belongs to $\Fitt^0_{\Z_p [G]} ( \mathrm{Sel}_{p, E / K}^\vee)$.
    However, the ideals generated by $\theta^\MT_K$ and $\theta^{\mathrm{MT}, \#}_K$ may not be equal if $\delta (m) > 1$ and the authors would like to thank Juan-Pablo Llerena-C\'ordova for providing them with numerical examples where these are indeed different. 
    \item The perhaps surprising appearance of the factor 2 in Theorem \ref{mazur--tate main result 1}\,(a) can be motivated conceptually as follows. Write $\mathrm{ord} (\theta^\MT_K)$ for the largest non-negative integer $n$ such that $\theta^\MT_K$ belongs to $I_{\Z_p, G}^n$. By an observation of Mazur and Tate \cite[(1.6.4)]{MT87}, the known validity of the $p$-parity conjecture \cite{Dokchitser2} then combines with the functional equation for $\theta^\MT_K$ to imply, if $p$ is odd, that $\mathrm{ord} (\theta^\MT_K) \equiv r_p \mod 2$.
    \item To define the integer $c^{(p)} (K)$ that appears in the statement of Theorem \ref{mazur--tate main result 1}\,(a), we set $a_\ell \coloneqq \ell +1 -| \widetilde{E}(\mathbb{F}_\ell) | $ for every prime number $\ell$.
    We also define $\bm{1}_N (\ell) = 1$ if $\ell \nmid N$ and $\bm{1}_N (\ell) = 0$ otherwise. We then let $f^{(p)}_{\ell, K / \Q}$ denote prime-to-$p$ part of the residue degree of $\ell$ in $K / \Q$ and define the set (see also Lemma \ref{invertible Euler factors lemma} for an alternative characterisation)
\begin{align*}
C^{(p)}_\times (K) & \coloneqq \{ \ell : 
\ell \not \equiv \zeta ( a_\ell - \bm{1}_N (\ell) \zeta) \mod p 
    \text{ for all } \zeta \in \overline{\Q_p}^\times \text{ with } \zeta^{f^{(p)}_{\ell, K / \Q}} = 1 
    \} |
\end{align*}
as well the subsets
\begin{align*}
    C^{(p)}_2 (K) & \coloneqq \{ \ell \nmid N : \ell \neq p, \ell \in C^{(p)}_\times (K), a_\ell = 1, \ell \mid m, \ell^2 \nmid m\} \\
    C^{(p)}_0 (m) & \coloneqq \{ \ell \mid N : \ell \neq p, a_\ell = 0, \ell^2 \mid m \}
\end{align*}
as well as their cardinalities $c_2^{(p)} (K) \coloneqq | C^{(p)}_2 (K)|$ and $c_0^{(p)} (m) \coloneqq | C^{(p)}_0 (m)|$.
Given this, we may then define
\[
c^{(p)} (K) \coloneqq  c^{(p)}_2 (K) + c^{(p)}_0 (m).
\]
\end{liste}
\end{rk}

To the knowledge of the authors, Theorem \ref{mazur--tate main result 1}\,(a) is the first 
general result that combines the contributions from both the rank of $\Sel_{p, E / \Q}^\vee$ and the presence of trivial zeros. That a refinement of Conjecture \ref{Mazur--Tate conj 1}\,(a) of this shape may hold was first suggested by Ota, who proved in \cite[Thm.\@ 1.1]{Ota23} that $\theta_K^\mathrm{MT} \in I_{\Z_p, G}^{\min \{ r_p, p \}}$ if one assumes Hypothesis \ref{hyp}\,(i) and certain local conditions at $p$ and at primes $\ell$ that divide $mN$. If all split-multiplicative primes $\ell$ dividing $m$ belong to $C_\times^{(p)} (K)$ and $p$ is big enough, then this also easily implies $\theta_K^\mathrm{MT} \in I_{\Z_p, G}^{r_p + \mathrm{sp} (m) + c_2^{(p)} (K)}$ (cf.\@ \cite[Cor.~6.3]{Ota}).\\
Using entirely different methods, Bergunde and Gehrmann \cite{BerGeh17} have also proved that $\theta^\mathrm{MT}_K$ belongs to $I_{ \cR,G}^{\mathrm{sp}(m)}$ if $K$ is a real abelian field (which settles Conjecture \ref{Mazur--Tate conj 1}\,(a) in this case, if $ r = 0$).\\
As for Conjecture \ref{Mazur--Tate conj 1}\,(b), 
the following results have previously been obtained assuming Hypothesis \ref{hyp}\,(i) and a slight strengthening of \ref{hyp}\,(ii)\,(b), where $p$ is also assumed to be of good reduction for $E$.
\begin{itemize}
\item If one also assumes the vanishing of a relevant $\mu$-invariant and that $p$ does not divide the product of Tamagawa numbers $\mathrm{Tam} (E) \coloneqq \prod_{\ell \mid N} ( E (\Q_\ell) : E_0 (\Q_\ell))$, then Kurihara has proved that the image of $\theta_K^\mathrm{MT}$ in $\Z_p [G_K]$ belongs to the annihilator of a relevant Selmer group for every finite abelian $p$-extension $K /\Q$ of conductor $m$ coprime to $p N$ (see \cite[Rk.\@ 1.2.6]{Kurihara14Proceedings}) and to $\Fitt^0_{\Z_p [G]} (\Sel_{p, E / K}^\vee)$ if $E$ has good ordinary reduction at $p$, the prime $p$ is tamely ramified in $K$, and suitable additional assumptions are satisfied (see \cite[Thm.~10.]{Kurihara03}).  
    \item Results of Kataoka (see \cite[Thm.\@ 1.8]{Kataoka1} and \cite[Thm.\@ 1.6]{Kataoka2}) combine to imply the $p$-part of the `weak main conjecture' \cite[Conj.\@ 3]{MT87} holds if $E$ has good reduction at every $\ell \mid pm$ and one assumes certain further local conditions such as the vanishing of $H^0 ( \Q (\zeta_{mp^\infty}) \otimes_\Q \Q_\ell, E [p^\infty])$ for every $\ell \mid pm$.
\end{itemize}
If $K$ is a subfield of the cyclotomic $\Z_p$-extension of $\Q$, then the result of Theorem \ref{mazur--tate main result 1}\,(b) has previously been proved by Emerton--Pollack--Weston \cite{emerton2022explicit}. (We note that the main result of loc.\@ cit.\@ also involves the involution $\#$.)
\\
To state the leading term conjecture of Mazur and Tate, we denote the Tate multiplicative period of $E$ at a prime $\ell$ of split-multiplicative reduction by $q_{E, \ell} \in \Q_\ell^\times$, and write $\Tam_\ell \coloneqq \ord_\ell (q_{E, \ell})$ for the Tamagawa number of $E$ at $\ell$.
We also fix an abelian number field $L$ with Galois group $G_L \coloneqq \gal{L}{\Q}$, write $\cD^{(\ell)}_L \subseteq G_L$ for the decomposition group at $\ell$, and let $\rec_\ell \: \Q_\ell^\times \to \cD^{(\ell)}_L \subseteq G_L$ denote the associated local reciprocity map.  

\begin{conj}[`leading term', {\cite[Conj.\@ 6]{MT87}}] \label{Mazur--Tate conj 2}
    Suppose the conductor $m'$ of $L$ is a product of primes at which $E$ has split-multiplicative reduction. Then one has
    \[
    \theta_L^\MT \equiv \theta_\Q^\MT \cdot {\prod}_{\ell \mid m'} \big( \Tam_\ell^{-1} \cdot (\rec_\ell (q_{E, \ell}) - 1) \big) \mod I_{\cR, G_L}^{\mathrm{sp} (m') + 1}. 
    \]
\end{conj}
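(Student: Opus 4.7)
My strategy is to reduce to $p$-parts and then verify the congruence via character evaluation. Both sides of the asserted congruence lie in $I_{\cR,G_L}^{\mathrm{sp}(m')}$: for the LHS this follows from Theorem~\ref{mazur--tate main result 1}(a) (with $c^{(p)}(L) = 0$, since every ramified prime in $L/\Q$ is split-multiplicative by assumption and so avoids $C_2^{(p)}(L) \cup C_0^{(p)}(m')$), while the RHS lies in $I^{\mathrm{sp}(m')}$ because each factor $\rec_\ell(q_{E,\ell}) - 1$ lies in the augmentation ideal. The conjecture therefore amounts to an identity in the graded piece $I^{\mathrm{sp}(m')}/I^{\mathrm{sp}(m')+1}$, which after rationalisation embeds into the product over characters $\chi$ of $G_L$ of their $\chi$-isotypic components.

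For a character $\chi$ of $G_L$ with conductor $m_\chi \mid m'$, the modular-symbol interpolation property of $\theta_L^\MT$ gives $\chi(\theta_L^\MT)$ as $L(E,\chi,1)$ times an appropriate period, multiplied by the modified Euler factors $\prod_{\ell \mid m',\,\ell \nmid m_\chi}(1 - \chi(\Frob_\ell))$ at split-multiplicative primes $\ell$ (where $a_\ell = 1$). Evaluating the RHS under $\chi$ yields $L(E,1)$ times the corresponding period, multiplied by $\prod_{\ell \mid m'} \Tam_\ell^{-1}(\chi(\rec_\ell(q_{E,\ell})) - 1)$.

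The key identification uses the Tate uniformisation together with local class field theory: modulo inertia, $\rec_\ell(q_{E,\ell})$ acts as $\Frob_\ell^{\ord_\ell(q_{E,\ell})} = \Frob_\ell^{\Tam_\ell}$, so for $\chi$ unramified at $\ell$ one has $\chi(\rec_\ell(q_{E,\ell})) = \chi(\Frob_\ell)^{\Tam_\ell}$. The elementary identity $x^n - 1 \equiv n(x-1) \pmod{(x-1)^2}$ then yields $\Tam_\ell^{-1}(\chi(\rec_\ell(q_{E,\ell})) - 1) \equiv \chi(\Frob_\ell) - 1$ in the relevant quotient, so that the product across $\ell \mid m'$ at which $\chi$ is unramified matches the Euler factor product on the LHS modulo $I^{\mathrm{sp}(m')+1}$. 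A standard factorisation relating $L(E,\chi,1)$ to $L(E,1)$ — valid precisely for those $\chi$ that can contribute to the leading term at order $\mathrm{sp}(m')$, namely those trivial on the relevant decomposition groups — completes the character-theoretic comparison.

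The main obstacle is promoting this character-by-character rational identity to an integral congruence in $\cR[G_L]$. I would expect this to follow from the ETNC framework used earlier in the paper: the `rank-zero' zeta element provides a uniform integral normalisation for the periods appearing in the interpolation formula, and local Bloch--Kato duality at a Steinberg prime produces precisely the Tamagawa factor $\Tam_\ell^{-1}$ needed to match the integral structure on the RHS. Alternatively, the Fitting-ideal statement of Theorem~\ref{mazur--tate main result 1}(b) gives a sharp annihilation of $\Sel_{p, E/L}^\vee$ which, combined with the rational leading-term identity, is expected to force the required integral congruence modulo $I^{\mathrm{sp}(m') + 1}$.
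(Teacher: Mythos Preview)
The statement you are attempting to prove is a \emph{conjecture}; the paper does not prove it in full. What the paper establishes is the $p$-part under Hypothesis~\ref{hyp} (Theorem~\ref{mazur--tate main result 2} specialised to $K=\Q$, $L=F_{m'}^+$), and the corollary following that theorem records this as the $p$-part of \cite[Conj.~6]{MT87} for semistable non-CM curves and $p\geq 11$.

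Your proposed approach has a fundamental flaw: character evaluation cannot detect congruences modulo $I^{n+1}$. For a nontrivial character $\chi$ of $G_L$, the image $\chi(I_{\cR,G_L})$ is the whole of $\cR[\chi]$ (it contains $\chi(\sigma)-1\neq 0$), so $\chi$ sends $I^n$ surjectively onto $\cR[\chi]$ for every $n\geq 0$. Consequently the map $\cR[G_L]\to\prod_\chi \cR[\chi]$ does \emph{not} induce an injection on the graded piece $I^{\mathrm{sp}(m')}/I^{\mathrm{sp}(m')+1}$, and two elements that agree character-by-character need not be congruent modulo $I^{\mathrm{sp}(m')+1}$. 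Your ``character-theoretic comparison'' therefore cannot verify the congruence, even rationally. (The only character that sees the $I$-adic filtration is the trivial one, and it kills both sides.) The hand-wave at the end about ``promoting to an integral congruence'' via the eTNC or Fitting ideals does not address this; the problem is earlier, at the level of the rational comparison itself.

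The paper's proof of the $p$-part (\S\,\ref{congruences proof section}) proceeds entirely differently. It constructs a Nekov\'a\v{r}--Selmer complex $\SC^\bullet_{\Pi,\Phi}$ whose determinant is linked to $\theta^\MT_L$ via Kato's Euler system (Lemma~\ref{description of F circ f lem}), applies the abstract Bockstein formalism of Proposition~\ref{bockstein proposition}\,(c) to obtain the congruence (\ref{congruence coming from appendix case 1}) between the value at level $L$ and a Bockstein evaluation at level $K$, and then computes these Bockstein maps explicitly (Lemma~\ref{this computes the RHS}, using the cohomological interpretation of Otsuki's elements in Theorem~\ref{multiplicative group thm} and the identification with reciprocity maps in Lemma~\ref{bockstein maps and reciprocity maps}). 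The factors $\Tam_\ell^{-1}(\rec_\ell(q_{E,\ell})-1)$ emerge from the connecting homomorphism of the Tate uniformisation sequence (Remark~\ref{tate uniformisation connecting map rk}) combined with the local Bockstein computation, not from any manipulation of $L$-values.
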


The approach of Mazur and Tate is supported by experimental evidence (see \cite[\S\,3.2]{MT87} and, more recently, the articles of Portillo-Bobadilla \cite{Portillo-Bobadilla} and Llerena-C\'ordova \cite{llerenacórdova2024}), and 
directly motivated by the earlier `$p$-adic Birch--Swinnerton-Dyer conjecture' of Mazur--Tate--Teitelbaum (from \cite{MTT84}).
Indeed, if $m' = \ell^n$ for some split-multiplicative prime $\ell$, then the `$\ell$-part' (so $\cR = \Z_\ell$) of Conjecture~\ref{Mazur--Tate conj 2} is a special case of the Mazur--Tate--Teitelbaum conjecture and therefore follows from the result of Greenberg and Stevens \cite{GreenbergStevens} if $\ell \geq 5$ (see also \cite{Kobayashi06}). 
Note that, due to its Iwasawa-theoretic nature, this approach only sees the `$\ell$-part'
of Conjecture~\ref{Mazur--Tate conj 2} (hence concerns the component on which $\ell$ is wildly ramified) and so 
misses much of the finer aspects of these congruences. The only theoretical evidence 
in an $\ell$-tamely ramified setting (still assuming $m' = \ell^n$)
that has hitherto been available in the literature is the result of de Shalit in \cite{deShalit} that applies to elliptic curves of prime conductor $\ell$ and is proved via an extension of the strategy of Greenberg and Stevens (see also recent work of Lecouturier \cite{Lecouturier} in this direction).\\
For a natural number $m$, we let $F_m$ denote the $m$-th cyclotomic field and write $\theta_m^\mathrm{MT} \coloneqq \theta_{F_m}^\mathrm{MT}$. 
We also let $\mathrm{Sp} (m)$ denote the set of prime numbers dividing $m$ at which $E$ has split-multiplicative reduction, and, for any extension $L / K$ of abelian fields, we write $\pi_{L / K} \: \Q [G_L] \to \Q [G_K]$ for the morphism induced by the restriction map $G_L \to G_K$.\\
Our second main result is a refinement of Conjecture \ref{Mazur--Tate conj 2} and reads as follows. 

\begin{thm} \label{mazur--tate main result 2}
Let $L$ be an abelian number field of conductor $m'$ and fix a prime number $p$ such that the pair $(L, p)$ satisfies Hypothesis \ref{hyp}.
Let $K$ be a subfield of $L$ and take $S' \subseteq \mathrm{Sp} (m')$ to be the subset of primes which split completely in $K$. Set $M' \coloneqq m'  \prod_{\ell \in S'} \ell^{-\ord_\ell (m')}$, and 
suppose that all primes $\ell$ dividing $M'$ satisfy $\ell \in C_\times^{(p)} (L)$.\\ Then $\theta_L^\mathrm{MT}$ belongs to $\mathfrak{A} \coloneqq (\prod_{\ell \in S'} I_{\Z_p, \cD^{(\ell)}_L}) \Z_p [G_L]$ and
one has 
    \[
    \theta_L^\mathrm{MT} \equiv \pi_{F_{M'} / K} (\theta^\mathrm{MT}_{F_{M'}}) \cdot {\prod}_{\ell \in S'} \big( \Tam_\ell^{-1} \cdot (\rec_\ell ( q_{E, \ell})  - 1) \big)
    \mod  I_{\Z_p, H} \mathfrak{A}
    \]
    with $H \coloneqq \gal{L}{K}$ and $F_{M'} \coloneqq \Q (\zeta_{M'})$ the $M'$-th cyclotomic field.
\end{thm}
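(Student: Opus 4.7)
The plan is to proceed via the `rank-zero component' of the equivariant Tamagawa Number Conjecture (ETNC) for the pair $(E, L)$, in parallel with the approach underlying Theorem~\ref{mazur--tate main result 1}. This framework, valid under Hypothesis~\ref{hyp}, determines $\theta_L^\MT$ modulo a sufficiently deep augmentation-ideal power in terms of an equivariant Euler characteristic of a Selmer complex together with its local trivialisations. The containment $\theta_L^\MT \in \mathfrak{A}$ is then an augmentation-ideal count: for each prime $\ell \in S'$, the fact that $\ell$ splits completely in $K$ implies $\cD^{(\ell)}_L \subseteq \gal{L}{K}$, and the local Euler factor at $\ell$ (which encodes a trivial zero, since $a_\ell = 1$ for split-multiplicative reduction) contributes one factor of $I_{\Z_p, \cD^{(\ell)}_L}$. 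For the primes $\ell$ dividing $M'$, the hypothesis $\ell \in C_\times^{(p)}(L)$ ensures their local factors are $p$-adic units in $\Z_p[G_L]$, so they do not absorb any augmentation.

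Next, I would compute $\theta_L^\MT$ modulo $I_{\Z_p, H}\mathfrak{A}$. Using that $\cD^{(\ell)}_L \subseteq H$ for each $\ell \in S'$, the class modulo $I_{\Z_p, H}\mathfrak{A}$ of each local contribution is governed by its class modulo $I_{\Z_p, \cD^{(\ell)}_L}^2$. The key computation is then the identification, for each $\ell \in S'$, of this first-order term with $\Tam_\ell^{-1}(\rec_\ell(q_{E, \ell}) - 1)$. This should flow from the Tate uniformization $E(\Q_\ell) \cong \Q_\ell^\times / q_{E, \ell}^{\Z}$, which realises $\Tam_\ell = \ord_\ell(q_{E, \ell})$ as the size of the component group at $\ell$ and identifies $\rec_\ell(q_{E, \ell})$ with the leading contribution to the trivial zero in the local factor.

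Finally, for the comparison with $\pi_{F_{M'}/K}(\theta_{F_{M'}}^\MT)$, after factoring out the $S'$-contribution just described, the residual `$M'$-level' part of $\theta_L^\MT$ must be identified with the pull-back of $\theta_{F_{M'}}^\MT$. The input here is a distribution-type relation between Mazur--Tate elements at varying conductors (which follows from their construction through modular symbols), combined with the $p$-unit property of local factors at primes dividing $M'$ that is delivered by the $C_\times^{(p)}(L)$-hypothesis. The main obstacle is the precise local computation at $\ell \in S'$: extracting the exact leading coefficient $\Tam_\ell^{-1}(\rec_\ell(q_{E, \ell}) - 1)$ demands an `exceptional zero formula' in the $\ell$-ramified direction that goes beyond the cyclotomic result of Greenberg--Stevens, together with careful tracking of Tamagawa normalisations throughout the ETNC-level bookkeeping.
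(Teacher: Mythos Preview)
Your proposal captures the broad philosophy correctly—the eTNC framework, the relevance of Tate uniformisation, and the role of the $C_\times^{(p)}(L)$ hypothesis—but it is missing the specific technical mechanism that drives the paper's proof, and without it I do not see how you would actually carry out the ``key computation'' you flag as the main obstacle.

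The paper does \emph{not} extract the leading term by analysing a local Euler factor directly. Instead it builds a Nekov\'a\v{r}--Selmer complex $\SC^\bullet_{\Pi,\Phi}$ whose local condition at each $\ell\in S'$ is $\mathrm{R}\Gamma(\Q_\ell,\Z_p(1)_{L/\Q})$, coming from the exact sequence $0\to\Z_p(1)\to\mathrm{T}_pE\to\Z_p\to 0$ supplied by Tate uniformisation. The passage from level $L$ to level $K$ is then governed by the \emph{Bockstein morphisms} $\beta_i\colon H^1(\overline{\SC}^\bullet)\to I_H/I_iI_H$ attached to this complex (Definition~\ref{bockstein map def}, Proposition~\ref{bockstein proposition}(c)). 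These Bockstein maps factor through the local complexes $A^\bullet_{K,\ell_i}=\mathrm{R}\Gamma(\Q_{\ell_i},\Z_p(1)_{K/\Q})$, and the crucial identification is Lemma~\ref{bockstein maps and reciprocity maps}: the local Bockstein map \emph{is} the reciprocity map $\mathrm{rec}_{\ell_i}$. This is how $\mathrm{rec}_\ell(q_{E,\ell})-1$ appears—not from an exceptional-zero formula in the style of Greenberg--Stevens, but purely from the algebra of the descent. The factor $\Tam_\ell^{-1}$ enters because the connecting map $\Z_p\to H^1(\Q_\ell,\Z_p(1))$ from the Tate sequence sends $1$ to $q_{E,\ell}$, and $\ord_\ell(q_{E,\ell})=\Tam_\ell$.

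A second ingredient you are missing is the cohomological interpretation of Otsuki's elements $\nu_m^{(\ell)}\Eul_\ell(\tilde\sigma_\ell)^{-1}$ (Theorem~\ref{multiplicative group thm}): these lie in the image of a determinant-to-cohomology map for the local $\mathbb{G}_m$-complex $A^\bullet_{L,\ell}$, and the descent calculation of \S\ref{local descent calculations section} computes their image at level $K$ explicitly. This is what lets one compare the local contributions at levels $L$ and $K$ and produce the factor $\pi_{F_{M'}/K}(\theta^{\MT}_{F_{M'}})$ on the right-hand side. Your appeal to ``distribution-type relations'' for Mazur--Tate elements does not by itself give this, because the relation you need is between the full determinant-level objects, not just the $\theta$-elements. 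The containment $\theta_L^{\MT}\in\mathfrak{A}$ likewise comes not from an ``augmentation-ideal count'' on Euler factors but from the fact that $H^2$ of the Selmer complex surjects onto $Y_{L,\Pi'}=\bigoplus_{\ell\in S'}\bigoplus_{v\mid\ell}\Z_p v$, forcing its Fitting ideal into $\prod_{\ell\in S'}I(\cD^{(\ell)}_L)$.
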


\begin{rk}
\begin{liste}
    \item Using the `norm relations' in Proposition \ref{p: MT norm relns}, one can explicitly compute the element $\pi_{F_{M'} / K} (\theta^\mathrm{MT}_{F_{M'}})$ that appears in Theorem \ref{mazur--tate main result 1}\,(c) as a linear combination of the elements $\theta_F^\mathrm{MT}$ with $F$ ranging over a suitable set of subfields of $K$. If the conductor $m$ of $K$ and $M' / m$ are coprime, then one may in fact take this set to be simply $\{ K \}$. 
    \item If $E (\Q)$ has positive rank $r > 0$, then the result of Theorem \ref{mazur--tate main result 2} is trivial (by Theorem~\ref{mazur--tate main result 1}~(a)). 
    In any such case, Mazur and Tate predict in \cite[Conj.\@ 4]{MT87} a finer congruence for $\theta^\mathrm{MT}_K$ modulo $I_{\cR, G}^{r + \mathrm{sp} (m) + 1}$. If $r = 1$ and $\mathrm{sp} (m) = 0$, results towards this conjecture have recently been obtained by Burns, Kurihara, and Sano in \cite{BKS-ellipticcurves2}. We expect that their approach can be combined with ours in order to prove the `$p$-part' of the full congruence modulo $I_{\Z_p, G}^{r + \mathrm{sp} (m) + 1}$, for primes $p$ as in Theorem \ref{mazur--tate main result 1}, up to a unit in $\Z_p^\times$ if the Birch--Swinnerton-Dyer conjecture holds for $E$ and without ambiguity if $E$ validates the `generalised Perrin-Riou conjecture' from \cite{BKS-ellipticcurves2}. 
    In the generality of Hypothesis \ref{hyp}, a result of this kind would require an extension of the comparison of height pairings carried out by Burns and Mac\'ias Castillo in \cite[Thm.\@ 10.3]{BurnsMaciasCastillo}.
 \end{liste}   
\end{rk}

To end the discussion of our main results, we summarise some of the consequences towards the `$p$-parts' of the conjectures of Mazur and Tate (that is, the statements of the conjectures after extending scalars from $\cR$ to $\cR \otimes_\Z \Z_p$) in their original formulations from \cite{MT87} in the following corollary. This result is obtained by taking $K$ to be the maximal real subfield $F_{m}^+$ of the $m$-th cyclotomic field $F_{m}$ in Theorem \ref{mazur--tate main result 1} (and taking note of Remark \ref{mazur--tate main result 1 rk}\,(a)), and $K = \Q$ and $L = F_{m'}^+$ with $m'$ a product of split-multiplicative primes in Theorem \ref{mazur--tate main result 2}. Note that, in these cases, condition (ii)\,(a) in Hypothesis \ref{hyp} is always satisfied and, if $E$ is a semistable non-CM elliptic curve, then Hypothesis \ref{hyp}\,(i) is satisfied by Remark \ref{big image hyp rk} and Hypothesis \ref{hyp}\,(iii) is empty. 

\begin{cor}
If $p\geq 11$ is a prime number and $E$ is a semistable elliptic curve without CM, then the following claims are valid.
\begin{liste}
\item The `$p$-part' of the `order-of-vanishing component' of \cite[Conj.\@ 4]{MT87}, and hence also the `weak vanishing conjecture' \cite[Conj.\@ 1]{MT87}, holds. 
    \item The `$p$-part' of the `weak main conjecture' \cite[Conj.\@ 3]{MT87} holds.
    \item The `$p$-part' of the `leading term conjecture' \cite[Conj.\@ 6]{MT87} holds.
\end{liste}
\end{cor}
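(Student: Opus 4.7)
The plan is to verify that the hypotheses of Theorems \ref{mazur--tate main result 1} and \ref{mazur--tate main result 2} hold in the specialisations suggested before the statement, and to check that their conclusions match the original conjectures of Mazur and Tate. Hypothesis \ref{hyp}(i) follows from Mazur's surjectivity theorem recalled in Remark \ref{big image hyp rk}, since $E$ is semistable, non-CM, and $p \geq 11$. Hypothesis \ref{hyp}(iii) is vacuous as semistability excludes additive reduction. For Hypothesis \ref{hyp}(ii)(a), I observe that the field to which the hypothesis is applied in each case (namely $F_m^+$ in Theorem \ref{mazur--tate main result 1} and $F_{m'}^+$ in Theorem \ref{mazur--tate main result 2}) is totally real and hence contains no primitive $p$-th root of unity for $p \geq 3$.

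For claim (a) I take $K = F_m^+$ in Theorem \ref{mazur--tate main result 1}(a). The exact sequence (\ref{Selmer exact sequence}) gives $r_p \geq \rank_\Z E(K) \geq \rank_\Z E(\Q) = r$, and $c^{(p)}(K) \geq 0$ by definition, so
\[
I_{\Z_p, G}^{r_p + \mathrm{sp}(m) + 2 c^{(p)}(K)} \subseteq I_{\Z_p, G}^{r + \mathrm{sp}(m)},
\]
which is the $p$-part of \cite[Conj.\@ 4]{MT87}. The weaker assertion \cite[Conj.\@ 1]{MT87} then follows formally. For claim (b), again with $K = F_m^+$: since $E$ is semistable, the conductor $N$ is squarefree, so for any $m$ the divisors $D(m) = \gcd(m, N)$ and $N / D(m)$ of $N$ are coprime, whence $\delta(m) = 1$. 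Remark \ref{mazur--tate main result 1 rk}(a) then implies that $\theta_K^\MT$ and $\theta_K^{\MT, \#}$ differ by a unit of $\Z[G]$, and Theorem \ref{mazur--tate main result 1}(b), together with the identification $S(E/K) \otimes_\Z \Z_p \cong \Sel_{p, E/K}^\vee$ provided by (\ref{Selmer exact sequence}), yields the $p$-part of \cite[Conj.\@ 3]{MT87}.

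For claim (c), I apply Theorem \ref{mazur--tate main result 2} with $K = \Q$ and $L = F_{m'}^+$, where $m'$ is a product of primes of split-multiplicative reduction. Every rational prime splits trivially in $\Q$, so $S' = \mathrm{Sp}(m')$ exhausts the prime divisors of $m'$; consequently $M' = 1$ and the requirement that primes dividing $M'$ lie in $C_\times^{(p)}(L)$ is vacuous. Since $F_1 = \Q$ and $\pi_{\Q/\Q} = \id$, the congruence in Theorem \ref{mazur--tate main result 2} collapses to the one predicted by \cite[Conj.\@ 6]{MT87}, modulo the ideal $I_{\Z_p, G_L} \mathfrak{A} \subseteq I_{\Z_p, G_L}^{\mathrm{sp}(m') + 1}$.

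The corollary is thus obtained as a direct specialisation of the two main theorems; the only substantive verifications are the elementary identities $r \leq r_p$, $\delta(m) = 1$ and $M' = 1$ needed to recast each conclusion in the precise form of the Mazur--Tate predictions. I therefore anticipate no genuine obstacle to the proof beyond unpacking these specialisations carefully.
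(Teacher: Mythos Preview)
The proposal is correct and follows the same approach as the paper, which likewise derives the corollary by specialising Theorem \ref{mazur--tate main result 1} to $K = F_m^+$ and Theorem \ref{mazur--tate main result 2} to $K = \Q$, $L = F_{m'}^+$, after noting that semistability and $p \geq 11$ force Hypothesis \ref{hyp}\,(i) via Mazur's theorem, make (iii) empty, and that total reality of $F_m^+$ ensures (ii)\,(a). Your write-up simply spells out the elementary checks ($r \leq r_p$, $\delta(m)=1$, $M'=1$, and $I_{\Z_p,H}\mathfrak{A}\subseteq I_{\Z_p,G_L}^{\mathrm{sp}(m')+1}$) that the paper leaves to the reader.
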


\subsection{Overview of proof strategy}

The approach we adopt in this article is motivated by the `Tamagawa Number Conjecture' of Bloch and Kato \cite{BlochKato}, which, when specialised appropriately, is well-known to recover the Birch--Swinnerton-Dyer conjecture (see \cite{kings-bsd, venjakob, BurungaleFlach} for details). 
Since the Mazur--Tate conjectures are themselves an equivariant refinement of the conjecture of Birch--Swinnerton-Dyer, it is natural to view them within the framework of the equivariant refinement of the Tamagawa Number Conjecture. This refinement, formulated independently by Fontaine--Perrin-Riou \cite{FontainePerrinRiou94} and Kato \cite{kato93} in the commutative setting and later generalised to motives with non-commutative coefficients by Burns and Flach \cite{BurnsFlach01}, is commonly referred to as the `equivariant Tamagawa Number Conjecture' (eTNC). While results in \cite{BurnsMaciasCastillo, BKS-ellipticcurves2} suggest there should exist a link between the eTNC and the Mazur--Tate conjectures, the full nature of this connection has hitherto remained unclear.\\
In this article, we establish a precise link between the relevant case of the eTNC and the conjectures of Mazur and Tate that sheds new light onto their relationship. With `one inclusion' in the eTNC at a prime satisfying Hypothesis \ref{hyp} recently proved in work of Burns and the first author \cite{BB} via an enhancement of the general theory of Euler systems and Kato's Euler system (from \cite{Kato04}), this new connection leads directly to Theorems~\ref{mazur--tate main result 1} and \ref{mazur--tate main result 2}.\\
To explain this strategy in a little more detail, we write $z^\mathrm{Kato} = (z^\mathrm{Kato}_m)_{m \in \N}$ for the collection of cohomology classes constructed by Kato in \cite{Kato04} (and adapted to our needs in Theorem~\ref{kato euler system}). Here each class $z_m^\mathrm{Kato}$ belongs to $H^1 (\cO_{F_m} [1 / Nmp], \mathrm{T}_p E)$ and is linked to the values of the twisted Hasse--Weil $L$-series at $s = 1$ via Kato's explicit reciprocity law. This reciprocity law can be reformulated (cf.\@ Theorem \ref{local points main result}) in such a way that there is a suitable local cohomology class $Q_m$ in $\bigoplus_{v \mid p} H^1 (F_{m, v}, \mathrm{V}_p E)$ with the property that
\[
\cP_m ( z^\mathrm{Kato}_m, Q_m) = \theta_m^\MT.
\]
Here $\cP_m (\cdot, \cdot)$ is induced by the cup product pairing on $\bigoplus_{v \mid p} H^1 (F_{m, v}, \mathrm{V}_p E)$ (see \S\,\ref{local duality section} for a precise definition).\\
The local class $Q_m$ has been given a very explicit description by Otsuki in \cite{Otsuki} (by building on earlier work of Kurihara \cite{Kurihara02}) that, amongst other things, enables us to control the denominators of $Q_m$ under condition \ref{hyp}\,(ii). More precisely, we use Honda theory to construct a suitable analogue of the `Artin--Hasse exponential' and show that $Q_m$ can be decomposed as the value of this exponential times the inverses of the relevant Euler factors $\Eul_\ell (\tilde \sigma_\ell)$, and certain elements $\nu_m^{(\ell)}$ of the group ring $\Z_p [G_m]$ of $G_m \coloneqq \gal{F_m}{\Q}$. The elements $\nu_m^{(\ell)}$ each encode the contribution of a prime $\ell \neq p$ towards Otsuki's construction and are amenable to explicit calculations. In the case of a split-multiplicative prime $\ell$, they are also given a cohomological interpretation in \S\,\ref{multiplicative group section} in the following way. For big enough $m$, one can define a canonical map
\[
\vartheta_m^{(\ell)} \: \Det_{\Z_p [G_m]} \big ( {\bigoplus}_{v \mid \ell} \mathrm{R}\Gamma (F_{m, v}, \Z_p (1)) \big)^{-1} \to \Q_p [G_m]
\]
such that $\nu^{(\ell)}_m \Eul_\ell (\tilde \sigma_\ell)^{-1}$ belongs to the image of this map. 
This approach is very much in the spirit of the eTNC since its relevant component implies that a similarly defined map,
\[
\Theta_m \: \Det_{\Z_p [G_m]} ( \mathrm{R}\Gamma (\cO_{F_m} [1 / Nmp], \mathrm{T}_p E))^{-1} \to H^1 (\cO_{F_m} [1 / Nmp], \mathrm{V}_p E),
\]
sends a $\Z_p [G_m]$-basis of $\Det_{\Z_p [G_m]} ( \mathrm{R}\Gamma (\cO_{F_m} [1 / Nmp], \mathrm{T}_p E))^{-1}$ to $z_m^\mathrm{Kato}$. In this direction, the equivariant theory of Euler systems, initiated by Burns, Sakamoto, and Sano in \cite{bss} and extended by Burns and the first author in \cite{BB}, can be used to prove that $z^\mathrm{Kato}_m$ belongs to the image of $\Theta_m$ if Hypothesis \ref{hyp} is valid.\\
To make this latter statement more explicit, one would ideally replace the \'etale cohomology complex $\mathrm{R}\Gamma (\cO_{F_m} [1 / Nmp], \mathrm{T}_p E)$ with the `finite-support cohomology' complex of Bloch and Kato, the cohomology groups of which are classical objects such as $\Sel_{p, E / K}^\vee$. This is also the strategy used in \cite{kings-bsd, venjakob, BurungaleFlach} to relate the Tamagawa Number Conjecture with the Birch--Swinnerton-Dyer conjecture. 
However, a significant obstacle arises when attempting to extend these arguments to the equivariant setting: The finite-support cohomology complex is rarely perfect as a complex of $\Z_p [G_m]$-modules, making it unavailable for equivariant computations.
 To overcome this, we carefully construct certain auxiliary Selmer complexes in \S\,\ref{approximation complexes section} that are perfect and sufficiently approximate the finite-support cohomology complex. 
By applying purely algebraic results of the nature proved by Burns and Sano in \cite[App.]{sbA} and extended in Appendix \ref{algebra appendix}, one may then prove rather generally that one has the inclusion
\begin{equation} \label{inclusion for Thm 1}
{\prod}_{\ell \mid m} ( \nu^{(\ell)}_m \Eul_\ell (\tilde \sigma_\ell))^\# \cdot \cP_m ( \Theta_m (a), Q_m) \in \Fitt^0_{\Z_p [G_m]} ( \Sel_{p, E / F_m}^\vee) 
\end{equation}
for every $a \in \Det_{\Z_p [G_m]} ( \mathrm{R}\Gamma (\cO_{F_m} [1 / Nmp], \mathrm{T}_p E))^{-1}$. This, when combined with the aforementioned result on the eTNC, then directly leads to Theorem \ref{mazur--tate main result 1}\,(b). 
\\
To prove Theorems \ref{Mazur--Tate conj 1}\,(a) and \ref{Mazur--Tate conj 2}, we `combine' the maps $\Theta_m$ and $\vartheta_m^{(\ell)}$. That is to say, we define a certain Nekov\'a\v{r}--Selmer complex $\SC^\bullet_m$ with the local condition at a split-multiplicative prime $\ell$ given by $\bigoplus_{v \mid \ell} \mathrm{R}\Gamma (F_{m, v}, \Z_p (1))$ such that the left hand side of (\ref{inclusion for Thm 1}) belongs to the image of a certain map
\[
\Det_{\Z_p [G_m]} ( \SC^\bullet_m)^{-1} \to \Z_p [G_m]. 
\]
In the context of Iwasawa theory, Nekov\v{a}\'r has previously observed in \cite[\S\,0.10]{NekovarSelmerComplexes} that a complex of this kind detects the presence of trivial zeroes.\\ 
Theorem \ref{mazur--tate main result 2} is then obtained via a calculation of `Bockstein morphisms' attached to the complex $\SC^\bullet_m$. 
These calculations are in many ways parallel to those previously performed in the context of the multiplicative group by Burns \cite[\S\,10]{burns07} and Burns--Kurihara--Sano \cite[\S\,5]{bks}.
The definition of Bockstein morphisms is directly inspired by the `algebraic height pairings' introduced by Nekov\v{a}\'r in \cite[\S\,11]{NekovarSelmerComplexes}, and we discuss their general formalism in \S\,\ref{bockstein section}.\\
It might be worth noting that, to date, the authors have found such a cohomological interpretation of $\nu_m^{(\ell)} \Eul_\ell (\tilde \sigma_\ell)^{-1}$ only for primes $\ell$ which are of split-multiplicative reduction for $E$. In the absence of a similar interpretation for the remaining primes $\ell$ dividing $Nm p$, we have to impose the condition $\ell \in C^{(p)}_\times (L)$ in Theorem \ref{mazur--tate main result 2} in order to ensure the associated Euler factors are invertible in $\Z_p [G_L]$ (cf.\@ Lemma \ref{invertible Euler factors lemma}). It would therefore be highly desirable to find a uniform cohomological intepretation of the constructions made by Otsuki in \cite{Otsuki}, likely within the framework of Kato's `local $\varepsilon$-constant conjecture' \cite{Kato-local-epsilon, FukayaKato} (see also Remark~\ref{epsilon constant rk} in this direction).

\subsection{General notation} \label{notation section}
For the convenience of the reader, we collect some general notation that we use throughout the article.
\smallskip\\
\textit{Algebra} Given an abelian group $A$, we write $A_\tor \coloneqq \bigcup_{n \in \N} A [n]$ for its torsion subgroup, and $A_\tf \coloneqq A / A_\tor$ for its torsionfree quotient. For any ring $R$, we then write $R [A] \coloneqq \bigoplus_{a \in A} R$ for the group ring of $A$ over $R$ and denote by $x^\#$ the image of an element $x \in R [A]$ under the involution $\# \: R [A] \to R [A]$ that is defined by $R$-linear extension of the rule $a \mapsto a^{-1}$ for all $a \in A$.
If $M$ is an $R [A]$-module, then $M^\#$ will denote $M$ with $A$-action given by $a \cdot m \coloneqq a^\# \cdot m$.\\
The completed group ring of $A$ is denoted as $R \llbracket A \rrbracket \coloneqq \varprojlim_{U \subseteq A} R [A / U]$ with $U$ ranging over all finite-index subgroups of $A$. Given a subgroup $U$ of $A$, we write $I_R (U) \coloneqq \ker \{ R \llbracket U \rrbracket \to R\}$ for its absolute and $I_{R, U} \coloneqq I_R (U) R \llbracket A \rrbracket = \ker \{ R \llbracket A \rrbracket \to R \llbracket A / U \rrbracket \}$ for its relative augmentation ideal. (If the coefficient ring is clear from context, then we suppress the subscript $R$.)
\\
We write $M^\vee \coloneqq \Hom_\Z (M, \Q  / \Z)$ for the Pontryagin dual of a $\Z$-module $M$. If $M$ is a $\Z [A]$-module, this is endowed with the contragredient $A$-action. Similarly, the $R$-linear dual $\Hom_R (M, R)$ of an $R [A]$-module is given the contragredient action. If $A$ is a finite group, then one has the isomorphism
\begin{equation} \label{dual sharp isomorphism}
\Hom_R (M, R)^\# \xrightarrow{\simeq} \Hom_{R [A]} (M, R [A]), \quad f \mapsto \big \{ m \mapsto {\sum}_{a \in A} f (a m) a^{-1} \big \}
\end{equation}
and we will write $M^\ast$ to mean either the $R$-linear or $R[A]$-linear dual of $M$.
Furthermore, for a fixed prime $p$ (that will always be clear from context), we let $\widehat{A} \coloneqq \varprojlim_{n \in \N} (A / p^n A)$ (or $A^\wedge$ where notationally more convenient)
denote the $p$-adic completion of $A$. The $p$-primary component of $A$ is denoted $A [p^\infty] \coloneqq \bigcup_{n \in \N} A [p^n]$.\\
For any ring commtuative ring $R$, we write $D (R)$ for the derived category of $R$-modules. The right-derived functor of a functor $\mathcal{F}$ is denoted $\mathrm{R}\mathcal{F}$ (for example, we use the notation $\RHom_R ( -, R)$). Left-derived tensor products are denoted $\otimes^\mathbb{L}$.
We will use the determinant functor $\Det_R (-)$ of Knudsen--Mumford \cite{KnudsenMumford} (see also \S\,\ref{algebra appendix section 1}).
\smallskip\\ 
\textit{Field arithmetic}  Fix an algebraic closure $\overline{\Q}$ along with an embedding $\iota \: \overline{\Q} \hookrightarrow \C$. For every natural number $m \in \N \coloneqq \Z_{> 0}$ we then set $\zeta_m \coloneqq \iota^{-1} ( e^{2 \pi i / m})$. We also write $F_m \coloneqq \Q (\zeta_m)$ for the $m$-th cyclotomic field, set $G_m \coloneqq \gal{\Q (\zeta_m)}{\Q}$, and recall that one has an isomorphism
\[
( \Z / m \Z)^\times \to G_m, \quad a \mapsto \sigma_a 
\]
with $\sigma_a$ defined by sending $\zeta_m$ to $\zeta_m^a$. For every prime number $\ell$, we also write $\Frob_\ell$ for a lift of the (arithmetic) Frobenius automorphism at $\ell$ to $\mathscr{G}_\Q \coloneqq \gal{\overline{\Q}}{\Q}$.\\
For every finite Galois extension $K$ of $\Q$ we set $G_K \coloneqq \gal{K}{\Q}$. Given another finite Galois extension $L$ of $\Q$ that contains $K$, we then write $\pi_{L / K} \: \C [G_L] \to \C [G_K]$ for the natural epimorphism induced by the restriction map $G_L \to G_K$.\\
Given a profinite group $\mathscr{G}$ and a finitely generated $\cR$-module $M$ endowed with a continuous action of $\mathscr{G}$, we write $\mathscr{C}^\bullet (\mathscr{G}, M)$ for the associated complex of continuous cochains and $\mathrm{R}\Gamma (\mathscr{G}, M)$ for the object of $D (\cR)$ defined by $\mathscr{C}^\bullet (\mathscr{G}, M)$. If $F$ is a field with absolute Galois group $\mathscr{G}_F \coloneqq \gal{\overline{F}}{F}$, then we abbreviate this to $\mathrm{R}\Gamma (F, M) \coloneqq \mathrm{R}\Gamma (\mathscr{G}_F, M)$. If $F$ is a number field and the action of $\mathscr{G}_F$ on $M$ is unramified outside a finite set $U$ of places of $F$, then $M$ is naturally a module for $\mathscr{G}_{F, U} \coloneqq \gal{F^U}{F}$ with $F^U \subseteq \overline{F}$ the maximal extension unramified outside $S$ and we write $\mathrm{R}\Gamma (\cO_{F, U}, M) \coloneqq \mathrm{R}\Gamma (\mathscr{G}_{F, U}, M)$. The asscociated cohomology groups will be denoted $H^i (F, M) \coloneqq H^i (\mathscr{G}_F, M)$ and $H^i (\cO_{F, U}, M) \coloneqq H^i (\mathscr{G}_{F, U}, M)$ for every $i \in \Z$.\smallskip \\
\textit{Elliptic curves}
Let $E$ be an an elliptic curve $E$ defined over $\Q$ and of conductor $N = N_E$, which will be fixed throughout the article. 
We also fix a global minimal Weierstra{\ss}  equation for $E$ and write $\omega_E$ for the corresponding N\'eron differential. We will often regard $E$ as an elliptic curve over a finite extension $F$ of $\Q_\ell$ for some prime number $\ell$, and write $\widehat{E}$ for the formal group of $E$ (usually with respect to $\omega_E$). The formal logarithm and formal exponential map of $\widehat{E}$ are denoted as $\log_{\widehat{E}}$ and $\exp_{\widehat{E}}$, respectively. The reduction of $E$ modulo the maximal ideal of $F$ (with respect to a Weierstra{\ss} equation that is minimal over $F$) will be written as $\widetilde E$. We also write $E_0 (F)$ and $E_1 (F)$ for the subgroups of $E (F)$ comprising points that, over the residue field $\mathbb{F}$ of $F$, reduce to a non-singular point and to the identity of $\widetilde E (\mathbb{F})$, respectively. We also use the semi-local variants $E_i (K_\ell) \coloneqq \bigoplus_{v \mid \ell} E_i (K_v)$ for $i \in \{ \emptyset, 0, 1\}$ if $K$ is a number field and $\ell$ a prime number.\\
For a prime number $p$, assumed to be odd in this article, we denote the  $p$-adic Tate modules of $E$ as
\[
\mathrm{T}_p E \coloneqq {\varprojlim}_{n \in \N} E [p^n] 
\quad \text{ and } \quad 
\mathrm{V}_p E \coloneqq \Q_p \otimes_{\Z_p} \mathrm{T}_p E.
\]
These are endowed with a natural action of $\mathscr{G}_\Q$. 
Given an abelian field $K$, we often also write $T_{K / \Q}$ for the induced representation $\mathrm{Ind}_{\mathscr{G}_\Q}^{\mathscr{G}_K} (\mathrm{T}_p E) = (\mathrm{T}_p E) \otimes_{\Z_p} \Z_p [G]$ on which $\mathscr{G}_\Q$ acts by $\sigma \cdot ( a \otimes b) \coloneqq (\sigma a) \otimes (\overline{\sigma}^{-1} b)$. Here $\overline{\sigma}$ is the image of $\sigma \in \mathscr{G}_\Q$ in $G$.\\
We let $\bm{1}_S (x)$ denote the indicator function of a set $S$. If $S$ is the set of prime numbers not dividing an integer $N$, we write simply $\bm{1}_N (x)$. 

\section{Local points and Mazur--Tate elements}

In this section we define the modular elements of Mazur and Tate, and relate them to Kato's Euler system. To do this, we follow the approach of Kurihara \cite{Kurihara02}, as further developed by Kobayashi \cite{Kobayashi03, Kobayashi06} and Otsuki \cite{Otsuki}, to construct useful local points. After some preliminaries, the main result of this section is stated as Theorem \ref{local points main result}.

\subsection{Modular symbols}
\label{s:Modular symbols and Mazur--Tate elements}

Fix a minimal Weierstrass model of $E$ over $\Z$ and write $\omega_E$ for the corresponding N\'eron differential. We also fix generators $\gamma^+$ and $\gamma^-$ of the subgroups $H_1 (E (\C), \Z)^+$ and $H_1 (E (\C), \Z)^-$ of $H_1 (E (\C), \Z)$ on which complex conjugation acts by $+1$ and $-1$, respectively. Write $c_\infty \in \{1, 2\}$ for the number of connected components of $E (\R)$, and define periods of $E$ by setting
\[
\Omega^+ = \Omega^+_{\omega, \gamma} \coloneqq  \int_{E (\R)} \mid \omega_E \mid = c_\infty \cdot \int_{\gamma^+} \omega_E 
\quad \text{ and } \quad 
\Omega^- = \Omega^-_{\omega, \gamma} \coloneqq  \int_{\gamma^-} \omega_E. 
\]
We assume $\gamma^+$ and $\gamma^-$ are chosen in such a way that $\Omega^+ > 0$ and $\Omega^- \in i \R_{> 0}$. 

\begin{rk}
Artin formalism suggests that the above normalisation of periods, which is consistent with \cite{wiersemawuthrich}, is best suited to the study of special values of $L$-series. We note, however, that our convention slightly differs from that chosen by Mazur and Tate in \cite{MT87}. To be more precise, Mazur and Tate use the periods $\frac12 \Omega^+$ and $\frac{c_\infty}{2} \Omega^-$. Since the difference only concerns a possible factor of $2$, this will be irrelevant to our main results.   
\end{rk}

By the modularity theorem \cite{BCDT01} there is a normalised newform $f$ of weight 2 associated to the isogeny class of $E$, which we use to define the map
\[
\lambda_f \: \Q \to \C, 
\quad a \mapsto 2 \pi \int_{0}^\infty f (a + i \tau) \mathrm{d}\tau.
\]
In addition, we define maps $[\cdot]^+_E \: \Q \to \R$ and $[\cdot]^-_E \: \Q \to \R$ 
by means of
\[
\lambda_f (a) = [a]^+_E \Omega^+ + [a]^-_E \Omega^-
\quad \text{ for all } a \in \Q. 
\]

\begin{definition}
For every abelian number field $K$ of conductor $m = m_K$, we define the `Mazur--Tate element' 
\[
\theta^\mathrm{MT}_K \coloneqq \pi_{F_m / K} \big ( \sum_{a \in (\Z / m \Z)^\times} 
( \msym{a}{m}^+_E + \msym{a}{m}^-_E ) \sigma_a \big)
\quad \in \R [G_K]. 
\]
If $K  = F_m$, then we abbreviate this to $\theta_m^\mathrm{MT} \coloneqq \theta^\mathrm{MT}_{F_m}$.
\end{definition}

\begin{rk}
\begin{liste}
    \item The Manin--Drinfeld theorem \cite{Manin72,Drinfeld73} implies that the modular symbols $[\cdot]^\pm$ are rational-valued, hence $\theta^\mathrm{MT}_K$ belongs to $\Q [G_K]$. In addition, the Mazur--Tate elements have nice integrality properties which are discussed in detail in Appendix~\ref{integrality Mazur Tate appendix}.
    \item Write $F_m^+$ for the maximal real subfield of $F_m$. Then $\frac12 \theta^\mathrm{MT}_{F_m^+}$ coincides with the `modular element' defined by Mazur and Tate in \cite[(1.2)]{MT87}. This follows from the fact that $\lambda ( \frac{- a}{m}) = \overline{\lambda ( \frac{a}{m})}$, (cf.\@ \cite[Lem.\@ 5]{wiersemawuthrich}).
    \item Birch's formula (see \cite[(8.6)]{MTT84}) implies that, for every primitive Dirichlet character $\chi \: G_m \to \C^\times$ of conductor $m$, the Mazur--Tate element is related to $L$-values via the interpolation property
    \[
    \chi (\theta_m^\mathrm{MT}) = \frac{G(\chi) \cdot L (E, \chi^{-1}, 1)}{\Omega^{\epsilon (\chi)}}
    \]
    with the (primitive) `Gauss sum' $G (\chi) \coloneqq \sum_{a \in (\Z / m \Z)^\times} \chi (\sigma_a) e^{2 \pi i a / m} \in \C$ and $\epsilon (\chi) \in \{ \pm \}$ is the sign of $\chi ( \sigma_{-1}) \in \{ \pm 1 \}$.
    \end{liste}
\end{rk}

The Mazur--Tate elements satisfy the following norm relations. 

\begin{prop}[Mazur--Tate]\label{p: MT norm relns}
    For every $m \in \N$ and prime number $\ell$ one has that 
    \[ 
    \pi_{m \ell / m} ( \theta^{\mathrm{MT}}_{m \ell}) =
    \begin{cases}
        (a_\ell - \bm{1}_N (\ell) \sigma_\ell^{-1} - \sigma_\ell) \cdot \theta^\mathrm{MT}_{m}
        \quad & \text{ if } \ell \nmid m, \\ 
        a_\ell \theta^\mathrm{MT}_{m} - \bm{1}_N (\ell) \NN_{F_m / F_{m / \ell}} \theta^\mathrm{MT}_{{m / \ell}} 
        & \text{ if } \ell \mid m.
    \end{cases}
    \]
\end{prop}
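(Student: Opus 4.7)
The plan is to compare coefficients of $\sigma_a$ on both sides for each $a \in (\Z/m\Z)^\times$, and so reduce each case to an identity among values of the period integral $\lambda_f$. By definition, the coefficient of $\sigma_a$ in $\pi_{m\ell/m}(\theta^{\MT}_{m\ell})$ equals $\sum_{b}(\msym{b}{m\ell}^+_E + \msym{b}{m\ell}^-_E)$, where $b$ runs over the fibre of the reduction map $(\Z/m\ell\Z)^\times \twoheadrightarrow (\Z/m\Z)^\times$ above $a$. Since $\lambda_f = [\cdot]^+_E \Omega^+ + [\cdot]^-_E \Omega^-$ with $\Omega^+ \in \R_{>0}$ and $\Omega^- \in i\R_{>0}$, any rational-coefficient identity between values of $\lambda_f$ descends (by separating real and imaginary parts and then adding) to the analogous identity for $[\cdot]^+_E + [\cdot]^-_E$, so it is enough to establish the desired identities at the level of $\lambda_f$.

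The key input is the modular-symbol version of the Hecke relation. Starting from $T_\ell f = a_\ell f$ (when $\ell \nmid N$) or $U_\ell f = a_\ell f$ (when $\ell \mid N$), a direct computation with the usual matrix coset representatives for $T_\ell$ (resp.\ $U_\ell$), followed by termwise integration along the vertical path above $a/m$, produces the identity
\begin{equation*}
\sum_{j=0}^{\ell-1} \lambda_f\bigl((a+jm)/(m\ell)\bigr) = a_\ell \, \lambda_f(a/m) - \bm{1}_N(\ell)\, \lambda_f(\ell a/m),
\end{equation*}
which I shall refer to as the Hecke identity below.

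Suppose first that $\ell \mid m$. Then the $\ell$ integers $a + jm$ with $0 \le j < \ell$ are all coprime to $m\ell$, and so exhaust the fibre above $a$. Consequently, the left-hand side of the Hecke identity is the $\sigma_a$-coefficient of $\pi_{m\ell/m}(\theta^{\MT}_{m\ell})$. Upon noting that $\lambda_f(\ell a/m) = \lambda_f(a/(m/\ell))$ is the $\sigma_a$-coefficient of $\NN_{F_m/F_{m/\ell}} \theta^{\MT}_{m/\ell}$ (since the norm map replicates the coefficient of a class modulo $m/\ell$ across its fibre in $G_m$), the right-hand side matches the $\sigma_a$-coefficient of $a_\ell \theta^{\MT}_m - \bm{1}_N(\ell)\, \NN_{F_m/F_{m/\ell}} \theta^{\MT}_{m/\ell}$, as claimed.

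Suppose instead that $\ell \nmid m$. Then precisely one value $j_0 \in \{0,\dots,\ell-1\}$ satisfies $\ell \mid a + j_0 m$; writing $a + j_0 m = \ell c$ one finds $c \equiv \ell^{-1} a \pmod m$. Subtracting this omitted term $\lambda_f(c/m) = \lambda_f(\ell^{-1} a/m)$ from the Hecke identity shows that the $\sigma_a$-coefficient of $\pi_{m\ell/m}(\theta^{\MT}_{m\ell})$ equals
$a_\ell \lambda_f(a/m) - \bm{1}_N(\ell)\, \lambda_f(\ell a/m) - \lambda_f(\ell^{-1} a/m)$. This coincides with the $\sigma_a$-coefficient of $(a_\ell - \bm{1}_N(\ell) \sigma_\ell^{-1} - \sigma_\ell) \theta^{\MT}_m$, upon observing that left multiplication by $\sigma_\ell^{\pm 1}$ transports the $\sigma_{\ell^{\mp 1} a}$-coefficient of an element of $\R[G_m]$ into the $\sigma_a$-position. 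The only delicate point in the approach is to pin down the normalisation of the Hecke identity in a form consistent with the conventions for $\lambda_f$ adopted in \S\ref{s:Modular symbols and Mazur--Tate elements}; the remainder of the argument is purely elementary bookkeeping.
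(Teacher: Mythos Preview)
Your argument is correct and is precisely the standard derivation from the Hecke eigenvalue relation for modular symbols; the paper does not spell this out but simply cites \cite[(4.2)]{MTT84} and \cite[(1.3)]{MT87}, where the same identity $\sum_{j=0}^{\ell-1}\lambda_f((a+jm)/m\ell)=a_\ell\lambda_f(a/m)-\bm{1}_N(\ell)\lambda_f(\ell a/m)$ is established and the same fibre bookkeeping is carried out. In other words, you have supplied the details behind the citation rather than taken a different route.
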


\begin{proof}
    This follows from \cite[(4.2)]{MTT84}, see also \cite[(1.3) on p.\@ 717]{MT87}.
\end{proof}

\begin{rk} \label{functional equation rk}
Write $D(m) \coloneqq \mathrm{gcd} ( m , N)$. If $\mathrm{gcd} (D (m), \frac{N}{D(m)}) = 1$, then the Mazur--Tate elements satisfy the `functional equation' (see \cite[(1.6.2)]{MT87})
\[
\theta_m^\mathrm{MT} = \epsilon_f \cdot \sigma_{- Q}^{-1} \cdot (\theta_m^\mathrm{MT})^\#,
\]
where $Q \coloneqq N/D(m)$ and $\epsilon_f \in \{ \pm 1 \}$. In this case, therefore, $\theta_m^\mathrm{MT}$ and $(\theta_m^\mathrm{MT})^\#$ generate the same $\Z [G_m]$-submodule of $\Q [G_m]$. The proof of this functional equation follows from the corresponding result for modular symbols (see \cite[\S\,6 Prop.]{MTT84}), and is given, in the case $m$ and $N$ are coprime, in \cite[Prop.\@ 5.16]{Ota}. The proof for general $m$ and $N$ is identical.   
\end{rk}

\subsection{Local Tate duality} \label{local duality section}

For every finite place $v$ of $K$ we write $E (K_v)^\wedge \coloneqq \varprojlim_{n \in \N} (E (K_v) / p^n E (K_v))$ for the $p$-adic completion of the group of $K_v$-rational points $E (K_v)$ of $E$. We denote the local Kummer map as $\kappa^{(v)} \: E (K_v)^\wedge \hookrightarrow H^1 (K_v, \mathrm{T}_p E)$ and, for every prime number $\ell$, denote its semilocal variant as $\kappa^{(\ell)} \: \bigoplus_{v \mid \ell} E (K_v)^{\wedge} \hookrightarrow H^1 (\Q_\ell, T_{K / \Q})$. Following Bloch and Kato \cite{BlochKato}, we define local spaces
\[
H^1_f (\Q_\ell, T_{K / \Q}) \coloneqq \im ( \kappa^{(\ell)}) 
\quad \text{ and } \quad
H^1_{/ f} (\Q_\ell, T_{K / \Q}) \coloneqq \coker (\kappa^{(\ell)}). 
\]
Recall that the Weil pairing induces a canonical isomorphism $(\mathrm{T}_p E)^\ast (1) \cong \mathrm{T}_p E$ and hence a cup product pairing 
\[
H^1 (K_v, \mathrm{T}_p E) \times H^1 (K_v, \mathrm{T}_p E) \cong 
H^1 (K_v, \mathrm{T}_p E) \times H^1 (K_v, (\mathrm{T}_p E)^\ast (1)) \xrightarrow{\cup} H^2 (K_v, \Z_p (1)) \cong \Z_p
\]
for every $p$-adic place $v$ of $K$. Taking the sum over all $p$-adic places of $K$ then gives 
a pairing
\begin{equation} \label{semilocal Tate pairing}
H^1 (\Q_p, T_{K / \Q}) \times H^1 (\Q_p, T_{K / \Q}) \to {\bigoplus}_{v \mid p} \Z_p \xrightarrow{\mathrm{Trace}} \Z_p.
\end{equation}
By \cite[Prop.\@ 3.8]{BlochKato} the space $H^1_f (\Q_p, T_{K / \Q})$ is self-orthogonal under (\ref{semilocal Tate pairing}) and so it induces a pairing
\begin{equation}\label{e: induced pairing}
    (\cdot, \cdot )_{E / K} \: H^1_{ / f} ( \Q_p, T_{K / \Q}) \times H^1_f (\Q_p, T_{K / \Q}) \to \Z_p.
\end{equation}
In particular, every $Q = (Q_v)_{v \mid p}$ in $H^1_f (\Q_p, T_{K / \Q})= \bigoplus_{v \mid p} \im (\kappa^{(v)})$ gives rise to a map
\begin{equation} \label{definition equivariant pairing map}
\cP_{K} (\cdot, Q) \: H^1 (K, \mathrm{T}_p E) \to \Z_p [G], \quad a \mapsto {\sum}_{\sigma \in G} (\mathrm{loc}^{(p)}_{/ f} (\sigma a), Q)_{E/ K} \sigma^{-1},
\end{equation}
where, as before, $G = \gal{K}{\Q} $ and $\mathrm{loc}_{/ f}^{(p)} \coloneqq \oplus_{v \mid p} \mathrm{loc}_{/ f}^{(v)}$ with 
$\mathrm{loc}_{/ f}^{(v)}$ the composite of the restriction map $\mathrm{loc}^{(v)} \: H^1 (K, \mathrm{T}_p E) \to H^1 (K_v, \mathrm{T}_p E)$ and the projection onto $H^1_{/ f} (K_v, \mathrm{T}_p E)$.

\begin{rk}\label{r: extending pairing}
Write $V_{K / \Q} \coloneqq T_{K / \Q} \otimes_{\Z_p} \Q_p$. We note that the pairing $(\cdot, \cdot)_{E / K}$ can be linearly extended to a pairing on $H^1_{ / f} ( \Q_p, V_{K / \Q}) \times H^1_f (\Q_p, V_{K / \Q})$. As a consequence, for every $Q \in H^1_f (\Q_p, V_{K / \Q})$ we also obtain a map $H^1 (K, \mathrm{V}_p E) \to \Q_p [G]$ that we denote by $\cP_K (\cdot, Q)$ as well. Since the target of (\ref{definition equivariant pairing map}) is $\Z_p$-torsion free, it factors through the natural map $H^1 (K, \mathrm{T}_p E) \to H^1 (K, \mathrm{V}_p E)$ and so we feel this slight abuse of notation is not unjustified.
\end{rk}

Write 
\begin{equation}\label{e: dual exp map Kato}
    \exp^\ast_{K_v} \: H^1_{/f} (K_v, \mathrm{V}_p E) \xrightarrow{\simeq} \mathrm{Fil}^0 D_{\dR, K_v} (\mathrm{V}_p E) 
\end{equation}
for the `dual exponential map' that is defined by Kato in \cite[Ch.\@ II, \S\,1.2.4]{kato93}
and define the composite map
\begin{align*}
 \exp^\ast_{\omega_E} \:
{\bigoplus}_{v \mid p} H^1_{/f} (K_v, \mathrm{V}_p E) \xrightarrow{\oplus_{v \mid p} \exp^\ast_{K_v}}
{\bigoplus}_{v \mid p}  \mathrm{Fil}^0 D_{\dR, K_v} (\mathrm{V}_p E) & \xrightarrow{\simeq} 
(\Q_p\otimes_\Q K) \otimes_\Q H^0 (E, \Omega^1_{E / \Q}) \\
     &  \xrightarrow{\omega_E \mapsto 1} \Q_p \otimes_\Q K,
\end{align*}
where the second arrow is the comparison isomorphism of $p$-adic Hodge theory.
The following result then
records the basic properties of the pairing $\cP (\cdot, \cdot)$.

\begin{lem} \label{pairing properties}
Fix $Q = (Q_v)_{v \mid p} \in H^1_f (\Q_p, T_{K / \Q})$ and let $a \in H^1 (\cO_{K, S}, \mathrm{T}_p E)$. 
\begin{liste}
\item One has 
\begin{equation} \label{pairing explicit formula}
\cP_K (a, Q) = {\sum}_{\sigma \in G_K} \mathrm{Tr}_{(\Q_p \otimes_\Q K) / \Q_p} \big ( (\log_{\widehat{E}} (Q_v))_{v \mid p} \cdot (\exp^\ast_{\omega_E} \circ \loc_{/f}^{(p)})(\sigma a) \big ) \sigma^{-1}.
\end{equation}
\item For every $x \in \Z_p [G]$ one has $x \cdot \cP_K (a, Q) = \cP_K (x \cdot a, Q) = \cP_{K} (a, x^\# \cdot Q)$.
\item If $L$ is a subfield of $K$ and $H \coloneqq \gal{K}{L}$, then in $\Z_p [G_L]$ one has
\[
\pi_{K / L} \big( \cP_K (a,Q) \big) = \cP_L ( \cores_{K / L}(a), \mathrm{Tr}_{K / L}(Q)).
\]
In particular, if $a$ is fixed by $H$, then 
$\cP_K (a, Q) = \cP_L (a, \mathrm{Tr}_{K / L}(Q)) \cdot \NN_H$ in $\Z_p [G_K]$.
\end{liste}
\end{lem}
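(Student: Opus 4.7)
The plan is to treat the three parts in order. For (a), the key input is the standard Bloch--Kato description of local Tate duality in terms of Kato's dual exponential and the formal group logarithm (cf.\@ \cite{BlochKato, kato93}). Concretely, at each $p$-adic place $v$ of $K$, for $P \in E(K_v)^\wedge \otimes_{\Z_p} \Q_p$ and any $y \in H^1_{/f}(K_v, \mathrm{V}_p E)$, one has the identity
\[
\bigl(y, \kappa^{(v)}(P)\bigr)_{v} = \mathrm{Tr}_{K_v/\Q_p}\bigl(\exp^\ast_{K_v}(y) \cdot \log_{\widehat E}(P)\bigr),
\]
where the right-hand side uses the natural pairing between $\mathrm{Fil}^0 D_{\dR, K_v}(\mathrm{V}_p E)$ and the Lie algebra of $\widehat E$ over $K_v$, together with the identification $\omega_E \mapsto 1$. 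Summing this identity over all $p$-adic places of $K$ converts the direct sum of local traces into $\mathrm{Tr}_{(\Q_p \otimes_\Q K)/\Q_p}$, and inserting the resulting formula into the definition \eqref{definition equivariant pairing map} of $\cP_K$ then yields the explicit formula \eqref{pairing explicit formula}.

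For (b), both equalities are formal consequences of a change of variables in the defining sum. The identity $\tau \cdot \cP_K(a, Q) = \cP_K(\tau a, Q)$ for $\tau \in G$ follows at once by substituting $\mu = \sigma \tau$ and using that $\loc^{(p)}_{/f}$ is $G$-equivariant. The identity $\cP_K(\tau a, Q) = \cP_K(a, \tau^\# Q)$ additionally relies on the Galois invariance $(\tau y, \tau z)_{E/K} = (y, z)_{E/K}$ of the semilocal pairing \eqref{e: induced pairing}, which is immediate from the fact that $G$ permutes the places of $K$ above $p$ while each local cup product pairing is itself Galois-equivariant. The general case of $x \in \Z_p[G]$ follows by $\Z_p$-linearity.

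Part (c) is the central point. The approach is to combine the projection formula for local Tate duality in towers,
\[
\bigl(\res_{K_v/L_{v_L}}(y), x\bigr)_{K_v} = \bigl(y, \cores_{K_v/L_{v_L}}(x)\bigr)_{L_{v_L}},
\]
valid at any pair $v \mid v_L \mid p$ of places of $K$ and $L$, with the standard compatibility of global corestriction with localisation, namely $\loc^{(p)}_{/f}(\cores_{K/L}(a)) = \sum_{v_L \mid p}\sum_{v \mid v_L} \cores_{K_v/L_{v_L}}(\loc_{/f}^{(v)}(a))$. A direct manipulation of the defining sum, by partitioning $G_K$ into fibres of $G_K \twoheadrightarrow G_L$ and applying the projection formula place-by-place, then produces $\pi_{K/L}(\cP_K(a, Q)) = \cP_L(\cores_{K/L}(a), \mathrm{Tr}_{K/L}(Q))$. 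For the final assertion, one specialises to $H$-fixed $a$: grouping $\sum_{\sigma \in G_K}$ according to left cosets of $H$ factors out $\NN_H \in \Z_p[G_K]$, and the remaining sum is recognised as an arbitrary lift of $\cP_L(a, \mathrm{Tr}_{K/L}(Q))$ to $\Z_p[G_K]$; the product with $\NN_H$ is independent of this lift.

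The principal technical point is the identification in (a) under the $p$-adic Hodge-theoretic comparison isomorphism, but this is well documented in the literature and requires no new input. The arguments in (b) and (c) are then essentially formal manipulations of the defining sum, using only the Galois equivariance of local cup products and Tate duality in towers.
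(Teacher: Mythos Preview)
Your proposal is correct and follows essentially the same approach as the paper. For part (a) you invoke the same Bloch--Kato identification of the local pairing with $\mathrm{Tr}(\log_{\widehat E}(\cdot)\cdot\exp^\ast(\cdot))$ that the paper extracts from \cite[Ex.\@ 3.11]{BlochKato}; for (b) and (c) the paper simply writes ``direct calculations (cf.\@ \cite[Lem.~3.5]{Otsuki})'', and your change-of-variables and projection-formula arguments are precisely what those direct calculations amount to.
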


\begin{proof}
In light of Remark \ref{r: extending pairing}, we may verify claim (a) after extending scalars to $\Q_p$. 
 Given this, the key point is then that, by \cite[Ex.\@ 3.11]{BlochKato}, one has the 
commutative diagram
    \[
    \begin{tikzcd}[row sep=small]
      \Q_p \otimes_{\Z_p} H^1_{ / f} ( \Q_p, T_{K / \Q}) \arrow{rr}{\exp^\ast_{\omega_E}} \arrow{d}[left]{\simeq}
      & 
      & 
      \Q_p \otimes_\Q K  \arrow{d}[left]{\simeq} \\
      (\Q_p \otimes_{\Z_p} H^1_f (\Q_p, T_{K / \Q}))^\ast \arrow{r}{\simeq} & \bigoplus_{v \mid p} ( \Q_p \otimes_{\Z_p} \widehat{E}(\cM_{K_v}) )^\ast \arrow{r}{\exp^\ast_{\widehat{E}}}   & (\Q_p \otimes_\Q K)^\ast .
    \end{tikzcd}
    \]
    Here the vertical isomorphism on the left is induced by the fact that the pairing $(\cdot, \cdot)_{E / K}$ is perfect after extending scalars to $\Q_p$ (by \cite[Prop.\@ 3.8]{BlochKato}), and the vertical isomorphism on the right is the map $\Q_p \otimes_\Q K \to (\Q_p \otimes_\Q K)^\ast$ that sends $a \mapsto \{ \alpha \mapsto \mathrm{Tr}_{(\Q_p \otimes_\Q K) / \Q_p} (a \alpha) \}$ (and so is induced by the trace pairing). Finally, the map $\exp_{\widehat{E}}^\ast$ is induced by the duals of the exponential maps $\exp_{\widehat{E}}$ of the formal groups $\widehat{E}$ of $E / K_v$ (associated to $\omega_E$).\\
    Now, the commutativity of the diagram implies $(x, \exp_{\widehat{E}} ( \alpha))_{E / K} = \mathrm{Tr}_{(\Q_p \otimes_\Q K) / \Q_p} ( \alpha \exp^\ast_{\omega_E} (x))$ for all $x \in H^1_{/ f} ( \Q_p, V_{K / \Q})$ and $\alpha \in \Q_p \otimes_\Q K$. Claim (a) then follows upon substituting $\alpha = \log_{\widehat{E}} (Q)$. \\ 
    Claims (b) and (c), in turn, are verified by means of direct calculations (cf.\@ \cite[Lem.~3.5]{Otsuki}). 
\end{proof}

\subsection{Kato's Euler system} \label{section statement local points main result}

Recall that, for every prime number $\ell$, the `Euler factor' at $\ell \neq p$ is defined as
\[
\mathrm{Eul}_\ell (X) \coloneqq {\det}_{\Q_p} ( 1 - \Frob_\ell^{-1} X \mid (\mathrm{V}_p E)^{\mathcal{I}_\ell}) \quad \in \Q_p [X],
\]
where $\mathcal{I}_\ell \subseteq \gal{\overline{\Q}}{\Q}$ denotes a choice of inertia subgroup at $\ell$.
Explicitly, one has (see, for example, \cite[Ex.~1.26]{kings-bsd})
\[
\mathrm{Eul}_\ell (X) = 1 - a_\ell \ell^{-1} X + \bm{1}_N (\ell) \ell^{-1} X^2.
\]
For any integer $n$, we let $S_n$ denote the set of prime divisors of $n$. We also write $S_{n\infty} \coloneqq S_n \cup \{\infty\}$ and set
\[
S(K) \coloneqq S_{p N m \infty}
\]
with $m \coloneqq m_K$ the conductor of $K$ and $N \coloneqq N_E$ the conductor of $E$. 

\begin{thm}[Kato] \label{kato euler system}
    Let $\mathscr{F}$ denote the set of finite abelian extensions of $\Q$.
    Then there exists a collection of elements
\[
z^\mathrm{Kato} \coloneqq (z^\mathrm{Kato}_K)_{K \in \mathscr{F}} \in {\prod}_{K \in \mathscr{F}}
H^1 ( \cO_{K, S (K)}, \mathrm{V}_p E)
\]
with the following properties.
\begin{liste}
\item If $L, K \in \mathscr{F}$ with $K \subseteq L$, then one has 
\[
\mathrm{Cores}_{L / K} ( z^\mathrm{Kato}_L) = \big( {\prod}_{\ell \in S (L) \setminus S (K)} \mathrm{Eul}_\ell (\Frob_\ell^{-1}) \big) \cdot z^\mathrm{Kato}_K,
\]
where $\mathrm{Cores}_{L / K} \: H^1 (L, \mathrm{V}_p E) \to H^1 (K, \mathrm{V}_p E)$ denotes the corestriction map. 
\item Set
\[
y_K^\mathrm{Kato} \coloneqq \big( {\prod}_{\ell \in S_N \setminus S_{p {m_K}}} \mathrm{Eul}_\ell ( \Frob_\ell^{-1}) \big)^{-1} \cdot z_K^\mathrm{Kato}.
\]
If $E [p]$ is irreducible as an $\mathbb{F}_p [\mathscr{G}_\Q]$-module
and $E (K)$ contains no point of order $p$, then $c_\infty y_K^\mathrm{Kato}$ (and hence also $c_\infty z^\mathrm{Kato}_K$) belongs to $H^1 (\cO_{K, S (K)}, \mathrm{T}_p E)$. 
\item One has the equality 
\[
({\oplus}_{v \mid p} \exp^\ast_{K_v}) ( z^\mathrm{Kato}_K) = \big( {\sum}_{\chi \in \widehat{G_K}} \frac{L_{S (K)} (E, \chi^{-1}, 1)}{\Omega^{\mathrm{sgn} (\chi)}} e_\chi \big) \otimes \omega_E 
\]
 in $\bigoplus_{v \mid p} \mathrm{Fil}^0_{\dR, K_v} (\mathrm{V}_p E) \cong \Q_p \otimes_\Q H^0 ( E, \Omega^1_{E / K}) = (\Q_p \otimes_\Q K) \otimes_\Q H^0 (E, \Omega^1_{E / \Q})$. 
\end{liste}
\end{thm}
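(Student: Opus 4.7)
The theorem is, in substance, a restatement of Kato's main construction from \cite{Kato04}, and my plan is to indicate how each of the three claims follows from that work, reorganised into the form stated here.

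\textbf{Construction and norm relations (a).} The strategy is to construct the classes $z_K^{\mathrm{Kato}}$ as specialisations of Beilinson--Kato elements in modular curves. Concretely, starting from Siegel units $g_{a,b}$ on $Y(N')$ (for auxiliary $N'$ divisible by $N$ and $m$), one forms cup products of pairs of Siegel units in motivic $K_2$ of affine modular curves, applies the \'etale Chern class map to land in $H^2_{\et}(Y(N'), \Z_p(2))$, and then uses the Hochschild--Serre spectral sequence, together with a cusp specialisation, to produce classes in $H^1(\cO_{\Q(\zeta_m), S(K)}, H^1_{\et}(Y_1(N)_{\overline{\Q}}, \Z_p(1)))$. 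Modularity \cite{BCDT01} identifies an appropriate $f$-isotypic quotient of the coefficient module with $\mathrm{V}_p E$, and $z_K^{\mathrm{Kato}}$ is defined as the image of this class. The distribution relations satisfied by Siegel units translate directly, after projection to the $f$-isotypic component, into the Euler-factor norm relations in (a): at primes $\ell \nmid pmN$ one recovers the polynomial $\mathrm{Eul}_\ell(\Frob_\ell^{-1}) = 1 - a_\ell \ell^{-1} \Frob_\ell^{-1} + \ell^{-1} \Frob_\ell^{-2}$, while at primes $\ell \mid N$ the action of inertia on $\mathrm{V}_p E$ forces the Euler factor to degenerate to its tame form, matching the definition of $\mathrm{Eul}_\ell$ as the characteristic polynomial on $(\mathrm{V}_p E)^{\mathcal{I}_\ell}$.

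\textbf{Integrality (b).} A priori, Kato's construction produces classes in rational cohomology, and the question is when these lift to $H^1(\cO_{K, S(K)}, \mathrm{T}_p E)$. Here the strategy, following Kato \cite[\S 12]{Kato04}, is to exploit the irreducibility of $E[p]$ as an $\mathbb{F}_p[\mathscr{G}_\Q]$-module: under this hypothesis, the $\mathscr{G}_\Q$-stable lattice $\mathrm{T}_p E$ inside $\mathrm{V}_p E$ is unique up to $\Z_p^\times$-scalar, so obstructions to integrality can only come from the choice of Euler factors used in the construction and from $H^0$-contributions. Removing the Euler factors at bad primes (which is precisely the purpose of the modification defining $y_K^{\mathrm{Kato}}$) eliminates the first type, and the hypothesis that $E(K)$ has no $p$-torsion kills the second. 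The factor $c_\infty$ accounts for the normalisation of $\Omega^+$ by $c_\infty = |\pi_0(E(\R))|$, which introduces an extra factor of $2$ when $E(\R)$ is disconnected.

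\textbf{Explicit reciprocity (c).} This is Kato's explicit reciprocity law, and the plan is to apply it after matching conventions. The argument computes $\exp^*_{K_v}(z_K^{\mathrm{Kato}})$ by passing through the $p$-adic Hodge-theoretic comparison isomorphism to the de Rham realisation, where the image of a cup product of Siegel units is identified with a (non-holomorphic) Eisenstein series. Integration against the newform $f$ attached to $E$ via Rankin's method then produces the twisted $L$-values $L_{S(K)}(E, \chi^{-1}, 1)/\Omega^{\mathrm{sgn}(\chi)}$, with the character decomposition coming from the standard idempotent decomposition of $\Q_p[G_K]$ and the removal of Euler factors at $\ell \in S(K)$ being built into the choice of auxiliary data in the construction. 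The orthogonal period $\Omega^{\mathrm{sgn}(\chi)}$ enters via the evaluation of the Petersson product at $f$.

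\textbf{Main obstacle.} The substantive content of the proof lies entirely in (c): Kato's explicit reciprocity law is among the deepest parts of \cite{Kato04}, combining the Rankin--Selberg integral representation of $L(E, \chi, s)$ with the comparison theorems of $p$-adic Hodge theory. In practice, the main work in writing this proof out is therefore the careful bookkeeping of normalisations, matching the choice of periods $\Omega^\pm$, the generators $\gamma^\pm$ of $H_1(E(\C),\Z)^\pm$, the N\'eron differential $\omega_E$, and the precise set $S(K)$ of Euler factors removed, to those under which Kato's formula is proved. Given the careful choices made in \S\,\ref{s:Modular symbols and Mazur--Tate elements}, the verification is then a direct translation.
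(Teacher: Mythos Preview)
Your proposal is correct and takes essentially the same approach as the paper: both treat the theorem as a restatement of Kato's results and defer to \cite{Kato04} for the substance, with the paper giving terse precise citations (to \cite[(8.1.3), 12.6, Thm.\@ 9.7 and 12.5]{Kato04}, together with Delbourgo \cite[App.\@ A]{Delbourgo} and \cite[Thm.~6.1]{Kataoka1} for the integrality in (b)) where you give a more discursive sketch of the construction and the mechanism behind each claim.
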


\begin{proof}
The elements $z^\mathrm{Kato}_K$ are defined by slightly modifying the elements ${}_{c, d} z_m^{(p)} (f, 1, 1, \xi, S (K))$ constructed by Kato in \cite[(8.1.3)]{Kato04}. The integrality property in claim (b) is proved by the argument of \cite[12.6]{Kato04} (following Delbourgo \cite[App.\@ A]{Delbourgo}, see also \cite[Thm.~6.1]{Kataoka1}) and the `explicit reciprocity law' in claim (c) follows from \cite[Thm.\@ 9.7 and 12.5]{Kato04}.
\end{proof}

\begin{rk}
    Suppose the Galois representation $\rho_{E, p} \: \mathscr{G}_\Q \to \Aut (\mathrm{T}_p E) \cong \mathrm{GL}_2 (\Z_p)$ contains $\mathrm{SL}_2 (\Z_p)$. Then $E [p]$ is an irreducible $\mathbb{F}_p [\mathscr{G}_\Q]$-module and $E (K)$ has no non-trivial point of order $p$ for all finite abelian extensions $K$ of $\Q$. 
\end{rk}

We now explain the link between Kato's Euler system, as normalised in Theorem \ref{kato euler system}, and the Mazur--Tate elements $\theta^\MT_K$.\\ 
To prepare for the statement of our result in this direction we define, for $a \in \N$, an automorphism $\tilde \sigma_a$ of $\Z [\zeta_m]$ as follows. 
Set $m_2 \coloneqq \prod_{\ell \mid a} \ell^{\ord_\ell (m)}$ and $m_1 \coloneqq m / m_2$ so that we have a decomposition $m = m_1 m_2$. We then take $\tilde \sigma_a$ to be the image of $\sigma_a$ under the splitting map $G_{m_1} \hookrightarrow G_{m_1} \times G_{m_2} \cong G_{m}$ that sends $g \mapsto (g, \id_{F_{m_2}})$. In particular, $\tilde \sigma_a$ agrees with $\sigma_a$ if $a$ is coprime with $m$, and is the identity map if $m \mid a$ (more generally, if and only if $m_1 = 1$). \\
We further let $\widehat{E}$ denote the formal group of $E / \Q_p$ and, given a finite extension $F /\Q_p$, write $\cM_F$ for the maximal ideal of $F$.\\ 
The main result of this section is as follows.

\begin{thm} \label{local points main result}
 Let $m$ be a natural number coprime to $p$ and fix $n \in \Z_{\geq 0}$.
 We also write $\tau$ for the $p$-adic Teichm\"uller character (regarded as a character $G_{mp^n} \to \Z_p^\times$). Then there exists 
 \[
 \mathfrak{k}_{mp^n} \in \Q_p \otimes_{\Z_p} \widehat{E} (F_{mp^n, p})
 \coloneqq \Q_p \otimes_{\Z_p} {\bigoplus}_{v \mid p} \widehat{E} (\cM_{F_{mp^n, v}})
 \]
 and, for every prime number $\ell \mid m$, an element $\nu^{(\ell)}_{mp^n} \in \Z_p [G_{mp^n}]$ with the following properties.
\begin{liste}
\item One has
\[
\big( {\prod}_{\ell \mid m} \Eul_\ell (\tilde \sigma_\ell)^{-1} \cdot \nu_{mp^n}^{(\ell)} \big)^\# \cdot 
\cP_{F_{mp^n}} ( y^\mathrm{Kato}_{mp^n}, \mathfrak{k}_{mp^n}) = \theta_{mp^n}^\mathrm{MT}.
\]
\item The element $(1 - e_\tau)\mathfrak{k}_{mp^n}$ belongs to (the image of) $\widehat{E} (F_{mp^n, p}) + (p\Eul_p (\sigma_p))^{-1} \widehat{E} (F_{m, p})$. If $p$ is not a multiplicative prime and $a_p \not\equiv 1 \mod p$, then also $\mathfrak{k}_{mp^n}$ belongs to $\widehat{E} (F_{mp^n, p})$.
\item One has the following equalities in $\Q_p \otimes_{\Z_p} \widehat{E} (F_{mp^n, p})$.
\begin{romanliste}
\item $\mathrm{Tr}_{mp^{n + 1}/mp^n} ( \mathfrak{k}_{mp^{n + 1}} )  = 
\begin{cases}
        a_p \mathfrak{k}_{mp^{n}} - \bm{1}_N(p) \mathfrak{k}_{mp^{n-1}} & \text{ if } n \geq 1, \\
        ( a_p  - \bm{1}_N(p)   \sigma_p - \sigma_p^{-1}  ) \mathfrak{k}_m & \text{ if } n =0 .
    \end{cases}$
    \item For every prime number $\ell \neq p$, one has 
    $\mathrm{Tr}_{\ell m p^n/mp^n} ( \mathfrak{k}_{\ell mp^{n}} )  = 
    \begin{cases}
\ell \mathfrak{k}_{mp^n} & \text{ if } \ell \mid m, \\
 - \sigma_\ell^{-1} \mathfrak{k}_{mp^n} & \text{ if } \ell \nmid m.
    \end{cases}
    $
\end{romanliste}
\item Write $\cD^{(\ell)}_{mp^n} \subseteq G_{m p^n}$ for the decomposition group of $\ell$. If any of the following hold:
    \begin{itemize}
        \item $a_\ell =2$, $\ell \nmid N$ and $\ell^2 \nmid m$,
        \item $a_\ell =1$ and $\ell \mid N$,
        \item $a_\ell =0$, $\ell \mid N$ and $\ell^2 \mid m$,
    \end{itemize}  
then $\nu^{(\ell)}_{mp^n}$ belongs to $I_{\cD^{(\ell)}_{mp^n}} \coloneqq \ker \{ \Z_p [G_{m p^n}] \to \Z_p [G_{m p^n} / \cD^{(\ell)}_{mp^n}] \}$. Furthermore,  one has the following congruence modulo $I_{\cD^{(\ell)}_{mp^n}}^2$,
\[
\nu^{(\ell)}_{mp^n} \equiv \begin{cases}
    0 & \text{ if } a_\ell = 2, \ell \nmid N  \text{ and } \ell^2 \nmid m, \\
     \ell^{- (\ord_\ell (m)-1)} ( 1 - \tilde \sigma_\ell) & \text{ if } a_\ell = 1 \text{ and } \ell \mid N, \\
     0 & \text{ if } a_\ell = 0, \ell \mid N  \text{ and } \ell^2 \mid m.
\end{cases} 
\]
\item Each $\nu_{mp^n}^{(\ell)}$ belongs to the ideal of $\Z_p [G_{mp^n}]$ that is generated by $\Eul_\ell (\tilde \sigma_\ell)$ and $\NN_{\mathcal{I}_{mp^n}^{(\ell)}}$, with $\cI^{(\ell)}_{mp^n} \subseteq \cD_{mp^n}^{(\ell)}$ the inertia subgroup at $\ell$. 
\end{liste}
\end{thm}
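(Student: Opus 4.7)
My plan is to combine Kato's explicit reciprocity law (Theorem \ref{kato euler system}\,(c)) with the trace-formula description of $\cP_K$ in Lemma \ref{pairing properties}\,(a) to convert the identity in (a) into an identity of $L$-values. Indeed, for any $Q \in H^1_f(\Q_p, T_{F_{mp^n}/\Q})$, the formula in Lemma \ref{pairing properties}\,(a) expresses $\cP_{F_{mp^n}}(y^{\mathrm{Kato}}_{mp^n}, Q)$ as a trace involving $\log_{\widehat E}(Q)$ and the dual exponential of $y^{\mathrm{Kato}}_{mp^n}$, which in turn is given character-by-character in terms of the twisted truncated $L$-values $L_{S(F_{mp^n})}(E, \chi^{-1}, 1)/\Omega^{\mathrm{sgn}(\chi)}$ by Theorem \ref{kato euler system}\,(c). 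On the other hand, Birch's formula interpolates $\theta^{\mathrm{MT}}_{mp^n}$ at each character $\chi$ by $G(\chi) \cdot L(E, \chi^{-1}, 1)/\Omega^{\mathrm{sgn}(\chi)}$. The task is therefore reduced to finding $\mathfrak{k}_{mp^n}$ whose logarithm, combined with the correction factors $\bigl(\prod_{\ell \mid m} \Eul_\ell(\tilde\sigma_\ell)^{-1}\nu^{(\ell)}_{mp^n}\bigr)^\#$, produces exactly the (primitive) Gauss sums $G(\chi)$ and converts the truncated Euler product over $S(F_{mp^n})$ into the full complete $L$-value at each $\chi$.

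\textbf{Construction and claim (a).} Following Otsuki (which refines Kurihara), I will define $\mathfrak{k}_{mp^n}$ character-by-character as an idempotent-weighted sum of formal exponentials of suitable elements built from $\zeta_{mp^n}$. For each $\chi \in \widehat{G_{mp^n}}$, the coefficient of $e_\chi$ is designed so that $\exp^\ast_{\omega_E}\bigl(y^{\mathrm{Kato}}_{mp^n}\bigr)$ paired with $\log_{\widehat E}(\mathfrak{k}_{mp^n})$ under the trace produces the Gauss-sum factor $G(\chi)$, while the missing Euler factors at primes $\ell \mid m$ (those removed in the normalisation $y^{\mathrm{Kato}}$ from $z^{\mathrm{Kato}}$, which only strips primes in $S_N\setminus S_{pm}$) are absorbed into the definition of $\nu^{(\ell)}_{mp^n}$ together with $\Eul_\ell(\tilde\sigma_\ell)^{-1}$. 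Concretely, for each $\ell \mid m$ one reads $\nu^{(\ell)}_{mp^n}$ off as a product of the appropriate group-ring Gauss-sum contribution (of the form involving $\sigma_\ell$ and inertia elements) times the ratio of the primitive Euler factor to the partial Euler factor, giving an explicit formula amenable to analysis.

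\textbf{Integrality (b) and norm relations (c).} Integrality will follow from Theorem \ref{kato euler system}\,(b) and a direct inspection of the formal exponentials appearing in the construction: the only possible denominator comes from $\Eul_p(\sigma_p)$ acting on the $\tau$-isotypic piece, which is killed by $1 - e_\tau$; and $\Eul_p(\sigma_p)$ is a unit in $\Z_p[G_{mp^n}]$ whenever $p$ is not multiplicative and $a_p \not\equiv 1 \pmod p$, yielding the stronger statement in (b). The norm relations in (c) will be derived by matching the two sides of (a): applying $\pi_{mp^{n+1}/mp^n}$ to the identity in (a), combined with Lemma \ref{pairing properties}\,(c), Kato's distribution relation in Theorem \ref{kato euler system}\,(a) (which gives the Euler factor at $p$ in the $\ell = p$ case and $\Eul_\ell(\Frob_\ell^{-1})$ at $\ell \neq p$), and the Mazur--Tate norm relations of Proposition \ref{p: MT norm relns}, uniquely determines the prescribed trace formulae for $\mathfrak{k}$.

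\textbf{Congruences for $\nu^{(\ell)}$, claims (d) and (e).} Using the explicit formula for $\nu^{(\ell)}_{mp^n}$, I will show membership in $I_{\cD^{(\ell)}_{mp^n}}$ under each of the three listed case hypotheses by evaluating $\nu^{(\ell)}_{mp^n}$ under the natural augmentation $\Z_p[G_{mp^n}] \to \Z_p[G_{mp^n}/\cD^{(\ell)}_{mp^n}]$ and observing that the image can be rewritten in terms of the Euler factor $\Eul_\ell(\Frob_\ell^{-1})$ evaluated at the relevant Frobenius, which vanishes in each of the three cases by the conditions on $a_\ell$ and the ramification of $\ell$ in $F_{mp^n}$. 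The first-order congruence modulo $I_{\cD^{(\ell)}_{mp^n}}^2$ will be obtained by Taylor-expanding the same formula: differentiating once along $\tilde\sigma_\ell$ gives zero when $a_\ell \in \{0, 2\}$ and the claimed $\ell^{-(\ord_\ell(m) - 1)}(1 - \tilde\sigma_\ell)$ when $a_\ell = 1$ with $\ell \mid N$. Finally, claim (e) will follow by factoring $\nu^{(\ell)}_{mp^n}$ through the natural decomposition $\Z_p[\cD^{(\ell)}_{mp^n}] \cong \Z_p[\cI^{(\ell)}_{mp^n}] \otimes \Z_p[\cD^{(\ell)}_{mp^n}/\cI^{(\ell)}_{mp^n}]$ and using that, modulo $\NN_{\cI^{(\ell)}_{mp^n}}$, the construction forces $\nu^{(\ell)}_{mp^n}$ to be a multiple of $\Eul_\ell(\tilde\sigma_\ell)$. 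The main technical obstacle will be the multiplicative case $a_\ell = 1$ in (d): here $\nu^{(\ell)}_{mp^n}$ is \emph{not} in higher powers of the augmentation ideal, and pinning down the precise normalising factor $\ell^{-(\ord_\ell(m)-1)}$ demands careful bookkeeping of the cyclotomic Gauss-sum contributions at $\ell$ and their interaction with the trace compatibilities from (c).
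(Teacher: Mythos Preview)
Your high-level plan for (a) --- expressing $\cP_{F_{mp^n}}(y^{\mathrm{Kato}}_{mp^n}, \mathfrak{k}_{mp^n})$ via Lemma~\ref{pairing properties}(a) and the explicit reciprocity law, then matching with Birch's formula --- is essentially what underlies Otsuki's Proposition~\ref{Otsuki's reciprocity law}, so that part is on track. But the remaining parts of your proposal have real gaps.

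\textbf{Construction and integrality.} You never give $\mathfrak{k}_{mp^n}$ or $\nu^{(\ell)}_{mp^n}$ concretely; saying you will define $\mathfrak{k}_{mp^n}$ ``character-by-character as an idempotent-weighted sum of formal exponentials'' and ``read off'' $\nu^{(\ell)}_{mp^n}$ from Gauss-sum bookkeeping is not enough to prove (b), (d), or (e). The paper defines $\nu^{(\ell)}_{mp^n} = \lambda^{(\ell)}_{\ord_\ell(m)}(\tilde\sigma_\ell)$ via an explicit polynomial built from Otsuki's recursion $c_{i+1}^{(\ell)} = \tfrac{a_\ell}{\ell}c_i^{(\ell)} - \tfrac{\bm{1}_N(\ell)}{\ell}c_{i-1}^{(\ell)}$, and $\mathfrak{k}_{mp^n}$ as the $\log_{\widehat{E}}$-preimage of $\sum_{m_0\mid d\mid m}(\Eul_p(\hat\sigma_p)^{-1}(X))(\zeta_{dp^n})$. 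Without such formulas, (d) and (e) cannot be checked: they are proved by direct recursive computation on the $c_i^{(\ell)}$ (Proposition~\ref{l: lambda mod I^2}) and by inspection of the shape of $\lambda^{(\ell)}_n(X)$, not by abstract augmentation arguments.

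More seriously, your justification of (b) is wrong. Theorem~\ref{kato euler system}(b) concerns integrality of the \emph{global} class $y^{\mathrm{Kato}}_K$ in Galois cohomology; it says nothing about whether the \emph{local point} $\mathfrak{k}_{mp^n}$ lies in $\widehat{E}(F_{mp^n,p})$ rather than $\Q_p\otimes\widehat{E}(F_{mp^n,p})$. The paper's proof of (b) is the technical heart of the section: one must show that certain power series $g_{\chi,d}(X)$ have the same Honda type $p - a_p X + \bm{1}_N(p)X^2$ as $\log_{\widehat{E}}$ (Proposition~\ref{definition g_chi proposition}), so that $h_{\chi,d} = \exp_{\widehat{E}}\circ g_{\chi,d}$ has coefficients in $\cR$ by Honda's theorem. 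This Artin--Hasse type construction is what controls the denominators, and it is absent from your outline.

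\textbf{Norm relations.} Your plan to deduce (c) by applying $\pi_{mp^{n+1}/mp^n}$ to the identity in (a) and invoking Mazur--Tate and Kato norm relations does not determine $\mathrm{Tr}(\mathfrak{k}_{mp^{n+1}})$: the map $Q\mapsto \cP_{F_{mp^n}}(y^{\mathrm{Kato}}_{mp^n}, Q)$ is generally not injective (for instance whenever $L_{S}(E,\chi^{-1},1)=0$ for some $\chi$), so the pairing identity cannot be inverted to recover $\mathfrak{k}$. The paper instead proves (c) directly from the explicit formula for $\log_{\widehat{E}}(\mathfrak{k}_{mp^n})$ using known trace relations for cyclotomic elements (via \cite[Prop.~2.5]{Otsuki}).
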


The proof of this result is given in \S\,\ref{proof local points main result section} after a number of preparations. To end this section, we comment on the hypothesis $a_p \not \equiv 1 \mod p$ that appears in claim (b) of Theorem \ref{local points main result}. 

\begin{rk} \label{anomalous primes rk}
Primes $p$ of good reduction for which $\widetilde{E} (\mathbb{F}_p)$ contains a point of order $p$ (or, equivalently, for which $a_p \coloneqq p + 1 - | \widetilde{E} (\mathbb{F}_p)|$ is congruent to $1 \mod p$) are called `anomalous' by Mazur in \cite{Mazur72}. 
Hasse's bound implies that one has $a_p = 1$ for any anomalous prime $p \geq 7$, and this combines with a result of Serre \cite{serre81} to show that the set of anomalous primes is of density zero.
 It is not known whether an elliptic curve can have infinitely many anomalous primes (although Mazur has conjectured this to be possible and it would follow from conjectures of Hardy--Littlewood, see \cite{Qin}, and Lang--Trotter \cite{LangTrotter76}). Given any set of prime numbers $\mathcal{P}$, one can however construct an elliptic curve $E / \Q$ such that every prime in $\mathcal{P}$ is anomalous for $E$ (see \cite[Lem.\@ 8.19]{Mazur72}).\\
If $E (\Q)$ contains a torsion point, then the set of anomalous primes for $E$ is either empty, consists of  a single element, or else is contained in $\{2, 3, 5\}$ (see \cite[Lem.\@ 8.18]{Mazur72}).
For example, if $E ( \Q)$ contains a point of prime-order $\ell \neq p$ and $E$ has good reduction at $p > 5$, then $a_p \not \equiv 1 \mod p$.
\end{rk}

\subsection{Otsuki's elements}

In this section we first recall important definitions and results from Otsuki's article \cite{Otsuki}, and then further develop certain aspects of Otsuki's theory. This will be crucial to the proof of Theorem \ref{local points main result}.

\subsubsection{Review of Otsuki theory}

For every natural number $a \in \N$, we will use the $\Q$-algebra homomorphism
\[
\hat \sigma_a \: \Q [X] \to \Q [X], \quad \hat \sigma_a (X) \coloneqq X^a.
\]

\begin{lem}[{\cite[Prop.~1.3]{Otsuki}}] \label{basic otsuki properties}
    The following claims are valid for every $m \in \N$ and prime number $\ell$.
    \begin{liste}
        \item $\Eul_\ell (\hat \sigma_\ell)$ defines an invertible $\Q$-algebra endomorphism of $\Q [X]/ (X^m - 1)$. In particular, there is a unique well-defined element $\Eul_\ell (\hat \sigma_\ell)^{-1} \in \mathrm{End}_{\textnormal{$\Q$-alg}} ( \Q [X] / (X^m - 1))$.
        \item If $\ell \nmid m$, then $\Eul_\ell (\hat \sigma_\ell)$ defines an invertible $\Q$-algebra endomorphism of $\Q [X]/ (\Phi_m)$ with $\Phi_m$ the $m$-th cyclotomic polynomial. In other words, $\Eul_\ell (\hat \sigma_\ell)^{-1}$ induces a well-defined element of $\mathrm{End}_\textnormal{$\Q$-alg} ( \Q [X] / (\Phi_m))$.
    \end{liste}
\end{lem}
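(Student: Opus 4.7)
The plan is to verify well-definedness of $\Eul_\ell(\hat\sigma_\ell)$ as a $\Q$-linear endomorphism (automatic once $\hat\sigma_\ell$ itself descends to the relevant quotient) and then to establish invertibility via injectivity on a finite-dimensional $\Q$-vector space. For descent: in (a) one uses that $\hat\sigma_\ell(X^m-1)=X^{\ell m}-1$ is divisible by $X^m-1$; in (b) one instead invokes the standard identity $\Phi_m(X^\ell)=\Phi_m(X)\cdot\Phi_{m\ell}(X)$, valid when $\ell\nmid m$, which shows $\Phi_m\mid\hat\sigma_\ell(\Phi_m)$.

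The central idea is to factor $\Eul_\ell(T)=(1-\alpha T)(1-\beta T)$ over $\overline{\Q}$, possibly with $\beta=0$ in the bad-reduction case, and exploit the bound $|\alpha|,|\beta|\le\ell^{-1/2}<1$ coming from the Hasse--Weil bound $|a_\ell|\le 2\sqrt{\ell}$ for $\ell\nmid N$ and from $|a_\ell|\le 1$ for $\ell\mid N$. For part (a), after base-changing to $\overline{\Q}$ and using the Chinese Remainder Theorem identification $\overline{\Q}[X]/(X^m-1)\cong\prod_{\zeta\in\mu_m}\overline{\Q}$ via $f\mapsto(f(\zeta))_\zeta$, the operator $\hat\sigma_\ell$ becomes the pullback along the $\ell$-th power map $\mu_m\to\mu_m$. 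A hypothetical kernel element $f$ of a factor $1-\gamma\hat\sigma_\ell$ would then satisfy $f(\zeta)=\gamma\cdot f(\zeta^\ell)$ for every $\zeta\in\mu_m$. Since each forward orbit $\zeta,\zeta^\ell,\zeta^{\ell^2},\ldots$ in the finite set $\mu_m$ is eventually periodic (pre-period $s$, period $r\ge 1$), iterating the relation along the periodic tail gives $(1-\gamma^r)f(\zeta^{\ell^s})=0$; as $|\gamma^r|<1$ one has $\gamma^r\ne 1$ and hence $f(\zeta^{\ell^s})=0$, whence $f(\zeta)=\gamma^s f(\zeta^{\ell^s})=0$. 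Thus $f$ vanishes identically and each factor, and therefore their product $\Eul_\ell(\hat\sigma_\ell)$, is injective.

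For part (b), I note that since $\ell\in(\Z/m\Z)^\times$ when $\ell\nmid m$, the induced operator on $\Q[X]/(\Phi_m)\cong\Q(\zeta_m)$ coincides with the Galois automorphism $\sigma_\ell$, which has finite order, so all its eigenvalues on $\overline{\Q}\otimes_\Q\Q[X]/(\Phi_m)$ are roots of unity and thus of absolute value $1$. Since the reciprocal roots $1/\alpha,1/\beta$ of $\Eul_\ell(T)$ have absolute value $\sqrt{\ell}>1$ (respectively $\ell>1$ in the bad-reduction case), no eigenvalue of $\hat\sigma_\ell$ is a root of $\Eul_\ell$, and $\Eul_\ell(\hat\sigma_\ell)$ is invertible. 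Alternatively one can simply re-run the orbit argument of (a), restricted to primitive $m$-th roots of unity.

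The main point of delicacy is in (a): the operator $\hat\sigma_\ell$ itself is generally \emph{not} invertible, because the $\ell$-th power map on $\mu_m$ fails to be surjective whenever $\gcd(\ell,m)>1$ (so $0$ appears among the eigenvalues of $\hat\sigma_\ell$). One therefore cannot argue purely via eigenvalues of $\hat\sigma_\ell$ as in case (b), and it is precisely the combination of the orbit analysis with the strict inequality $|\gamma|<1$ that lets one treat the coprime and non-coprime cases uniformly.
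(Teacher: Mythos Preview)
Your argument is correct. For part (a), the paper simply cites \cite[Prop.~1.3]{Otsuki} for injectivity, whereas you supply an explicit proof: factor $\Eul_\ell(T)=(1-\alpha T)(1-\beta T)$ with $|\alpha|,|\beta|<1$, identify $\overline{\Q}[X]/(X^m-1)$ with $\prod_{\zeta\in\mu_m}\overline{\Q}$, and use eventual periodicity of $\ell$-power orbits together with $|\gamma|^r<1$ to force any kernel element to vanish. This is a clean, self-contained proof of the cited input.

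For part (b) the approaches genuinely diverge. The paper does \emph{not} argue directly on $\Q[X]/(\Phi_m)$; instead it observes that $\hat\sigma_\ell$ (hence $\Eul_\ell(\hat\sigma_\ell)$) preserves the ideal $(\Phi_m)/(X^m-1)$ inside $\Q[X]/(X^m-1)$, then uses injectivity from (a) to conclude that $\Eul_\ell(\hat\sigma_\ell)$ is an automorphism of this finite-dimensional subspace, so its inverse also preserves the subspace, and therefore both descend to the quotient. Your route is more direct: identify $\Q[X]/(\Phi_m)\cong\Q(\zeta_m)$, note $\hat\sigma_\ell$ becomes the Galois automorphism $\sigma_\ell$ of finite order, and use that its eigenvalues over $\overline{\Q}$ are roots of unity while the reciprocal roots of $\Eul_\ell$ have absolute value $\sqrt{\ell}>1$. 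The paper's subspace trick has the advantage of needing no separate computation for (b) once (a) is in hand; your argument is more transparent about \emph{why} invertibility holds and makes the connection to the Galois action explicit.
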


\begin{proof}
    To prove claim (a), it suffices to show that $\Eul_\ell (\hat \sigma_\ell)$ defines an injective endomorphism of the finite-dimensional $\Q$-vector space $\Q [X] / (X^m - 1)$ and this is verified in \cite[Prop.\@ 1.3]{Otsuki}. As for claim (b), it follows from the fact that $\hat \sigma_\ell$ preserves the ideal $(\Phi_m)$ if $\ell \nmid m$ that $\Eul_\ell (\hat \sigma_\ell)$ does the same. Since we know $\Eul_\ell (\hat \sigma_\ell)$ to be injective by claim (a), this shows that $\Eul_\ell (\hat \sigma_\ell)$ restricts to an automorphism of the $\Q$-subvector space $(\Phi_m) / (X^m - 1)$ of $\Q [X] / (X^m - 1)$. As a consequence, $\Eul_\ell (\hat \sigma_\ell)^{-1}$ preserves $(\Phi_m) / (X^m - 1)$. This shows that $\Eul_\ell (\hat \sigma_\ell)$ and $\Eul_\ell (\hat \sigma_\ell)^{-1}$ both descend to elements of $\mathrm{End}_\textnormal{$\Q$-alg} ( \Q [X] / (\Phi_m))$, as required to prove claim (b).
\end{proof}

The endomorphism $\Eul_\ell (\hat \sigma_\ell)^{-1}$ introduced above is rather inexplicit but does satisfy a certain inductive relation that will be useful in computations.
To state this relation, we shall use certain constructions, themselves based on \cite[\S\,2.2]{Kurihara02}, that are made by Otsuki in \cite{Otsuki} and which we now recall. For a prime divisor $\ell$ of $m$ and integer $i\in \Z_{\geq 0}$ we inductively define elements $c_i^{(\ell)} \in \Z[1/\ell]$ as follows. Set $c_0^{(\ell)} \coloneqq 0, \ c_1^{(\ell)} \coloneqq 1$ and, for $i \geq 2$, 
\begin{equation}\label{e: c_i relations}
c_{i+1}^{(\ell)} \coloneqq \frac{a_\ell}{\ell}c_i^{(\ell)} - \frac{\bm{1}_N(\ell)}{\ell} c_{i-1}^{(\ell)} . 
\end{equation}
For $i\in \Z_{\geq 0}$ we also define a polynomial $\widetilde F_\ell^{(i)}(X) \in \Z[1/\ell][X]$ by
\[ \widetilde F_\ell^{(i)}(X) \coloneqq c_{i+1}^{(\ell)} - \frac{\bm{1}_N(\ell)}{\ell} c_i^{(\ell)} X  . \]
By induction on $j \in \N$, one then proves the key relation
\begin{equation} \label{Otsuki relation}
\mathrm{Eul}_\ell (\hat \sigma_\ell)^{-1}  = \sum_{i=0}^{j-1} c_{i+1}^{(l)} \hat \sigma_\ell^i + \widetilde{F}_\ell^{(j)} (\hat \sigma_\ell) \mathrm{Eul}_\ell (\hat \sigma_\ell)^{-1} \hat \sigma_\ell^j ,
\end{equation}
(see \cite[Lem.\@ 2.6]{Otsuki} for details).

\subsubsection{The definition of Otsuki's elements}
\label{section definition nu lambda}

In this section we discuss the canonical local elements defined by Otsuki in \cite[Def.\@ 2.4]{Otsuki}.

\begin{definition} \label{otsuki element}
    For every natural number $m$, we define
    \[
    x_m \coloneqq \Big( \big ( {\prod}_{\ell \mid  mp} \Eul_\ell (\hat \sigma_\ell )^{-1} \big) (X) \Big) (\zeta_m)
    \in F_m.
    \]
\end{definition}

\begin{remark}\label{r: difference in x_m}
    Our definition of $x_m$ differs slightly from that of Otsuki in \cite{Otsuki}. To be precise, if we write $x_m^{\text{Otsuki}}$ for the element denoted as $x'_m$ in \cite[Def.\@ 2.4]{Otsuki}, then
    \[ x_m = \begin{cases}
        x_m^{\text{Otsuki}} & \text{ if } p \mid m, \\
        \Eul_p (\sigma_p )^{-1} x_m^{\text{Otsuki}} & \text{ if } p \nmid m.
    \end{cases} \]
\end{remark}

In the remainder of this section, we will make the element $x_m$ more explicit.
To do this, it is convenient to introduce some further notation. For every prime number $\ell$ and integers $0 \leq j \leq n$, we set $e_{n, 0}^{(\ell)} \coloneqq \ell^{-( n - 1)} \NN_{F_{\ell^n} / \Q}$ and 
$e^{(\ell)}_{n, j} \coloneqq \ell^{ -(n- j)} \NN_{F_{\ell^n} / F_{\ell^{j}}}$. If $m' \in \N$ is coprime with $\ell$, then one has
\begin{equation} \label{cyclotomic relation}
\zeta_{m^\prime \ell^j} = \omega^{(\ell)}_{n, j} \cdot \sum_{i = 0}^n \zeta_{m^\prime \ell^i}
\quad \text{ with } \quad 
\omega^{(\ell)}_{n, j} \coloneqq 
\begin{cases}
(e^{(\ell)}_{n, j} - e^{(\ell)}_{n, j - 1})  \quad & \text{ if } j \geq 2, \\
e_{n,1}^{(\ell)}  & \text{ if } j = 1,\\
    - \tilde \sigma_\ell e_{n,0}^{(\ell)}  & \text{ if } j = 0.
\end{cases}
\end{equation}
Here $\omega^{(\ell)}_{n, j}$ is considered to be an element of $\Z_p [G_{m^\prime \ell^n}]$ via the splitting $G_{m^\prime \ell^n} \cong G_{m^\prime} \times G_{\ell^n}$ induced by the relevant restriction maps.
We may then define an element of $\Z [1 / \ell] [G_{\ell^n}] [X]$ by 
\[
\lambda^{(\ell)}_{n} (X) \coloneqq 
\mathrm{Eul}_\ell (X) \Big ( \sum_{i=0}^{n-1} c_{i+1}^{(l)}  \omega^{(\ell)}_{n, n - i}
\Big) - \widetilde{F}_\ell^{(n)} (X) X e^{(\ell)}_{n,0}. 
\]

\begin{lem} \label{Otsuki calculations lemma 1}
For every prime number $\ell \neq p$ and natural number $n$ 
the polynomial $\lambda^{(\ell)}_{n} (X)$ has the property that, for every $m^\prime \in \N$ with $\ell \nmid m^\prime$, one has the following equality in $F_{\ell^n m'}$.
\[
\big( \Eul_\ell (\hat \sigma_\ell)^{-1} (X) \big) (\zeta_{m^\prime \ell^n}) = \Eul_\ell (\tilde \sigma_\ell)^{-1} \cdot \lambda_{n}^{(\ell)} (\tilde \sigma_\ell) \cdot \sum_{i = 0}^n \zeta_{m^\prime \ell^i}.
\]
\end{lem}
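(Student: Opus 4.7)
The strategy is to apply Otsuki's inductive relation \eqref{Otsuki relation} with $j = n$ to the polynomial $X$, evaluate at $X = \zeta_{m'\ell^n}$, and then use the cyclotomic relation \eqref{cyclotomic relation} to rewrite every term as a $\Z_p[G_{m'\ell^n}]$-multiple of $N \coloneqq \sum_{k=0}^{n} \zeta_{m'\ell^k}$. Since $X^{\ell^i}|_{X=\zeta_{m'\ell^n}} = \zeta_{m'\ell^{n-i}}$ for $0 \leq i \leq n$, the first sum produced by \eqref{Otsuki relation} reads $\sum_{i=0}^{n-1} c_{i+1}^{(\ell)} \zeta_{m'\ell^{n-i}}$; replacing each $\zeta_{m'\ell^{n-i}}$ (for which $n-i \geq 1$) via \eqref{cyclotomic relation} transforms it into $\bigl(\sum_{i=0}^{n-1} c_{i+1}^{(\ell)} \omega^{(\ell)}_{n,n-i}\bigr)\,N$, which matches the $\Eul_\ell(X)$-part of $\lambda^{(\ell)}_n$ after multiplying and dividing by $\Eul_\ell(\tilde\sigma_\ell)$.

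The main obstacle is to identify the remaining summand $\bigl(\widetilde F_\ell^{(n)}(\hat\sigma_\ell)\,\Eul_\ell(\hat\sigma_\ell)^{-1}(X^{\ell^n})\bigr)\big|_{X=\zeta_{m'\ell^n}}$ with a Galois-equivariant expression, since $\hat\sigma_\ell$ is the polynomial substitution $X \mapsto X^\ell$ rather than a genuine element of $G_{m'\ell^n}$. To bridge this, I would observe that $Y \coloneqq X^{\ell^n}$ generates a $\Q$-subalgebra $A \subseteq \Q[X]/(X^{m'\ell^n}-1)$ canonically isomorphic to $\Q[Y]/(Y^{m'}-1)$ and stable under $\hat\sigma_\ell$, with the restricted action corresponding to $Y \mapsto Y^\ell$. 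By Lemma~\ref{basic otsuki properties}~(a) applied to $m'$, the element $\Eul_\ell(\hat\sigma_\ell)^{-1}(X^{\ell^n})$ then lies in $A$ and equals $g(X^{\ell^n})$ with $g(Y) \coloneqq \Eul_\ell(\hat\sigma_\ell)^{-1}(Y) \in \Q[Y]/(Y^{m'}-1)$. Since $\ell \nmid m'$, evaluating at $Y = \zeta_{m'}$ (equivalently, at $X = \zeta_{m'\ell^n}$) identifies the action of $\hat\sigma_\ell$ on $A$ with that of the Galois automorphism $\sigma_\ell$ on $F_{m'}$, which via the splitting $G_{m'} \hookrightarrow G_{m'\ell^n}$ in turn coincides with the action of $\tilde\sigma_\ell$.

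To finish, I would track the same identification through the prefactor $\widetilde F_\ell^{(n)}(\hat\sigma_\ell) = c_{n+1}^{(\ell)} - \bm{1}_N(\ell)\ell^{-1}c_n^{(\ell)}\hat\sigma_\ell$: substituting $X = \zeta_{m'\ell^{n-1}}$ in $g(X^{\ell^n})$ yields $g(\zeta_{m'}^{\ell}) = \sigma_\ell\,g(\zeta_{m'})$, so the entire second summand equals $\widetilde F_\ell^{(n)}(\tilde\sigma_\ell)\,\Eul_\ell(\tilde\sigma_\ell)^{-1}\zeta_{m'}$. Finally, using \eqref{cyclotomic relation} with $j = 0$ to rewrite $\zeta_{m'} = -\tilde\sigma_\ell\,e^{(\ell)}_{n,0}\,N$ and factoring $\Eul_\ell(\tilde\sigma_\ell)^{-1}$ out of both contributions assembles the left-hand side into $\Eul_\ell(\tilde\sigma_\ell)^{-1}\lambda^{(\ell)}_n(\tilde\sigma_\ell)\cdot N$, as required.
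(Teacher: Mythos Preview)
Your proposal is correct and follows essentially the same route as the paper: apply the Otsuki relation \eqref{Otsuki relation} with $j=n$, recognise that the residual term $\widetilde F_\ell^{(n)}(\hat\sigma_\ell)\Eul_\ell(\hat\sigma_\ell)^{-1}\hat\sigma_\ell^n(X)$ lives in the subalgebra generated by $X^{\ell^n}$ (the paper phrases this via the commutative diagram involving $\Q[X]/(X^{m'}-1)$ and invokes Lemma~\ref{basic otsuki properties}\,(b) rather than (a), but the content is the same), and then rewrite all the resulting $\zeta_{m'\ell^j}$ via \eqref{cyclotomic relation}. The only cosmetic difference is that the paper packages your ``main obstacle'' as the single identity labelled \eqref{useful observation 1} before carrying out the computation.
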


\begin{proof}
We first make the useful observation that one has 
\begin{equation} \label{useful observation 1}
\big( (\widetilde{F}_\ell^{(n)} (\hat \sigma_\ell) \mathrm{Eul}_\ell (\hat \sigma_\ell)^{-1} \hat \sigma_\ell^n)(X) \big) (\zeta_{m^\prime \ell^n}) = \widetilde{F}_\ell^{(n)} (\sigma_\ell) \mathrm{Eul}_\ell (\sigma_\ell)^{-1} \cdot \zeta_{m^\prime}.
\end{equation}
To justify this, we note that $\hat \sigma_\ell^n (X) = X^{\ell^n}$ belongs to the image of the map $\Q [X] / (X^{m'} - 1) \to \Q [X] / (X^{\ell^n m'} - 1)$ that is defined by sending $X \mapsto X^{\ell^n}$. From the commutative diagram
\begin{cdiagram}
    \Q [X] / (X^{m'} - 1) \arrow{r}{X \mapsto X^{\ell^n}} \arrow{d}{X \mapsto \zeta_{m'}} & \Q [X] / (X^{\ell^n m'} - 1) \arrow{d}{X \mapsto \zeta_{\ell^n m'}} \\ 
    \Q [\zeta_{m'}] \arrow{r}{\zeta_{m'} \mapsto \zeta_{m'}} & \Q [\zeta_{\ell^n m'}]
\end{cdiagram}%
we therefore see that is suffices to compute $(\widetilde{F}_\ell^{(n)} (\hat \sigma_\ell) \mathrm{Eul}_\ell (\hat \sigma_\ell)^{-1}) (X)$ evaluated at $X = \zeta_{m'}$. By Lemma \ref{basic otsuki properties}\,(b) the endomorphism $\mathrm{Eul}_\ell (\hat \sigma_\ell)^{-1}$ of $\Q [X] / (X^{m'} - 1)$ descends to $\Q [X] / (\Phi_{m'})$ (and corresponds with multiplication by $\Eul_\ell (\sigma_\ell)^{-1}$ under the 
isomorphism $\Q [X] / (\Phi_{m'}) \cong \Q [\zeta_{m'}]$ that sends $X \mapsto \zeta_{m'}$), so this computation can be done in $\Q [\zeta_{m'}]$ and leads to the claimed equality (\ref{useful observation 1}).\\
Now, (\ref{useful observation 1}) combines with the relation (\ref{Otsuki relation}) to imply that one has
\begin{align*}
\big( \Eul_\ell (\hat \sigma_\ell)^{-1} (X) \big) (\zeta_{m^\prime \ell^n})
& = 
 \big( \sum_{i=0}^{n-1} c_{i+1}^{(l)} \hat \sigma_\ell^i (X)  + (\widetilde{F}_\ell^{(n)} (\hat \sigma_\ell) \mathrm{Eul}_\ell (\hat \sigma_\ell)^{-1} \hat \sigma_\ell^n)(X) \big) (\zeta_{m^\prime \ell^n}) \\
& = \big( \sum_{i=0}^{n-1} c_{i+1}^{(l)}\zeta_{m^\prime \ell^{n - i}} \big) + \big( \widetilde{F}_\ell^{(n)} (\tilde \sigma_\ell) \mathrm{Eul}_\ell (\tilde \sigma_\ell)^{-1} \zeta_{m^\prime}\big) \\
& = \big ( \sum_{i=0}^{n-1} c_{i+1}^{(l)} \omega^{(\ell)}_{n, n - i} + \widetilde{F}_\ell^{(n)} (\tilde \sigma_\ell) \mathrm{Eul}_\ell ( \tilde \sigma_\ell)^{-1} \omega_{n, 0}^{(\ell)} \big) \cdot \sum_{i = 0}^n \zeta_{m^\prime \ell^i} \\
& = \Eul_\ell (\tilde \sigma_\ell )^{-1} \cdot \lambda^{(\ell)}_{n} (\tilde \sigma_\ell) \cdot \sum_{i = 0}^n \zeta_{m^\prime \ell^i},
\end{align*}
as claimed.
\end{proof}

For every natural number $m \in \N$ and prime divisor $\ell \neq p$ of $m$, we define
\[
\nu_{m}^{(\ell)} \coloneqq \lambda_{\ord_\ell (m)}^{(l)} (\tilde \sigma_\ell) \in \Z_p [G_{m}].
\]
The following result, in which we write $m_0 \coloneqq \prod_{\ell \mid m} \ell$ for the `squarefree radical' of an integer $m$, describes the contribution to the definition of $x_m$ of the factors $\Eul_\ell (\hat \sigma_\ell)^{-1}$ for $\ell \neq p$ in terms of the elements $\nu_m^{(\ell)}$. 

\begin{prop} \label{Prop Otsuki l-adic euler factors}
Let $m$ be a natural number coprime to $p$. For every integer $n \geq 0$ one has
\[
x_{m p^n} = \big( \prod_{\ell \mid m} \Eul_\ell (\tilde \sigma_\ell)^{-1} \cdot \nu^{(\ell)}_{mp^n} \big) \cdot \sum_{m_0 \mid d \mid m} (\Eul_p (\hat \sigma_p)^{-1} (X)) ( \zeta_{d p^n}).
\]
\end{prop}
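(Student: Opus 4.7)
The strategy is an induction on the number $k$ of distinct prime divisors of $m$ (all necessarily coprime to $p$). The base case $k=0$ (so $m=1$) is immediate, since both sides reduce to $(\Eul_p(\hat\sigma_p)^{-1}(X))(\zeta_{p^n})$.

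For the inductive step, fix a prime $\ell \mid m$, set $n_\ell := \ord_\ell(m)$, and write $m = \ell^{n_\ell} m'$ with $\gcd(\ell, m')=1$. Since the operators $\hat\sigma_{\ell''}$ on $\Q[X]/(X^{mp^n}-1)$ commute for distinct primes, so do their Euler factor inverses. Defining $R(X) := \prod_{\ell''\mid m'p} \Eul_{\ell''}(\hat\sigma_{\ell''})^{-1}(X)$, we may therefore write $x_{mp^n} = (\Eul_\ell(\hat\sigma_\ell)^{-1} R(X))|_{X=\zeta_{mp^n}}$. Applying the recursion (\ref{Otsuki relation}) with $j=n_\ell$ and evaluating at $X=\zeta_{mp^n}$: since $\hat\sigma_\ell$ is a ring homomorphism, the terms $c_{i+1}^{(\ell)} \hat\sigma_\ell^i R(X)$ yield $c_{i+1}^{(\ell)} R(\zeta_{\ell^{n_\ell-i} m' p^n})$, and, by the subring argument from the proof of Lemma \ref{Otsuki calculations lemma 1} (namely, that $\hat\sigma_\ell^{n_\ell} R(X)$ lies in the subring $\Q[X^{\ell^{n_\ell}}]/(X^{mp^n}-1) \cong \Q[Y]/(Y^{m'p^n}-1)$, on which $\Eul_\ell(\hat\sigma_\ell)^{\pm 1}$ descends via Lemma \ref{basic otsuki properties}(b) to multiplication by $\Eul_\ell(\tilde\sigma_\ell)^{\pm 1}$ on $F_{m'p^n}$), the final summand contributes $\widetilde F_\ell^{(n_\ell)}(\tilde\sigma_\ell) \Eul_\ell(\tilde\sigma_\ell)^{-1} R(\zeta_{m'p^n})$.

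To assemble these pieces, first note that $R(\zeta_{m'p^n}) = x_{m'p^n}$, by the compatibility of each $\Eul_{\ell''}(\hat\sigma_{\ell''})^{-1}$ with the natural projection $\Q[X]/(X^{mp^n}-1) \twoheadrightarrow \Q[X]/(X^{m'p^n}-1)$; the inductive hypothesis then applies. For the remaining terms $R(\zeta_{\ell^{n_\ell-i}m'p^n})$ with $0 \leq i < n_\ell$, one re-expresses each $\zeta_{\ell^j m'p^n}$ via (\ref{cyclotomic relation}) in terms of the averaged sum $\sum_{a=0}^{n_\ell} \zeta_{\ell^a m' p^n}$ weighted by $\omega^{(\ell)}_{n_\ell, j}$, carrying through the same inductive/compatibility analysis. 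Collecting all contributions, the group-ring coefficients assemble---by the very definition of $\lambda^{(\ell)}_{n_\ell}(X)$ in \S\,\ref{section definition nu lambda}---into $\Eul_\ell(\tilde\sigma_\ell)^{-1} \lambda^{(\ell)}_{n_\ell}(\tilde\sigma_\ell) = \Eul_\ell(\tilde\sigma_\ell)^{-1} \nu^{(\ell)}_{mp^n}$, while the sum range $m'_0 \mid d \mid m'$ from the inductive hypothesis extends to $m_0 \mid d \mid m$ upon insertion of the new $\ell$-exponents $j \in \{1, \ldots, n_\ell\}$.

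The main obstacle is the final combinatorial bookkeeping: one must precisely match $\sum_{i=0}^{n_\ell-1} c_{i+1}^{(\ell)} \omega^{(\ell)}_{n_\ell, n_\ell-i}$, together with the $-\widetilde F_\ell^{(n_\ell)}(\tilde\sigma_\ell) \tilde\sigma_\ell e^{(\ell)}_{n_\ell, 0}$ term, against the explicit formula for $\lambda^{(\ell)}_{n_\ell}(\tilde\sigma_\ell)$, and verify that the range restriction $m_0 \mid d$ (rather than merely $d \mid m$) emerges correctly. This is precisely what the defining formula for $\lambda^{(\ell)}_n(X)$ is engineered to achieve, but executing it requires careful tracking of the ``$\ell \nmid d$'' contributions from the inductive sum, which must be absorbed into the $\widetilde F$-coefficient terms via (\ref{cyclotomic relation})---analogously to how the vanishing of the constant coefficient of $\Eul_p(\hat\sigma_p)^{-1}(X)$ forces the cancellation of boundary contributions in the single-prime case.
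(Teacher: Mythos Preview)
Your overall strategy—peel off one prime $\ell\mid m$ via the Otsuki relation~(\ref{Otsuki relation}) and then induct—is the right one, and it is also the paper's strategy. The gap is in how your induction closes.

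After the $\ell$-step you obtain
\[
x_{mp^n}=\sum_{i=0}^{n_\ell-1} c_{i+1}^{(\ell)}\, R\big(\zeta_{\ell^{\,n_\ell-i} m'p^n}\big)
+\widetilde F_\ell^{(n_\ell)}(\tilde\sigma_\ell)\,\Eul_\ell(\tilde\sigma_\ell)^{-1}\,R(\zeta_{m'p^n}),
\]
and your inductive hypothesis (the Proposition for $m'$) identifies only the last term as $x_{m'p^n}$. For $j\ge 1$ the element $R(\zeta_{\ell^{\,j} m'p^n})$ is neither $x_{m'p^n}$ nor $x_{\ell^{\,j} m'p^n}$ (the latter would require the missing $\ell$-Euler factor), so the hypothesis says nothing about it. Your proposed remedy—``re-express each $\zeta_{\ell^{\,j} m'p^n}$ via~(\ref{cyclotomic relation})''—does not apply as written: (\ref{cyclotomic relation}) is an identity for roots of unity, and $R$ is a polynomial, so the group-ring weights $\omega^{(\ell)}_{n_\ell,j}$ cannot be pushed through $R$ without further argument. (There \emph{is} a reason this works—every monomial $X^k$ occurring in $R$ has $\gcd(k,\ell)=1$, since $R$ is built from $\hat\sigma_{\ell''}$ with $\ell''\neq\ell$; this forces $R(\zeta_{\ell^{\,j} m'p^n})=\omega^{(\ell)}_{n_\ell,j}\cdot\sum_a R(\zeta_{\ell^{\,a} m'p^n})$—but you neither state nor prove this, and even then you are left with the full sum $\sum_a R(\zeta_{\ell^{\,a} m'p^n})$, which again cannot be evaluated from the hypothesis for $m'$ alone.)

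The paper avoids this by choosing a \emph{stronger} inductive claim~(\ref{induction claim}): at stage $j$ the primes $\ell_1,\dots,\ell_j$ have been pulled out as $\Eul_{\ell_i}(\tilde\sigma_{\ell_i})^{-1}\nu^{(\ell_i)}_{mp^n}$, while the remaining operators $\prod_{i>j}\Eul_{\ell_i}(\hat\sigma_{\ell_i})^{-1}$ stay \emph{inside} the evaluation, applied at the various $\zeta_{d\,mp^n/m_j}$. The inductive step then amounts precisely to one application of Lemma~\ref{Otsuki calculations lemma 1} at $\ell_j$, after fully expanding the still-unprocessed operators into the auxiliary coefficients $\tilde c^{(\ell_i)}_q$ so that the lemma can act termwise. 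Your sketch would become correct if you replaced ``the Proposition for $m'$'' by this mixed inductive claim (equivalently: if you iterate the generalized single-prime lemma rather than invoke a black-box hypothesis), but as written the induction does not close.
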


\begin{proof}
    Factorise $m$ as $\prod_{i = 1}^s \ell_i^{n_i}$ and set $\ell_{s + 1} \coloneqq p$ and $n_{s + 1} \coloneqq n$ so that $mp^n = \prod_{i = 1}^{s + 1} \ell_i^{n_i}$. We shall prove by induction on $0 \leq j \leq s$ that one has
\begin{equation} \label{induction claim}
x_{m p^n} = \big( \prod_{i = 1}^j \Eul_{\ell_i} (\tilde \sigma_{\ell_i})^{-1} \cdot \nu^{(\ell_i)}_{mp^n} \big) \cdot 
\Big ( \sum_{m_{j, 0} \mid d \mid m_j} \big( \prod_{i = j + 1}^{s + 1} \Eul_{\ell_i} (\hat \sigma_{\ell_i})^{-1} \big) ( X) \Big) ( \zeta_{d m p^n / m_j})
\end{equation}
with $m_j \coloneqq \prod_{i = 1}^j \ell_i^{n_i}$.
Taking $j = s$, this then implies claim (a). If $j = 0$, the claim follows directly from the definition of $x_{m p^n}$.
For the inductive step we assume that $j > 0$ and that (\ref{induction claim}) is valid for $j - 1$. We introduce the notation
\[
\tilde c_{q}^{(\ell_i)} \coloneqq \begin{cases}
    c_q^{(\ell_i)} \quad & \text{ if } 0 \leq q \leq n_i - 1, \\ 
    \widetilde F_\ell^{(n_i)} (\hat \sigma_\ell) \Eul_\ell (\hat \sigma_\ell)^{-1} & \text{ if } q = n_i,
\end{cases}
\]
so that the relation (\ref{Otsuki relation}) can be compactly written as 
\[
\Eul_{\ell_i} (\hat \sigma_{\ell_i})^{-1} = \sum_{q = 0}^{n_i} \tilde c_{q + 1}^{(\ell_i)} \hat \sigma_{\ell_i}^q.
\]
Define $\mathscr{A} (j) \coloneqq  \prod_{i = j + 1}^{s + 1} \{0, \dots, n_i\}$
and, for every $a = (a_{j+1}, \dots, a_s) \in \mathscr{A} (j)$, write $|a| \coloneqq \prod_{i = j + 1}^{s + 1} a_i$. Then we have
\begin{align} \nonumber
 \big( \prod_{i = j}^{s + 1} \Eul_{\ell_i} (\hat \sigma_{\ell_i})^{-1} \big) ( X) 
& = \Big( \Eul_{\ell_j} (\hat \sigma_{\ell_j})^{-1} \cdot \prod_{i = j + 1}^{s + 1} \big( \sum_{q = 0}^{n_i} \tilde c_{q + 1}^{(\ell_i)} \hat \sigma_{\ell_i}^q \big)  \Big) (X) \\
& = \sum_{a \in \mathscr{A} (j)} ( \prod_{i = j + 1}^{s + 1} \tilde c^{(\ell_i)}_{a_i} ) \cdot \Eul_{\ell_j} (\hat \sigma_{\ell_j})^{-1} ( X^{|a|})
.
\label{calculating everything out}
\end{align}
Fix a divisor $d'$ of $m_{j - 1}$ and define $d'_j \coloneqq d' m p^n / m_{j - 1}$.
If we set
\[
C_{q}^{(\ell_i)} \coloneqq \begin{cases}
    c_q^{(\ell_i)} \quad & \text{ if } 0 \leq q \leq n_i - 1, \\ 
    \widetilde F_\ell^{(n_i)} (\sigma_\ell) \Eul_\ell (\sigma_\ell)^{-1} & \text{ if } q = n_i,
\end{cases}
\]
then (\ref{calculating everything out}) combines with (\ref{useful observation 1}) to imply that
\begin{align*}
\big( \prod_{i = j}^{s + 1} \Eul_{\ell_i} (\hat \sigma_{\ell_i})^{-1} \big) ( X) ( \zeta_{d'_j})
  = 
  \sum_{a \in \mathscr{A} (j)} ( \prod_{i = j + 1}^{s + 1} C^{(\ell_i)}_{a_i} ) \cdot \big( \Eul_{\ell_j} (\hat \sigma_{\ell_j})^{-1} ( X) \big) (\zeta_{d'_j / |a|}).
\end{align*}
Let $d$ be a divisor of $m_j$ and set $d_j \coloneqq dmp^n / m_j$, then the same computation also shows that one has
\begin{equation} \label{on the other hand}
\big( \prod_{i = j + 1}^{s + 1} \Eul_{\ell_i} (\hat \sigma_{\ell_i})^{-1} \big) ( X) ( \zeta_{d_j})
= 
\sum_{a \in \mathscr{A} (j)} ( \prod_{i = j + 1}^{s + 1} C^{(\ell_i)}_{a_i} )\cdot \zeta_{d_j / |a|}.
\end{equation}
Next we recall that
Lemma \ref{Otsuki calculations lemma 1} proves an equality
\begin{equation} \label{input from Otsuki calculations lemma 1}
\big( \Eul_{\ell_j} (\hat \sigma_{\ell_j})^{-1} ( X) \big) (\zeta_{d'_j / |a|}) = \Eul_{\ell_j} (\tilde \sigma_{\ell_j})^{-1} \cdot \lambda_{n_j}(\tilde \sigma_\ell) \cdot \sum_{q = 0}^{n_j - 1} \zeta_{ d'_j / ( \ell^q |a|)}.
\end{equation}
Now, every $m_{j, 0} \mid d \mid m_j$ is of the form $d' \ell^{n_j - q}$ for some $m_{j - 1, 0} \mid d' \mid m_{j - 1}$ and $q \in \{0, \dots, n_j - 1\}$ so that $d_j$ is of the form $d'_j / \ell^q$.
As a consequence, we may calculate that
\begin{align*}
& \sum_{m_{j - 1,0} \mid d' \mid m_{j - 1}}  \big( \prod_{i = j}^{s + 1} \Eul_{\ell_i} (\hat \sigma_{\ell_i})^{-1} \big) ( X) (\zeta_{d'_j}) \\
& \qquad \stackrel{\ (\ref{calculating everything out}) \ }{=} 
\sum_{m_{j - 1,0} \mid d' \mid m_{j - 1}}  \sum_{a \in \mathscr{A} (j)} ( \prod_{i = j + 1}^{s + 1} \tilde c^{(\ell_i)}_{a_i} ) \cdot \big(\Eul_{\ell_j} (\hat \sigma_{\ell_j})^{-1} ( X)\big) (\zeta_{d'_j / |a|})
\\
    & \qquad \stackrel{ \ref{Otsuki calculations lemma 1} }{=} \sum_{m_{j - 1,0} \mid d' \mid m_{j - 1}} \sum_{a \in \mathscr{A} (j)} ( \prod_{i = j + 1}^{s + 1} C^{(\ell_i)}_{a_i} ) \cdot \big( \Eul_{\ell_j} (\tilde \sigma_{\ell_j})^{-1} \cdot \lambda^{(\ell_j)}_{n_j}(\tilde \sigma_\ell) \cdot \sum_{q = 0}^{n_j - 1} \zeta_{ d'_j / ( \ell^q |a|)} \big) \\
    & \qquad \stackrel{\ (\ref{input from Otsuki calculations lemma 1}) \ }{=}  \Eul_{\ell_j} (\tilde \sigma_{\ell_j})^{-1} \cdot \lambda^{(\ell_j)}_{n_j}(\tilde \sigma_\ell) \cdot 
    \sum_{m_{j,0} \mid d \mid m_{j}} \sum_{a \in \mathscr{A} (j)} ( \prod_{i = j + 1}^{s + 1} C^{(\ell_i)}_{a_i} ) \cdot \zeta_{d_j / |a|} \\ 
    & \qquad \stackrel{\ (\ref{on the other hand}) \ }{=}  \Eul_{\ell_j} (\tilde \sigma_{\ell_j})^{-1} \cdot \lambda^{(\ell_j)}_{n_j}(\tilde \sigma_\ell) \cdot 
   \Big( \sum_{m_{j,0} \mid d \mid m_{j}}
\big( \prod_{i = j + 1}^{s + 1} \Eul_{\ell_i} (\hat \sigma_{\ell_i})^{-1} \big) ( X)\Big) ( \zeta_{d_j}).
\end{align*}
By the induction hypothesis, this shows the claimed equality (\ref{induction claim}) and therefore concludes the proof of the proposition.
\end{proof}

\subsubsection{Congruences modulo augmentation ideals}

In this section we investigate when the elements $\nu_{mp^n}^{(\ell)}$ belong to a relevant augmentation ideal and also compute its class modulo the square of the augmentation ideal. In particular, the  second part of Theorem \ref{local points main result}\,(d) will be a consequence of these calculations.\\
We recall that we have fixed an \textit{odd} prime $p$, which will be important in the proof of the following result.

\begin{prop}\label{l: lambda mod I^2}
    Let $m > 1$ be a natural number, let $\ell \neq p$ be a prime number, and set $n \coloneqq \ord_\ell (m)$. Then the following claims are valid.
    \begin{liste}
    \item 
Write $\cD^{(\ell)}_{m} \subseteq G_{m}$ for the decomposition group of $\ell$. The element $\nu^{(\ell)}_{m}$ belongs to $I_{\cD^{(\ell)}_{m}} \coloneqq \ker \{ \Z_p [G_{m}] \to \Z_p [G_{m} / \cD^{(\ell)}_{m}] \}$
if any of the following hold:
    \begin{itemize}
        \item $a_{\ell} =2$, $\ell \nmid N$ and $\ell^2 \nmid m$,
        \item $a_{\ell} =1$ and $\ell \mid N$,
        \item $a_{\ell} =0$, $\ell \mid N$ and $\ell^2 \mid m$.
    \end{itemize}   
    \item 
    \label{p: congruence of nu mod I^2}
    One has $\nu^{(\ell)}_{m} \equiv c_{n}^{(\ell)} ( 1 - a_{\ell}\tilde \sigma_{\ell} )  + \frac{\bm{1}_N(\ell)}{\ell} ( \ell c_{n}^{(\ell)} \tilde \sigma_{\ell}^2  + c_{n-1}^{(\ell)}\tilde\sigma_{\ell}(\ell - 1 ) ) \pmod{I_{D^{(\ell)}_{m}}^2}$.
    \end{liste}
\end{prop}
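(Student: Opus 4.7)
My plan is to establish part (b) directly and then deduce part (a) by reducing the resulting congruence modulo $I_{\cD^{(\ell)}_m}$. Beginning from the explicit formula
\[
\nu_m^{(\ell)} = \Eul_\ell(\tilde\sigma_\ell)\Sigma - \widetilde F_\ell^{(n)}(\tilde\sigma_\ell)\tilde\sigma_\ell e_{n,0}^{(\ell)}
\]
with $\Sigma := \sum_{i=0}^{n-1} c_{i+1}^{(\ell)}\omega_{n,n-i}^{(\ell)}$, I would compute each factor modulo $I^2 := I_{\cD^{(\ell)}_m}^2$. Setting $H_j := \Gal(F_{\ell^n}/F_{\ell^j})$ (with the convention $H_0 := G_{\ell^n}$) and $J_j := \sum_{g \in H_j}(g-1) \in I_{\cD^{(\ell)}_m}$, a direct calculation gives the exact equalities $e_{n,j}^{(\ell)} = 1 + \ell^{j-n}J_j$ for $j \geq 1$ and $e_{n,0}^{(\ell)} = (\ell-1) + \ell^{1-n}J_0$. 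Expanding $\Sigma$ via these (which produces telescoping cancellations) then yields that $\Sigma$ is the sum of $c_n^{(\ell)}$ and a $\Z_p$-linear combination of $J_1, \dots, J_{n-1}$.

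The next key observation is that $\tilde\sigma_\ell$ lies in $\cD^{(\ell)}_m$, so $(\tilde\sigma_\ell - 1)J_j \in I^2$ and hence $f(\tilde\sigma_\ell)J_j \equiv f(1)J_j \pmod{I^2}$ for any polynomial $f \in \Z_p[X]$. This lets me split $\nu_m^{(\ell)} \equiv A + B \pmod{I^2}$ into a `Frobenius part'
\[
A := \Eul_\ell(\tilde\sigma_\ell)\,c_n^{(\ell)} - (\ell-1)\tilde\sigma_\ell \widetilde F_\ell^{(n)}(\tilde\sigma_\ell),
\]
depending only on $\tilde\sigma_\ell$, and an `inertia part' $B$ that is a $\Z_p$-linear combination of $J_0, J_1, \dots, J_{n-1}$. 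A routine expansion of $A$, combined with the recursion $c_{n+1}^{(\ell)} = \ell^{-1}(a_\ell c_n^{(\ell)} - \bm{1}_N(\ell)c_{n-1}^{(\ell)})$ used to rewrite the $(\ell-1)c_{n+1}^{(\ell)}\tilde\sigma_\ell$ term, then reproduces exactly the right-hand side of the claim.

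The crucial step is to show that $B \equiv 0 \pmod{I^2}$, and this is where the oddness of $p$ enters essentially. Because $\cD^{(\ell)}_m$ is abelian, there is a canonical isomorphism
\[
I_{\cD^{(\ell)}_m}/I_{\cD^{(\ell)}_m}^2 \;\cong\; \Z_p[G_m/\cD^{(\ell)}_m] \otimes_{\Z_p} (\cD^{(\ell)}_m \otimes_\Z \Z_p)
\]
under which $J_j$ corresponds to $1 \otimes (\sum_{g \in H_j} g) \otimes 1$, the inner sum being taken inside the abelian group $H_j$. In any finite cyclic group of order $m$ this sum equals the unique element of order dividing $2$ (it is $0$ if $m$ is odd). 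For $\ell$ odd, $H_j$ with $j \geq 1$ is cyclic of odd order $\ell^{n-j}$, so the sum vanishes; whilst $H_0 = G_{\ell^n} = (\Z/\ell^n)^\times$ is cyclic of even order, but its unique element of order $2$ (namely $-1$) maps to zero in $G_{\ell^n} \otimes_\Z \Z_p$ because $p$ is odd. For $\ell = 2$, each $H_j$ is a $2$-group, so $H_j \otimes_\Z \Z_p = 0$ trivially. In either case $J_j \equiv 0 \pmod{I^2}$, which gives $B \equiv 0$ as required.

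Part (a) is then a direct consequence of (b): reducing the congruence modulo $I_{\cD^{(\ell)}_m}$ sends $\tilde\sigma_\ell$ to $1$, and so yields $\nu_m^{(\ell)} \equiv c_n^{(\ell)}(1 - a_\ell + \bm{1}_N(\ell)) + \bm{1}_N(\ell)(\ell-1)c_{n-1}^{(\ell)}/\ell \pmod{I_{\cD^{(\ell)}_m}}$. A case-by-case check using the recursion for the $c_i^{(\ell)}$ then shows this vanishes in each of the three listed cases; for example in the third case $a_\ell = \bm{1}_N(\ell) = 0$ forces $c_i^{(\ell)} = 0$ for all $i \geq 2$. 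The main obstacle of the argument is the vanishing of $B$, which requires handling uniformly both the cyclic structure of $G_{\ell^n}$ for $\ell$ odd and the non-cyclic $2$-group structure for $\ell = 2$ with $n \geq 3$.
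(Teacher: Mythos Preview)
Your proof is correct and rests on the same key observation as the paper: the elements $J_j=\sum_{g\in H_j}(g-1)$ vanish modulo $I_{\cD^{(\ell)}_m}^2$ because the product $\prod_{g\in H_j}g$ is $2$-torsion and $p$ is odd. The computation of the ``Frobenius part'' $A$ is likewise the same as the paper's expansion.

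The only notable difference is organisational. The paper proves part (a) first, working only modulo $I_{\cD^{(\ell)}_m}$ and then running an inductive argument on the auxiliary quantities $\overline{\lambda}^{(\ell)}_{m',i}$ to handle the three cases. You instead establish (b) first and deduce (a) by reducing modulo $I_{\cD^{(\ell)}_m}$ (sending $\tilde\sigma_\ell\mapsto 1$) and checking the resulting expression $c_n^{(\ell)}(1-a_\ell+\bm 1_N(\ell))+\bm 1_N(\ell)(\ell-1)c_{n-1}^{(\ell)}/\ell$ directly in each case. Your route is a bit more economical, avoiding the induction entirely; the paper's route has the minor advantage of keeping (a) self-contained with the weaker congruence $e_{n,j}^{(\ell)}\equiv 1\pmod{I_{\cD^{(\ell)}_m}}$.
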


\begin{proof}
It is convenient to set $m' \coloneqq m \ell^{-n}$ in this proof.
To prove claim (a), we first note that, since $\ell$ is totally ramified in $F_{m^\prime \ell^n} / F_{m^\prime \ell^j}$, one has that $\gal{F_{m^\prime \ell^n}}{F_{m^\prime \ell^j}}$ is contained in $\cD^{(\ell)}_{m}$ for every $j \in \{0, \dots, n \}$. As a consequence, we have
\begin{equation} \label{e congruence}
e^{(\ell)}_{n,j} \equiv 1 \mod I_{\cD^{(\ell)}_{m}}
\quad \text{ if } j \in \{1, \dots, n \}.
\end{equation}
In particular, $e^{(\ell)}_{n, n - i} - e^{(\ell)}_{n, n - i - 1}$ belongs to $I_{\cD^{(\ell)}_{m}}$ for $i \in \{ 0, \dots , n-2\}$, and therefore
\begin{align}
    \nu_{m}^{(\ell)} = \lambda^{(\ell)}_{n} (\tilde \sigma_\ell)  &\equiv \mathrm{Eul}_\ell (\tilde \sigma_\ell)c_n^{(\ell)} e_{n, 1}^{(\ell)} -  \widetilde{F}_\ell^{(n)} (\tilde \sigma_\ell) \tilde \sigma_\ell e_{n, 0}^{(\ell)} \pmod{I_{\cD^{(\ell)}_{m}}} \notag \\
    \label{e: reduction of lambda mod I} 
    & \stackrel{(\ref{e congruence})}{\equiv} \mathrm{Eul}_\ell (\tilde \sigma_\ell)c_n^{(\ell)}  -  \widetilde{F}_\ell^{(n)} (\tilde \sigma_\ell) (\ell -1 ) \pmod{I_{\cD^{(\ell)}_{m}}} .
\end{align}
For ease of notation we write $\overline{\lambda}^{(\ell)}_{m^\prime, i}$ for the quantity on the right hand side of (\ref{e: reduction of lambda mod I}) with $n$ replaced by $i$. Note that, if $i=n$, then (\ref{e: reduction of lambda mod I}) gives that $\lambda^{(\ell)}_{n} (\tilde \sigma_\ell) \equiv \overline{\lambda}^{(\ell)}_{m^\prime, n} \pmod{I_{\cD^{(\ell)}_{m}}}$. We work inductively on $i$ and begin by considering $\overline{\lambda}^{(\ell)}_{m^\prime, 1}$. Since, $c_0^{(\ell)} \coloneqq 0$ and $c_1^{(\ell)} \coloneqq 1$ we can calculate
\begin{align*}
    \overline{\lambda}^{(\ell)}_{m^\prime, 1} &= \mathrm{Eul}_\ell (\tilde \sigma_\ell)  -  ( \frac{a_\ell}{\ell} - \frac{\bm{1}_N(\ell)}{\ell}\tilde \sigma_\ell ) ( \ell -1 )  \\
    &= \frac{1}{\ell} ( \ell - a_\ell \tilde \sigma_\ell + \bm{1}_N (\ell) \tilde \sigma_\ell^2)-  ( \frac{a_\ell}{\ell} - \frac{\bm{1}_N(\ell)}{\ell}\tilde \sigma_\ell ) ( \ell -1 )  \\
    & \equiv \frac{1}{\ell} ( \ell - a_\ell  + \bm{1}_N (\ell) ) - ( \frac{a_\ell}{\ell} - \frac{\bm{1}_N(\ell)}{\ell} )(\ell - 1) \pmod{I_{\cD^{(\ell)}_{m}}} \\
    & \equiv 1 - a_\ell + \bm{1}_N (\ell) \pmod{I_{\cD^{(\ell)}_{m}}} .
\end{align*}
It follows that $\overline{\lambda}^{(\ell)}_{m^\prime, 1} \in I_{\cD^{(\ell)}_{m}}$ if either of the following hold:
\begin{itemize}
        \item $a_\ell =2$, $\ell \nmid N$, 
        \item $a_\ell =1$ and $\ell \mid N$.
\end{itemize}
If $n=1$ then $\lambda^{(\ell)}_{1} (\tilde \sigma_\ell) \equiv \overline{\lambda}^{(\ell)}_{m^\prime, 1} \pmod{I_{\cD^{(\ell)}_{m}}}$, therefore the first two points in claim (a) hold in this case. We now consider $\overline{\lambda}^{(\ell)}_{m^\prime, 2}$. Using (\ref{e: c_i relations}) we calculate
\begin{equation}\label{e: cong of lambda when n=2}
    \overline{\lambda}^{(\ell)}_{m^\prime, 2} = \mathrm{Eul}_\ell (\tilde \sigma_\ell)\frac{a_\ell}{\ell} - ( \frac{a_\ell}{\ell} \frac{a_\ell}{\ell} - \frac{\bm{1}_N(\ell)}{\ell}  - \frac{\bm{1}_N(\ell)}{\ell}\frac{a_\ell}{\ell} \tilde \sigma_\ell )(\ell - 1)
    = \frac{a_\ell}{\ell}  \overline{\lambda}^{(\ell)}_{m^\prime, 1} + \frac{\bm{1}_N(\ell)}{\ell}(\ell -1) .
\end{equation}
Therefore, if either
\begin{itemize}
    \item $\overline{\lambda}^{(\ell)}_{m^\prime, 1} \in I_{\cD^{(\ell)}_{m}}$ and $\ell \mid N$, or
    \item $a_\ell =0$ and $\ell \mid N$.
\end{itemize}
Then $\overline{\lambda}^{(\ell)}_{m^\prime, 2} \in I_{\cD^{(\ell)}_{m}}$. By the calculations done for $\overline{\lambda}^{(\ell)}_{m^\prime, 1}$ we have that claim (a) holds if $n \in \{ 1,2\}$. Suppose $n \geq 3$ and let $i=3, \dots, n$. By applying (\ref{e: c_i relations}) to $\overline{\lambda}^{(\ell)}_{m^\prime, i}$ one calculates
\[ \overline{\lambda}^{(\ell)}_{m^\prime, i} = \frac{a_\ell}{\ell}\overline{\lambda}^{(\ell)}_{m^\prime, i-1} -  \frac{\bm{1}_N(\ell)}{\ell} \overline{\lambda}^{(\ell)}_{m^\prime, i-2}. \]
As with $\overline{\lambda}^{(\ell)}_{m^\prime, 2}$, we observe that if either
\begin{itemize}
    \item $\overline{\lambda}^{(\ell)}_{m^\prime, i-1} \in I_{\cD^{(\ell)}_{m}}$ and $\ell \mid N$, or
    \item $a_\ell =0$ and $\ell \mid N$,
\end{itemize}
then $\overline{\lambda}^{(\ell)}_{m^\prime, i} \in I_{\cD^{(\ell)}_{m}}$. Considering this argument inductively completes the proof of (a).\\
    To prove claim (b), let $j \in \{1, \dots , n\}$ and calculate
    \begin{align}
        e_{n,j}^{(\ell)} &= \frac{1}{\ell^{n-j}} \NN_{F_{\ell^n}/F_{\ell^j}} = \frac{1}{\ell^{n-j}} \big ( \ell^{n-j} + {\sum}_{\sigma \in \gal{F_{\ell^n}}{F_{\ell^j}}} ( \sigma -1 ) \big ) . \label{e: e_j calc}
    \end{align}
Under the isomorphism $I_{\cD^{(\ell)}_{m}} / I_{\cD^{(\ell)}_{m}}^2 \cong \cD^{(\ell)}_{m} \otimes \Z_p$ induced by $\sigma -1  \mapsto \sigma$, the sum in (\ref{e: e_j calc}) is mapped to $\prod_{\sigma \in \gal{F_{\ell^n}}{F_{\ell^j}}} \sigma$.
If $\sigma$ has odd order, then $\sigma \neq \sigma^{-1}$ and so both (element and its inverse) appear in the above product. It follows that only the (unique) element of order 2 does not cancel. Since $p$ is odd, this element is trivial in $\cD^{(\ell)}_{m} \otimes \Z_p$. Therefore,
\[ e_{n,j}^{(\ell)} \equiv 1 \pmod{I_{\cD^{(\ell)}_{m}}^2}. \]
Similarly, we have $ \tilde \sigma_\ell e_{n,0}^{(\ell)} \equiv \tilde \sigma_\ell (\ell - 1) \pmod{I_{\cD^{(\ell)}_{m}}^2}$. It therefore follows from the definition that
\begin{align*}
    \nu_m^{(\ell)} = \lambda_{n}^{(\ell)} (\tilde \sigma_\ell) &\equiv \Eul_\ell(\tilde \sigma_\ell ) c_n^{(\ell)} - \tilde F_{\ell}^{(n)}(\tilde \sigma_\ell ) \tilde \sigma_\ell (\ell -1) \pmod{I_{\cD^{(\ell)}_{m}}^2} \\
    &\equiv \frac{1}{\ell}( \ell - a_\ell \tilde \sigma_\ell + \bm{1}_N(\ell) \tilde \sigma_\ell^2 ) c_n^{(\ell)} - ( c_{n+1}^{(\ell) }- \frac{\bm{1}_N(\ell)}{\ell}c_{n}^{(\ell)} \tilde \sigma_\ell ) \tilde \sigma_\ell (\ell -1) \pmod{I_{\cD^{(\ell)}_{m}}^2} 
\end{align*}
The result follows from rearranging and the fact that $c_{n+1}^{(\ell)} = \frac{a_\ell}{\ell}c_n^{(\ell)} - \frac{\bm{1}_N(\ell)}{\ell}c_{n-1}^{(\ell)}$.
\end{proof}

\subsubsection{Norm relations}

We end this section by proving two norm relations for the elements $\nu_{mp^{n}}^{(\ell)}$ which are required to establish the claimed congruence of Mazur--Tate elements in Theorem \ref{mazur--tate main result 2}.

For natural numbers $a,b \in \N$ with $a \mid b$ we denote by $\pi_{b/a} \: \Z_p[ G_b] \rightarrow \Z_p [ G_a ]$ the natural map induced by the restriction map $G_b \rightarrow G_a$.

\begin{lem} \label{nus are compatible}
    Let $m > 1$ be a natural number and let $\ell \neq p$ be a prime divisor of $m$. If $d \in \N$ is a divisor of $m$ with $\ord_\ell (m) = \ord_\ell (d)$, then one has
    \[
    \pi_{m / d} (\nu_m^{(\ell)}) = \nu_{d}^{(\ell)}.
    \]
\end{lem}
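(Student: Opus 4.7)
The plan is to unfold the definition $\nu_m^{(\ell)} = \lambda_{\ord_\ell(m)}^{(\ell)}(\tilde\sigma_\ell)$ and check that each ingredient is compatible with $\pi_{m/d}$. Set $n \coloneqq \ord_\ell(m) = \ord_\ell(d)$, write $m = m'\ell^n$ and $d = d'\ell^n$ with $\gcd(m',\ell) = \gcd(d',\ell) = 1$ and $d' \mid m'$. The polynomial
\[
\lambda_n^{(\ell)}(X) = \Eul_\ell(X)\Big({\sum}_{i=0}^{n-1} c_{i+1}^{(\ell)} \omega_{n,n-i}^{(\ell)}\Big) - \widetilde F_\ell^{(n)}(X)\, X\, e_{n,0}^{(\ell)}
\]
is built from the universal (i.e.\@ $m$-independent) rational numbers $c_i^{(\ell)}$ and the universal polynomial $\widetilde F_\ell^{(n)}(X)$, together with the group-ring elements $\omega_{n,j}^{(\ell)}$ and $e_{n,j}^{(\ell)}$.

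The first step is to observe that, by their very definitions, the elements $e_{n,j}^{(\ell)} = \ell^{-(n-j)} N_{F_{\ell^n}/F_{\ell^j}}$ for $j \in \{1,\dots,n\}$ and the differences $\omega_{n,j}^{(\ell)}$ for $j \geq 1$ all lie in the subring $\Z_p[G_{\ell^n}]$, which is embedded into $\Z_p[G_m]$ (resp.\@ $\Z_p[G_d]$) via the second factor of the isomorphism $G_m \cong G_{m'} \times G_{\ell^n}$ (resp.\@ $G_d \cong G_{d'} \times G_{\ell^n}$). Under the map $\pi_{m/d}$, induced by restriction $G_m \to G_d$, these two isomorphisms are compatible in the sense that the $G_{\ell^n}$-component is preserved identically while the $G_{m'}$-component is sent to the $G_{d'}$-component via $\pi_{m'/d'}$. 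In particular, each of the elements $e_{n,j}^{(\ell)}$ and $\omega_{n,j}^{(\ell)}$ (for $j\geq 1$) is mapped to the element bearing the same name in $\Z_p[G_d]$.

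The second step is to handle the remaining pieces involving $\tilde\sigma_\ell$, namely $\omega_{n,0}^{(\ell)} = -\tilde\sigma_\ell\, e_{n,0}^{(\ell)}$ and the evaluations $\Eul_\ell(\tilde\sigma_\ell)$, $\widetilde F_\ell^{(n)}(\tilde\sigma_\ell)$. From the splitting definition recalled before Theorem~\ref{local points main result}, the element $\tilde\sigma_\ell \in G_m$ corresponds to $(\sigma_\ell, 1) \in G_{m'} \times G_{\ell^n}$; since $\ell \nmid m'/d'$, restriction sends $\sigma_\ell \in G_{m'}$ to $\sigma_\ell \in G_{d'}$, so $\pi_{m/d}(\tilde\sigma_\ell) = \tilde\sigma_\ell$ in $\Z_p[G_d]$. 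Combined with the previous step, this gives $\pi_{m/d}(\omega_{n,0}^{(\ell)}) = \omega_{n,0}^{(\ell)}$.

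The third and final step is to assemble the pieces: since $\pi_{m/d}$ is a ring homomorphism fixing all the building blocks of $\lambda_n^{(\ell)}(\tilde\sigma_\ell)$, one concludes
\[
\pi_{m/d}\bigl(\nu_m^{(\ell)}\bigr) = \pi_{m/d}\bigl(\lambda_n^{(\ell)}(\tilde\sigma_\ell)\bigr) = \lambda_n^{(\ell)}(\tilde\sigma_\ell) = \nu_d^{(\ell)}
\]
inside $\Z_p[G_d]$. No step is genuinely hard; the only point that requires care is the bookkeeping of the splittings $G_m \cong G_{m'}\times G_{\ell^n}$ for varying $m$, which is precisely where the hypothesis $\ord_\ell(d)=\ord_\ell(m)$ is essential, as it guarantees that the $\ell$-part of the group ring is not touched at all by $\pi_{m/d}$.
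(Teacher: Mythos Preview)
Your proof is correct and follows essentially the same approach as the paper's own proof. The paper's version is slightly more compressed: it writes $\pi_{m/d}(\lambda_n^{(\ell)}(\tilde\sigma_\ell)) = \lambda_n^{(\ell)}(\pi_{m/d}(\tilde\sigma_\ell))$ in one line, implicitly using that $\lambda_n^{(\ell)}(X) \in \Z[1/\ell][G_{\ell^n}][X]$ has coefficients preserved by $\pi_{m/d}$, whereas you spell this out explicitly by tracking the $e_{n,j}^{(\ell)}$ and $\omega_{n,j}^{(\ell)}$ through the splitting.
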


\begin{proof}
We set $n \coloneqq \ord_\ell (m)$ and $m' \coloneqq m \ell^{-n}$ so that $m = m' \ell^n$. 
    For clarity, let us write $\tilde \sigma_\ell^{(m)}$ for the element previously denoted $\tilde \sigma_\ell$ if it is regarded as an element of $G_m$. That is, $\tilde \sigma_\ell^{(m)}$ is the image of $(\sigma_\ell, \id)$ under the isomorphism $G_{m'} \times G_{\ell^n} \cong G_{m}$. Similarly, the element $\tilde \sigma_\ell^{(d)}$ is the image of $(\sigma_\ell, \id)$ under $G_{d'} \times G_{\ell} \cong G_d$ with $d' \coloneqq d \ell^{-n}$. The commutative diagram
    \begin{cdiagram}
        G_{m^\prime} \arrow{d}{\pi_{m^\prime/d^\prime}} \arrow[hookrightarrow]{r} & G_{m^\prime} \times G_{\ell^n}  \arrow{r}{\cong} &  G_{m} \arrow{d}{\pi_{m /d }}  \\ 
         G_{d'} \arrow[hookrightarrow]{r} & G_{d^\prime} \times G_{\ell^n}  \arrow{r}{\cong} &  G_{d} , 
    \end{cdiagram}%
    then shows that we have $\pi_{m / d} ( \tilde \sigma_\ell^{(m)}) = \tilde \sigma_\ell^{(d)}$. Using this, we may compute that
    \[
    \pi_{m / d} (\nu_m^{(\ell)}) = \pi_{m / d} ( \lambda_{n}^{(\ell)} (\tilde \sigma_\ell^{(m)})) = 
    \lambda_{n}^{(\ell)} ( \pi_{m / d} ( \tilde \sigma_\ell^{(m)})) = \lambda_{n}^{(\ell)} ( \tilde \sigma_\ell^{(d)}) = \nu_d^{(\ell)},
    \]
    as claimed. 
\end{proof}

\subsection{A construction of local points}

In this section we use the theory of formal groups to construct certain local points, and this will allow us to prove Theorem \ref{local points main result} in \S\,\ref{proof local points main result section}.

\subsubsection{Review of Honda theory}

For the convenience of the reader, we review relevant aspects of Honda's article \cite{Honda70} (see also \cite[\S\,8.1]{Kobayashi03}).\\
Suppose $\cQ$ is a finite extension of $\Q_p$ with ring of integers $\cR$, maximal ideal $\cM = (\pi)$, and an automorphism $\phi \: \cQ \to \cQ$ that satisfies $\phi (a) \equiv a^p \mod \cM$ for every $a \in \cR$.
We then define the `Frobenius operator'
\[
\widehat \phi \: \cQ \llbracket X \rrbracket \to \cQ \llbracket X \rrbracket,
\quad \sum_{i = 0}^\infty \beta_i X^i \mapsto \sum_{i = 0}^\infty \phi (\beta_i) X^{ip}.
\]

Given $f, g \in \cQ \llbracket X \rrbracket$ and an ideal $\a$ of $\cR \llbracket X \rrbracket$, we write $f \equiv g \mod \a$ if $f - g \in \a$.

\begin{lem} \label{useful congruence}
    Suppose $f = \sum_{i = 0}^\infty \beta_i X^i$ is a power series in $\cQ \llbracket X \rrbracket$ with the property that $i \beta_i \in \cR$ for every $i \geq 0$, and that the power series $g \coloneqq \sum_{i = 0}^\infty \phi (\beta_i) ((X + 1)^{ip} - 1)$ exists in $\cQ \llbracket X \rrbracket$. 
    Then one has 
    \[
    \widehat \phi ( f) \equiv g \mod \pi \cR \llbracket X \rrbracket.
    \]
\end{lem}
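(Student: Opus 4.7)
The plan is a direct coefficient-wise computation driven by the congruence $(X+1)^p \equiv X^p + 1 \pmod{p\,\Z[X]}$. I would first record this explicitly as
\[ (X+1)^p - 1 = X^p + p\,h(X), \qquad h(X) := \sum_{k=1}^{p-1}\tfrac{1}{p}\binom{p}{k}\,X^k \in \Z[X], \]
(the coefficients of $h$ being integers because $p \mid \binom{p}{k}$ for $1 \leq k \leq p-1$). This isolates the $p$-divisible correction term $p\,h(X)$ that will drive the argument.

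Next, I would expand each summand defining $g$ and compare with $\widehat\phi(f) = \sum_i \phi(\beta_i) X^{ip}$ via the binomial theorem. This presents the difference $g - \widehat\phi(f)$ as a sum of terms of the form $\phi(\beta_i)\binom{i}{j}(p\,h)^j X^{(i-j)p}$ with $j \geq 1$ (the $j=0$ contributions being exactly the summands of $\widehat\phi(f)$, and so cancelling). The key algebraic manipulation is then to use the hypothesis $i\beta_i \in \cR$ in the form $\phi(\beta_i) = \phi(i\beta_i)/i$, combined with the elementary identity $\binom{i}{j}/i = \binom{i-1}{j-1}/j$, to rewrite each such term as
\[ \phi(i\beta_i)\,\binom{i-1}{j-1}\,\frac{p^j}{j}\,h^j\,X^{(i-j)p}, \]
in which every factor other than $p^j/j$ visibly lies in $\cR$ (or $\cR[X]$).

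The decisive numerical estimate is then $v_p(p^j/j) = j - v_p(j) \geq 1$ for every $j \geq 1$, which follows from the elementary bound $v_p(j) \leq \log_p j \leq j-1$. This forces every summand to lie in $\pi\,\cR\llbracket X\rrbracket$. To pass from this term-wise estimate to the claimed congruence, I would invoke the hypothesis that $g$ exists in $\cQ\llbracket X\rrbracket$ to control the convergence of the relevant sums coefficient-by-coefficient, combined with the fact that $\pi\cR$ is closed in $\cQ$ under the $\pi$-adic topology (by $\pi$-adic completeness of $\cR$).

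The step I expect to be the main technical obstacle is the careful combinatorial bookkeeping in the binomial expansion: one must ensure that the denominators coming from $\phi(\beta_i) \notin \cR$ (for $i$ divisible by $p$) are exactly compensated by the extra factors of $p$ produced by the correction $p\,h(X)$, and that these cancellations propagate uniformly across all contributions to each coefficient of $X^k$. Beyond this, the argument is essentially elementary $p$-adic analysis, built on top of the Frobenius congruence for $(X+1)^p$.
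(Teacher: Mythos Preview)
Your binomial expansion does not compute what you claim. Writing $(X+1)^p - 1 = X^p + p\,h(X)$ and expanding gives
\[
\sum_{j}\binom{i}{j}(p\,h)^j X^{(i-j)p} \;=\; (X^p + p\,h)^i \;=\; \bigl((X+1)^p - 1\bigr)^i,
\]
not $(X+1)^{ip} - 1$; these differ already for $i = 2$. So the terms you bound are not the summands of $g$ as defined in the statement.

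This is in fact a typo in the stated lemma rather than a gap in your reasoning: with $g$ as written, the assertion is false. Take $\cR = \Z_p$, $\pi = p$, $\phi = \id$, and $f = p^{-1}X^p$ (so $p\,\beta_p = 1 \in \Z_p$); then $g - \widehat\phi(f) = p^{-1}\bigl[(X+1)^{p^2} - 1 - X^{p^2}\bigr]$, whose $X^p$-coefficient $p^{-1}\binom{p^2}{p}$ is a $p$-adic unit since $v_p\bigl(\binom{p^2}{p}\bigr) = 1$. The intended statement takes
\[
g \;=\; \sum_i \phi(\beta_i)\bigl((X+1)^p - 1\bigr)^i,
\]
which is exactly what your expansion produces and also what the paper actually uses downstream (there, for $f = c\,[(1+X)^\delta - 1]$, this gives $g = c\,[(1+X)^{p\delta} - 1]$).

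For that corrected $g$, your argument is correct and is an explicit unpacking of Honda's congruence $i^{-1}(X + \pi Y)^i \equiv i^{-1}X^i \pmod{\pi}$, which is all the paper invokes. Your identity $\binom{i}{j}/i = \binom{i-1}{j-1}/j$ together with $v_p(p^j/j) \geq 1$ is precisely the mechanism behind that congruence in the present situation, so the two approaches coincide.
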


\begin{proof}
    This follows from the congruence $i^{-1} (X + \pi Y)^i \equiv i^{-1} X^i \mod \pi$ in \cite[Lem.\@ 2.1]{Honda70}.
\end{proof}

\begin{definition}
Let $u \in \cR [X]$ be a polynomial with $u (0) = \pi$. A power series $f = \sum_{i = 0}^\infty \beta_i X^i$ in $\cQ \llbracket X \rrbracket$ is of `Honda type $u$' if $\beta_0 = 0$, $\beta_1 = 1$, and
\[
( u ( \widehat \phi)) (f) \equiv 0 \mod \pi \cR \llbracket X \rrbracket.
\]
\end{definition}

\begin{thm}[Honda] \label{Honda's thm}
The following claims are valid.
\begin{liste}
    \item Suppose $f \in \cQ \llbracket X \rrbracket$ is a power series of type $u \in \cR [X]$. Then there exists a one-dimensional commutative formal group $\cF$ over $\cR$ such that $\log_\cF = f$.
    \item Suppose that $\cF$ and $\mathcal{G}$ are two one-dimensional commutative formal groups such that $\log_\cF$ and $\log_{\mathcal{G}}$ have the same type. Then $\exp_\mathcal{G} \circ \log_\cF$ belongs to $\cR \llbracket X \rrbracket$ and defines an isomorphism $\cF \stackrel{\simeq}{\to} \mathcal{G}$ over $\cR$.
\end{liste}
\end{thm}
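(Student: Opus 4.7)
The plan is to follow Honda's strategy from \cite{Honda70}: construct the candidate formal group as $\cF(X, Y) \coloneqq f^{-1}(f(X) + f(Y))$ and deduce both parts from a single \textit{functional equation lemma} controlling when a power series a priori defined over $\cQ$ in fact lies in $\cR\llbracket X \rrbracket$.

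For part (a), I would first note that since $\beta_1 = 1$, the compositional inverse $f^{-1} \in \cQ \llbracket X \rrbracket$ exists, so $\cF(X, Y) \coloneqq f^{-1}(f(X) + f(Y))$ is a well-defined formal group law over $\cQ$ with $\log_\cF = f$. The only substantive claim is that $\cF$ has coefficients in $\cR$. I would establish this from the following functional equation lemma: if $h \in \cQ \llbracket X_1, \ldots, X_n \rrbracket$ has $h(0) = 0$ and satisfies $u(\widehat \phi)(h) \in \pi \cR \llbracket X_1, \ldots, X_n \rrbracket$, then in fact $h \in \cR \llbracket X_1, \ldots, X_n \rrbracket$. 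Given the lemma, one applies it to $h(X, Y) \coloneqq \cF(X, Y)$, whose image under $f$ equals $f(X) + f(Y) \in \cR \llbracket X, Y \rrbracket$; combined with the hypothesis that $f$ is of type $u$, a short manipulation using Lemma~\ref{useful congruence} (which compares $\widehat \phi$ with the integral operator $X \mapsto (X + 1)^p - 1$) shows that $\cF$ itself satisfies the hypothesis of the lemma, forcing integrality.

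For part (b), set $h \coloneqq \exp_{\mathcal{G}} \circ \log_{\cF} \in \cQ \llbracket X \rrbracket$, which has constant term $0$, derivative $1$ at $X = 0$, and is by construction a homomorphism of formal groups $\cF \to \mathcal{G}$ over $\cQ$. Applying $u(\widehat \phi)$ to the identity $\log_{\mathcal{G}} \circ h = \log_{\cF}$ and using that $\log_{\cF}$ and $\log_{\mathcal{G}}$ are both of type $u$, one deduces that $u(\widehat \phi)(h) \equiv 0$ modulo $\pi \cR \llbracket X \rrbracket$. The functional equation lemma then yields $h \in \cR \llbracket X \rrbracket$. Reversing the roles of $\cF$ and $\mathcal{G}$ produces an integral compositional inverse of $h$, so $h$ is an isomorphism over $\cR$ and, as the composite of formal-group homomorphisms over $\cQ$ with integral coefficients, is automatically a formal-group isomorphism over $\cR$.

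The main obstacle is the functional equation lemma itself. A naive induction on the total degree of a coefficient of $h$ fails, because inverting $f$ introduces denominators that are not visibly cancelled. The key point is that $\widehat \phi$ multiplies monomial degrees by $p$, so the hypothesis $u(\widehat \phi)(h) \in \pi \cR \llbracket X \rrbracket$ couples coefficients in low degree to coefficients in much higher degree; running the induction in the correct direction, and using Lemma~\ref{useful congruence} to convert the $\widehat \phi$-congruence into a genuine congruence modulo $\pi$ at the level of power series, is precisely what makes the $\pi$-divisibility propagate and forces integrality.
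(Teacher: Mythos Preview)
The paper's own proof consists entirely of citations to Honda \cite{Honda70}, so your sketch attempts more than the paper does. However, the ``functional equation lemma'' you state is false as written: you claim that $h(0)=0$ and $u(\widehat\phi)(h) \in \pi\cR\llbracket X\rrbracket$ force $h \in \cR\llbracket X\rrbracket$, but the logarithm $f$ itself is an immediate counterexample. By hypothesis $f$ is of type $u$, so $u(\widehat\phi)(f) \in \pi\cR\llbracket X\rrbracket$, yet $f$ typically has unbounded denominators (for $u = p - X$ the Lubin--Tate logarithm is $X + p^{-1}X^p + p^{-2}X^{p^2} + \cdots$). Several downstream claims inherit this error: $f(X)+f(Y)$ is not in $\cR\llbracket X,Y\rrbracket$ as you assert, and $\cF$ itself need not satisfy $u(\widehat\phi)(\cF) \equiv 0 \pmod{\pi}$ (take $\cF = X+Y+XY$ with $u = p-X$: one gets $p(X+Y+XY) - (X^p + Y^p + X^pY^p)$, not divisible by $p$).

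The correct lemma is \emph{relative} rather than absolute: if $f$ and $g$ are both of type $u$ with $f'(0)$ a unit, then $f^{-1}\circ g \in \cR\llbracket X\rrbracket$ (this is \cite[Prop.~2.6]{Honda70}, later axiomatised as Hazewinkel's functional equation lemma). Part (a) follows because $g(X,Y) \coloneqq f(X)+f(Y)$ inherits the type-$u$ condition directly from $f$, so $\cF = f^{-1}(g)$ is integral; part (b) follows by taking $f = \log_{\mathcal{G}}$ and $g = \log_\cF$. Your manipulation ``apply $u(\widehat\phi)$ to $\log_{\mathcal{G}} \circ h = \log_\cF$'' does not work as stated either, since $\widehat\phi$ does not commute with composition in the way that step would require.
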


\begin{proof}
    Claim (a) is proved in \cite[Thm.\@ 2]{Honda70}.
    To prove claim (b), we note that $\cF (X, Y) = \exp_\cF ( \log_\cF (X) + \log_\cF (Y))$ and $\mathcal{G} (X, Y) = \exp_\mathcal{G} ( \log_\mathcal{G} (X) + \log_\mathcal{G} (Y))$ by \cite[Thm.\@ 1]{Honda70}.
    If these have the same type, then they are isomorphic over $\cR$ by \cite[Thm.\@ 2]{Honda70}. 
    By \cite[Prop.\@ 1.6]{Honda70} the latter holds if and only if  $\exp_\mathcal{G} \circ \log_\cF$ belongs to $\cR \llbracket X \rrbracket$, as required to prove claim (b).
\end{proof}

We conclude our review of Honda theory with the following result.

\begin{prop} \label{elliptic curve Honda type}
Let $\widehat E$ denote the formal group of the minimal model (over $\Z_p$) of an elliptic curve $E$ defined over $\Q$. Then $\log_{\widehat{E}}$ is of Honda type $p - a_p X + \bm{1}_N (p) X^2$.
\end{prop}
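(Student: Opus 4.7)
The plan is to invoke the classical identity
\[
\log_{\widehat{E}}(X) = {\sum}_{n \geq 1} \frac{a_n}{n} X^n
\]
in $\Q_p \llbracket X \rrbracket$, where $(a_n)_{n \geq 1}$ are the Fourier coefficients of the normalised newform $f$ attached to $E$ by modularity. This identity is standard and comes from the fact that, in the parameter $T = -x/y$ on $\widehat E$, the N\'eron differential takes the shape $\omega_E = ({\sum}_{n \geq 1} a_n T^{n-1}) \, dT$, and $\log_{\widehat{E}}$ is by definition its formal antiderivative vanishing at $T = 0$ (the same computation appears, for instance, in \cite[\S 8.1]{Kobayashi03}).

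Given this identity, the verification of the Honda-type conditions is then purely formal. First I would observe that $\beta_0 = 0$ and $\beta_1 = a_1 = 1$, as required. Second, since $\cR = \Z_p$ one has $\phi = \id$, and so the operator $\widehat{\phi}$ simply sends ${\sum} \beta_i X^i$ to ${\sum} \beta_i X^{ip}$. Applying the operator $u(\widehat{\phi}) = p - a_p \widehat{\phi} + \bm{1}_N(p) \widehat{\phi}^2$ to $\log_{\widehat{E}}$ and collecting the coefficient of $X^n$ for each $n \geq 1$, I would then treat two cases. If $p \nmid n$, only the first summand contributes and the coefficient equals $p a_n / n$, which plainly lies in $p\Z_p$. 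If $p \mid n$, write $n = p^r m$ with $p \nmid m$ and $r \geq 1$; all three summands may contribute and a short calculation yields the coefficient
\[
\frac{a_m}{p^{r - 1} m} \bigl( a_{p^r} - a_p a_{p^{r - 1}} + \bm{1}_N (p) \, p \cdot a_{p^{r - 2}} \bigr),
\]
with the convention $a_{p^{-1}} \coloneqq 0$ to cover the case $r = 1$. By the Hecke multiplicativity $a_n = a_m a_{p^r}$ together with the standard recursion $a_{p^r} = a_p a_{p^{r-1}} - \bm{1}_N(p) \, p \, a_{p^{r-2}}$ (which holds for all $r \geq 1$ under the above convention), the inner bracket vanishes identically.

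This shows $u(\widehat{\phi})(\log_{\widehat{E}}) \in p\Z_p \llbracket X \rrbracket$, which is precisely the Honda-type condition for $u = p - a_p X + \bm{1}_N(p) X^2$. There is no substantive obstacle here: once the identification of $\log_{\widehat{E}}$ with the formal antiderivative of $f$ is in hand, the statement is a direct consequence of the basic recurrence satisfied by the Hecke eigenvalues $a_{p^r}$ at a single prime. The only point requiring minor care is the boundary case $r = 1$, and this is handled cleanly by setting $a_{p^{-1}} = 0$ so as to unify the formula across all positive $r$.
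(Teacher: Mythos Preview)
Your argument rests on the identity $\log_{\widehat{E}}(X) = \sum_{n \geq 1} \frac{a_n}{n} X^n$ with $a_n$ the Fourier coefficients of the newform, but this identity is \emph{false} as an equality of power series. The coefficients of $\omega_E$ in the formal parameter $T = -x/y$ are universal polynomials in the Weierstrass coefficients (e.g.\ the $T$-coefficient is the Weierstrass $a_1$), whereas the Fourier $a_n$ are determined by point counts on reductions. For any curve in the isogeny class $11a$, for instance, the Weierstrass $a_1$ vanishes, so the $T^2$-coefficient of $\log_{\widehat{E}}$ is $0$, while the Hecke eigenvalue $a_2$ equals $-2$. What \emph{is} true is that $\widehat{E}$ is isomorphic over $\Z_p$ to the formal group whose logarithm is $\sum a_n X^n / n$, but establishing that isomorphism is precisely Honda's theorem, so invoking it here is circular.

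The genuine content of the proposition is an Atkin--Swinnerton-Dyer--type congruence: writing $\log_{\widehat{E}}(T) = \sum c_n T^n / n$ with $c_n \in \Z$, one needs $c_{p^r m'} - a_p c_{p^{r-1} m'} + \bm{1}_N(p)\, p\, c_{p^{r-2} m'} \equiv 0 \pmod{p^r}$ for all $r \geq 1$ and $p \nmid m'$. This is not a formal consequence of Hecke multiplicativity; its proof requires the arithmetic of the reduction $\widetilde{E}/\mathbb{F}_p$ (the characteristic polynomial of Frobenius) together with Cartier--Dieudonn\'e theory or an equivalent device. The paper does not reproduce this argument and simply cites Honda's original papers \cite[Thm.~9, (6.6)]{Honda70} for good reduction and \cite[Thm.~5]{Honda68} for bad reduction; your write-up should do the same, or else supply the Frobenius argument directly.
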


\begin{proof}
    This is proved in \cite[Thm.\@ 9, (6.6)]{Honda70} in the case of good reduction and in \cite[Thm.~5]{Honda68} for the case of bad reduction.
\end{proof}

\subsubsection{An `Artin--Hasse type' exponential}

In this section we will apply the results from Honda theory reviewed in the last section to a certain explicit power series. To define this power series, we fix $m\in \N$ with $p \nmid m$ and also let $q$ denote an auxiliary prime number coprime to $m (p - 1) p$. Take $\cQ$ to be the unramified extension of $\Q_p$ obtained as the completion of $F_{mq}$ at a $p$-adic place, $\phi \coloneqq \sigma_p$ its Frobenius automorphism, and write $(p) \subseteq \cR$ for the maximal ideal and ring of integers of $\cQ$, respectively.\\ 
Fix an embedding $\overline{\Q} \hookrightarrow \overline{\Q_p}$ that allows to view $\zeta_a$, for every $a \in \N$, as an element of $\overline{\Q_p}$.\\ 
In the following, we define, for an element $\delta \in \Z_p$, the power series
\[
(1 + X)^\delta \coloneqq \exp ( \delta \log (1 + X)) = \sum_{i = 0}^\infty \binom{\delta}{i} X^i
\]
with $\binom{\delta}{i} \coloneqq i!^{-1} \prod_{j = 0}^{i - 1} (\delta - i) \in \Z_p$.\\ 
Given  $F = \sum_{i = 0}^\infty \beta_i X^i \in \cQ \llbracket X \rrbracket$ and $\sigma \in \gal{\cQ}{\Q_p}$, we also write $\sigma(F) \coloneqq \sum_{i = 0}^\infty \sigma (\beta_i) X^i$. 

\begin{prop} \label{definition g_chi proposition}
Write $\tau \: (\Z / p\Z)^\times \hookrightarrow \Z_p^\times$ for the $p$-adic Teichm\"uller character. For every $m_0 \mid d \mid m$ (viewed as an element of $\Z_p^\times$) and Dirichlet character $\chi \: (\Z / p\Z)^\times \to \Z_p^\times$ with $\chi \neq \tau$, we define
\begin{equation} \label{definition g_chi}
g_{\chi,d} (X) \coloneqq \log_{\widehat{E}} (X) + (p - 1)^{-1} \sum_{i = 0}^\infty c_{i + 1}^{(p)} \zeta_d^{p^i} \sum_{j = 1}^{p - 1} \chi^{-1} (j) ( (X + 1)^{d^{-1} \tau (j) p^i} - 1).
\end{equation}
Then the following claims are valid.
\begin{liste}
\item $g_{\chi , d} (X)$ is a well-defined element of $\cQ \llbracket X \rrbracket$ and of Honda type $p - a_p X + \bm{1}_N (p) X^2$.
    \item $(\sigma_p^{-n} g_{\chi , d}) (X)$ converges at $X = \zeta_{p^n} - 1$ for every $n \in \N$, and one has
    \[
    (\sigma_p^{-n} g_{\chi , d}) ( \zeta_{p^n} - 1) = \log_{\widehat{E}} ( \zeta_{p^n} - 1) + e_\chi \cdot \sum_{i = 0}^{n - 1} c_{i + 1}^{(p)} \sigma_p^{i - n} (\zeta_d) ( \zeta_{p^n}^{d^{-1} p^i} -1)
    \]
    with the usual idempotent $e_\chi \coloneqq (p - 1)^{-1} \sum_{a = 1}^{p- 1} \chi (a)^{-1} \sigma_a$.
    \item Set $h_{\chi , d} \coloneqq \exp_{\widehat E} \circ g_{\chi , d}$. Then $h_{\chi , d}$ is a well-defined element of $\cR \llbracket X \rrbracket$,
    the power series $(\sigma_p^{-n}g_{\chi , d}) (X)$ converges in $\zeta_{p^n} - 1$, and one has
    \[
    \log_{\widehat{E}} ( (\sigma_p^{-n} h_{\chi , d}) ( \zeta_{p^n} - 1)) = (\sigma_p^{-n}g_{\chi , d}) (\zeta_{p^n} - 1).
    \]
\end{liste}
\end{prop}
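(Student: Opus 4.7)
I would prove the three parts in order, with (a) being the technical crux and (c) following quickly from Honda theory once (a) is established.

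For part (a), convergence of the defining sum in $\cQ\llbracket X\rrbracket$ follows from a valuation estimate: expanding $(X+1)^{d^{-1}\tau(j)p^i} - 1$ via the binomial series and using the Stirling expansion $\binom{a}{k} = (k!)^{-1}\sum_{s \geq 1} s(k,s) a^s$, the inner sum over $j$ is, by orthogonality of characters $\sum_{j=1}^{p-1} \chi^{-1}(j) \tau(j)^s = (p-1) \delta_{\chi, \tau^s}$, supported on $s$ with $\tau^s = \chi$. The hypothesis $\chi \neq \tau$ forces the smallest contributing $s^{*}$ to satisfy $s^{*} \geq 2$, so that the coefficient of $X^k$ in the $i$-th summand has $p$-adic valuation at least $2i - v_p(k!)$. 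Combined with $v_p(c_{i+1}^{(p)}) \geq -i$ (from the linear recurrence whose characteristic polynomial is $pT^2 - a_p T + \bm{1}_N(p)$), this gives valuation $\geq i - v_p(k!)$, which tends to infinity with $i$ and ensures convergence. The conditions $\beta_0 = 0$ and $\beta_1 = 1$ are then immediate: the first from $g_{\chi,d}(0) = 0$, the second from $\log_{\widehat{E}}'(0) = 1$ and $\sum_{j=1}^{p-1} \chi^{-1}(j) \tau(j) = 0$ (using $\chi \neq \tau$).

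For the Honda condition $u(\widehat\phi) g_{\chi,d} \in p\cR\llbracket X \rrbracket$, Proposition \ref{elliptic curve Honda type} handles $\log_{\widehat{E}}$, so it suffices to treat $G \coloneqq g_{\chi,d} - \log_{\widehat{E}}$. Writing $G = (p-1)^{-1} \sum_i c_{i+1}^{(p)} \zeta_d^{p^i} A_i$ with $A_i(X) \coloneqq \sum_j \chi^{-1}(j)((X+1)^{d^{-1}\tau(j)p^i} - 1)$, one has $\widehat\phi A_i(X) = A_i(X^p)$ and $A_{i+1}(X) = A_i((X+1)^p - 1)$, so $A_i(X^p) = A_{i+1}(X) - E_i(X)$ with $E_i \in p\cR\llbracket X\rrbracket$ arising from $(X+1)^p - X^p - 1 \in p\Z_p[X]$. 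Collecting terms in $u(\widehat\phi) G$ by the index of the $A_j$, the recurrence $pc_{j+1}^{(p)} - a_p c_j^{(p)} + \bm{1}_N(p) c_{j-1}^{(p)} = 0$ (for $j \geq 1$) causes the principal terms to cancel for $j \geq 2$; the $j = 1$ contribution cancels via $pc_2^{(p)} = a_p c_1^{(p)}$. The surviving principal contribution is the boundary term $\frac{p}{p-1}\zeta_d A_0(X)$, which visibly lies in $p\cR\llbracket X\rrbracket$. The remaining ``discrepancy'' contributions involving the $E_i$ are handled by the same $s^{*} \geq 2$ character cancellation: each coefficient of $X^m$ in $\sum_i c_{i+1}^{(p)} \zeta_d^{p^{i+1}} E_i$ gains a further factor of $p^{is^{*}}$ from the Stirling analysis of the $\gamma_k^{(i)} \coloneqq \sum_j \chi^{-1}(j) \binom{d^{-1}\tau(j) p^i}{k}$ appearing in $E_i$, which overpowers the growth of $c_{i+1}^{(p)}$ to yield the required divisibility by $p$. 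This step is the main obstacle: balancing the recurrence-induced cancellation against these discrepancy errors requires the $s^{*} \geq 2$ cancellation at more than one point, and the bookkeeping is delicate.

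For part (b), substitute $X = \zeta_{p^n} - 1$ into $\sigma_p^{-n} g_{\chi,d}$. The $\log_{\widehat{E}}$-piece converges on the maximal ideal. In the remaining sum, $(X+1)^{d^{-1}\tau(j) p^i}$ evaluates to $\zeta_{p^n}^{d^{-1}\tau(j) p^i}$, which equals $1$ for $i \geq n$ and so truncates the sum to $0 \leq i < n$. Reorganising using $\sigma_p^{-n}(\zeta_d^{p^i}) = \sigma_p^{i-n}(\zeta_d)$ and collecting the sum over $j$ into the idempotent $e_\chi = (p-1)^{-1}\sum_a \chi(a)^{-1}\sigma_a$ then yields the stated formula. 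Finally for part (c), Theorem \ref{Honda's thm}(a) applied to part (a) produces a formal group $\cF$ over $\cR$ with $\log_\cF = g_{\chi,d}$; then Theorem \ref{Honda's thm}(b), applied to $\cF$ and $\widehat{E}$ (which share the Honda type $p - a_p X + \bm{1}_N(p) X^2$ by (a) and Proposition \ref{elliptic curve Honda type}), shows $h_{\chi,d} = \exp_{\widehat{E}} \circ g_{\chi,d}$ lies in $\cR\llbracket X\rrbracket$. Convergence of $(\sigma_p^{-n} g_{\chi,d})(X)$ at $\zeta_{p^n} - 1$ is part (b), and the identity $\log_{\widehat{E}}((\sigma_p^{-n} h_{\chi,d})(\zeta_{p^n} - 1)) = (\sigma_p^{-n} g_{\chi,d})(\zeta_{p^n} - 1)$ is obtained by applying $\sigma_p^{-n}$ to the tautology $\log_{\widehat{E}} \circ h_{\chi,d} = g_{\chi,d}$ and evaluating, noting that $\sigma_p^{-n}$ commutes with $\log_{\widehat{E}}$ and $\exp_{\widehat{E}}$ since the latter have coefficients in $\Q_p$.
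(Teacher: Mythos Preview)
Your plan for the convergence argument in (a), and for parts (b) and (c), is essentially the paper's. The divergence is in the Honda-type verification, where you have the right skeleton (telescope via the recurrence for the $c_i^{(p)}$) but your proposed control of the discrepancy terms $E_i$ does not close.

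The problem is the claimed bound ``gains a further factor of $p^{is^*}$''. The Stirling/orthogonality argument you cite for $\gamma_l^{(i)} = \sum_j\chi^{-1}(j)\binom{d^{-1}\tau(j)p^i}{l}$ gives only $v_p(\gamma_l^{(i)}) \geq is^* - v_p(l!)$, not $\geq is^*$, and the $v_p(l!)$ penalty is fatal. Writing $E_i = \sum_l\gamma_l^{(i)}\big[((X+1)^p-1)^l - X^{pl}\big]$, the coefficient of $X^m$ in $E_i$ receives contributions from $l$ up to $m$, so your bound yields only $v_p(\mathrm{coeff}_m(c_{i+1}^{(p)}E_i)) \geq i(s^*-1) + 1 - v_p(m!)$, which is $< 1$ already for $i = 1$ and $m \geq p^2$. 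The character cancellation sees the powers of $\tau(j)$ but not the $1/l!$ in the binomial coefficients, and no rearrangement of the bookkeeping recovers the missing $v_p(m!)$.

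The paper's route is orthogonal to the character sum: it proves the \emph{pointwise} integrality $l\,c_{i+1}^{(p)}\binom{d^{-1}\tau(j)p^i}{l} \in \Z_p$, i.e.\ $l\binom{up^i}{l} \in p^i\Z_p$ for $u \in \Z_p^\times$, by a direct $p$-adic estimate on $\binom{up^i}{l}$ (the identity $l\binom{a}{l} = a\binom{a-1}{l-1}$ already gives it). This is exactly the hypothesis of Lemma~\ref{useful congruence}, and it yields $c_{i+1}^{(p)}\big[(X^p+1)^{d^{-1}\tau(j)p^i} - (X+1)^{d^{-1}\tau(j)p^{i+1}}\big] \in p\Z_p\llbracket X\rrbracket$ for each $i$ and each $j$ separately; summing over $j$ then gives $c_{i+1}^{(p)}E_i \in p\cR\llbracket X\rrbracket$ and the telescoping goes through. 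Note that the paper uses $\chi \neq \tau$ only for convergence and for $g'_{\chi,d}(0)=1$, not for the Honda congruence itself; your attempt to make the character cancellation do double duty is where the approach breaks down. The same integrality $l\beta_l \in \cR$ is also what the paper invokes (via \cite[Ch.~IV, Lem.~6.3(a)]{Silverman}) to justify the term-by-term evaluation at $\zeta_{p^n}-1$ in part~(b), which you pass over.
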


\begin{proof}
To prove the first part of claim (a), we note that when expanding out the expression $\sum_{i = 0}^\infty c_{i + 1}^{(p)} \zeta_d^{p^i} \sum_{j = 1}^{p - 1} \chi^{-1} (j) ( (X + 1)^{d^{-1} \tau (j) p^i} - 1)$, one obtains a power series of the form $\sum_{l = 0}^\infty \beta_l X^l$ where $\beta_l = \sum_{i = 0}^{\infty} c_{i + 1}^{(p)} \zeta_d^{p^i} \sum_{j = 1}^{p - 1} \chi (j)^{-1}  \binom{p^i d^{-1} \tau (j)}{l}$. Now,
\begin{align} \nonumber
\sum_{j = 1}^{p - 1} \chi (j)  \binom{p^i d^{-1} \tau (j)}{l} & = 
 \frac{1}{l !}  \sum_{j = 1}^{p - 1} \chi^{-1} (j) \prod_{\alpha = 0}^{l - 1} ( p^i d^{-1} \tau (j) - \alpha)\\
& =  \frac{1}{l !} \sum_{j = 1}^{p - 1} \chi^{-1} (j) \sum_{a = 1}^l B_a (p^i d^{-1} \tau (j))^a
\label{expanding binomial quotients out}
.\end{align}
with suitable integers $B_a \in \Z$. Since $\tau \neq \chi$, we have $\sum_{j = 1}^{p - 1} \chi (j)^{-1} \tau (j) = 0$ and so the $p$-adic valuation of (\ref{expanding binomial quotients out}) is at least $2i - l$. Here we are using Legendre's formula to observe the bound $\ord_p(l!) \leq l$. This then combines with the observations that $\ord_p (c_{i + 1}^{(p)}) \geq - i$ and $\ord_p (\zeta_d^{p^i}) = 0$ to imply that $\beta_l$ is a sum of terms that have $p$-adic valuation at least $i - l$
, and hence converges.\\
To show that $g_{\chi , d} (X)$ is of type $p - a_p X + \bm{1}_N (p) X^2$ we begin by noting that it is immediate from the definition of $g_{\chi , d} (X)$ that its constant term is $\log_{\widehat{E}} (0) + \beta_0 = 0$. Similarly, we see that $g_{\chi , d} ' (0) = 1$ because $\beta_1 = \sum_{i\geq 0} c_{i + 1}^{(p)} \zeta_d^{p^i} p^i d^{-1} \sum_{j = 1}^{p - 1} \chi^{-1} (j) \tau (j) = 0$.\\
We next verify that $(p - a_p \widehat \phi + \bm{1}_N (p)\widehat \phi^2) g_{\chi , d} (X)$ belongs to $p \cR \llbracket X \rrbracket$. In light of Proposition~\ref{elliptic curve Honda type}, we need only verify that $(p - a_p \widehat \phi + \bm{1}_N (p)\widehat \phi^2) \sum_{l = 0}^\infty \beta_l X^l$ belongs to $p \cR \llbracket X \rrbracket$. \\
Let us consider the $p$-order of the binomial expression in the definition of $\beta_l$. We have
\[ \binom{p^i d^{-1} \tau (j)}{l} = \frac{1}{l !} (p^i d^{-1} \tau (j)) \prod_{a=1 }^{l-1} ( p^i d^{-1} \tau (j) -a ) . \]
Write $l-1$ in its $p$-adic expansion, $l-1= x_n p^n + \dots + x_1 p +x_0$, i.e.\@ $x_i \in \{ 0, 1, \dots , p-1 \}$ and $x_n \neq 0$. We then note the following equality of sets, for $\gamma \in \{ 1, \dots , x_n \}$,
\begin{multline*}
    \{ p^i d^{-1}\tau(j) - a \pmod{p^n} \mid (\gamma-1)p^n +1 \leq a \leq \gamma p^n  \} \\ = \{ a \pmod{p^n} \mid (\gamma-1)p^n +1 \leq a \leq \gamma p^n \} .
\end{multline*}
Therefore, 
\begin{equation}\label{e: step 1 ord calc}
    \ord_p\bigg( \frac{\prod_{a=1 }^{x_n p^n} ( p^i d^{-1} \tau (j) -a )}{(x_np^n)!} \bigg) \geq 0.
\end{equation}
Similarly, we have for $\gamma=1, \dots , x_{n-1}$,
\begin{multline*}
    \{ p^i d^{-1}\tau(j) - a \pmod{p^{n-1}} \mid p^n + (\gamma-1)p^{n-1} +1 \leq  a \leq p^n + \gamma p^{n-1}  \} \\ = \{ a \pmod{p^{n-1}} \mid p^n + (\gamma-1)p^{n-1} +1 \leq a \leq  p^n + \gamma p^{n-1} \} .
\end{multline*}
We note that for any $a$ in the range used in the set above the highest power of $p$ that can divide it is $p^{n-1}$. Combining this calculation with (\ref{e: step 1 ord calc}) we have
\[ \ord_p\bigg( \frac{\prod_{a=1 }^{x_n p^n+ x_{n-1}p^{n-1}} ( p^i d^{-1} \tau (j) -a )}{(x_np^n+ x_{n-1}p^{n-1})!} \bigg) \geq 0. \]
Repeating this process one observes that 
\[ \ord_p\bigg( \frac{\prod_{a=1 }^{l} ( p^i d^{-1} \tau (j) -a )}{(l-1)!} \bigg) \geq 0. \]
In particular, $\ord_p ( \binom{p^i d^{-1} \tau (j)}{l}) \geq i - \ord_p( l)$. Therefore, by the observations made above we have $ l c_{i + 1}^{(p)}\binom{p^i d^{-1} \tau (j)}{l} \in \Z_p$. Thus, we can deduce from Lemma \ref{useful congruence} that
\begin{align*}
\widehat \phi \big( c_{i + 1}^{(p)} ( ( X + 1)^{d^{-1} \tau (j)p^i} - 1) \big)
& = c_{i + 1}^{(p)} ( ( X^p + 1)^{d^{-1} \tau (j)p^i} - 1) \\
& \equiv c_{i + 1}^{(p)} ( ( X + 1)^{d^{-1} \tau (j)p^{i + 1}} - 1) \mod p \Z_p \llbracket X \rrbracket
\end{align*}
for every $i \geq 0$ and $j \in (\Z / p \Z)^\times$. For the first equality we note that $\phi$ acts trivially on $c_{i+1}^{(p)}\binom{p^i d^{-1} \tau (j)}{l} \in \Q_p$. We may therefore use the relation $p c_{i + 1}^{(p)} - a_p c_i^{(p)} + \bm{1}_N (p) c_{i - 1}^{(p)} = 0$ to calculate that
\begin{align*}
 & \qquad (p - a_p \widehat \phi + \bm{1}_N (p)\widehat \phi^2) \sum_{i = 0}^\infty c_{i + 1}^{(p)} \zeta_d^{p^i} ( ( X + 1)^{d^{-1} \tau (j) p^i} - 1) \\
 & \equiv 
\sum_{i = 0}^\infty c_{i + 1}^{(p)}  \big( p \zeta_d^{p^i} ( ( X + 1)^{d^{-1}\tau (j) p^i} - 1) 
 - a_p \zeta_d^{p^{i + 1}} ( ( X + 1)^{d^{-1} \tau (j) p^{i + 1}} - 1) \\
 & \qquad + \bm{1}_N (p) \zeta_d^{p^{i + 2}} ( ( X + 1)^{d^{-1} \tau (j) p^{i + 2}} - 1) \big) \mod p \Z_p \llbracket X \rrbracket \\
& = p \zeta_d^{p} c_1^{(p)} X + \sum_{i = 1}^\infty \zeta_d^{p^i} \big( p c_{i + 1}^{(p)} - a_p c_{i}^{(p)} + \bm{1}_N (p) c_{i - 1}^{(p)} \big) ( X + 1)^{d^{-1} \tau (j) p^i} - 1) \\
& = p \zeta_d^{p} X.
\end{align*}
This implies the congruence required to establish that $g_{\chi , d} (X)$ is of the claimed Honda type. \\
As for claim (b), it follows immediately from the calculations made in the proof of (a) that $l \beta_l \in \cR$ for all $l \geq 0 $. 
Given this, it follows from \cite[Ch.\@ IV, Lem.\@ 6.3\,(a)]{Silverman} that $(\sigma_p^{-n}g_{\chi , d}) (X)$ converges when evaluated at $\zeta_{p^n} - 1$ and we may compute $(\sigma_p^{-n} g_{\chi , d}) (\zeta_{p^n} - 1)$ by evaluating (\ref{definition g_chi}) after applying $\sigma_p^{-n}$ (cf.\@ \cite[Lem.\@ 3.17]{Kataoka1}).\\
Since $g_{\chi , d} (X)$ and $\log_{\widehat{E}} (X)$ have the same Honda type (by claim (a) and Proposition \ref{elliptic curve Honda type}), it follows from Theorem \ref{Honda's thm} that $h_{\chi , d}$ belongs to $\cR \llbracket X \rrbracket$ and, in particular, converges when evaluated at $\zeta_{p^n} - 1$. To prove the remaining part of claim (c), we note that we have the formal identity $(\log_{\widehat{E}} \circ \exp_{\widehat{E}}) (X) = X$, which implies that $(\log_{\widehat{E}} \circ h_{\chi , d}) (X) = g_{\chi , d} (X)$ as power series. 
Since $\log_{\widehat{E}}$ has coefficients in $\Q_p$, this also implies that $(\log_{\widehat{E}} \circ  \sigma_p^{-n} (h_{\chi , d})) (X) = \sigma_p^{-n} (g_{\chi , d}) (X)$.
The latter equality still holds true when evaluated at $\zeta_{p^n} - 1$ because the coefficients of $\sigma_p^{-n} (h_{\chi , d})$ are in $\cR$ (see, for example, \cite[\S\,6.1.5]{Robert00}).
\end{proof}

\begin{rk}
The purpose of the sum $\sum_{j = 1}^{p - 1}$ in (\ref{definition g_chi}) is to avoid delicate convergence issues, and this extends an idea of Kobayashi. That is,
our definition of $g_{\chi, d}$ is directly inspired by a definition of Kobayashi in \cite[\S\,2]{Kobayashi06} that can be seen as a special case of (\ref{definition g_chi}) for $\chi = \bm{1}$. 
\end{rk}

We can now define useful local points by appropriately evaluating the power series $h_{\chi, d}$ defined in Proposition \ref{definition g_chi proposition}.

\begin{definition} \label{def x tilde}
Take $\epsilon_d$ to be a preimage of $\zeta_d p$ under $\log_{\widehat E} \: \widehat E ( p \cR) \stackrel{\simeq}{\to} \mathbb{G}_a (p \cR)$. We define an element of $\Q_p \otimes_{\Z_p} \widehat E ( \cM_{\cQ (\zeta_{p^n})})$ as
\[
\widetilde x_{mp^n} \coloneqq 
\sum_{m_0 \mid d \mid m}  \big( \sum_{\chi \neq \tau} \big( \sigma_p^{- n} (h_{\chi , d}) (\zeta_{p^n} - 1)  -_{\widehat{E}} (\zeta_{p^n} - 1) \big)  +_{\widehat E} (p\Eul_p (\sigma_p))^{-1} \sigma_p^{-n}(\epsilon_d) \big),
\]
where the sum ranges over all characters $\chi \: (\Z / p \Z)^\times \to \Z_p^\times$ that are not equal to the Teichm\"uller character $\tau$ and $\cM_{\cQ (\zeta_{p^n})}$ is the maximal ideal of $\cQ (\zeta_{p^n})$.
\end{definition}

The following result establishes the connection of these local points with Otsuki's elements from Definition \ref{otsuki element}.

\begin{lem} \label{key property x tilde}
In $\Q_p (\zeta_{mp^n})$, one has the equality
\[
\log_{\widehat{E}} ( \widetilde x_{mp^n}) = (1 - e_\tau) \cdot \sum_{m_0 \mid d \mid m} \big( \Eul_p (\hat \sigma_p)^{-1} (X) \big) ( \zeta_{dp^n}).
\]
\end{lem}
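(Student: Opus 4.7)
Because $\log_{\widehat E}$ is a formal-group homomorphism, it converts the operations $+_{\widehat E}, -_{\widehat E}$ into usual addition and subtraction. The plan is therefore to apply $\log_{\widehat E}$ term-by-term to Definition~\ref{def x tilde}, rewrite each piece explicitly, and match the result to the claimed right-hand side via the Otsuki relation (\ref{Otsuki relation}) with $j=n$.

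First, I would compute the contribution of a single divisor $d$ with $m_0 \mid d \mid m$. For each character $\chi \neq \tau$, Proposition~\ref{definition g_chi proposition}\,(c) gives
$\log_{\widehat E}\big( \sigma_p^{-n}(h_{\chi,d})(\zeta_{p^n}-1)\big) = (\sigma_p^{-n} g_{\chi,d})(\zeta_{p^n}-1)$,
and Proposition~\ref{definition g_chi proposition}\,(b) expands this as $\log_{\widehat E}(\zeta_{p^n}-1) + e_\chi \sum_{i=0}^{n-1} c_{i+1}^{(p)} \sigma_p^{i-n}(\zeta_d)(\zeta_{p^n}^{d^{-1} p^i} -1)$. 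Subtracting $\log_{\widehat E}(\zeta_{p^n}-1)$ and summing over $\chi \neq \tau$ collapses the idempotents via the identity $\sum_{\chi\neq\tau} e_\chi = 1 - e_\tau$. For the last summand of $\widetilde x_{mp^n}$, since $\log_{\widehat E}(\epsilon_d) = \zeta_d p$ by construction, we obtain $\log_{\widehat E}\big((p\Eul_p(\sigma_p))^{-1} \sigma_p^{-n}(\epsilon_d)\big) = \Eul_p(\sigma_p)^{-1} \sigma_p^{-n}(\zeta_d)$, where we use that $\Eul_p(\sigma_p)^{-1}$ and $\sigma_p^{-n}$ commute with $\log_{\widehat E}$.

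Next I would identify the roots of unity appearing. For $0\le i\le n-1$ the element $\sigma_p^{i-n}(\zeta_d) \cdot \zeta_{p^n}^{d^{-1} p^i}$ equals $\zeta_d^{a}\zeta_{p^{n-i}}^{b}$ with integers $a,b$ satisfying $ap^{n-i}\equiv 1\pmod d$ and $bd\equiv 1\pmod{p^{n-i}}$. A short CRT argument shows $ap^{n-i}+bd \equiv 1 \pmod{dp^{n-i}}$, hence this product is precisely $\zeta_{dp^{n-i}}$. Consequently $\sigma_p^{i-n}(\zeta_d)(\zeta_{p^n}^{d^{-1}p^i}-1) = \zeta_{dp^{n-i}} - \sigma_p^{i-n}(\zeta_d)$.

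Putting the pieces together, $\log_{\widehat E}(\widetilde x_{mp^n})$ becomes
\[
\sum_{m_0 \mid d \mid m} \Big[(1-e_\tau) \sum_{i=0}^{n-1} c_{i+1}^{(p)}\zeta_{dp^{n-i}} \;-\; \sum_{i=0}^{n-1} c_{i+1}^{(p)} \sigma_p^{i-n}(\zeta_d) \;+\; \Eul_p(\sigma_p)^{-1} \sigma_p^{-n}(\zeta_d)\Big],
\]
where I have used that $\sigma_p^{i-n}(\zeta_d)\in F_m$ is fixed by the decomposition group generated by $\{\sigma_a : a \in (\Z/p\Z)^\times\}$ and hence unaffected by $(1-e_\tau)$. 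On the other hand, applying the Otsuki relation (\ref{Otsuki relation}) with $\ell = p$ and $j = n$ to the polynomial $X$, evaluating at $\zeta_{dp^n}$, and using the identity (\ref{useful observation 1}) to handle $\hat\sigma_p^n$, yields
$(\Eul_p(\hat\sigma_p)^{-1}(X))(\zeta_{dp^n}) = \sum_{i=0}^{n-1} c_{i+1}^{(p)} \zeta_{dp^{n-i}} + \widetilde F_p^{(n)}(\sigma_p) \Eul_p(\sigma_p)^{-1}(\zeta_d)$. Since the second summand is fixed by $(1-e_\tau)$, matching the two expressions reduces to the operator identity
\[
\Eul_p(\sigma_p)^{-1}\sigma_p^{-n}(\zeta_d) - \sum_{i=0}^{n-1} c_{i+1}^{(p)}\sigma_p^{i-n}(\zeta_d) = \widetilde F_p^{(n)}(\sigma_p) \Eul_p(\sigma_p)^{-1}(\zeta_d),
\]
which is a direct consequence of (\ref{Otsuki relation}) applied to $\sigma_p^{-n}(\zeta_d)$ with $j=n$.

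The main technical nuisance will be the CRT bookkeeping for the identification $\sigma_p^{i-n}(\zeta_d)\zeta_{p^n}^{d^{-1}p^i} = \zeta_{dp^{n-i}}$ and keeping track of how $(1-e_\tau)$ acts on the various constituents, particularly in separating the $\zeta_d$-fixed pieces from the genuinely $p$-power pieces. The rest is a bookkeeping application of the previously established properties of $h_{\chi,d}$, $g_{\chi,d}$, $\epsilon_d$, and the Otsuki recursion.
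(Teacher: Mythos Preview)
Your proposal is correct and follows essentially the same route as the paper's proof: both apply Proposition~\ref{definition g_chi proposition}\,(b,c) term-by-term, use that $(1-e_\tau)$ acts trivially on $\cQ$, invoke the Otsuki relation~(\ref{Otsuki relation}) once as an operator identity on $F_m$ (to handle the $\Eul_p(\sigma_p)^{-1}\sigma_p^{-n}(\zeta_d)$ contribution from $\epsilon_d$) and once in its $\hat\sigma_p$-form together with~(\ref{useful observation 1}) (to unwind $(\Eul_p(\hat\sigma_p)^{-1}(X))(\zeta_{dp^n})$), and rely on the root-of-unity identity $\sigma_p^{-n}(\zeta_d)\zeta_{p^n}^{d^{-1}}=\zeta_{dp^n}$. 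The only cosmetic difference is that the paper keeps all $\hat\sigma_p^i$-powers attached to the single evaluation point $\sigma_p^{-n}(\zeta_d)\zeta_{p^n}^{d^{-1}}$ and makes the root-of-unity identification once at the end, whereas you identify $\sigma_p^{i-n}(\zeta_d)\zeta_{p^n}^{d^{-1}p^i}=\zeta_{dp^{n-i}}$ for each $i$ separately; the underlying computation is the same.
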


\begin{proof}
At the outset we recall that the idempotent $1 - e_\tau$ acts as the identity on $\cQ$.
Using Proposition \ref{definition g_chi proposition} and Otsuki's relation (\ref{Otsuki relation}) (for the equalities ($\ast$)), we may then calculate that
\begin{align*}
\log_{\widehat{E}} ( \widetilde x_{mp^n}) & = \sum_{m_0 \mid d \mid m} \big(  \sum_{\chi \neq \tau} \big( \sigma_p^{- n} (g_{\chi , d}) (\zeta_{p^n} - 1)  - \log_{\widehat E} (\zeta_{p^n} - 1) \big) +  (p \Eul_p (\sigma_p))^{-1} p \sigma_p^{-n} (\zeta_d) \big) \\
& \stackrel{(\ast)}{=} \sum_{m_0 \mid d \mid m} \big(  (1 - e_\tau) \sum_{i = 0}^{n-1} c_{i + 1} \sigma_p^{i - n} (\zeta_d) (\zeta_{p^{n}}^{ d^{-1} p^{i}} - 1) \\
& \qquad \qquad +  \big( \sum_{i = 0}^{n - 1} c_{i + 1} \sigma_p^i + \widetilde{F}^{(n)}_p (\sigma_p) \cdot \Eul_p (\sigma_p)^{-1} \big) \sigma_p^{-n} (\zeta_d) \big) \\
& = \sum_{m_0 \mid d \mid m} \big( (1 - e_\tau) \big( \sum_{i = 0}^{n - 1} c_{i + 1} \sigma_p^{i - n} (\zeta_d) \zeta_{p^{n}}^{d^{-1} p^{i}} + \widetilde{F}^{(n)}_p (\sigma_p) \cdot \Eul_p (\sigma_p)^{-1} \cdot \sigma_p^{-1}(\zeta_d) \big) \big) \\
& =  (1 - e_\tau) \cdot \sum_{m_0 \mid d \mid m} \big( \big( \sum_{i = 0}^{n - 1} c_{i + 1} \hat \sigma_p^i + \widetilde{F}^{(n)}_p (\sigma_p) \cdot \Eul_p (\sigma_p)^{-1} \hat \sigma_p^n \big) (X) \big) ( \sigma_p^{-n} (\zeta_d) \zeta_{p^n}^{d^{-1}}) \\
& \stackrel{(\ast)}{=} (1 - e_\tau) \cdot \sum_{m_0 \mid d \mid m} \big( \mathrm{Eul}_p (\hat \sigma_p)^{-1} (X) \big)( \zeta_{dp^n}),
\end{align*}
where the last equality uses that $\sigma_p^{-n} (\zeta_d) \zeta_{p^n}^{d^{-1}} = \zeta_{dp^n}$. 
\end{proof}

\subsubsection{An explicit reciprocity law and the proof of Theorem \ref{local points main result}}
\label{proof local points main result section}

We require the following modification of Kato's explicit reciprocity law (as stated in Theorem~\ref{kato euler system}~(c)).

\begin{prop}[Otsuki] \label{Otsuki's reciprocity law}
    For every $m \in \N$ coprime to $p$ and $n \in \Z_{\geq 0}$ one has
    \[    
    {\sum}_{\sigma \in G}  \mathrm{Tr}_{(\Q_p \otimes_\Q F_{mp^n}) / \Q_p} (  \sigma (x_{mp^n}) \cdot (\exp^\ast_{\omega_E} \circ \mathrm{loc}^{(p)}_{/f}) ( y^\mathrm{Kato}_{mp^n}) ) \sigma  = \theta_{mp^n}^\mathrm{MT} . \]
\end{prop}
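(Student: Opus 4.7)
The plan is to verify the claimed equality character by character. Since characters $\chi \: G_{mp^n} \to \overline{\Q_p}^\times$ separate elements of $\overline{\Q_p}[G_{mp^n}]$, it suffices to show that applying each such $\chi$ to both sides yields the same scalar. For the right-hand side, Birch's interpolation formula (recalled earlier in the paper) gives
\[
\chi(\theta^{\mathrm{MT}}_{mp^n}) = \frac{G(\chi) \cdot L(E, \chi^{-1}, 1)}{\Omega^{\epsilon(\chi)}},
\]
where $G(\chi) = \sum_a \chi(\sigma_a) \zeta_{mp^n}^a$ is the Gauss sum, so the task is to prove that the character values of the left-hand side reproduce this expression exactly.

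For the left-hand side, I would set $L := F_{mp^n}$ and $A := (\exp^\ast_{\omega_E} \circ \loc^{(p)}_{/f})(y^{\mathrm{Kato}}_{mp^n}) \in \Q_p \otimes_\Q L$, so that applying $\chi$ and using $\Q_p$-linearity together with Galois equivariance of $\mathrm{Tr}_{L_p/\Q_p}$ yields $\mathrm{Tr}_{L_p/\Q_p}\bigl(\bigl(\sum_\sigma \chi(\sigma) \sigma(x_{mp^n})\bigr) \cdot A\bigr)$. Next I would extend scalars to a field containing all the character values and use the decomposition $\overline{\Q_p} \otimes_\Q L \cong \prod_\psi \overline{\Q_p}$ via the characters $\psi$ of $G_{mp^n}$, under which the trace pairing becomes a sum of pointwise products of $\psi$- and $\psi^{-1}$-components. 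In this decomposition, $\sum_\sigma \chi(\sigma)\sigma(x_{mp^n})$ picks out the $\chi^{-1}$-component of $x_{mp^n}$ (up to the normalisation $|G_{mp^n}|$), and the computation reduces to the product of this scalar with the $\chi$-component of $A$.

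Then I would compute these two scalars explicitly. By Theorem \ref{kato euler system}\,(c) applied to $z^{\mathrm{Kato}}_L$, combined with the definition $y^{\mathrm{Kato}}_L = \bigl(\prod_{\ell \in S_N \setminus S_{pm}} \Eul_\ell(\Frob_\ell^{-1})\bigr)^{-1} z^{\mathrm{Kato}}_L$, the $\chi$-component of $A$ is
\[
\bigl(\prod_{\ell \in S_N \setminus S_{pm}} \Eul_\ell(\chi(\Frob_\ell)^{-1})\bigr)^{-1} \cdot \frac{L_{S(L)}(E, \chi^{-1}, 1)}{\Omega^{\mathrm{sgn}(\chi)}}.
\]
For the $\chi^{-1}$-component of $x_{mp^n}$, the crucial point is that the operator $\hat\sigma_\ell$ (defined by $X \mapsto X^\ell$) acts on $\zeta_{mp^n}^a$ by multiplying the exponent by $\ell$, which mirrors the Galois action $\sigma_\ell(\zeta_{mp^n}) = \zeta_{mp^n}^\ell$ at primes $\ell \nmid mp^n$. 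Combining this with the Gauss-sum identity $\sum_a \chi(\sigma_a)\zeta_{mp^n}^a = G(\chi)$, the definition $x_{mp^n} = \bigl(\prod_{\ell\mid mp}\Eul_\ell(\hat\sigma_\ell)^{-1}(X)\bigr)(\zeta_{mp^n})$ then yields $\chi(x_{mp^n})$ as a product of $G(\chi)$ with the Euler factors $\prod_{\ell \mid mp} \Eul_\ell(\chi(\sigma_\ell))^{-1}$, after taking careful note of primes at which $\chi$ is ramified. Multiplying the two scalars, the Euler factors align to convert $L_{S(L)}(E, \chi^{-1}, 1)$ into the full $L$-value $L(E,\chi^{-1},1)$, and the remaining Gauss sum and period factor combine to give exactly $\chi(\theta^{\mathrm{MT}}_{mp^n})$.

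I expect the main obstacle to be the careful bookkeeping of Euler factors: one must track which primes' Euler factors appear in $\exp^\ast_{\omega_E}(y^{\mathrm{Kato}}_{mp^n})$ versus in the operator $\prod_{\ell\mid mp}\Eul_\ell(\hat\sigma_\ell)^{-1}$, and verify that together they contribute \emph{exactly} the set of Euler factors absent from $L_{S(L)}(E, \chi^{-1}, 1)$ — particularly at primes where $\chi$ is ramified, where the factor disappears from Kato's formula but survives in $x_{mp^n}$. A secondary but subtle point is the compatibility of $\epsilon(\chi)$ (the sign of $\chi(\sigma_{-1})$ appearing in Birch's formula) with $\mathrm{sgn}(\chi)$ (appearing in Kato's explicit reciprocity law); under the chosen conventions these must coincide, which needs to be recorded explicitly.
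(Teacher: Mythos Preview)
Your strategy of checking the identity character by character is exactly the approach used in Otsuki's paper (which the present paper simply cites for this result, together with a remark that the normalisation of $x_{mp^n}$ and $y^{\mathrm{Kato}}_{mp^n}$ here differs from Otsuki's by a compensating factor of $\Eul_p(\sigma_p)^{\pm 1}$). So the broad plan is correct, and the paper offers no alternative argument to compare against.

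However, two of your intermediate formulas are not right and would derail the computation if taken at face value. First, Birch's interpolation formula as recorded in the paper applies only to \emph{primitive} characters of conductor $mp^n$; for imprimitive $\chi$ you must descend via the norm relations for $\theta^{\mathrm{MT}}$ (and correspondingly for $y^{\mathrm{Kato}}$), and this descent is where Euler factors genuinely appear. Second, your claimed expression for the $\chi^{-1}$-resolvent of $x_{mp^n}$ as $G(\chi)\cdot\prod_{\ell\mid mp}\Eul_\ell(\chi(\sigma_\ell))^{-1}$ is ill-defined when $\ell$ divides the conductor of $\chi$ (there is no Frobenius to evaluate $\chi$ on), and indeed wrong: for \emph{primitive} $\chi$ of conductor $mp^n$, the $\chi$-resolvent of $\hat\sigma_\ell^k(\zeta_{mp^n}) = \zeta_{mp^n/\ell^k}$ vanishes for $k\geq 1$, so the operator $\Eul_\ell(\hat\sigma_\ell)^{-1}$ contributes \emph{no} Euler factor and the resolvent of $x_{mp^n}$ is simply $G(\chi)$. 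The whole point of Otsuki's construction is that $x_{mp^n}$ is engineered so that its resolvent at an \emph{imprimitive} character $\chi$ of conductor $f_\chi$ equals the primitive Gauss sum at level $f_\chi$ times exactly the Euler factors needed to match the norm-relation discrepancy on the Mazur--Tate side. Once you separate the argument cleanly into the primitive case (direct match) and the imprimitive case (both sides computed via norm relations and shown to agree), the bookkeeping goes through --- but the formula you wrote collapses this distinction and should be replaced.
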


\begin{proof}
This is proved in \cite[Thm.\@ 3.6]{Otsuki}. Note that $p$ is assumed to be a prime of good reduction for $E$ in loc.\@ cit.\@ but this assumption is not needed for the proof of \cite[Thm.~3.6]{Otsuki}.
 Furthermore, the difference between $x_{m}$ and $x_{m}^\text{Otsuki}$ noted in Remark \ref{r: difference in x_m} is taken into account by our definition of $y_m^\mathrm{Kato}$ (which differs from the element used by Otsuki in \cite[Prop.\@ 3.2]{Otsuki} by $\Eul_p (\sigma_p^{-1})^{-1}$). 
\end{proof}

We now give the proof of Theorem \ref{local points main result}. \medskip \\
\textit{Proof (of Theorem \ref{local points main result}):}
Let $\mathfrak{k}_{mp^n}$ be the preimage of $\sum_{m_0 \mid d \mid m} (\mathrm{Eul}_p (\hat \sigma_p)^{-1} (X))(\zeta_{dp^n}) \in \Q_p \otimes_\Z F_{mp^n}$ under the isomorphism
\[
 H^1_f (\Q_p, \mathrm{V}_p E_{F_{mp^n} / \Q}) =
{\bigoplus}_{v \mid p} (\Q_p \otimes_{\Z_p} \widehat{E} (\cM_{F_{mp^n, v}}))
\stackrel{\simeq}{\longrightarrow} {\bigoplus}_{v \mid p} (\Q_p \otimes_{\Z_p} 
\cO_{F_{mp^n, v}})
= \Q_p \otimes_\Z F_{mp^n} 
\]
defined as the sum $\oplus_{v \mid p} \log_{\widehat{E}}$ of the formal logarithm of $\widehat{E}$. 
Claim (a) of Theorem \ref{local points main result} follows by combining Proposition \ref{Prop Otsuki l-adic euler factors} with Proposition \ref{Otsuki's reciprocity law} and Lemma \ref{pairing properties}\,(a, b). 
To prove claim (b) of Theorem \ref{local points main result}, we 
recall that Otsuki has proved, in \cite[Prop.\@ 4.5]{Otsuki}, that 
\[
\Eul_p (\hat \sigma_p) \cdot \oplus_{v \mid p} (\log_{\widehat{E}} ( \widehat{E} ( p \cO_{F_m, v} \llbracket X \rrbracket + X \cO_{F_m, v} \llbracket X \rrbracket))) = 
(\Z_p \otimes_\Z \cO_{F_{m}}) \llbracket X \rrbracket
\]
if $p$ is a good prime with $a_p \not \equiv 1 \mod p$, and the same proof works if $p$ is an additive prime. In this case, therefore, $\mathfrak{k}_{mp^n}$ belongs to $\widehat{E} (F_{mp^n, p})$, as claimed. \\
We observe that Lemma \ref{key property x tilde} implies that $(1 - e_\tau) \mathfrak{k}_{mp^n}$ is equal to the family $(\tilde x_{mp^n, v})_{v \mid p}$ with $\tilde x_{mp^n, v}$ the element from Definition \ref{def x tilde} with $\cQ$ taken to be the completion of $F_m$ at the place lying below $v$ (so that $\cQ (\zeta_{p^n})$ is the completion of $F_{mp^n}$ at $v$). By construction, $p \Eul_p (\tilde \sigma_p) \tilde x_{mp^n, v}$ belongs to $\widehat{E} ( \cM_{F_{mp^n, v}})$, with $\cM_{F_{mp^n, v}}$, hence so does $(1 - e_\tau) p \mathrm{Eul}_p (\tilde \sigma_p) \mathfrak{k}_{mp^n}$, as claimed. \\
We now turn to the proof of claim (c). At the outset we note that it suffices to prove the claimed relations for the elements $ (\oplus_{v \mid p} \log_{\widehat{E}}) ( \mathfrak{k}_{mp^n}) = \Eul_p (\hat \sigma_p)^{-1} \cdot \sum_{m_0 \mid d \mid m} \zeta_{dp^n}$. 
    Equality (i) then follows from the fact that by \cite[Prop.\@ 2.5]{Otsuki} one has
    \begin{align*}
    \mathrm{Tr}_{mp^{n + 1}/mp^n} \big ( (\Eul_p (\hat \sigma_p)^{-1} (X)) (\zeta_{dp^n}) \big) & = 
    \mathrm{Tr}_{d p^{n + 1}/ d p^n} \big ( (\Eul_p (\hat \sigma_p)^{-1} (X)) (\zeta_{dp^n}) \big ) \\
    & = \begin{cases}
        a_p \zeta_{dp^n} - \bm{1}_N (p) \zeta_{dp^{n - 1}} \quad & \text{ if } n > 2, \\
        a_p \zeta_{dp^n} - \bm{1}_N (p) \Eul_p (\sigma_p)^{-1} \zeta_d & \text{ if } n = 1, \\
        (a_p - \bm{1}_N (p) \sigma_p - \sigma_p^{-1}) \Eul_p (\sigma_p)^{-1} \zeta_d & \text{ if } n = 0.
        \end{cases}
    \end{align*}
    As for the equality in (ii), we let $d$ be a natural number with $m_0 \mid d \mid m$ and
    first recall that $\mathrm{Tr}_{\ell d p^n / d p^n} (\zeta_{\ell dp^n})$ vanishes if $\ell \mid d$, so that
    \[
     \mathrm{Tr}_{\ell m p^n/mp^n} \big (  \sum_{(\ell m )_0 \mid d' \mid \ell m} \zeta_{d'p^n} \big) = 
     \begin{cases}
     \mathrm{Tr}_{\ell m p^n/mp^n} (  \sum_{m_0 \mid d \mid m} \zeta_{dp^n}) \quad & \text{ if } \ell \mid m, \\
      \mathrm{Tr}_{\ell m p^n/mp^n} (  \sum_{m_0 \mid d \mid m}  \zeta_{\ell d p^n}) \quad & \text{ if } \ell \nmid m.
     \end{cases}
    \]
Claim (ii) therefore follows from the observation that, for a natural number $d' \mid  \ell m$, one has 
    \[
    \mathrm{Tr}_{\ell m p^n/mp^n} ( \zeta_{d' p^n} ) = \begin{cases} 
    \ell \zeta_{d' p^n} \quad & \text{ if } \ell \nmid d', \\
    - \sigma_{\ell}^{-1} \zeta_{d' / \ell} & \text{ if } \ell \mid d'.
    \end{cases}
    \]

We show claim (d) of Theorem \ref{local points main result} working case by case. \\
If $a_\ell =2$, $\ell \nmid N$ and $\ell^2 \nmid m$, then $\ord_\ell(m)=1$ and thus from Proposition \ref{l: lambda mod I^2}\,(b) we have
\[ \nu_{mp^n}^{(\ell)} \equiv c_{1}^{(\ell)} ( 1 - a_{\ell}\tilde \sigma_{\ell} )  + \frac{\bm{1}_N(\ell)}{\ell} ( \ell_j c_{1}^{(\ell)} \tilde \sigma_{\ell}^2  + c_{0}^{(\ell)}\tilde\sigma_{\ell}(\ell - 1 ) ) \pmod{I_{\cD^{(\ell)}_{mp^n}}^2} . \]
Since $c_0^{(\ell)}=0$ and $c_1^{(\ell)}=1$ we can calculate
\[ \nu_{mp^n}^{(\ell)} \equiv 1-2 \tilde\sigma_{\ell} + \tilde\sigma_{\ell}^2 = ( 1-\tilde\sigma_{\ell} ) \equiv 0 \pmod{I_{\cD^{(\ell)}_{mp^n}}^2} . \]

If $\bm{1}_N(\ell) =0 $ then one can inductively show that $c_j^{(\ell)} = (\frac{a_\ell}{\ell})^{j-1}$ for $j \geq 1$. Proposition \ref{l: lambda mod I^2}\,(b) now gives that, in this case,
\[ \nu_{mp^n}^{(\ell)} \equiv c_{1}^{(\ell)} ( 1 - a_{\ell}\tilde \sigma_{\ell} )   \pmod{I_{\cD^{(\ell)}_{mp^n}}^2} . \]
In both the remaining cases of claim (d) of Theorem \ref{local points main result} we have $\bm{1}_N(\ell) =0 $, hence the result follows immediately from the above equation and definition of $c_j^{(\ell)}$ in these cases.  \\
Finally, claim (e) is immediate from the definition of $\lambda_{\ord_\ell (m)}^{(\ell)} (X)$ upon noting that the element $\omega_{\ord_\ell (m), 0}^{(\ell)} = - \tilde \sigma_\ell \ell^{- \ord_\ell (m)} \NN_{F_{\ell^{\ord_\ell (m)}} / \Q}$ differs from $\NN_{\mathcal{I}^{(\ell)}_{mp^n}}$ only by a unit in $\Z_p [G_{mp^n}]$.
\qed 

\section{Nekov\'a\v{r}--Selmer complexes and the equivariant Tamagawa Number Conjecture}

In this section we recall some general aspects of the theory of Nekov\'a\v{r}--Selmer complexes 
and discuss examples of such complexes that will be important to us. Even though most material contained in this section extends to general $p$-adic representations, we prefer to restrict attention to the setting most relevant to us -- namely that of the representation given by the $p$-adic Tate module of $E$. For a more in-depth treatment of Nekov\'a\v{r}--Selmer complexes an interested reader is invited to refer to \cite{NekovarSelmerComplexes} or \cite[\S\,3]{BB}.\\
We freely use some of the notation and conventions introduced in Appendix \ref{algebra appendix}. In particular, we write $D( \cR)$ for the derived category of $\cR$-modules of a ring $\cR$ and $D^\mathrm{perf} (\cR)$ for its full triangulated subcategory of complexes that are perfect.

\subsection{Nekov\'a\v{r}--Selmer structures}

We begin by introducing a suitable variant of the notion of `Selmer structure' used by Mazur and Rubin in \cite[Def.\@ 2.1.1]{MazurRubin04}. Recall that we have defined $S(K)= S_{pNm\infty}$, where $m$ denotes conductor of $K$ and $N$ is the conductor of $E$, and that Shapiro's lemma gives natural isomorphisms $\mathrm{R}\Gamma (\cO_{\Q, U}, T_{K / \Q}) \cong \mathrm{R}\Gamma (\cO_{K, U}, \mathrm{T}_p E)$ and $\mathrm{R}\Gamma (\Q_v, T_{K / \Q}) \cong \bigoplus_{w \mid v}\mathrm{R}\Gamma (K_w, \mathrm{T}_p E)$ for every 
finite set $U \subseteq S(K)$ and place $v$ of $\Q$.

\begin{definition}
A `Nekov\'a\v{r}--Selmer structure' $\cF$ for $T_{K / \Q}$ consists of the following data:
\begin{itemize}
    \item A finite set $S(\cF)$ of places of $\Q$ that contains $S(K)$. 
    \item For every place $v \in S(\cF)$ a complex $\mathrm{R}\Gamma_\cF (\Q_v, T_{K / \Q})$ of $\Z_p [G]$-modules together with a morphism $i_{\cF, v} \: \mathrm{R}\Gamma_\cF (\Q_v, T_{K / \Q}) \to \mathrm{R}\Gamma (\Q_v, T_{K / \Q})$ in $D(\Z_p [G])$.
\end{itemize}
One then defines the `Nekov\'a\v{r}--Selmer complex' $\mathrm{R}\Gamma_\cF (K, \mathrm{T}_p E)$ as
\[
\mathrm{cone} \big\{ 
\mathrm{R}\Gamma (\cO_{K, S(\cF)}, \mathrm{T}_p E) \oplus 
\bigoplus_{v \in S(\cF)} \mathrm{R}\Gamma_\cF (\Q_v, T_{K / \Q})
\xrightarrow{\oplus_{v \in S(\cF)} ( \mathrm{loc}_v - i_{\cF, v})}
\bigoplus_{v \in S(\cF)} \mathrm{R}\Gamma (\Q_v, T_{K / \Q})
\big\} [-1],
\]
where $\mathrm{loc}_v$ denotes the natural localisation map 
\[
\mathrm{R}\Gamma (\cO_{K, S(\cF)}, \mathrm{T}_p E) \to {\bigoplus}_{w \mid v} \mathrm{R}\Gamma (K_w, \mathrm{T}_p E) = \mathrm{R}\Gamma (\Q_v, T_{K / \Q})
\]
for every $v \in S(\cF)$, and sets $H^i_\cF (K, \mathrm{T}_p E) \coloneqq H^i ( \mathrm{R}\Gamma_\cF (K, \mathrm{T}_p E))$ for all $i \in \Z$. 
\end{definition}

\begin{rk} \label{Selmer complex triangle}
For every Selmer structure $\cF$, the octrahedral axiom implies the existence of an exact triangle
\begin{cdiagram}[column sep=small]
    \mathrm{R}\Gamma_\cF (K, \mathrm{T}_p E) \arrow{r} & \mathrm{R}\Gamma (K, \mathrm{T}_p E) \arrow{r} & 
    {\bigoplus}_{v \in S (\cF)} \mathrm{R}\Gamma_{ / \cF} (K_v, \mathrm{T}_p E) \arrow{r} & \mathrm{R}\Gamma_\cF (K, \mathrm{T}_p E) [1]
\end{cdiagram}
    with $\mathrm{R}\Gamma_{ / \cF} (K_v, \mathrm{T}_p E) \coloneqq \mathrm{cone} (i_{\cF, v}) [-1]$. 
\end{rk}

\begin{bspe}
\item (Relaxed structures)
    For every finite set $\Sigma$ of places of $\Q$ that contains $S (K)$ we define a `relaxed' Nekov\'a\v{r}--Selmer structure $\cF_{\Sigma, \mathrm{rel}}$ by taking 
    \begin{itemize}
        \item $S ( \cF_{\Sigma, \mathrm{rel}}) \coloneqq \Sigma$,
        \item $\mathrm{R}\Gamma_{\cF_{\Sigma, \mathrm{rel}}} (\Q_v, T_{K / \Q}) \coloneqq \mathrm{R} \Gamma ( (\Q_v, T_{K / \Q})$ for all $v \in S ( \cF_{\Sigma, \mathrm{rel}})$,
        \item $i_{\cF_{\Sigma, \mathrm{rel}}}$ to be the identity map for all $v \in S ( \cF_{\Sigma, \mathrm{rel}})$.
    \end{itemize}
    The associated Nekov\'a\v{r}--Selmer complex $\mathrm{R}\Gamma_{\cF_{\Sigma, \mathrm{rel}}} (K, \mathrm{T}_p E)$ then coincides with $\mathrm{R}\Gamma (\cO_{K, \Sigma}, \mathrm{T}_p E)$.

\item (Strict structures)
    For every finite set of places of $\Q$ that contains $S (K)$ we define a `strict' Nekov\'a\v{r}--Selmer structure $\cF_{\Sigma, \mathrm{str}}$ by means of
    \begin{itemize}
        \item $S( \cF_{\Sigma, \mathrm{str}}) \coloneqq \Sigma$,
        \item $\mathrm{R}\Gamma_{\cF_{\Sigma, \mathrm{str}}} (\Q_v, T_{K / \Q}) = 0$ for all $v \in S( \cF_{\Sigma, \mathrm{str}})$,
        \item $i_{\cF_{\Sigma, \mathrm{str}}, v} \coloneqq 0$ for all $v \in S( \cF_{\Sigma, \mathrm{str}})$.
    \end{itemize}
    The associated Nekov\'a\v{r}--Selmer complex $\mathrm{R}\Gamma_{\cF_{\Sigma, \mathrm{rel}}} (K, \mathrm{T}_p E)$ then coincides with the `compact-support' cohomology complex $\mathrm{R}\Gamma_\mathrm{c} (\cO_{K, \Sigma}, \mathrm{T}_p E)$.
\end{bspe}

\begin{rk}[Dual structures] \label{dual structures}
For every finite place $v$ of $\Q$ local Tate duality induces an isomorphism (cf.\@ \cite[Th.~5.2.6]{NekovarSelmerComplexes})
\begin{equation} \label{derived local Tate duality}
\RHom_{\Z_p} (\mathrm{R}\Gamma (\Q_v, T_{K / \Q}), \Z_p)[-2] \xrightarrow{\simeq} \mathrm{R} \Gamma (\Q_v, (T_{K / \Q})^\ast (1)) \cong \mathrm{R} \Gamma (\Q_v, T_{K / \Q}).
\end{equation}
(Here the second isomorphism is induced by the Weil pairing.) Any Nekov\'a\v{r}--Selmer structure $\cF$ on $T_{K / \Q}$ therefore naturally induces a dual structure $\cF^\ast$ by taking 
\begin{itemize}
    \item $S (\cF^\ast) \coloneqq S (\cF)$,
    \item $\mathrm{R}\Gamma_{\cF^\ast} (\Q_v, T_{K / \Q}) \coloneqq \RHom_{\Z_p} ( \mathrm{R}\Gamma_{/ \cF} (\Q_v, T_{K / \Q}), \Z_p) [-2]$ if $v$ is finite and zero otherwise,
    \item $i_{\cF^\ast, v}$ is the composite of $\RHom_{\Z_p} ( i_{\cF, v}, \Z_p) [-2]$ and (\ref{derived local Tate duality}) if $v$ is finite (and the zero map otherwise).
\end{itemize}
\end{rk}

\subsection{Compactly supported \'etale cohomology} \label{s: cpctly supported etale cohom}

Fix a finite set $\Sigma$ of places of $\Q$ that contains $S(K)$.
In the next section, we often use the complex
\[
C^\bullet_{K, \Sigma} \coloneqq \RHom_{\Z_p} ( \mathrm{R}\Gamma_\mathrm{c} (\cO_{K, \Sigma}, \mathrm{T}_p E), \Z_p) [-3].
\]
The following result records useful properties of the complex $C^\bullet_{K, \Sigma}$. 

\begin{lem} \label{complex lemma}
Let $\Sigma$ be a finite set of places of $\Q$ that contains $S (K)$, and assume that $E(K)$ has no point of order $p$. The following then hold.
\begin{liste}
\item $C^\bullet_{K, \Sigma}$ is a perfect object of $D (\Z_p [G])$ that has vanishing Euler characteristic in $K_0 (\Z_p [G])$ and is acyclic outside degrees one and two. There is a canonical isomorphism $H^1 (C^\bullet_{K, \Sigma}) \cong H^1 (\cO_{K, \Sigma}, \mathrm{T}_p E)$ and a split-exact sequence
\begin{cdiagram}
    0 \arrow{r} & H^2 ( \cO_{K, \Sigma}, \mathrm{T}_p E) \arrow{r} & H^2 (C^\bullet_{K, \Sigma}) \arrow{r}{\pi} & 
    (T_{K / \Q}^+)^\ast \arrow{r} & 0
\end{cdiagram}%
with $T_{K / \Q}^+ \coloneqq H^0 (\R, T_{K/ \Q})$. 
\item $C^\bullet_{K, \Sigma}$ admits a standard representative in the sense of Definition \ref{standard representative def} with $F^0 = 0$.
\end{liste}
\end{lem}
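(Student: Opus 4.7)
The core of the proof is an application of Artin--Verdier / Poitou--Tate duality. The complex $\mathrm{R}\Gamma_c(\cO_{K, \Sigma},\mathrm{T}_p E)$ is a perfect complex of $\Z_p[G]$-modules, and its $\Z_p$-linear dual, shifted by $-3$, is computed by Artin--Verdier duality. Combined with the self-duality $(\mathrm{T}_p E)^\ast(1)\cong\mathrm{T}_p E$ induced by the Weil pairing, the plan is to produce an exact triangle in $D(\Z_p[G])$ of the form
\[
    \mathrm{R}\Gamma(\cO_{K, \Sigma},\mathrm{T}_p E) \to C^\bullet_{K, \Sigma} \to (T_{K / \Q}^+)^\ast [-2]
\]
in which the archimedean correction $(T_{K / \Q}^+)^\ast [-2]$ will be identified, via Shapiro's lemma and the computation of local Galois cohomology at archimedean places, as coming from the module $T_{K / \Q}^+ = H^0(\R, T_{K / \Q})$.

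\textbf{Proof of (a).} Most of (a) will then follow directly by taking the long exact sequence of this triangle. The hypothesis that $E(K)$ contains no point of order $p$ forces $H^0(\cO_{K, \Sigma},\mathrm{T}_p E)=0$, and since the correction is concentrated in degree $2$ one obtains $H^i(C^\bullet_{K, \Sigma})=0$ for $i \leq 0$ and a canonical identification $H^1(C^\bullet_{K, \Sigma})\cong H^1(\cO_{K, \Sigma},\mathrm{T}_p E)$. At degree $2$ the long exact sequence reads
\[
    0 \to H^2(\cO_{K, \Sigma}, \mathrm{T}_p E) \to H^2(C^\bullet_{K, \Sigma}) \to (T_{K / \Q}^+)^\ast \to H^3(\cO_{K, \Sigma}, \mathrm{T}_p E)
\]
and the vanishing of the right-most term, valid because $p$ is odd and the $p$-cohomological dimension of $\cO_{K, \Sigma}$ is at most $2$, will yield the claimed short exact sequence; its splitting over $\Z_p$ is automatic from the $\Z_p$-freeness of $(T_{K / \Q}^+)^\ast$. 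Perfectness of $C^\bullet_{K, \Sigma}$ follows at once from that of $\mathrm{R}\Gamma_c(\cO_{K, \Sigma}, \mathrm{T}_p E)$ since $\RHom_{\Z_p}(-, \Z_p)$ and shifting both preserve perfectness, and vanishing of $\chi_{\Z_p[G]}(C^\bullet_{K, \Sigma})$ in $K_0(\Z_p[G])$ will be deduced from the global Euler--Poincar\'e characteristic formula applied to $\mathrm{R}\Gamma_c$.

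\textbf{Part (b) and main obstacle.} For (b), the key observation is that $C^\bullet_{K, \Sigma}$ is a perfect complex of $\Z_p[G]$-modules that is acyclic in non-positive degrees, and for any such complex a smart-truncation argument produces a quasi-isomorphic representative by a bounded complex of finitely generated projective modules concentrated in strictly positive degrees, which is precisely a standard representative with $F^0=0$. The most delicate step of the whole argument will be to correctly identify the archimedean correction as $(T_{K / \Q}^+)^\ast [-2]$ with the right shift and $G$-action (as opposed to, say, the correction sitting in a different degree or appearing with a twist). This requires a careful unpacking of Artin--Verdier duality together with local Tate duality at the archimedean places of $\Q$; once this is in place, the remaining steps are either standard cohomological bookkeeping or purely algebraic.
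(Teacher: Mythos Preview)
Your approach via the Artin--Verdier duality triangle
\[
\mathrm{R}\Gamma(\cO_{K,\Sigma},\mathrm{T}_p E)\to C^\bullet_{K,\Sigma}\to (T_{K/\Q}^+)^\ast[-2]\to
\]
is exactly the one the paper takes, and your treatment of (a) is essentially correct. Two imprecisions are worth flagging.

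For the splitting in (a), $\Z_p$-freeness of $(T_{K/\Q}^+)^\ast$ only yields a $\Z_p$-linear splitting, whereas the claim concerns $\Z_p[G]$-modules. The correct reason is that $T_{K/\Q}^+$ is $\Z_p[G]$-free of rank one (the paper exhibits an explicit basis $b_K$ immediately after the lemma), so $(T_{K/\Q}^+)^\ast$ is $\Z_p[G]$-free and the sequence splits equivariantly.

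For (b), your smart-truncation argument does not yet produce a standard representative in the sense of Definition~\ref{standard representative def}. That definition demands a two-term complex $[F^1\to F^2]$ of \emph{free} $\Z_p[G]$-modules satisfying the rank inequality (iii) and the basis-compatibility condition (iv) with respect to a chosen surjection $\kappa\colon H^2(C^\bullet)\to\cY$. Smart truncation of a perfect complex acyclic outside $[1,2]$ only gives a two-term complex of projectives and addresses neither (iii) nor (iv); further manipulation (adding free summands, arranging the basis) is needed. The paper handles this by citing \cite[Prop.~A.11]{sbA}, whose hypothesis is the $\Z_p$-torsion-freeness of $H^1(C^\bullet_{K,\Sigma})\cong H^1(\cO_{K,\Sigma},\mathrm{T}_p E)$ --- a consequence of $E(K)[p]=0$ that you use to kill $H^0$ but do not invoke for (b).
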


\begin{proof}
    The assumption that $E (K) [p]$ vanishes implies that $H^1 (\cO_{K, \Sigma}, \mathrm{T}_pE )$ is $\Z_p$-torsion free (cf.\@ \cite[Ex.\@ 3.3\,(b)]{BullachDaoud}). That is, \cite[Hyp.\@ 2.16]{sbA} is satisfied and so \cite[Prop.\@ 2.22\,(ii)]{sbA} combines with \cite[Prop.\@ A.11]{sbA} to imply claims (a) and (b). We only note that the description of the cohomology given in claim (a) is a consequence of the fact that Artin--Verdier duality (as in \cite[Lem.\@ 12\,(b)]{BurnsFlach01}) induces an exact triangle
    \begin{cdiagram}[column sep=small]
       \mathrm{R}\Gamma (\cO_{K, \Sigma}, \mathrm{T}_p E) \arrow{r} & C^\bullet_{K, \Sigma} \arrow{r} & (T_{K / \Q}^+)^\ast [-2] \arrow{r} &  \mathrm{R}\Gamma (\cO_{K, \Sigma}, \mathrm{T}_p E) [1]
    \end{cdiagram}%
    in $D (\Z_p [G])$. 
\end{proof}

We next make a convenient choice of $\Z_p [G]$-basis for $T_{K / \Q}^+$.
To do this, write $w_\iota$ for the place of $K$ corresponding with $\iota_K$, the restriction to $K$ of the embedding $\iota \: \overline{\Q} \hookrightarrow \C$ fixed in \S\,\ref{notation section}, and denote by $\mathcal{D}^{(\iota)}_K \subseteq G$ the associated decomposition group. Viewing $w_\iota$ as an equivalence class of embeddings $\sigma \: K \hookrightarrow \C$, one has that $\Z_p \otimes_\Z (\bigoplus_{\sigma \in w_\iota} H^1 (E^\sigma_K (\C),  \Z))^+
= \big( ( \bigoplus_{\sigma \in w_\iota} \Z_p \sigma) \otimes_\Z H^1 (E (\C),  \Z)) \big)^+$ is a free $\Z_p [\mathcal{D}_K^{(\iota)}]$-module of rank one with basis 
\[
\gamma_K \coloneqq (\tfrac{1}{2} (1 + c) \iota_K  \otimes \gamma_+) +  (\tfrac{1}{2} (1 - c) \iota_K \otimes \gamma_- ).
\]
We then define $b \coloneqq b_K$ to be the image of $\gamma_K$ under the comparison isomorphism 
\[
\Z_p \otimes_\Z ({\bigoplus}_{\sigma \in w_\iota} H^1 (E^\sigma_K (\C),  \Z))^+\cong H^0 (K_{w_\iota}, \mathrm{T}_p E).
\]
By construction, the element $b$ is indeed a $\Z_p [G]$-basis of $T_{K / \Q}^+ \cong \bigoplus_{w \mid \infty} H^0 (K_w, \mathrm{T}_p E)$.
\\
Let $\Sigma$ be a finite set of places of $\Q$ that contains $S (K)$, and write 
\[
\Theta_{K, \Sigma}  \coloneqq \vartheta_{C^\bullet_{K, \Sigma}, \{ b^\ast \}} \: 
\Det_{\Z_p [G]} ( C^\bullet_{K, \Sigma})^{-1} \to H^1 ( \cO_{K, \Sigma}, \mathrm{V}_p E)
\]
for the map from Definition \ref{def projection map from det} with $b^\ast$ the $\Z_p [G]$-linear dual of the basis element $b \in T_{K / \Q}^+ $
defined above.

\begin{thm}
    \label{etnc result 2}
    Assume $p > 3$ is a prime number such that the image of $\rho_{E, p}$ contains $\mathrm{SL}_2 (\Z_p)$.
    If $E$ does not have potentially good reduction at $p$, then we also assume that $K$ does not contain a primitive $p$-th root of unity.
 Then $z^\mathrm{Kato}_{K}$ is contained in the image of $\Theta_{K, S(K)}$.
\end{thm}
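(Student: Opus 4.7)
The plan is to deduce the statement directly from the `one inclusion' in the relevant case of the equivariant Tamagawa Number Conjecture that is proved by Burns and the first author in [BB]. That result is itself obtained by applying the enhanced equivariant theory of Euler systems (initiated in [bss] and extended in [BB]) to Kato's Euler system. Under our hypotheses, it produces precisely an element $a$ in $\Det_{\Z_p[G]}(C^\bullet_{K,S(K)})^{-1}$ with $\Theta_{K,S(K)}(a)=z_K^{\mathrm{Kato}}$, as required.

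The first step is to check that the running hypotheses supply the input needed to invoke that result. The big-image condition in Hypothesis \ref{hyp}\,(i) implies, on the one hand, that $E[p]$ is irreducible as an $\mathbb{F}_p[\mathscr{G}_\Q]$-module, and, on the other, that $E(K)$ contains no non-trivial $p$-torsion; by Theorem \ref{kato euler system}\,(b) this forces $c_\infty z_K^{\mathrm{Kato}}$ (hence, since $p>3$, also $z_K^{\mathrm{Kato}}$ itself) to lie integrally in $H^1(\cO_{K,S(K)},\mathrm{T}_p E)$. Combined with Lemma \ref{complex lemma}, the same conditions guarantee that $C^\bullet_{K,S(K)}$ is a perfect complex of $\Z_p[G]$-modules admitting a standard representative concentrated in degrees one and two, so that the map $\Theta_{K,S(K)}$ is well-defined in the sense of Definition \ref{def projection map from det}.

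Next, I would use the assumption on potentially good reduction together with the no-primitive-$p$-th-root-of-unity condition to verify the local hypotheses at $p$ needed to run the Euler system argument (namely, appropriate vanishing of $H^0(K_v,E[p^\infty])$ for $v\mid p$). These ensure that one can apply Kato's `explicit reciprocity law' at the place $p$ (as in Theorem \ref{kato euler system}\,(c)) and hence that the constructed Euler system class has the expected leading term behaviour required by the Euler-system-to-basis formalism.

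The main technical obstacle is identifying the `basis element' of the determinant produced by the Euler system framework of [BB] with an element in the image of $\Theta_{K,S(K)}$, since the former is most naturally associated with the \'etale cohomology complex $\mathrm{R}\Gamma(\cO_{K,S(K)},\mathrm{T}_p E)$ whereas the latter is defined in terms of its Artin--Verdier dual $C^\bullet_{K,S(K)}$. Resolving this requires tracing through the exact triangle in the proof of Lemma \ref{complex lemma} and confirming that the duality identification is compatible with the chosen $\Z_p[G]$-basis $b_K$ of $T^+_{K/\Q}$. Once these identifications are in place, the image of the Euler-system basis under $\Theta_{K,S(K)}$ is exactly $z_K^{\mathrm{Kato}}$, proving the theorem.
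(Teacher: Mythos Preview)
Your proposal is correct in spirit and takes the same approach as the paper: the theorem is simply a citation of \cite[Cor.\@ 9.6\,(i)]{BB}, and the paper's entire proof consists of that single reference. Your elaboration on why the hypotheses of Theorem~\ref{etnc result 2} match those needed in \cite{BB}, and on the compatibility between the Euler-system framework and the map $\Theta_{K,S(K)}$, is plausible commentary but not required here; the paper defers all of that to \cite{BB} itself, so your discussion of local vanishing conditions at $p$ and the ``main technical obstacle'' of basis identification are speculative reconstructions of what happens inside that reference rather than steps the present paper carries out.
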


\begin{proof}
    This is proved by Burns and the first author in \cite[Cor.\@ 9.6\,(i)]{BB}.
\end{proof}

\begin{rk}
The result of Theorem \ref{etnc result 2} is directly related to the `equivariant Tamagawa Number Conjecture' for the pair $(h^1 ( E / K) (1), \Z_p [G])$, see \cite[Rk.\@ 9.7]{BB} for more details.
\end{rk}

\subsection{Bloch--Kato Selmer complexes} \label{RGamma_f section}

Following Bloch and Kato \cite{BlochKato}, we define the local `finite-support' cohomology complex
\[
\mathrm{R}\Gamma_f (\Q_v, T_{K / \Q}) \coloneqq  \begin{cases}
    \bigoplus_{w \mid v} E (K_v)^\wedge [-1] \quad & \text{ if } v \nmid \infty, \\ 
     \mathrm{R}\Gamma (\Q_v, T_{K / \Q}) & \text{ if } v = \infty,
\end{cases}
\]
for every place $v$ of $\Q$. For every finite set of places $\Sigma$ containing $S(K)$, one then defines the Bloch--Kato Selmer structure $\cF_{\Sigma, \mathrm{BK}}$ for $T_{K / \Q}$ as follows. 
\begin{itemize}
    \item $S(\cF_{\Sigma, \mathrm{BK}}) = \Sigma$,
    \item $\mathrm{R}\Gamma_{\cF_{\Sigma, \mathrm{BK}}} (\Q_v, T_{K / \Q}) \coloneqq \mathrm{R}\Gamma_f (\Q_v, T_{K / \Q})$ for all $v \in \Sigma$,
    \item $i_{\cF_{\Sigma, \mathrm{BK}}, v}$ is taken to be the map induced by the Kummer map if $v \in \Sigma$ is a finite place and the identity map if $v = \infty$.
\end{itemize}  
As is customary, we then replace any adornments $\cF_{\Sigma, \mathrm{BK}}$ by simply $f$ and, for example, write
\[
\mathrm{R}\Gamma_f (K, \mathrm{T}_p E) \coloneqq \mathrm{R} \Gamma_{\cF_{\Sigma, \mathrm{BK}}} (K, \mathrm{T}_p E)
\]
and $H^i_f (K, \mathrm{T}_p E) \coloneqq H^i_{\cF_{\Sigma, \mathrm{BK}}} (K, \mathrm{T}_p E)$ for every $i \in \Z$ (this is consistent with the definition for $\ell$-adic fields given in \S\,\ref{local duality section}). Note that this definition of $\mathrm{R}\Gamma_f (K, \mathrm{T}_p E)$ does not depend on the choice of $\Sigma$, see \cite[Lem.\@ 2.5]{BurnsMaciasCastillo}.

\begin{lem} \label{description finite support cohomology}
    One has canonical identifications
    \[
    H^i_f (K, \mathrm{T}_p E) = \begin{cases}
        \Sel_{p, E / K}^\wedge \quad & \text{ if } i = 1,\\
        \Sel_{p, E / K}^\vee & \text{ if } i = 2, \\
        E (K) [p^\infty]^\vee & \text{ if } i = 3,\\
        0 & \text{ otherwise}.
    \end{cases}
    \]
\end{lem}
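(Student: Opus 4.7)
The plan is to extract the long exact sequence in cohomology from the exact triangle defining the Nekov\'a\v{r}--Selmer complex, applied to the Bloch--Kato structure $\cF_{\Sigma, \mathrm{BK}}$. After unwinding the local data this reduces the lemma to two pieces: a direct identification of the kernel in degree one with the compactified Selmer group, and an invocation of global Poitou--Tate duality for the higher-degree identifications.

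First I would unravel the local quotient cohomology. For a finite place $v$ in $\Sigma$, the cone triangle $\mathrm{R}\Gamma_f (K_v, \mathrm{T}_p E) \to \mathrm{R}\Gamma(K_v, \mathrm{T}_p E) \to \mathrm{R}\Gamma_{/f}(K_v, \mathrm{T}_p E)$ together with the identity $\mathrm{R}\Gamma_f(K_v, \mathrm{T}_p E) = E(K_v)^\wedge[-1]$, the injectivity of the Kummer map $\kappa^{(v)}$, and the cohomological dimension bound $H^i(K_v, \mathrm{T}_p E) = 0$ for $i \geq 3$, yields $H^0_{/f}(K_v, \mathrm{T}_p E) = 0$, $H^1_{/f}(K_v, \mathrm{T}_p E) = \coker(\kappa^{(v)})$, $H^2_{/f}(K_v, \mathrm{T}_p E) = H^2(K_v, \mathrm{T}_p E) \cong E(K_v)[p^\infty]^\vee$ via local Tate duality, and vanishing elsewhere. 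At archimedean places the local condition agrees with the full local cohomology, so the corresponding quotient complex vanishes.

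Feeding these into the global long exact sequence attached to the Selmer complex triangle, and using $H^0(\cO_{K, \Sigma}, \mathrm{T}_p E) = 0$ (since $E(K)[p^\infty]$ is finite), yields $H^i_f(K, \mathrm{T}_p E) = 0$ for $i \leq 0$ directly. For $i = 1$ the sequence collapses to $H^1_f(K, \mathrm{T}_p E) = \ker \{ H^1(\cO_{K, \Sigma}, \mathrm{T}_p E) \to \bigoplus_v \coker(\kappa^{(v)}) \}$; by passing to the inverse limit of the $p^n$-Kummer descent sequences one identifies this kernel with $\varprojlim_n \Sel_{p^n, E / K} = \Sel_{p, E/K}^\wedge$ in the paper's sense.

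For the remaining claims I would invoke global Poitou--Tate duality at the level of Selmer complexes (in the form formulated by Nekov\'a\v{r}), which, using Weil self-duality of $\mathrm{T}_p E$, gives isomorphisms $H^i_f(K, \mathrm{T}_p E)^\vee \cong H^{3 - i}_f(K, E[p^\infty])$. The right-hand side is the Bloch--Kato Selmer complex for the discrete module $E[p^\infty]$, which by its very construction satisfies $H^0_f(K, E[p^\infty]) = E(K)[p^\infty]$ and $H^1_f(K, E[p^\infty]) = \Sel_{p, E/K}$. Dualising then produces $H^3_f(K, \mathrm{T}_p E) = E(K)[p^\infty]^\vee$ and $H^2_f(K, \mathrm{T}_p E) = \Sel_{p, E/K}^\vee$, with vanishing in other degrees. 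The hard part will be the careful verification that the Bloch--Kato structure on $\mathrm{T}_p E$ is self-dual, i.e.\ that the orthogonal complement of $\im(\kappa^{(v)})$ inside $H^1(K_v, \mathrm{T}_p E)$ under local Tate duality is the image of the analogous Kummer map for $E[p^\infty]$; this is Proposition~3.8 of Bloch--Kato and is already invoked implicitly when the induced pairing on $H^1_{/f}$ is defined, so only the assembly of the local dualities into a global Poitou--Tate statement remains.
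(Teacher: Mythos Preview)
Your proposal is correct and constitutes exactly the standard argument; the paper itself does not give a proof but simply cites \cite[Lem.~1]{BurungaleFlach}, where this computation is carried out along the lines you describe. Your sketch of the long exact sequence for the Selmer complex, the identification of $H^1_f$ as a Selmer kernel via inverse-limit Kummer theory, and the use of Nekov\'a\v{r}'s Poitou--Tate duality together with the self-orthogonality of the Bloch--Kato local condition (Bloch--Kato, Prop.~3.8) is precisely the content of that reference.
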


\begin{proof}
    See, for example, \cite[Lem.\@ 1]{BurungaleFlach}.
\end{proof}

\begin{rk}
Local Tate duality induces  an isomorphism
\begin{equation} \label{Tate duality RGamma_f}
\mathrm{R}\Gamma_{/f} (\Q_\ell, T_{K / \Q}) \cong \RHom_{\Z_p} ( \mathrm{R}\Gamma_f (\Q_\ell, T_{K / \Q}) , \Z_p) [-2] 
\end{equation}
that is valid for every prime number $\ell$.
This is a special case of the general result of \cite[Lem.~19]{BurnsFlach01} and, in the case at hand, can also be checked explicitly as in \cite[Lem.~2]{BurungaleFlach}.
\end{rk}

\section{Nekov\'a\v{r}--Selmer complexes and weak main conjectures} \label{approximation complexes section}

The fundamental difficulty in deriving explicit statements involving classical Selmer groups such as Conjecture \ref{Mazur--Tate conj 1} from the equivariant Tamagawa Number Conjecture is that the finite-support cohomology complex $\mathrm{R}\Gamma_f (K, \mathrm{T}_p E)$ is rarely perfect as an object in $D (\Z_p [G])$ (cf.~\cite[Lem.~5.1]{BurnsMaciasCastillo} or \cite[\S\,4.1]{NickelEllerbrock}). In this subsection we therefore construct auxiliary Selmer complexes that are perfect and approximate $\mathrm{R}\Gamma_f (K, \mathrm{T}_p E)$.\\
Throughout this subsection we fix an abelian number field $K$ with Galois group $G = G_K \coloneqq \gal{K}{\Q}$. For every place $v$ of $K$, we denote the residue field at $v$ by $\mathbb{F}_v$.

\subsection{Unramified cohomology}

Write $\widetilde E$ for the reduction of $E$ modulo $p$, denote the group of non-singular $\mathbb{F}_v$-rational points on $\widetilde E$ by $\widetilde E^\mathrm{ns} ( \mathbb{F}_v)$, and set
\[
(E_1 / E_0) ( K_\ell) \coloneqq {\bigoplus}_{v \mid \ell} ( \widetilde E^\mathrm{ns} (\mathbb{F}_v) \otimes_\Z \Z_p ).
\]

\begin{lem} \label{unr cohom lem}
    For every prime number $\ell$ there exists a natural number $n_\ell$ such that, if $\ell$ is unramified in $K$, there exists an $(n_\ell \times n_\ell)$-matrix $A_{\ell, K}$ with the following properties.
    \begin{liste}
    \item $\det_{\Z_p [G]} ( A_{\ell, K}) \cdot \Z_p [G] =  \ell \cdot \Eul_\ell ( \sigma_\ell^{-1}) \cdot \Z_p [G]$.
        \item There is an exact sequence 
        \begin{cdiagram}
        0 \arrow{r} & \Z_p [G]^{\oplus n_\ell} \arrow{r}{\cdot A_{\ell, K}} & \Z_p [G]^{\oplus n_\ell} \arrow{r} & (E_1 / E_0) ( K_\ell) \arrow{r} & 0.
    \end{cdiagram}%
    In particular, we have the equality
    $
    \Fitt^0_{\Z_p [G]} \big ( (E_1 / E_0) ( K_\ell) \big)
    = \ell \cdot \Eul_\ell (\sigma_\ell^{-1}) \cdot \Z_p [G]$.
        \item If $L$ is a subfield of $K$, then the functor $(-) \otimes_{\Z_p [G_K]} \Z_p [G_L]$ takes the exact sequence in (b) for $K$ to that for $L$.
    \end{liste}
\end{lem}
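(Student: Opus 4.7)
The plan is to present $(E_1/E_0)(K_\ell)$ as the cokernel of a single square $\Z_p[G]$-endomorphism, built from the Frobenius action on the Tate module of the smooth locus of the reduction.

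First, I would reduce to the semi-local picture via Shapiro's lemma. Fix a place $v \mid \ell$ of $K$ and write $\cD \coloneqq \cD^{(\ell)}_K$ for its decomposition group; since $\ell$ is unramified in $K$, $\cD$ is cyclic of order equal to the residue degree $f$, generated by $\sigma_\ell$. There is a canonical $\Z_p[G]$-isomorphism
\[
(E_1/E_0)(K_\ell) \;\cong\; \Z_p[G] \otimes_{\Z_p[\cD]} \bigl(\widetilde E^{\mathrm{ns}}(\mathbb{F}_v) \otimes_{\Z} \Z_p\bigr),
\]
and since $\Z_p[G]$ is free over $\Z_p[\cD]$, it suffices to produce a square $\Z_p[\cD]$-presentation of the right-hand factor of the required form. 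The matrix $A_{\ell,K}$ and its determinant are then obtained by extending scalars, and this also ensures $n_\ell$ depends only on $\ell$.

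Second, I would use the long exact sequence of Frobenius cohomology attached to the short exact sequence $0 \to T_p \widetilde E^{\mathrm{ns}} \to V_p\widetilde E^{\mathrm{ns}} \to V_p\widetilde E^{\mathrm{ns}}/T_p\widetilde E^{\mathrm{ns}} \to 0$. Outside of additive reduction, $\Frob_v = \sigma_\ell^f$ has no eigenvalue equal to $1$ on $V_p\widetilde E^{\mathrm{ns}}$, and the LES collapses to a $\Z_p[\cD]$-equivariant isomorphism
\[
\widetilde E^{\mathrm{ns}}(\mathbb{F}_v) \otimes \Z_p \;\cong\; T_p \widetilde E^{\mathrm{ns}} \big/ (\Frob_v - 1) T_p \widetilde E^{\mathrm{ns}}.
\]
Since $T_p\widetilde E^{\mathrm{ns}}$ is cyclic over $\Z_p[\Frob_\ell]$ with annihilator the minimal polynomial $P_\ell(X)$ of $\Frob_\ell$ -- namely $X^2 - a_\ell X + \ell$ for good reduction and $X \mp \ell$ for split/non-split multiplicative reduction -- the right-hand side identifies with $\Z_p[\cD]/(P_\ell(\sigma_\ell))$. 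One takes $n_\ell = 1$ with $A_{\ell,K}$ the scalar determined by this identification; the exact sequence in (b) follows by scalar extension, and claim (a) reduces to the polynomial identity $P_\ell(X^{-1}) = \ell\Eul_\ell(X^{-1})$, which is verified directly from the definition of $\Eul_\ell$ (up to a sign that is a unit in the multiplicative case). The interplay between the two natural generators $P_\ell(\sigma_\ell)$ and $\ell\Eul_\ell(\sigma_\ell^{-1})$ of the relevant ideal is mediated by the involution $\sigma_\ell \mapsto \sigma_\ell^{-1}$ of $\Z_p[\cD]$, consistent with the paper's conventions for duals and the involution $\#$. Claim (c) is immediate from the naturality of the construction: $T_p\widetilde E^{\mathrm{ns}}$ and $\Eul_\ell$ depend only on $\ell$, and $\cD_L^{(\ell)}$ is the image of $\cD_K^{(\ell)}$ under the restriction $G_K \twoheadrightarrow G_L$.

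The main obstacle I anticipate is the case of additive reduction at $\ell \neq p$. Here $\widetilde E^{\mathrm{ns}} \cong \mathbb{G}_a$ has no prime-to-$\ell$ torsion, so $T_p\widetilde E^{\mathrm{ns}}$ vanishes and the cyclic presentation collapses; yet $V_pE^{I_\ell}$ and hence $\ell\Eul_\ell(\sigma_\ell^{-1})$ can remain nontrivial in the potentially multiplicative or potentially good regimes. Resolving this cleanly requires replacing $T_p\widetilde E^{\mathrm{ns}}$ by a suitable $\Z_p$-lattice inside $V_pE^{I_\ell}$ and running an analogous Frobenius-cokernel argument, possibly at the cost of letting $n_\ell > 1$ and keeping track of torsion contributions. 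This bookkeeping, together with the precise matching of ideals discussed above, I expect to be the technical crux of the argument.
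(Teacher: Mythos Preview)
Your overall architecture for $\ell\neq p$ is close to the paper's, but the key step ``$T_p\widetilde E^{\mathrm{ns}}$ is cyclic over $\Z_p[\Frob_\ell]$'' is not true in general, and with it your choice $n_\ell=1$ collapses in the good-reduction case. Take $K=\Q$ (so $G$ and $\cD$ are trivial): then $(E_1/E_0)(\Q_\ell)=\widetilde E(\mathbb F_\ell)\otimes\Z_p$, and whenever $E[p]\subset\widetilde E(\mathbb F_\ell)$ (equivalently, $\ell$ splits completely in $\Q(E[p])$) this $p$-group is non-cyclic, so no $1\times 1$ presentation over $\Z_p$ can exist. In module-theoretic terms, even though the Frobenius eigenvalues on $V_pE$ are always distinct (since $a_\ell^2\neq 4\ell$ for prime $\ell$), they may be congruent modulo $p$, and then $T_pE$ is not generated by any single vector over $\Z_p[\Frob_\ell]$. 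The paper sidesteps this entirely by \emph{not} attempting cyclicity: it takes $n_\ell=\mathrm{rk}_{\Z_p}\big((\mathrm T_pE)^{\mathcal I_\ell}\big)$ and uses the $\Z_p[G]$-linear endomorphism $1-\Frob_\ell^{-1}\cdot\sigma_\ell^{-1}$ of the free module $(\mathrm T_pE)^{\mathcal I_\ell}\otimes_{\Z_p}\Z_p[G]$, whose determinant is $\Eul_\ell(\sigma_\ell^{-1})$ on the nose and whose cokernel is $(E_1/E_0)(K_\ell)$ by a standard Lang--Frobenius argument. Your Shapiro reduction and the identification $\widetilde E^{\mathrm{ns}}(\mathbb F_v)\otimes\Z_p\cong T_p\widetilde E^{\mathrm{ns}}/(\Frob_v-1)$ are fine and indeed underlie that argument; the missing idea is simply to keep the full rank-$n_\ell$ module rather than force it to be cyclic.

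Two further corrections. First, your ``main obstacle'' is a phantom: for additive reduction at $\ell\neq p$ one always has $(V_pE)^{\mathcal I_\ell}=0$, hence $\Eul_\ell=1$, while $\widetilde E^{\mathrm{ns}}\cong\mathbb G_a$ gives $(E_1/E_0)(K_\ell)=0$; so $n_\ell=0$ works trivially and there is nothing to resolve. Second, the case $\ell=p$ genuinely requires a separate treatment that your sketch does not address: here the paper argues case-by-case (multiplicative, additive, good), using that $(E_1/E_0)(K_p)$ either vanishes, is $\mathbb F_p[G]$, or is a cyclic $\Z_p[G]$-module with Fitting ideal $p\,\Eul_p(\sigma_p^{-1})\,\Z_p[G]$ (the last by a result of Burns--Mac\'ias Castillo--Wuthrich), always yielding $n_p=1$. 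Your Tate-module-of-the-reduction heuristic does not transfer to $\ell=p$ since $T_p\widetilde E$ has the ``wrong'' rank there.
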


\begin{proof}
We abbreviate (a choice of) inertia subgroup of $\gal{\overline{\Q}}{\Q}$ at $\ell$ to $\mathcal{I}_\ell \coloneqq \mathcal{I}^{(\ell)}_{\overline{\Q}}$.
If $\ell \nmid p$, then we have an exact sequence (see, for example, the argument of \cite[Lecture 5, Lem.\@ 5]{kings-bsd})
\begin{equation} \label{resolution unr cohom}
\begin{tikzcd}
    0 \arrow{r} & (T_{K / \Q})^{\mathcal{I}_\ell} \arrow{r}{1 - \sigma_\ell^{-1}} & (T_{K / \Q})^{\mathcal{I}_\ell} \arrow{r} & 
    (E_1 / E_0) ( K_\ell) \arrow{r} & 0.
\end{tikzcd}%
\end{equation}%
This sequence may be used for the computation of the Fitting ideal because 
\[
(T_{K / \Q})^{\mathcal{I}_\ell} = ( \mathrm{T}_p E \otimes_{\Z_p} \Z_p [G])^{\mathcal{I}_\ell} = (\mathrm{T}_p E)^{\mathcal{I}_\ell} \otimes_{\Z_p} \Z_p [G]
\]
is a free $\Z_p [G]$-module of rank $n_\ell \coloneqq \mathrm{rk}_{\Z_p} ( (\mathrm{T}_p E)^{\mathcal{I}_\ell})$. (We have used here that $\ell$ is unramified in $K$.) It follows that
the Fitting ideal of $(E_1 / E_0) ( K_\ell)$ is generated by 
\begin{align*}
 {\det}_{\Z_p[G]} ( 1 - \sigma_\ell^{-1} \mid  (T_{K / \Q})^{\mathcal{I}_\ell}) 
 = {\det}_{\Z_p} ( 1 - \sigma_\ell^{-1} X \mid  (\mathrm{T}_p E)^{\mathcal{I}_\ell})_{\mid_{X = \sigma_\ell^{-1}}}
 = \Eul_\ell (\sigma_\ell^{-1}),
\end{align*}
which generates the same ideal of $\Z_p [G]$ as $\ell \cdot \Eul_\ell (\sigma_\ell^{-1})$ because $\ell \neq p$. Upon fixing a $\Z_p$-basis of $(\mathrm{T}_p E)^{\mathcal{I}_\ell}$, we therefore obtain an $(n_\ell \times n_\ell)$-matrix $A_{\ell, K}$ that represents multiplication by $1- \sigma_\ell^{-1}$ on $(\mathrm{T}_p E)^{\mathcal{I}_\ell} \otimes_{\Z_p} \Z_p [G]$ and has all properties listed in the lemma.\\
Let us now turn to the case $\ell = p$. We first assume that $E$ has bad reduction and note that the reduction type of $E$ (over $K$) at $v$ agrees with the reduction type of $E$ (over $\Q$) at $p$ because $p$ is assumed to be unramified in $K$. If $p$ is a prime of multiplicative reduction, then 
$(E_1 / E_0) ( K_\ell) \cong \bigoplus_{v \mid p} (\mathbb{F}_v^\times \otimes_\Z \Z_p)$ therefore vanishes
and so in this case the claim follows upon taking $n_p = 1$ and $A_{p, K} = 1$, and noting that $p \Eul_p (\sigma_p^{-1}) = p \pm \sigma_p^{-1}$ is a unit in $\Z_p [G]$. If $p$ is a prime of additive reduction, on the other hand, then 
$(E_1 / E_0) ( K_\ell) \cong \bigoplus_{v \mid p} \mathbb{F}_v$.
Now, the latter is isomorphic to $\mathbb{F}_p [G]$ by the normal basis theorem and so we deduce that its Fitting ideal over $\Z_p [G]$ is generated by $p = p \Eul_p (\sigma_p^{-1})$, and that we may take $n_p = 1$ and $A_{p, K} = p$.\\
We finally assume that $p$ has good reduction at $p$. 
In this case (the proof of) \cite[Lem.\@ 4.4]{BurnsCastilloWuthrich} (see also \cite[Lem.\@ 6.12]{BurnsMaciasCastillo}) shows that $\Fitt^0_{\Z_p [G]} ( (E_1 / E_0) (K_p)) = p \Eul_p (\sigma_p^{-1}) \Z_p [G]$. Moreover, $(E_1 / E_0) (K_p)$ is a cyclic $\Z_p [G]$-module (cf.\@ \cite[Ch.~III, Cor.~6.4]{Silverman}) so that its Fitting ideal agrees with its annihilator. Upon choosing a $\Z_p [G]$-generator of $(E_1 / E_0) (K_p)$ we therefore 
obtain an isomorphism $\Z_p [G] /(p \Eul_p (\sigma_p^{-1}) \Z_p [G]) \cong (E_1 / E_0) (K_p)$. It follows that conditions (a) and (b) are satisfied with $n_p = 1$ and $A_{p, K} = p \Eul_p (\sigma_p^{-1})$, and that condition (c) is satisfied if we (as we may) choose the $\Z_p [G_L]$-generators of $(E_1 / E_0) (L_p)$ compatibly as $L$ varies over all finite abelian extensions of $\Q$ unramified at $p$.
\end{proof}

\subsection{Local finite-support cohomology}

We define, for $i \in \{0, 1\}$,
\[
(E / E_i) (K_\ell) \coloneqq {\prod}_{v \mid \ell} ( E ( K_v) / E_i (K_v))^\wedge.
\] 
We will use without comment that if $\ell \neq p$, then $(E / E_1) (K_\ell)$ agrees with $E (K_\ell) \coloneqq \bigoplus_{v \mid \ell}  (K_v)^\wedge$ because $E_1 (K_\ell)$ is pro-$\ell$.
Note that the Tamagawa number $| E (K_v) / E_0 (K_v)|$ is independent of the choice of place $v \mid \ell$ because $K$ is an abelian field. For simplicity we therefore denote this number as $\mathrm{Tam}_{K, \ell}$.

\begin{lem} \label{approximation complexes}
Assume that $p$ is unramified in $K$ if $E$ has additive reduction at $p$. 
Then the following claims are valid.
\begin{liste}
\item For every prime number $\ell$, there exists a morphism 
\[
\rho_{\ell} \: D_{\ell}^\bullet \to (E / E_1) ( K_\ell) [-1]
\]
in $D (\Z_p [G])$ with $D_{\ell}^\bullet$ a perfect complex of $\Z_p [G]$-modules that has the following properties: 
\begin{romanliste}
\item $D_{\ell}^\bullet$ is acyclic outside degree one.
    \item $D_{\ell}^\bullet$ admits a representative of the form 
$[F \to F]$ for a free $\Z_p [G]$-module of finite rank (here the first term is placed in degree 0).
\item One has $\vartheta_{D_\ell^\bullet, \emptyset} ( \Det_{\Z_p [G]} (D^\bullet_{\ell})^{-1}) =  \ell \cdot \Eul_\ell (\tilde \sigma_\ell^{-1}) \cdot \Z_p [G]$.
\item If $\ell$ is unramified in $K$ and $p \nmid \Tam_{K, \ell}$, then $\rho_\ell$ is a quasi-isomorphism.
\end{romanliste}   
\item For every point $Q$ in $E_1 (K_p)$, there exists a morphism 
\[
\rho_{p, Q} \: D_{p, Q}^\bullet \coloneqq D_{p}^\bullet \oplus (\Z_p [G] [-1]) \to \mathrm{R}\Gamma_f (\Q_p, T_{K / \Q})
\]
in $D (\Z_p [G])$ such that the restriction of $H^1 (\rho_{p, Q})$ to $\Z_p [G]$ maps $1$ to $Q$.
\end{liste}
\end{lem}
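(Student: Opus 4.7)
The approach to part (a) is to construct $D_\ell^\bullet$ uniformly as a two-term complex of the form $[F \xrightarrow{A_{\ell, K}} F]$ with $F = \Z_p[G]^{n_\ell}$ a free $\Z_p[G]$-module, so that properties (i), (ii) and (iii) become manifest once $\det_{\Z_p[G]}(A_{\ell, K})$ is correctly identified. For $\ell \neq p$ I would take $n_\ell \coloneqq \mathrm{rk}_{\Z_p}((\mathrm{T}_p E)^{\mathcal{I}_\ell})$ and let $A_{\ell, K}$ be the matrix (in any chosen $\Z_p$-basis of $(\mathrm{T}_p E)^{\mathcal{I}_\ell}$) representing the $\Z_p[G]$-linear endomorphism $1 - \Frob_\ell^{-1} \otimes \tilde\sigma_\ell^{-1}$ of the free $\Z_p[G]$-module $(\mathrm{T}_p E)^{\mathcal{I}_\ell} \otimes_{\Z_p} \Z_p[G]$. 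This operator is integral because $\Frob_\ell^{-1}$ is integral on $(\mathrm{T}_p E)^{\mathcal{I}_\ell}$ whenever $\ell \neq p$, and a direct computation gives $\det_{\Z_p[G]}(A_{\ell, K}) = \Eul_\ell(\tilde\sigma_\ell^{-1})$, which generates the same ideal as $\ell \cdot \Eul_\ell(\tilde\sigma_\ell^{-1})$ since $\ell$ is a unit in $\Z_p[G]$. For unramified $\ell$ this specialises to the construction of Lemma~\ref{unr cohom lem} (as then $\tilde\sigma_\ell = \sigma_\ell$), and I would define $\rho_\ell$ via the resulting identification $H^1(D_\ell^\bullet) \cong (E_1/E_0)(K_\ell)$ composed with the natural inclusion into $(E/E_1)(K_\ell)$; under the further hypothesis $p \nmid \Tam_{K,\ell}$, the quotient $(E/E_0)(K_\ell) = (E/E_1)(K_\ell)/(E_1/E_0)(K_\ell)$ vanishes so this inclusion is an isomorphism and $\rho_\ell$ is a quasi-isomorphism. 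For ramified $\ell$, condition (iv) is vacuous and any choice of morphism $\rho_\ell$ (for instance, the zero morphism) suffices.

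For $\ell = p$, a finer case-by-case analysis is required because $\Frob_p^{-1}$ fails to be integral on $\mathrm{T}_p E$ in the good reduction case. If $E$ has multiplicative reduction at $p$, I would take $n_p = 1$ and $A_{p, K} = p - a_p \tilde\sigma_p^{-1} = p \cdot \Eul_p(\tilde\sigma_p^{-1})$, which is a unit in $\Z_p[G]$, so that $D_p^\bullet$ is acyclic. If $E$ has additive reduction, then $p$ is unramified in $K$ by the standing assumption, and Lemma~\ref{unr cohom lem} applies directly. If $E$ has good reduction, I would take $n_p = 2$ and let $A_{p, K}$ represent the integral operator $\Frob_p - \tilde\sigma_p^{-1} \cdot I$ on $\mathrm{T}_p E \otimes_{\Z_p} \Z_p[G]$; its determinant is the characteristic polynomial of $\Frob_p$ evaluated at $\tilde\sigma_p^{-1}$, namely $\tilde\sigma_p^{-2} - a_p \tilde\sigma_p^{-1} + p = p \cdot \Eul_p(\tilde\sigma_p^{-1})$, as required. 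In each case, the injectivity of $A_{\ell, K}$ (needed for (i)) reduces to observing that after base change along any character $\chi$ of $G$, the eigenvalues of Frobenius on $(\mathrm{V}_p E)^{\mathcal{I}_\ell}$ have absolute value $\sqrt{\ell}$ whilst $\chi(\tilde\sigma_\ell)$ is a root of unity, so that $A_{\ell,K}$ remains injective after any such base change.

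Part (b) is then an easy consequence of derived category formalism. Since $\Z_p[G]$ is a projective $\Z_p[G]$-module, Lemma~\ref{description finite support cohomology} gives a canonical isomorphism
\[
\Hom_{D(\Z_p[G])}(\Z_p[G][-1], \mathrm{R}\Gamma_f(\Q_p, T_{K/\Q})) \cong H^1(\mathrm{R}\Gamma_f(\Q_p, T_{K/\Q})) = E(K_p)^\wedge,
\]
so that any $Q \in E_1(K_p) \subseteq E(K_p)^\wedge$ determines a morphism $\Z_p[G][-1] \to \mathrm{R}\Gamma_f(\Q_p, T_{K/\Q})$ whose effect on $H^1$ sends $1$ to $Q$. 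The morphism $\rho_{p, Q}$ is then taken to be the direct sum of this morphism with any (for instance, the zero) morphism $D_p^\bullet \to \mathrm{R}\Gamma_f(\Q_p, T_{K/\Q})$. The main technical obstacle in the whole argument is the uniform verification of (iii) at $\ell = p$, where the na\"ive construction used for $\ell \neq p$ fails to be integral and one must carry out the case split above whilst simultaneously ensuring that the resulting $\rho_\ell$ aligns with Lemma~\ref{unr cohom lem} in the unramified case, so that (iv) can be read off from the vanishing of $(E/E_0)(K_\ell)$.
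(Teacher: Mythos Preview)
Your approach for $\ell \neq p$ is essentially the paper's: it also builds $D_\ell^\bullet$ as the complex $[\Z_p[G]^{n_\ell} \xrightarrow{\widetilde{A_\ell}} \Z_p[G]^{n_\ell}]$ with $\widetilde{A_\ell}$ the matrix of $1 - \Frob_\ell^{-1} \otimes \tilde\sigma_\ell^{-1}$, obtained by transporting the matrix of Lemma~\ref{unr cohom lem} along the splitting $j_\ell \colon G_{m'} \hookrightarrow G_m$. The only difference is that the paper constructs $\rho_\ell$ even for ramified $\ell$ (via the diagram over the maximal subfield in which $\ell$ is unramified) rather than taking the zero map; your minimalist choice suffices for the bare statement, but the paper's specific $\rho_\ell$ is needed in Lemma~\ref{approximation complex properties lemma}. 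The same comment applies to part (b): the paper takes $\rho_{p,Q}$ to fill in a morphism of triangles (so that its restriction to $D_p^\bullet$ is $\rho_p$, not zero), which is again what is used downstream.

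There is, however, a genuine gap in your treatment of $\ell = p$ with good reduction. You propose $n_p = 2$ and the operator ``$\Frob_p - \tilde\sigma_p^{-1} I$'' on $\mathrm{T}_p E \otimes_{\Z_p} \Z_p[G]$, asserting that its determinant is $\tilde\sigma_p^{-2} - a_p \tilde\sigma_p^{-1} + p$. But $T^2 - a_p T + p$ is the characteristic polynomial of the \emph{crystalline} Frobenius $\varphi$ on $D_{\mathrm{cris}}(\mathrm{V}_p E)$, not of any Galois Frobenius on $\mathrm{T}_p E$. Since $\mathrm{V}_p E|_{\mathscr{G}_{\Q_p}}$ has Hodge--Tate weights $\{0,1\}$, it is ramified, so inertia acts nontrivially and the characteristic polynomial of a lift $\Frob_p \in \mathscr{G}_{\Q_p}$ on $\mathrm{T}_p E$ depends on the lift; in particular it is not canonically $T^2 - a_p T + p$, and your determinant identity fails. (Concretely, $\det(\sigma \mid \mathrm{T}_p E) = \chi_{\mathrm{cyc}}(\sigma)$ via the Weil pairing, and $\chi_{\mathrm{cyc}}$ is already nontrivial on inertia.) The paper avoids this entirely: Lemma~\ref{unr cohom lem} shows, using cyclicity of $\widetilde{E}(\mathbb{F}_p) \otimes_\Z \Z_p[G]$ and an explicit Fitting ideal computation, that for good reduction one may take $n_p = 1$ and $A_{p,K} = p\,\Eul_p(\sigma_p^{-1})$; the proof of Lemma~\ref{approximation complexes} then lifts this scalar along $j_p$ exactly as in the $\ell \neq p$ case.
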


\begin{proof}
Write $m$ for the conductor of $K$ and $m' \coloneqq m \ell^{- \ord_\ell (m)}$ for the prime-to-$\ell$ part of $m$. We can then define a $\Z_p [G_m]$-algebra map $j_\ell \: \Z_p [G_{m'}] \to \Z_p [G_m]$ by sending $\sigma_a$ to $\tilde \sigma_a$ (with $\tilde \sigma_a$ as defined before Theorem \ref{local points main result} in \S\,\ref{section statement local points main result}).
Write $j_\ell ( A_{\ell, F_{m'}})$ for the matrix obtained by applying $j_\ell$ to each of entry of the matrix $A_{\ell, m'}$ from Lemma \ref{unr cohom lem} and set $\widetilde{ A_\ell} \coloneqq \pi_{F_m / K} (j_\ell ( A_{\ell, F_{m'}}))$.\\
Let $\cI^{(\ell)} \subseteq G$ be the inertia subgroup at $\ell$, and $\pi_\ell \: \Z_p [G] \to \Z_p [G / \cI^{(\ell)}]$ the natural restriction map. It then follows Lemma \ref{unr cohom lem}\,(c) that $\pi_\ell ( \widetilde{A_\ell})$ agrees with $A_\ell \coloneqq A_{\ell, K}$. The exact sequence from Lemma \ref{unr cohom lem}\,(b) therefore fits into a commutative diagram of the form
\begin{cdiagram}
0 \arrow{r} &  \Z_p [G]^{\oplus n_\ell} \arrow{r}{\cdot \widetilde{A_\ell}} \arrow[twoheadrightarrow]{d}{\pi_\ell^{\oplus n_\ell}} &
    \Z_p [G]^{\oplus n_\ell} \arrow{r} \arrow[twoheadrightarrow]{d}{\pi_\ell^{\oplus n_\ell}} & \coker ( j_\ell (A_\ell)) \arrow{r} \arrow[dashed, twoheadrightarrow]{d} & 0 \\
    0 \arrow{r} & \Z_p [G / \cI^{(\ell)}]^{\oplus n_\ell} \arrow{r}{\cdot A_\ell} & \Z_p [G / \cI^{(\ell)}]^{\oplus n_\ell} \arrow{r} & (E_0 / E_1) ((K^{\mathcal{I}_\ell})_\ell) \arrow{r} & 0.
\end{cdiagram}%
The surjectivity of the dashed arrow follows from the snake lemma, and multiplication by $j_\ell (A_\ell)$ is injective by \cite[Ch.\@ III, \S\,8.2, Prop.\@ 3]{bourbaki} because
\[
\det ( j_\ell (A_{\ell, F_{m'}})) = j_\ell (\det (A_{\ell, F_{m'}})) = j_\ell ( \Eul_\ell ( \sigma_\ell^{-1})) = \ell \cdot \Eul_\ell ( \tilde \sigma_\ell^{-1})
\]
is a nonzero divisor. Indeed, for this it is enough to prove that $\chi ( \Eul_\ell ( \tilde \sigma_\ell^{-1})) = \Eul_\ell ( \chi (\tilde \sigma_\ell^{-1}))$ is nonzero for ever character $\chi$ of $G$, which is true because the polynomial $\Eul_\ell(X)$ has no root of complex absolute value 1.\\
Note that $(E_0 / E_1) ((K^{\mathcal{I}_\ell})_\ell)$ and $(E_0 / E_1) (K_\ell)_\ell)$ agree if $\ell$ is not an additive ramified prime because in any such case $K$ and $K^{\mathcal{I}_\ell}$ have the same residue field at $\ell$. If $\ell$ is additive and ramified in $K$, then $\ell \neq p$ by the assumed validity of Hypothesis \ref{hyp}\,(iii) and so $(E_0 / E_1) ((K^{\mathcal{I}_\ell})_\ell) = 0$.
Writing $D_{\ell}^\bullet$ for the perfect complex $\big [  \Z_p [G]^{\oplus n_\ell} \xrightarrow{\cdot \widetilde{A_\ell}} \Z_p [G]^{\oplus n_\ell} \big ]$, where the first term is placed in degree zero, in all cases we have therefore constructed a composite morphism
\[
\rho_{\ell} \: D_{\ell}^\bullet = \coker ( \widetilde{A_\ell})) [-1] \to  (E_0 / E_1) ((K^{\mathcal{I}_\ell})_\ell) [-1] \to (E_0 / E_1) (K_\ell) [-1]
\]
in $D (\Z_p [G])$. We also note that $D_{\ell}^\bullet$ is acyclic outside degree one and that one has
\begin{align*}
\Det_{\Z_p [G]} ( D_{\ell}^\bullet) & = \det ( \widetilde{A_\ell}) \cdot \Z_p [G] = \ell \cdot \Eul_\ell (\tilde \sigma_\ell^{-1}) \cdot \Z_p [G].
\end{align*}
It is then clear from the construction that $D^\bullet_\ell$ has all the properties claimed in (i) -- (iii). Moreover, it follows from Lemma \ref{unr cohom lem}\,(b) that $\rho_\ell$ is a quasi-isomorphism if $\ell$ is unramified in $K$ and $p \nmid \Tam_{K, \ell}$.
This proves claim (a)\,(iv), thereby concluding the proof of (a).\\ 
To prove claim (b), we take $\rho_{p, Q}$ to be any map that makes the diagram
\begin{cdiagram}[column sep=small]
\Z_p [G] [-1] \arrow{d}{1 \mapsto Q}  \arrow{r} & \Z_p [G][-1] \oplus D_{p}^\bullet \arrow[dashed]{d}{\rho_{p, Q}} \arrow{r} & 
    D_{p}^\bullet \arrow{d}{\rho_{p}} \arrow{r} & \phantom{X} \\ 
    E_1 ( K_p) [-1] \arrow{r} & \mathrm{R}\Gamma_f (\Q_\ell, T_{K / \Q}) \arrow{r} & (E / E_1) (K_p) [-1] \arrow{r} & \phantom{X}
\end{cdiagram}%
   commute. 
\end{proof}

\begin{rk} \label{Tamagawa number modified local complex rk}
    If one wants to account for Tamagawa numbers, on can modify $D^\bullet_\ell$ as follows. Since $p$ is odd, each quotient $E ( K_v) / E_0 (K_v)$ is a cyclic group by the Kodaira--N\'eron Theorem and hence
    $(E / E_0) ( K_p)$ is a cyclic $\Z_p [G]$-module generated by $t_\ell$, say.
    We therefore have a well-defined map $\Z_p [G] / \Tam_{K, \ell} \Z_p [G] \to (E / E_0) (K_\ell)$ induced by sending 1 to $t_\ell$. We then obtain a morphism
    \[
    \rho_{\ell, \mathrm{Tam}, 0} \: D_{\ell, \mathrm{Tam}}^\bullet \coloneqq \big [ 
    \Z_p [G] \xrightarrow{\cdot \Tam_{K, \ell}} \Z_p [G] \big] \to (E / E_0) ( K_\ell) [-1]
    \]
in $D (\Z_p [G])$ by taking $\rho_{\ell, \mathrm{Tam}}^0$ to be the zero map and $\rho_{\ell, \mathrm{Tam}}^1$ the map that sends $1$ to $t_\ell$. If we define $D^\bullet_{\ell, \mathrm{Tam}} \coloneqq D^\bullet_{\ell} \oplus D_{\ell, \mathrm{Tam}}^\bullet$ and $\rho_{\ell, \mathrm{Tam}} \coloneqq \rho_{\ell} \oplus \rho_{\ell, \mathrm{Tam}}$, then the modified complex $D^\bullet_{\ell, \mathrm{Tam}}$ also has all of the properties listed in Lemma \ref{approximation complexes}\,(a) as long as one replaces (iii) by
\begin{align*}
\vartheta_{D_{\ell, \mathrm{Tam}}^\bullet, \emptyset} (\Det_{\Z_p [G]} ( D^\bullet_{\ell, \mathrm{Tam}})) & = \vartheta_{D_{\ell}^\bullet, \emptyset} (\Det_{\Z_p [G]} ( D^\bullet_{\ell} )) \cdot \vartheta_{D_{\ell, \mathrm{Tam},0}^\bullet, \emptyset} ( \Det_{\Z_p [G]} ( D_{\ell, \mathrm{Tam},0}^\bullet)) \\
& = \ell \cdot \Eul_\ell (\tilde \sigma_\ell^{-1}) \cdot \Tam_{K, \ell} \cdot \Z_p [G].
\end{align*}
By defining $D_{p, Q, \mathrm{Tam}} \coloneqq D^\bullet_{p, \mathrm{Tam}} \oplus (\Z_p [G] [-1])$ one can then also adapt the construction made in Lemma \ref{approximation complexes}\,(b).
\end{rk}

\subsection{Global finite-support cohomology}
 In this section we use the complexes from Lemma ~\ref{approximation complexes} to define an auxiliary Nekov\'a\v{r}--Selmer complex. To do this,
let $\Pi$ be a finite set of finite places of $K$, fix $Q \in E_1 (K_p)$, and 
define a 
Selmer structure $\cF_{\Sigma, \Pi, Q}$ on $T_{K / \Q}$ as follows. We take $S (\cF_{\Sigma, \Pi, Q}) \coloneqq \Sigma \cup \Pi$, set
\[
\mathrm{R}\Gamma_{\cF_{\Sigma, \Pi, Q}} (\Q_\ell, T_{K / \Q}) \coloneqq
\begin{cases}
    D^\bullet_{\ell}\quad & \text{ if }  \ell \in \Pi \setminus \{ p \}, \\
    D^\bullet_{p, Q} & \text{ if } \ell = p \in \Pi, \\
    \Z_p [G] [-1] & \text{ if } \ell = p \not \in \Pi, \\
    0 & \text{ otherwise,}
\end{cases}
\]
and define $i_{\cF_{\Sigma, \Pi, Q}, v}$ to be the composite map 
\[
\mathrm{R}\Gamma_{\cF_{\Sigma, \Pi, Q}} (\Q_v, T_{K / \Q}) \to 
\mathrm{R} \Gamma_f ( \Q_v, T_{K / \Q}) \to \mathrm{R} \Gamma ( \Q_v, T_{K / \Q}), 
\]
where the first arrow is $\rho_\ell$, $\rho_{p, Q}$ (both constructed in Lemma \ref{approximation complexes}), or the zero map as appropriate. If $p$ is a good prime that ramifies in $K$, then by $\rho_{p, Q}$ we mean the restriction of $\rho_{p, Q}$ to $\Z_p [G][-1]$, so the map induced by sending $1$ to $Q$.\\
To lighten notation, we then introduce the abbreviations
\[
\widetilde D_{K, \Sigma, \Pi, Q}^\bullet \coloneqq \mathrm{R} \Gamma_{\cF_{\Sigma, \Pi, Q}} (K, \mathrm{T}_p E)
\quad \text{ and } \quad 
D_{K, \Sigma, \Pi, Q} \coloneqq \RHom_{\Z_p} ( \widetilde D_{K, \Sigma, \Pi, Q}^\bullet, \Z_p) [-3].
\]

The next result records the basic properties of these complexes.

\begin{lem} \label{approximation complex properties lemma}
Assume that $E (K) [p] = 0$ and that $p$ is unramified in $K$ if $p$ is an additive prime for $E$.
For every $Q \in E_1 (K_p)$, finite set $\Sigma \supseteq S(K)$ of places of $\Q$, and finite set $\Pi$ of finite places of $\Q$, the following claims are valid.
\begin{liste}
\item One has an inclusion
\[
\Fitt^0_{\Z_p [G]} ( H^2 ( \widetilde D^\bullet_{K, \Sigma, \Pi, Q})) \subseteq \Fitt^0_{\Z_p [G]} (H^2_f (K, \mathrm{T}_p E)).
\]
\item Let $F$ be a subfield of $K$ and define $Q' \coloneqq \mathrm{Tr}_{K / F} (Q)$. Then one has
\[
\pi_{K / F} ( \Fitt^0_{\Z_p [G_K]} ( H^2 ( \widetilde D^\bullet_{K, \Sigma, \Pi, Q}))) = \Fitt^0_{\Z_p [G_F]} ( H^2 ( \widetilde D^\bullet_{F, \Sigma, \Pi, Q'})).
\]
\item Let $\Sigma'$ be a finite set of places of $K$ that contains $\Sigma$. Then one has
an exact triangle
\begin{cdiagram}[column sep=small]
    \bigoplus_{\ell \in (\Sigma' \setminus \Sigma)} \mathrm{R} \Gamma_f (\Q_\ell, T_{K / \Q}) \arrow{r} & \widetilde D^\bullet_{K, \Sigma', \Pi, Q} \arrow{r} & \widetilde D_{K, \Sigma, \Pi, Q}^\bullet \arrow{r} & \bigoplus_{\ell \in (\Sigma' \setminus \Sigma)} \mathrm{R} \Gamma_f (\Q_\ell, T_{K / \Q})[1].
\end{cdiagram}
\item If $\ell \in \Pi \setminus S(K)$, then $\widetilde D^\bullet_{K, \Sigma, \Pi, Q} = \widetilde D^\bullet_{K, \Sigma \setminus \{ \ell\}, \Pi \setminus \{ \ell \}, Q}$.
\end{liste}
\end{lem}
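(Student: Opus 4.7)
The plan is to compare the auxiliary complex $\widetilde D^\bullet_{K, \Sigma, \Pi, Q}$ with the classical Bloch--Kato complex $\mathrm{R}\Gamma_f(K, \mathrm{T}_p E)$ via a morphism of Selmer structures, and to exploit the base-change compatibility of the constituent complexes built into Lemmas \ref{unr cohom lem} and \ref{approximation complexes}.

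For (a), the composites of the maps $\rho_\ell$ and $\rho_{p, Q}$ with the natural morphisms $\mathrm{R}\Gamma_f(\Q_v, T_{K/\Q}) \to \mathrm{R}\Gamma(\Q_v, T_{K/\Q})$, together with the zero maps at the remaining places of $S(\cF_{\Sigma, \Pi, Q})$, define a morphism of Selmer structures $\cF_{\Sigma, \Pi, Q} \to \cF_{\Sigma \cup \Pi, \mathrm{BK}}$. Applying the octahedral axiom to the defining triangles of the associated Selmer complexes (and invoking \cite[Lem.\,2.5]{BurnsMaciasCastillo} to identify the Bloch--Kato complex taken with respect to $\Sigma \cup \Pi$ with $\mathrm{R}\Gamma_f(K, \mathrm{T}_p E)$) identifies the cone of the induced morphism $\widetilde D^\bullet_{K, \Sigma, \Pi, Q} \to \mathrm{R}\Gamma_f(K, \mathrm{T}_p E)$ with $\bigoplus_{v \in \Sigma \cup \Pi} \mathrm{cone}(\mathrm{R}\Gamma_{\cF_{\Sigma, \Pi, Q}}(\Q_v, T_{K/\Q}) \to \mathrm{R}\Gamma_f(\Q_v, T_{K/\Q}))$. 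A place-by-place inspection then shows that $H^2$ of each local cone vanishes: at finite places both source and target are concentrated in degrees $\leq 1$; at $v = \infty$ the cone is isomorphic to $T_{K/\Q}^+$ placed in degree $0$, as $p$ is odd. Hence the induced map $H^2(\widetilde D^\bullet_{K, \Sigma, \Pi, Q}) \to H^2_f(K, \mathrm{T}_p E) = \Sel_{p, E/K}^\vee$ is surjective, and (a) follows from the behaviour of $\Fitt^0$ under quotients.

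For (b), each ingredient of $\widetilde D^\bullet_{K, \Sigma, \Pi, Q}$ base-changes correctly along $\Z_p[G_K] \to \Z_p[G_F]$: Shapiro's lemma handles $\mathrm{R}\Gamma(\cO_{K, \Sigma \cup \Pi}, \mathrm{T}_p E)$ and each $\mathrm{R}\Gamma(\Q_v, T_{K/\Q})$; part (c) of Lemma \ref{unr cohom lem} provides the analogous compatibility for each $D^\bullet_{\ell}$; and the construction of $D^\bullet_{p, Q}$ together with $Q' = \mathrm{Tr}_{K/F}(Q)$ takes care of the $p$-adic term. Since $\widetilde D^\bullet_{K, \Sigma, \Pi, Q}$ is a cone of perfect complexes and thus itself perfect, this yields a quasi-isomorphism $\widetilde D^\bullet_{K, \Sigma, \Pi, Q} \otimes^{\mathbb{L}}_{\Z_p[G_K]} \Z_p[G_F] \simeq \widetilde D^\bullet_{F, \Sigma, \Pi, Q'}$. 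Repeating the analysis in the proof of Lemma \ref{complex lemma}(b) with $\widetilde D^\bullet_{K, \Sigma, \Pi, Q}$ in place of $C^\bullet_{K, \Sigma}$, and using $E(K)[p] = 0$, one shows that $\widetilde D^\bullet_{K, \Sigma, \Pi, Q}$ has cohomology concentrated in degrees $[1, 2]$ and hence admits a representative of the form $[F^1 \to F^2]$ by free $\Z_p[G_K]$-modules. The module $H^2(\widetilde D^\bullet_{K, \Sigma, \Pi, Q})$ is therefore the cokernel of a matrix over $\Z_p[G_K]$ whose base change to $\Z_p[G_F]$ presents $H^2(\widetilde D^\bullet_{F, \Sigma, \Pi, Q'})$; claim (b) then follows from the standard base-change behaviour of Fitting ideals of finitely presented modules.

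For (c), the difference between $\widetilde D^\bullet_{K, \Sigma', \Pi, Q}$ and $\widetilde D^\bullet_{K, \Sigma, \Pi, Q}$ stems entirely from enlarging $S(\cF)$ by the primes in $\Sigma' \setminus \Sigma$, each of which acquires the strict local condition and at which $T_{K/\Q}$ is unramified (since $\Sigma \supseteq S(K)$ already contains all primes of ramification). A $3 \times 3$-diagram chase combining the defining triangle of the Selmer complex with the standard excision triangle for global \'etale cohomology (expressing the difference of $\mathrm{R}\Gamma(\cO_{K, \Sigma \cup \Pi}, \mathrm{T}_p E)$ and $\mathrm{R}\Gamma(\cO_{K, \Sigma' \cup \Pi}, \mathrm{T}_p E)$ in terms of $\bigoplus_{\ell \in \Sigma' \setminus \Sigma} \mathrm{R}\Gamma_{/f}(\Q_\ell, T_{K/\Q})$) yields the claimed triangle, after using local Tate duality (\ref{Tate duality RGamma_f}) and the self-duality $T_{K/\Q}^\ast(1) \cong T_{K/\Q}^\#$ to rewrite the resulting unramified term as $\mathrm{R}\Gamma_f(\Q_\ell, T_{K/\Q})$. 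For (d), any $\ell \in \Pi \setminus S(K)$ has good reduction for $E$, is unramified in $K$, and satisfies $\Tam_{K, \ell} = 1$, so by Lemma \ref{approximation complexes}(a)(iv) the morphism $\rho_\ell$ is a quasi-isomorphism making the local condition at $\ell$ interchangeable with the Bloch--Kato/unramified one; applying the triangle of (c) in reverse then shows that removing $\ell$ from $\Pi$ leaves the Selmer complex unchanged. The main obstacle will be the careful $3 \times 3$-chase of (c) with the correct bookkeeping of signs and shifts, together with the verification in (b) that $\widetilde D^\bullet_{K, \Sigma, \Pi, Q}$ has cohomology concentrated in degrees $[1, 2]$.
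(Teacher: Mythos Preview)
Your proposal is correct and follows essentially the same route as the paper. Both arguments produce the comparison triangle between $\widetilde D^\bullet_{K,\Sigma,\Pi,Q}$ and $\mathrm{R}\Gamma_f(K,\mathrm{T}_pE)$ by the octahedral axiom and then read off the surjection on $H^2$ for (a); both reduce (b) to the derived base-change isomorphism $\widetilde D^\bullet_{K,\Sigma,\Pi,Q}\otimes^{\mathbb L}_{\Z_p[G_K]}\Z_p[G_F]\simeq\widetilde D^\bullet_{F,\Sigma,\Pi,Q'}$ and then extract the Fitting-ideal identity; and (c), (d) are handled identically via the excision/compact-support triangle and Lemma~\ref{approximation complexes}(a)(iv). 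The only organisational difference is that for (b) the paper invokes Lemma~\ref{properties top and bottom cohomology}(a) directly (needing only acyclicity in degrees $>2$), whereas you pass through an explicit two-term free resolution---this is a little more work but equally valid, and your observation that $H^i(\widetilde D^\bullet)=0$ for $i\notin\{1,2\}$ is correct.
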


\begin{proof}
At the outset we define 
\[
\widetilde D^\bullet_{\mathrm{loc}, \Pi, Q} \coloneqq 
\bigoplus_{\ell \in \Pi \cup \{ p \}} \mathrm{R}\Gamma_{\cF_{\Sigma, \Pi, Q}} (\Q_\ell, T_{K / \Q})
\]
and $\rho_\Pi \coloneqq \rho_{p, Q} \oplus \bigoplus_{\ell \in \Pi} \rho_\ell$, 
and note that the definition of $D^\bullet_{K, \Sigma, \Pi, Q}$ implies that there is an exact triangle
\begin{equation} \label{triangle for tilde D}
    \begin{tikzcd}[column sep=small]
          \mathrm{R} \Gamma_c (\cO_{K, \Sigma}, \mathrm{T}_p E) 
        \arrow{r} & \widetilde D^\bullet_{K, \Sigma, \Pi, Q} \arrow{r}
        & \widetilde D^\bullet_{\mathrm{loc}, \Pi, Q} \arrow{r}  & \mathrm{R} \Gamma_c (\cO_{K, \Sigma}, \mathrm{T}_p E) [1].
    \end{tikzcd}
\end{equation}
This combines with the definition of the complex $\mathrm{R} \Gamma_f (K, \mathrm{T}_p E)$ and the octahedral axiom to yield an exact triangle of the form\begin{equation} \label{diagram comparing D and RGamma_f}
\begin{tikzcd}[column sep=small]
    \widetilde D^\bullet_{K, \Sigma, \Pi, Q} \arrow{r} & \mathrm{R} \Gamma_f (K, \mathrm{T}_p E) \arrow{r} & \mathrm{cone} (\rho_{\Pi}) \oplus \bigoplus_{\ell \in \Sigma \setminus (\Pi \cup \{ p \}} \mathrm{R}\Gamma_f (\Q_\ell, T_{K / \Q}) \arrow{r} & \widetilde D^\bullet_{K, \Sigma, \Pi, Q} [1].
\end{tikzcd}%
\end{equation}
Since the cone of the morphism $\rho_\Pi$ and $\bigoplus_{\ell \in \Sigma \setminus \Pi} \mathrm{R}\Gamma_f (\Q_\ell, T_{K / \Q})$ both have no nonzero cohomology in degrees greater than one, this exact triangle induces a surjection
\[
H^2 ( \widetilde D^\bullet_{K, \Sigma, \Pi, Q}) \twoheadrightarrow H^2_f (K, \mathrm{T}_p E).
\]
A standard property of Fitting ideals then combines with this surjection to imply the inclusion claimed in (a). Similarly, the properties of Fitting ideals reduce claim (b) to the claim that one has an isomorphism
\[
H^2 ( \widetilde D^\bullet_{K, \Sigma, \Pi, Q}) \otimes_{\Z_p [G_K]} \Z_p [G_F] \cong H^2 ( \widetilde D^\bullet_{K, \Sigma, \Pi, Q'}).
\]
Now, the triangle (\ref{triangle for tilde D}) combines with the assumption $E (K) [p] = 0$ to imply $\widetilde D^\bullet_{K, \Sigma, \Pi, Q}$ is acyclic in degree greater than two and so the desired isomorphism follows from Lemma~\ref{properties top and bottom cohomology}~(a) if we can demonstrate that one has an isomorphism
\[
\widetilde D^\bullet_{K, \Sigma, \Pi, Q} \otimes_{\Z_p [G_K]}^\mathbb{L} \Z_p [G_F] 
\cong 
\widetilde D^\bullet_{F, \Sigma, \Pi, Q'}
\]
in $D (\Z_p [G_F])$. To do this, we first note that one has 
$D^\bullet_{p, Q} \otimes_{\Z_p [G_K]}^\mathbb{L} \Z_p [G_F]  \cong  D^\bullet_{p, Q'}$ and 
$D^\bullet_{\ell} \otimes_{\Z_p [G_K]}^\mathbb{L} \Z_p [G_F]  \cong  D^\bullet_{\ell}$,
as can be checked from Lemma \ref{unr cohom lem}\,(c) and the explicit definitions of these complexes.
In addition, one has commutative diagrams
\begin{cdiagram}
D^\bullet_{K, p, Q} \otimes_{\Z_p [G_K]}^\mathbb{L} \Z_p [G_F] \arrow{d}{\simeq} \arrow{r}{\overline{\rho_{K, p, Q}}} & E (K_p) [-1] \arrow{d}{\mathrm{Tr}_{K / F}} 
&  D^\bullet_{K, \ell} \otimes_{\Z_p [G_K]}^\mathbb{L} \Z_p [G_F] \arrow{d}{\simeq} \arrow{r}{\overline{\rho_{K, \ell}}} & 
(E / E_0)(K_\ell) [-1] \arrow{d}{\mathrm{Tr}_{K / F}}
\\
D_{F, p, Q'} \arrow{r}{\rho_{F, p, Q}} & E (F_p) [-1] &  D_{F, \ell}^\bullet \arrow{r}{\rho_{F, \ell}} & (E / E_0) (F_\ell),
\end{cdiagram}%
where we have introduced subscripts $K$ and $F$ to emphasise the field of definition, and $\overline{\rho_{K, p, Q}}$ and $\overline{\rho_{K, \ell}}$ denote the maps induced by $\rho_{K, p, Q}$ and $\rho_{K, \ell}$, respectively.
The claimed property of $\widetilde D^\bullet_{K, \Sigma, \Pi, Q}$ then follows by combining this with the triangle (\ref{triangle for tilde D}) and the well-known isomorphism
\[
\mathrm{R} \Gamma_c (\cO_{K, \Sigma}, \mathrm{T}_p E)  \otimes_{\Z_p [G_K]}^\mathbb{L} \Z_p [G_F]  \cong \mathrm{R} \Gamma_c (\cO_{F, \Sigma}, \mathrm{T}_p E),
\]
(see, for example, \cite[Prop.\@ 1.6.5\,(3)]{FukayaKato}). Claim (c) follows from (\ref{triangle for tilde D}) and the triangle
\[
    \bigoplus_{\ell \in (\Sigma' \setminus \Sigma)} \mathrm{R} \Gamma_f (\Q_\ell, T_{K / \Q}) \to \mathrm{R} \Gamma_c (\cO_{K, \Sigma'}, \mathrm{T}_p E) \to \mathrm{R}\Gamma_c (\cO_{K, \Sigma}, \mathrm{T}_p E) \to  \bigoplus_{\ell \in (\Sigma' \setminus \Sigma)} \mathrm{R} \Gamma_f (\Q_\ell, T_{K / \Q}) [1]
\]
via the octrahedral axiom. Finally, claim (d) is a consequence of the fact that, if $\ell \in \Pi \setminus S (K)$ (and so $\ell \neq p$ is a prime of good reduction that is unramified in $K$), then the map $\rho_\ell \: D^\bullet_\ell \to \mathrm{R}\Gamma_f (\Q_\ell, T_{K / \Q})$ is a quasi-isomomorphism by Lemma \ref{approximation complexes}\,(a)\,(iv) (cf.\@ also the argument of \cite[Lem.\@ 2.5]{BurnsMaciasCastillo}). 
\end{proof}

\begin{lem} \label{propeties D complex}
Assume $E (K) [p] = 0$ and that $p$ is unramified in $K$ if $p$ is an additive prime for $E$. For given $Q \in E_1 (K_p)$, consider the composite morphism
\begin{align*}
\rho'_{p, Q} \:  C^\bullet_{K, \Sigma} & \longrightarrow \mathrm{R}\Gamma_{/f} (\Q_p, T_{K / \Q}) \cong \RHom_{\Z_p} ( \mathrm{R}\Gamma_f (\Q_p, T_{K / \Q}), \Z_p) [-2] \\
& \xrightarrow{\rho_{p, Q}^\ast} \RHom_{\Z_p} (D^\bullet_{p, Q}, \Z_p)[-2] = \RHom_{\Z_p} (D^\bullet_{p}, \Z_p)[-2] \oplus \Z_p [G][-1] \\
& \longrightarrow \Z_p [G][-1].
\end{align*}
Here the first arrow is the natural localisation map composed with the morphism $\mathrm{R}\Gamma (\Q_p, T_{K / \Q}) \to \mathrm{R}\Gamma_{/f} (\Q_p, T_{K / \Q}) $, the isomorphism is by local Tate duality (\ref{Tate duality RGamma_f}), $\rho^\ast_{p, Q} \coloneqq \RHom_{\Z_p} (\rho_{p, Q}, \Z_p) [-2]$ is the dual of the map $\rho_{p, Q}$ defined in Lemma \ref{approximation complexes}\,(b), and the last arrow is projection onto the direct summand $\Z_p [G][-1]$.
Then the map $H^1 (\rho'_{p, Q})$ induced by $\rho'_{p, Q}$ on cohomology in degree one 
coincides with the composite map
\[
 H^1 (\cO_{K, \Sigma}, \mathrm{T}_p E) \to H^1_{/ f} (\Q_p, T_{K / \Q}) \xrightarrow{\cP_K (\cdot, Q)} \Z_p [G].
\]
\end{lem}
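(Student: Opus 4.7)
The plan is to compute $H^1(\rho'_{p,Q})$ by unwinding the composite defining $\rho'_{p,Q}$ step by step on degree-one cohomology. Each constituent morphism has an explicit description on $H^1$, and the desired identification with $\cP_K(\cdot, Q)$ will emerge from a direct bookkeeping argument using Galois-equivariance of the Tate pairing.

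First, Lemma \ref{complex lemma}\,(a) identifies $H^1(C^\bullet_{K,\Sigma})$ with $H^1(\cO_{K, \Sigma}, \mathrm{T}_p E)$, and the localisation-then-projection map induces $\mathrm{loc}^{(p)}_{/f}$ on $H^1$. The local Tate duality isomorphism (\ref{Tate duality RGamma_f}) restricted to degree one recovers the pairing adjunction
\[
H^1_{/f} (\Q_p, T_{K/\Q}) \xrightarrow{\sim} \Hom_{\Z_p} (H^1_f (\Q_p, T_{K/\Q}), \Z_p), \quad x \mapsto (x, -)_{E/K},
\]
as can be verified through the commutative diagram already employed in the proof of Lemma \ref{pairing properties}\,(a). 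The map $H^1(\rho^\ast_{p,Q})$ pre-composes with $H^1(\rho_{p,Q})$, and the final projection onto the $\Z_p [G][-1]$ summand restricts the resulting functional to the direct summand $\Z_p[G]$ of $H^1(D^\bullet_{p,Q})$, on which $H^1(\rho_{p,Q})$ acts by $1 \mapsto Q$ by Lemma \ref{approximation complexes}\,(b).

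Hence for $a \in H^1(\cO_{K,\Sigma}, \mathrm{T}_p E)$ the resulting $\Z_p$-linear functional on $\Z_p[G]$ is $c \mapsto (\mathrm{loc}^{(p)}_{/f}(a), cQ)_{E/K}$. Under the canonical $\Z_p[G]$-module identification $\Hom_{\Z_p}(\Z_p[G], \Z_p) \cong \Z_p[G]$, with the contragredient action on the left and the regular action on the right (compare (\ref{dual sharp isomorphism})), given by $f \mapsto \sum_{g \in G} f(g) g$, this functional corresponds to $\sum_{g \in G}(\mathrm{loc}^{(p)}_{/f}(a), gQ)_{E/K}\, g$. Invoking $G$-invariance of the pairing together with $\Z_p [G]$-equivariance of $\mathrm{loc}^{(p)}_{/f}$ rewrites this as $\sum_g (\mathrm{loc}^{(p)}_{/f}(g^{-1}a), Q)_{E/K}\, g$, which after reindexing $\sigma = g^{-1}$ matches the defining formula (\ref{definition equivariant pairing map}) for $\cP_K(a,Q)$.

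The most delicate step I anticipate will be ensuring that the chosen trivialisation of the shifted $\Z_p$-dual of $\Z_p [G][-1]$ is compatible, as $\Z_p[G]$-modules, with the direction of local Tate duality so that the final reindexing produces $\sigma^{-1}$ rather than $\sigma$. Tracking these conventions amounts to fixing the isomorphism $\Hom_{\Z_p}(\Z_p[G], \Z_p) \cong \Z_p[G]$ in the non-$\#$-twisted direction, which is the natural choice consistent with the contragredient action already built into the duality (\ref{Tate duality RGamma_f}). Once this is done, the rest of the argument is a routine unravelling of definitions.
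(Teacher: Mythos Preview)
Your proof is correct and follows essentially the same approach as the paper's: both identify $H^1(\rho'_{p,Q})$ as the $\Z_p$-dual of the map $\Z_p[G] \to H^1_f(\Q_p, T_{K/\Q})$, $1 \mapsto Q$, pre-composed with localisation, and then reinterpret this via the local Tate pairing as $\cP_K(\cdot, Q)$. The only difference is organisational---the paper tracks the dualisation through explicit free representatives of $C^\bullet_{K,\Sigma}$ and $\mathrm{R}\Gamma_c(\cO_{K,\Sigma},\mathrm{T}_pE)$ and a pair of commutative squares, whereas you unwind each constituent of $\rho'_{p,Q}$ directly on degree-one cohomology; your discussion of the $\Hom_{\Z_p}(\Z_p[G],\Z_p)\cong\Z_p[G]$ convention is exactly the point the paper handles implicitly by working with those representatives.
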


\begin{proof}
Using the (dual of the) representative of $C^\bullet_{K, \Sigma}$ constructed in Lemma \ref{complex lemma}, we have a commutative diagram
\begin{cdiagram}
0 \arrow{d} \arrow{r} & \Z_p [G] \arrow{d}{\rho^{2}_{p, Q}} \arrow[equals]{r} & \Z_p [G] \arrow{d}{H^2 (\rho_{p, Q})} \arrow{r} & 0\\ 
    P \arrow{r}{\phi^\ast} & P \arrow{r} & H^2_c (\cO_{K, \Sigma}, \mathrm{T}_p E) \arrow{r} & 0, 
\end{cdiagram}%
where the first commutative square represents the morphism $\rho_{p, Q}$ 
composed with the morphism $\mathrm{R}\Gamma_f (\Q_p, T_{K / \Q}) [-1] \to \mathrm{R}\Gamma_c ( \cO_{K, \Sigma}, \mathrm{T}_p E) \cong \RHom_{\Z_p} ( C_{K,\Sigma}^\bullet, \Z_p) [-3]$
(and $\rho^{2}_{p, Q}$ is the map induced by $\rho_{p, Q}$ in degree two). By dualising, we therefore obtain the commutative diagram
\begin{cdiagram}
    0 \arrow{r} & H^1 (\cO_{K, \Sigma}, \mathrm{T}_p E) \arrow{d}{H^0 (\rho'_{p, Q})} \arrow{r} & P \arrow{d}{(\rho_{p, Q}')^0} \arrow{r}{\phi} & P \arrow{d} \\ 
    0 \arrow{r} & \Z_p [G] \arrow[equals]{r} & \Z_p [G] \arrow{r} & 0,
\end{cdiagram}%
    where now the second commutative square represents the morphism $\rho'_{p, Q}$ (and $(\rho'_{p, Q})^0$ is the map induced by $\rho'_{p, Q}$ in degree zero). In particular, $H^0 (\rho'_{p, Q})$ coincides with the dual of $H^2 (\rho_{p, Q})$. The claim therefore follows from the observation that $H^2 (\varrho_{p, Q})$ is the composite map
    \[
    \Z_p [G] \xrightarrow{1 \mapsto Q}  H^1_f (\Q_p, T_{K / \Q}) \to H^2_c (\cO_{K, \Sigma}, \mathrm{T}_p E)
    \]
    so that its dual map is given by the composite
    \[
    H^1 (\cO_{K, \Sigma}, \mathrm{T}_p E) \to H^1_{/ f} (\Q_p, T_{K / \Q}) \cong H^1_f (\Q_p, T_{K / \Q})^\ast \to \Z_p [G].
    \]
    Here the first arrow is the localisation map (followed by projection), the isomorphism is induced by the pairing $(\cdot, \cdot)_{E / K}$, and the last arrow is evaluation at $Q$. 
\end{proof}

\subsection{Determinantal ideals}

In this section we establish a connection between elements in the image of the map $\Theta_{K, \Sigma}$ (defined before the statement of Theorem \ref{etnc result 2}) and the Selmer complexes $\widetilde D_{K, \Sigma, \Pi, Q}$ introduced in the last section. Our main result in this direction is as follows. 

\begin{prop} \label{etnc explicit cor}
Assume that $E (K) [p] = 0$ and that $p$ is unramified in $K$ if $E$ has additive reduction at $p$. Fix $Q \in E_1 (K_p)$, and let $\Sigma$ be a finite set of places of $K$ that contains $S(K)$. If $\Pi \subseteq \Sigma$ is a subset of rational primes that are either unramified in $K$ or at which $E$ has multiplicative reduction, then for every 
\begin{itemize}
    \item $a \in \Det_{\Z_p [G]} (C^\bullet_{K, \Sigma})^{-1}$,
    \item $t \in \prod_{\ell \in \Pi} \Ann_{\Z_p [G]} ( ( E/ E_0) (K_\ell))^{-1, \#}$.
    \item $\nu \in  \big( \prod_{\ell \in \Pi} (\ell^{-1} \Eul_\ell (\widetilde \sigma_\ell^{-1})^{-1} \NN_{\cI_K^{(\ell)}} \Z_p [G] + \Z_p [G]  \big)$, 
\end{itemize} 
one has the containments
\begin{align} \label{prop 4.6 first inclusion}
 \nu \cdot \cP_K
(\Theta_{K, \Sigma}(a), Q) & \in 
\Fitt^0_{\Z_p [G]} (H^2 (\widetilde D^\bullet_{K, \Sigma, \Pi, Q}))^{\#}, \\
 t \cdot \nu \cdot \cP_K
(\Theta_{K, \Sigma}(a), Q) & \in 
\Z_p [G].
\label{prop 4.6 second inclusion}
\end{align}
\end{prop}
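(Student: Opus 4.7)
The plan is to reduce both inclusions to a single determinantal statement for perfect complexes and then invoke the algebraic projection principle of Appendix~\ref{algebra appendix}. The geometric input is Lemma~\ref{propeties D complex}, which realises $\cP_K(\cdot, Q)$ as the degree-one cohomology of an explicit morphism of complexes, while the algebraic bookkeeping — Euler factors, inertia norms, and the involution~$\#$ — is exactly what the elements $\nu$ and $t$ are designed to absorb.

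Concretely, dualising the defining triangle (\ref{triangle for tilde D}) of $\widetilde{D}^\bullet_{K,\Sigma,\Pi,Q}$ produces an exact triangle
\[
\widetilde{D}^{\bullet,*}_{\mathrm{loc},\Pi,Q}[-3] \longrightarrow D^\bullet_{K,\Sigma,\Pi,Q} \longrightarrow C^\bullet_{K,\Sigma} \xrightarrow{\ \beta\ } \widetilde{D}^{\bullet,*}_{\mathrm{loc},\Pi,Q}[-2].
\]
After projecting onto the $\Z_p[G][-1]$-summand of the $p$-adic component $D^{\bullet,*}_{p,Q}[-2]$, Lemma~\ref{propeties D complex} identifies the map induced by $\beta$ on $H^1$ with $\cP_K(\cdot, Q) \colon H^1(\cO_{K,\Sigma}, \mathrm{T}_pE) \to \Z_p[G]$. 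Choosing standard representatives for $C^\bullet_{K,\Sigma}$ (Lemma~\ref{complex lemma}\,(b)) and for each $D^\bullet_\ell$ (Lemma~\ref{approximation complexes}\,(a)\,(ii)) gives an explicit perfect representative of $D^\bullet_{K,\Sigma,\Pi,Q}$ through which $\cP_K(\Theta_{K,\Sigma}(a), Q)$ factors at the determinantal level.

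Applying then the projection principle of Appendix~\ref{algebra appendix} (extending \cite[App.]{sbA}), the image of $\Det^{-1}_{\Z_p[G]}(C^\bullet_{K,\Sigma})$ under the composite $\cP_K(\Theta_{K,\Sigma}(\cdot), Q)$ is contained in $\Fitt^0_{\Z_p[G]}(H^2(\widetilde{D}^\bullet_{K,\Sigma,\Pi,Q}))^\#$ up to exactly the local denominators $\prod_{\ell \in \Pi}(\ell\Eul_\ell(\widetilde\sigma_\ell^{-1}))^{-1}$, produced by $\Det^{-1}_{\Z_p[G]}(D^\bullet_\ell)$ via Lemma~\ref{approximation complexes}\,(a)\,(iii), together with inertia-norm corrections $\NN_{\cI_K^{(\ell)}}$ at ramified primes $\ell$ of multiplicative reduction. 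The shape of $\nu$ prescribed in the hypothesis is calibrated precisely to clear these denominators, yielding (\ref{prop 4.6 first inclusion}); the involution $\#$ enters because $\cP_K$ is built from a cup product that swaps the two $\Z_p[G]$-module structures (Lemma~\ref{pairing properties}\,(b)).

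For (\ref{prop 4.6 second inclusion}), I would combine (\ref{prop 4.6 first inclusion}) with the surjection $H^2(\widetilde{D}^\bullet_{K,\Sigma,\Pi,Q}) \twoheadrightarrow H^2_f(K,\mathrm{T}_pE) = \Sel^\vee_{p,E/K}$ of Lemma~\ref{approximation complex properties lemma}\,(a), whose kernel is annihilated by $\prod_{\ell\in\Pi}\Ann_{\Z_p[G]}((E/E_0)(K_\ell))$ — this being the exact discrepancy between the $(E_0/E_1)(K_\ell)$-target of $\rho_\ell$ and the $E(K_\ell)^\wedge$-target of $\mathrm{R}\Gamma_f(\Q_\ell, T_{K/\Q})$. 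Combined with the prescribed form of $t$, this forces $t\cdot \nu\cdot \cP_K(\Theta_{K,\Sigma}(a), Q)$ into $\Z_p[G]$. \textbf{The main obstacle} will be formulating the projection principle in the appendix in the correct generality, and rigorously tracing how the local Euler factors interact with the inertia averaging at ramified primes of multiplicative reduction, so that the denominators fit exactly into the set in which $\nu$ is allowed to lie.
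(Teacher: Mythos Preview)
Your plan for (\ref{prop 4.6 first inclusion}) is essentially the paper's route, carried out via Lemma~\ref{etnc explicit lem}. The one place you underplay is the sentence ``the shape of $\nu$ prescribed in the hypothesis is calibrated precisely to clear these denominators''. In the paper this is the heart of the argument: one shows, using the triangle (\ref{diagram comparing D and RGamma_f}), that
\[
H^1(\widetilde D^\bullet_{K,\Sigma,\Pi,Q})_\tor \;\cong\; {\bigoplus}_{\ell\in\Pi_{\mathrm{bad}}}\bigl(\Ann_{\Z_p[G]}((E_0/E_1)(K_\ell))\,/\,(\ell\,\Eul_\ell(\tilde\sigma_\ell))\bigr),
\]
and then computes that $\Ann_{\Z_p[G]}((E_0/E_1)(K_\ell))$ is generated by $\Eul_\ell(\tilde\sigma_\ell^{-1})$ and $I(\cI_K^{(\ell)})$ for multiplicative $\ell$. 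This is what produces, after the duality (\ref{fun isomorphism}), the precise ideal $\ell^{-1}\Eul_\ell(\tilde\sigma_\ell)^{-1}\NN_{\cI_K^{(\ell)}}\Z_p[G]+\Z_p[G]$. Your ``inertia-norm corrections'' gestures at this, but the identification of $H^1(\widetilde D)_\tor$ is where the exact form of $\nu$ is pinned down, and you should not expect this step to be purely formal.

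For (\ref{prop 4.6 second inclusion}) there is a genuine gap. Your proposal is to deduce it from (\ref{prop 4.6 first inclusion}) via the surjection $H^2(\widetilde D^\bullet_{K,\Sigma,\Pi,Q})\twoheadrightarrow H^2_f(K,\mathrm{T}_pE)$ and control of its kernel. But (\ref{prop 4.6 first inclusion}) already lands in $\Fitt^0(H^2(\widetilde D))^\#\subseteq\Z_p[G]$; the problem is that $t$ has \emph{denominators} in $\Ann((E/E_0)(K_\ell))^{-1,\#}$, and there is no general reason why multiplying an element of $\Fitt^0(H^2(\widetilde D))^\#$ by such denominators remains integral. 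Knowing that the kernel of $H^2(\widetilde D)\to H^2_f$ is annihilated by $\prod_\ell\Ann((E/E_0)(K_\ell))$ does not give this: Fitting ideals do not interact with short exact sequences in a way that would allow you to ``cancel'' $t$ against that kernel. The paper instead reruns the entire argument with a different complex: it uses the Tamagawa-modified local complexes $D^\bullet_{\ell,\Tam}$ of Remark~\ref{Tamagawa number modified local complex rk} to build $\widetilde D_{\Tam}$, for which one now has $H^1(\widetilde D_{\Tam})_\tor\cong\bigoplus_{\ell\in\Pi_{\mathrm{bad}}}\bigl(\Ann((E/E_0)(K_\ell))/(\Tam_{K,\ell})\bigr)\oplus\bigl(\Ann((E_0/E_1)(K_\ell))/(\ell\Eul_\ell(\tilde\sigma_\ell))\bigr)$. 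The Tamagawa annihilators appear in $H^1_\tor$ of the \emph{modified} complex, not in the kernel of $H^2\to H^2_f$ of the original one, and the same projection principle from Appendix~\ref{algebra appendix} then yields (\ref{prop 4.6 second inclusion}) directly.
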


Before proving Proposition \ref{etnc explicit cor}, we first establish an auxiliary result that concerns
the maps 
\[
\vartheta_{K, \Sigma, \Pi, Q} \coloneqq \vartheta_{D^\bullet_{K, \Sigma, \Pi, Q}, \{ b^\ast \}}
\quad \text{ and } \quad
\widetilde \vartheta_{K, \Sigma, \Pi, Q} \coloneqq \vartheta_{\widetilde D^\bullet_{K, \Sigma, \Pi, Q}, \emptyset}
\]
(with $b^\ast$ the dual of the $\Z_p [G]$-basis $b$ of $T_{K / \Q}^+$ from \S\,\ref{s: cpctly supported etale cohom}) defined as the relevant instances of Definition \ref{def projection map from det}.\\
Setting $D^\bullet_{\mathrm{loc}, \Pi, Q} \coloneqq \RHom_{\Z_p} ( \widetilde D^\bullet_{\mathrm{loc}, \Pi, Q}, \Z_p) [-2]$ and dualising the triangle (\ref{triangle for tilde D}) gives an exact triangle
\begin{equation} \label{triangle comparing C and D}
\begin{tikzcd}[column sep=small]
    D^\bullet_{K, \Sigma, \Pi, Q} \arrow{r} & C^\bullet_{K, \Sigma} \arrow{rrr}{\rho_{\mathrm{loc, \Pi, Q}}} & & & D^\bullet_{\mathrm{loc}, \Pi, Q} \arrow{r} & D^\bullet_{K, \Sigma, \Pi, Q} [1]. 
\end{tikzcd}
\end{equation}
To state the next lemma below, we will make use of the isomorphism
\begin{align}
\nonumber
\Det_{\Z_p [G]} (C^\bullet_{K, \Sigma})^{-1}  
& \cong \Det_{\Z_p [G]} ( D^\bullet_{K, \Sigma, \Pi, Q})^{-1} \otimes \Det_{\Z_p [G]} ( D^\bullet_{\mathrm{loc}, \Pi, Q})^{-1}  \\ \label{isomorphisms of dets}
& \cong \Det_{\Z_p [G]} ( D^\bullet_{K, \Sigma, \Pi, Q})^{-1}.
\end{align}
Here the first isomorphism is induced by the the triangle (\ref{triangle comparing C and D}) and the second isomorphism is induced by the isomorphism
\[
(\otimes_{\ell \in \Pi} \mathrm{Ev} (a_\ell)) \: 
\Det_{\Z_p [G]} ( D^\bullet_{\mathrm{loc}, \Pi, Q})^{-1} \xrightarrow{\simeq} \Det_{\Z_p [G]} ( \Z_p [G] [-1])^{-1} = \Z_p [G]
\]
with $a_\ell$ the canonical element of $\Det_{\Z_p [G]} ( \RHom_{\Z_p} ( D^\bullet_\ell, \Z_p)) \cong \Det_{\Z_p [G]} (D^\bullet_\ell)^{-1, \#}$ that satisfies $\vartheta_{D^\bullet_{\ell}, \emptyset} (a_\ell) = \ell \cdot \Eul_\ell (\sigma_\ell)$ (which exists by Lemma \ref{approximation complexes}\,(a)\,(iii)).

\begin{lem} \label{etnc explicit lem}
Let  $Q \in E_1 (K_p)$. If $E(K) [p] = 0$, then the following claims are valid.
\begin{liste}
\item Put $\eta_\Pi \coloneqq \prod_{\ell \in \Pi } (\ell \cdot \Eul_\ell (\tilde \sigma_\ell))$ for brevity. Then the following diagram commutes.
\begin{cdiagram}
    \Det_{\Z_p [G]} (C^\bullet_{K, \Sigma})^{-1} \arrow{r}{\Theta_{K, \Sigma}} \arrow{d}[right]{\simeq}[left]{(\ref{isomorphisms of dets})} & H^1 (\cO_{K, \Sigma}, \mathrm{T}_p E) \arrow{d}{\pm \eta_\Pi^{-1} \cdot \cP_K (\cdot, Q)} \\
    \Det_{\Z_p [G]} ( D^\bullet_{K, \Sigma, \Pi, Q})^{-1} \arrow{r}{\vartheta_{K, \Sigma, \Pi, Q}} & \Q_p [G].
\end{cdiagram}%
    \item One has the equality $\vartheta_{K, \Sigma, \Pi, Q} ( \Det_{\Z_p [G]} ( D^\bullet_{K, \Sigma, \Pi, Q})^{-1}) = \widetilde \vartheta_{K, \Sigma, \Pi, Q} ( \Det_{\Z_p [G]} ( \widetilde D^\bullet_{K, \Sigma, \Pi, Q})^{-1})^\#$ and an inclusion
    \[
    \Fitt^0_{\Z_p [G]} ( H^1 (\widetilde D_{K, \Sigma, \Pi, Q})_\tor^\vee ) \cdot 
     \widetilde \vartheta_{K, \Sigma, \Pi, Q} ( \Det_{\Z_p [G]} ( \widetilde D^\bullet_{K, \Sigma, \Pi, Q})^{-1}) \subseteq \Fitt^0_{\Z_p [G]} ( H^2 ( \widetilde D_{K, \Sigma, \Pi, Q}^\bullet)).
    \]
    \end{liste}
\end{lem}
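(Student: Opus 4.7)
The plan is to exploit the exact triangle (\ref{triangle comparing C and D}) together with the explicit description of the composite map $H^1(\rho'_{p,Q})$ provided by Lemma \ref{propeties D complex}.

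For claim (a), the map $\vartheta_{K,\Sigma,\Pi,Q}$ is by construction obtained from the Knudsen--Mumford determinant of $D^\bullet_{K,\Sigma,\Pi,Q}$ by evaluation against the basis element $b^\ast$, and $\Theta_{K,\Sigma}$ is obtained in the same manner from $C^\bullet_{K,\Sigma}$. The triangle (\ref{triangle comparing C and D}) induces the isomorphism (\ref{isomorphisms of dets}) of determinants, under which the evaluation on the $C^\bullet_{K,\Sigma}$-side splits as a product of the evaluation on the $D^\bullet_{K,\Sigma,\Pi,Q}$-side and a contribution from $D^\bullet_{\mathrm{loc},\Pi,Q}$. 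The second factor is identified with $\eta_\Pi$ (up to sign) by the very definition of the canonical elements $a_\ell$ and by Lemma \ref{approximation complexes}\,(a)\,(iii), whose prescription of $\vartheta_{D^\bullet_\ell, \emptyset}$ gives precisely $\ell \cdot \Eul_\ell(\tilde\sigma_\ell^{-1})$. The remaining content of the square, namely that the `pairing part' of $\vartheta_{K,\Sigma,\Pi,Q}$ coincides on $H^1(\cO_{K,\Sigma}, \mathrm{T}_p E)$ with $\cP_K(\cdot, Q)$, follows from Lemma \ref{propeties D complex} after dualising the triangle (\ref{triangle for tilde D}): the summand of $D^\bullet_{\mathrm{loc}, \Pi, Q}$ corresponding to the $\Z_p[G][-1]$-component in $D^\bullet_{p,Q}$ is precisely what $\rho'_{p,Q}$ projects onto, so tracking the Knudsen--Mumford formalism identifies this contribution with $\cP_K(\Theta_{K,\Sigma}(a), Q)$.

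For claim (b), the asserted equality between the images of $\vartheta_{K,\Sigma,\Pi,Q}$ and $\widetilde\vartheta_{K,\Sigma,\Pi,Q}^{\,\#}$ is a formal consequence of the definition $D^\bullet_{K,\Sigma,\Pi,Q} = \RHom_{\Z_p}(\widetilde D^\bullet_{K,\Sigma,\Pi,Q}, \Z_p)[-3]$ together with the standard compatibility of determinants with $\RHom$ and shifts in the Knudsen--Mumford formalism, noting that for finite $G$ the $R[G]$-dual differs from the $R$-dual precisely by the $\#$-involution (cf.\@ (\ref{dual sharp isomorphism})). For the inclusion, we observe that the triangle (\ref{triangle for tilde D}) combined with $E(K)[p]=0$ and Lemma \ref{complex lemma}\,(a) forces $\widetilde D^\bullet_{K,\Sigma,\Pi,Q}$ to be a perfect complex of $\Z_p[G]$-modules acyclic outside degrees one and two; moreover, the local complexes $D^\bullet_\ell$ and $D^\bullet_{p,Q}$ are concentrated in degree one. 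One may then invoke the Burns--Sano-type algebraic criterion developed in Appendix \ref{algebra appendix}, applied to a suitable perfect representative whose boundary map accounts for the $H^1$-torsion on one side and the target $H^2$-torsion on the other.

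The main obstacle is the careful bookkeeping of signs and of the $\#$-involution across the two passages through Knudsen--Mumford duality, both in the dualisation of (\ref{triangle for tilde D}) producing (\ref{triangle comparing C and D}) and in the identification of $\vartheta_{D^\bullet_\ell, \emptyset}$ with evaluation on $a_\ell$; once this is settled, the inclusion in (b) reduces to an invocation of the algebraic framework of Appendix \ref{algebra appendix} and is essentially routine.
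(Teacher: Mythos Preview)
Your proposal is essentially correct and follows the same route as the paper. For (a) the paper invokes Proposition~\ref{functoriality properties of det projection map}\,(a)\,(i) for each $\mathrm{Ev}_{a_\ell}$ and Proposition~\ref{functoriality properties of det projection map}\,(b) combined with Lemma~\ref{propeties D complex}, which is exactly the mechanism you describe; for (b) the paper builds an explicit three-term standard representative of $\widetilde D^\bullet_{K,\Sigma,\Pi,Q}$ from the mapping-cone description of the triangle (\ref{triangle comparing C and D}) and then applies Proposition~\ref{integrality properties of det projection map}\,(b) for the inclusion and Proposition~\ref{functoriality properties of det projection map}\,(c) for the equality of images.

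One point worth sharpening: you describe the equality of images in (b) as a ``formal consequence'' of Knudsen--Mumford duality, but the $\vartheta$-maps are not just determinants---they include the passage-to-cohomology step, and matching their images under dualisation is precisely the content of Proposition~\ref{functoriality properties of det projection map}\,(c), which in turn needs the complex to be acyclic outside degrees one and two. The paper secures this by writing down the explicit representative $[F \to P \oplus F' \to P]$ and checking that $\partial'^{\#,\mathrm{tr}}$ is injective. Your sketch would benefit from naming these two propositions rather than gesturing at the appendix in general.
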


\begin{proof}
Claim (a) follows from Proposition \ref{functoriality properties of det projection map}\,(a)\,(i), applied for every $\mathrm{Ev}_{a_\ell}$, and  Proposition~\ref{functoriality properties of det projection map}~(b) (combined with Lemma \ref{propeties D complex}). \\
To justify claim (b), we first note that, by Lemma \ref{approximation complexes}\,(a)\,(ii), the complex $D^\bullet_{\mathrm{loc}, \Pi, Q}$ admits a representative of the form $[F \oplus \Z_p [G] \xrightarrow{\partial \oplus 0} F]$ for some free $\Z_p [G]$-module $F$ of finite rank and endomorphism $\partial$ of $F$. In the following we set $F' \coloneqq F \oplus \Z_p [G]$ and fix a $\Z_p [G]$-basis $y_1, \dots, y_s$ of $F'$ such that $F$ is identified with $\bigoplus_{i = 2}^s \Z_p [G] y_i$. \\
Write $[P \stackrel{\phi}{\to} P]$ for the representative of $C^\bullet_{K, \Sigma}$ provided by Lemma \ref{complex lemma}. That is, $P$ is a free $\Z_p [G]$-module of rank $d$, say, with basis $x_1, \dots, x_d$, and $P = P' \oplus (T_{K / \Q})^+$ with $P' \coloneqq \bigoplus_{i = 2}^d \Z_p [G] x_i$.
From this choice of bases we see that, by (\ref{triangle comparing C and D}) and the definition of the mapping cone, that $D^\bullet_{K, \Sigma, \Pi, Q}$ is represented by $[P \stackrel{\phi'}{\to} P \oplus F' \stackrel{\partial'}{\to} F ]$ with the first term placed in degree one. 
Here $\phi' = - (\phi \oplus \rho^1)$ and $\partial' = - \rho^2 + \partial$ with $\rho^i$ the component in degree $i$ of the morphism $\rho \: C^\bullet_{K, S} \to D^\bullet_{\mathrm{loc}, \Pi, Q}$.\\
We next recall from (\ref{dual sharp isomorphism}) that for every $\Z_p [G]$-module $M$ we have a canonical isomorphism $\Hom_{\Z_p} (M, \Z_p) \xrightarrow{\simeq} \Hom_{\Z_p [G]} (M, \Z_p [G])^\#$.
It follows that the complex $\widetilde D^\bullet_{K, \Sigma, \Pi, Q}$,
which is isomorphic to the (shifted) $\Z_p$-linear dual of $D^\bullet_{K, \Sigma, \Pi, Q}$, can be represented by $[F \xrightarrow{{\partial}^{\#, \mathrm{tr}}} P \oplus F' \xrightarrow{{\phi'}^{\#, \mathrm{tr}}} P]$ with the first term placed in degree zero. Here the maps ${\partial'}^{\#, \mathrm{tr}}$ and ${\phi'}^{\#, \mathrm{tr}}$ are defined by means of the matrices that are obtained by applying the involution $\#$ to the transpose of the matrices representing $\partial'$ and $\phi'$, respectively. 
This shows that $\widetilde{D}^\bullet_{K, \Sigma, \Pi, Q}$ admits a standard representative in the sense of Definition \ref{standard representative def} and so the inclusion claimed in (b) is a special case of Proposition~\ref{integrality properties of det projection map}~(b). 
\\
To prove the remainder of claim (b), we note that the complex $[F \xrightarrow{{\partial'}^{\#, \mathrm{tr}}} P' \oplus F' \xrightarrow{{\phi'}^{\#, \mathrm{tr}}} P]$ is a perfect complex of $\Z_p [G]$-modules that is both acyclic outside degrees one and two (this uses that $\partial^{\#, \mathrm{tr}}$, and hence also ${\partial'}^{\#, \mathrm{tr}}$, is injective) and has vanishing Euler characteristic in $K_0 (\Z_p [G])$. 
We may then apply Proposition \ref{functoriality properties of det projection map}\,(c) to deduce that
\begin{align*}
    \widetilde \vartheta_{K, \Sigma, \Pi, Q} ( \Det_{\Z_p [G]} ( \widetilde D^\bullet_{K, \Sigma, \Pi, Q})^{-1})^\# & \stackrel{(\ref{dual sharp isomorphism})}{=}
    \widetilde \vartheta_{K, \Sigma, \Pi, Q} ( \Det_{\Z_p [G]} ( \RHom_{\Z_p [G]} (D^\bullet_{K, \Sigma, \Pi, Q}, \Z_p [G]))^{-1}) \\
    & = \vartheta_{K, \Sigma, \Pi, Q} ( \Det_{\Z_p [G]} ( D^\bullet_{K, \Sigma, \Pi, Q})^{-1}),
\end{align*}
as claimed.
\end{proof}

We can now give the proof of Proposition \ref{etnc explicit cor}. 
\begin{proof}[Proof (of Proposition \ref{etnc explicit cor}):]
We first explain how to deduce the inclusion (\ref{prop 4.6 first inclusion}) from Lemma \ref{etnc explicit lem}. To do this, we note that, as $H^1_f (K, \mathrm{T}_p E) = E (K) \otimes_{\Z} \Z_p$ is assumed to be $\Z_p$-torsion free, it follows from the triangle (\ref{diagram comparing D and RGamma_f}) that we have an identification
\begin{align*}
 H^1 (\widetilde D_{K, \Sigma, \Pi, Q} )_\tor  & = H^0 \big( \mathrm{cone} (\widetilde \rho_{\mathrm{loc}, \Pi, Q} ) \big)_\mathrm{tor} \\
& = \ker \big \{ {\bigoplus}_{\ell \in \Pi} \big( \Z_p [G]^{n_\ell} / (j_\ell (A_\ell)) \big) \longrightarrow {\bigoplus}_{\ell \in \Pi} (E / E_0) (K_\ell) \big \} \\ 
& = \ker \big \{ {\bigoplus}_{\ell \in \Pi_\mathrm{bad}} \big( \Z_p [G] / (\ell \Eul_\ell (\tilde \sigma_\ell))  \big) \longrightarrow {\bigoplus}_{\ell \in \Pi_\mathrm{bad}} (E / E_0) (K_\ell) \big \} \\
& = {\bigoplus}_{\ell \in \Pi_\mathrm{bad}} \big( \Ann_{\Z_p [G]} ( (E_0 / E_1) (K_\ell)) / (\ell \Eul_\ell (\tilde \sigma_\ell)) \big).
\end{align*}
Here $\Pi_\mathrm{bad} \subseteq \Pi$ denotes the subset of places at which $E$ has bad reduction (in which case $n_\ell = 1$ and $j_\ell (A_\ell) = \ell \Eul_\ell (\tilde \sigma_\ell)$), and we have used that any prime $\ell \in \Pi \setminus \Pi_\mathrm{bad}$ is by assumption unramified in $K$ and so does not contribute to the above kernel because $\rho_\ell$ is an quasi-isomorphism for any such $\ell$ by Lemma \ref{approximation complexes}.\\
Before proceeding, it is convenient to first make a general observation concerning an ideal $\a$ of $\Z_p [G]$ that contains a nonzero divisor $x$. In any such situation one has 
\begin{equation} \label{fun isomorphism}
\big ( \a / \Z_p [G] x \big)^\vee \cong \big( (\Z_p [G] x)^\ast / \a^\ast \big)^\# \cong \big( ( x^{-1} \Z_p [G]) / \a^{-1} \big)^\# \cong \Z_p [G] / (x \a^{-1})^\#,  
\end{equation}
where we have used the isomorphism $\a^\ast \cong \a^{-1} \coloneqq \{ y \in \Q_p [G] \mid y \a \subseteq \Z_p [G] \}$ that is valid because $\a$ contains a nonzero divisor (cf.\@ \cite[Prop.\@ 6.1\,(4)]{bass}). From (\ref{fun isomorphism}) we then conclude that 
$\Fitt^0_{\Z_p [G]} ( ( \a / \Z_p [G] x)^\vee ) = (x \a^{-1})^{\#}$. This general observation applied with $\a$ and $x$ taken to be $\Ann_{\Z_p [G]} ( (E_0 / E_1) (K_\ell))$ and $ \ell \Eul_\ell (\tilde \sigma_\ell^{-1})$, respectively, shows that $\Fitt^0_{\Z_p [G]} ( H^1 (\widetilde D_{K, \Sigma, \Pi, Q} )_\tor^\vee) $ is equal to the product ideal
\[
{\prod}_{\ell \in \Pi_\mathrm{bad}}  \ell \Eul_\ell (\tilde \sigma_\ell^{-1})  \cdot \Ann_{\Z_p [G]} ( (E_0 / E_1) (K_\ell))^{-1, \#}.
\]
Now, the factors $ \ell \Eul_\ell (\tilde \sigma_\ell^{-1})$ cancel with the corresponding factors in the definition of the element $\eta_\Pi^{-1}$ from Lemma \ref{etnc explicit lem}, and so Lemma \ref{etnc explicit lem}\,(a) implies that
\[
\big( {\prod}_{\ell \in \Pi_\mathrm{bad}} \Ann_{\Z_p [G]} ( (E_0 / E_1) (K_\ell))^{-1, \#} \big) \cdot
\cP_K (\Theta_{K, \Sigma} (a), Q)
\]
is contained in
\[
\big( {\prod}_{\ell \in \Pi \setminus \Pi_\mathrm{bad}} \ell \Eul_\ell (\tilde \sigma_\ell^{-1}) \big) \cdot \Fitt^0_{\Z_p [G]} (H^2 (\widetilde D^\bullet_{K, \Sigma, \Pi, Q}))^{\#}.
\]
It now only remains to observe that, for every $\ell \in \Pi_\mathrm{bad}$, one has that $\Ann_{\Z_p [G]} ( (E_0 / E_1) (K_\ell))^{-1, \#}$ contains $\ell^{-1} \Eul_\ell (\tilde \sigma_\ell)^{-1} \NN_{\cI_K^{(\ell)}} \Z_p [G] + \Z_p [G]$ because 
$\Ann_{\Z_p [G]} ( (E_0 / E_1) (K_\ell))$ is generated by $\Eul_\ell (\tilde \sigma_\ell^{-1})$ and the augmentation ideal $I (\cI_K^{(\ell)})$ as a consequence of Lemma \ref{unr cohom lem} and the assumption that $\Pi_\mathrm{bad}$ contains no additive primes. 
Since $\cI_K^{(\ell)}$ is trivial for any $\ell \in \Pi \setminus \Pi_\mathrm{bad}$ by assumption (and so $\ell^{-1} \Eul_\ell (\tilde \sigma_\ell)^{-1} \NN_{\cI_K^{(\ell)}} \Z_p [G] + \Z_p [G]$ simplifies to $\ell^{-1}\Eul_\ell (\tilde \sigma_\ell)^{-1} \Z_p [G]$ for such $\ell$), this concludes the proof of (\ref{prop 4.6 first inclusion}). \\
As for the proof of (\ref{prop 4.6 second inclusion}), this follows from a very similar argument and so we only provide a sketch.
Using the complexes defined in Remark \ref{Tamagawa number modified local complex rk}, we may define a modified Nekov\'a\v{r}--Selmer structure $\cF^\mathrm{Tam}_{\Sigma, \Pi, Q}$ by taking $S (\cF^\Tam_{\Sigma, \Pi, Q}) \coloneqq \Sigma \cup \Pi$ and 
\[
\mathrm{R}\Gamma_{\cF^\Tam_{\Sigma, \Pi, Q}} (\Q_\ell, T_{K / \Q}) \coloneqq
\begin{cases}
    D^\bullet_{\ell, \Tam}\quad & \text{ if }  \ell \in \Pi \setminus \{ p \}, \\
    D^\bullet_{p, Q, \Tam} & \text{ if } \ell = p \in \Pi, \\
    \Z_p [G] [-1] & \text{ if } \ell = p \not \in \Pi, \\
    0 & \text{ otherwise.}
\end{cases}
\]
Now, if we set $\eta_\Pi^\Tam \coloneqq \prod_{\ell \in \Pi} ( \Tam_{K, \ell} \cdot \ell \cdot \Eul_\ell (\sigma_\ell^{-1}))$
and $\widetilde D_{\Tam} \coloneqq \mathrm{R}\Gamma_{\cF^\Tam_{\Sigma, \Pi, Q}} (\cO_{K, \Sigma}, \mathrm{T}_p E)$, then the argument of Lemma \ref{etnc explicit lem}\,(a) shows that we have a commutative diagram of the form 
\begin{cdiagram}
    \Det_{\Z_p [G]} (C^\bullet_{K, \Sigma})^{-1} \arrow{r}{\Theta_{K, \Sigma}} \arrow{d}[right]{\simeq} & H^1 (\cO_{K, \Sigma}, \mathrm{T}_p E) \arrow{d}{\pm (\eta_\Pi^\Tam)^{-1} \cdot \cP_K (\cdot, Q)} \\
    \Det_{\Z_p [G]} ( \RHom_{\Z_p} ( \widetilde D_{\Tam}, \Z_p) [-3])^{-1} \arrow{r}{\vartheta^\Tam} & \Q_p [G],
\end{cdiagram}%
where the map $\vartheta^\Tam$ is defined as the relevant special case of Definition \ref{def projection map from det}. In addition, the argument of Lemma \ref{etnc explicit lem}\,(b) shows that 
\begin{align*}
    \Fitt^0_{\Z_p [G]} ( H^1 (\widetilde D_{\Tam})_\tor^\vee ) \cdot 
     (\im \vartheta^\Tam)^\#  & \subseteq \Fitt^0_{\Z_p [G]} ( H^2 ( \widetilde D_{\Tam}^\bullet)) \subseteq \Z_p [G].
    \end{align*}
The inclusion (\ref{prop 4.6 second inclusion}) can therefore be proved in exactly the same way as the inclusion (\ref{prop 4.6 first inclusion}) once we have observed that $H^1 (\widetilde D_{\Tam} )_\tor$ identifies with
\[
{\bigoplus}_{\ell \in \Pi_\mathrm{bad}} \big( \Ann_{\Z_p [G]} ( (E / E_0) (K_\ell)) / (\Tam_{K, \ell}) \big) \oplus \big( \Ann_{\Z_p [G]} ( (E_0 / E_1) (K_\ell)) / (\ell \Eul_\ell (\tilde \sigma_\ell)) \big).
\qedhere
\]
\end{proof}

\subsection{The proof of Theorem \ref{mazur--tate main result 1}\,(b)}

In this section we prove Theorem \ref{mazur--tate main result 1}\,(b), our main result towards the `weak main conjecture' of Mazur and Tate. If the Euler factors $\Eul_\ell (\tilde \sigma_\ell)^{-1}$ are invertible in $\Z_p [G_{mp^n}]$ for all prime divisors $\ell$ of $m$ and
the element $\mathfrak{k}_{mp^n}$ from Theorem \ref{local points main result} is in $E_1 (K_p)$, then this is a straightforward consequence of Theorem \ref{etnc result 2} and Proposition \ref{etnc explicit cor}. However, this will not be the case in general and so a more detailed analysis is required in order to prove Theorem~\ref{mazur--tate main result 1}~(b). This will
be done in the rather technical Lemma \ref{inductive argument} below, where we will use the Euler system norm relations in order to prove that
the denominators arising from factors of the form $\Eul_\ell (\tilde \sigma_\ell)^{-1}$ can be `absorbed' by $z^\mathrm{Kato}_{mp^n}$. \\
Before stating this result, we first give a different characterisation of the set of prime numbers $C^{(p)}_\times (K)$ that was defined in Remark \ref{mazur--tate main result 1 rk}\,(c).

\begin{lem} \label{invertible Euler factors lemma}
    A prime number $\ell$ belongs to $C_\times^{(p)} (K)$ if and only if $\ell \cdot \Eul_\ell (\tilde \sigma_\ell) \in \Z_p [G]^\times$.
\end{lem}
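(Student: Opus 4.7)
The plan is to reduce the unitality of $\ell \cdot \Eul_\ell(\tilde\sigma_\ell)$ in $\Z_p[G]$ to a family of non-vanishing conditions on character evaluations, and then to match these conditions with the defining condition of $C^{(p)}_\times(K)$. First, I would exploit the fact that $p$ lies in the Jacobson radical of the finite $\Z_p$-algebra $\Z_p[G]$, so that an element is a unit if and only if its reduction to $\mathbb{F}_p[G]$ is a unit. For the abelian group $G$, extending scalars to $\overline{\mathbb{F}_p}$ decomposes $\overline{\mathbb{F}_p}[G]$ as a product of local rings indexed by the characters $\chi \colon G \to \overline{\mathbb{F}_p}^\times$, so unitality becomes equivalent to $\chi(\alpha) \neq 0$ in $\overline{\mathbb{F}_p}$ for every such $\chi$.

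Applied to $\alpha = \ell \cdot \Eul_\ell(\tilde\sigma_\ell)$, the relevant evaluation is
\[ \chi(\alpha) = \ell - a_\ell\chi(\tilde\sigma_\ell) + \bm{1}_N(\ell)\chi(\tilde\sigma_\ell)^2, \]
and the classical reduction bijection between prime-to-$p$ roots of unity in $\overline{\Q_p}^\times$ and those in $\overline{\mathbb{F}_p}^\times$ rewrites the condition $\chi(\alpha) \neq 0$ in $\overline{\mathbb{F}_p}$ as $\ell \not\equiv \zeta(a_\ell - \bm{1}_N(\ell)\zeta) \pmod p$ for the corresponding $\zeta \in \overline{\Q_p}^\times$. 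The key remaining step is to identify the set $\{\chi(\tilde\sigma_\ell) : \chi \in \Hom(G, \overline{\mathbb{F}_p}^\times)\}$ with the group of $f^{(p)}_{\ell, K/\Q}$-th roots of unity in $\overline{\mathbb{F}_p}^\times$. For the inclusion $\supseteq$, I would use that $\tilde\sigma_\ell$ descends to a generator of the cyclic quotient $\cD^{(\ell)}_K / \cI^{(\ell)}_K \simeq \Z/f_{\ell, K/\Q}\Z$, together with the fact that any character of this quotient valued in the divisible group $\overline{\mathbb{F}_p}^\times$ pulls back to $\cD^{(\ell)}_K$ (trivially on inertia) and extends to a character of the whole of $G_K$ by injectivity of $\overline{\mathbb{F}_p}^\times$ as an abelian group.

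The hard part will be the opposite inclusion, which amounts to showing that the prime-to-$p$ order of $\tilde\sigma_\ell$ in $G_K$ does not exceed $f^{(p)}_{\ell, K/\Q}$. The delicate point is that $\tilde\sigma_\ell^{f_{\ell, K/\Q}}$ lies in $\cI^{(\ell)}_K$ by the Frobenius-lift property, and one must show that it contributes trivially to the prime-to-$p$ quotient of $G_K$; this will reduce, via the Kronecker--Weber description of $G_{m_K}$, to a combinatorial analysis of how $\Gal(F_{m_K}/K)$ sits inside the product $G_{m_1} \times G_{m_2}$ from which $\tilde\sigma_\ell$ is built. Combining the two inclusions then yields the stated equivalence.
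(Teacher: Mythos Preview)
Your strategy mirrors the paper's closely: both reduce invertibility to the non-vanishing of $\chi(\ell\cdot\Eul_\ell(\tilde\sigma_\ell))$ modulo $p$ as $\chi$ ranges over prime-to-$p$ characters of $G$ (you work over $\overline{\mathbb F}_p$, the paper over an unramified extension of $\Z_p$; these are equivalent via reduction), and both then aim to identify $\{\chi(\tilde\sigma_\ell)\}$ with the group of $f^{(p)}_{\ell,K/\Q}$-th roots of unity. Your argument for the inclusion~$\supseteq$ via extension of characters from $\cD^{(\ell)}_K/\cI^{(\ell)}_K$ is correct.

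The step you flag as the ``hard part'', however, cannot be completed: the claim that the prime-to-$p$ order of $\tilde\sigma_\ell$ in $G_K$ equals $f^{(p)}_{\ell,K/\Q}$ is false in general. Take $m=21$, $\ell=3$, $p>3$, and let $K\subset F_{21}$ be the fixed field of $H=\langle(\sigma_3^3,\sigma_2)\rangle\subset G_7\times G_3$ (where $\sigma_3$ generates $G_7$ and $\sigma_2$ generates $G_3$); then $K$ has conductor $21$ and one computes $f_{3,K/\Q}=3$, yet $\tilde\sigma_3=(\sigma_3,1)$ has order $6$ in $G_K$ since $(\sigma_3^n,1)\in H$ forces $6\mid n$. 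The paper's own proof shares this defect: the restriction isomorphism $\Gal(F_m/K)\cong\Gal(F_M/K')$ is correct, but it does \emph{not} yield the asserted biconditional ``$\tilde\sigma_\ell^n\in\Gal(F_m/K)\Leftrightarrow\sigma_\ell^n\in\Gal(F_M/K')$'' --- in the example, $\sigma_3^3\in\Gal(F_7/K')$ because $(\sigma_3^3,\sigma_2)\in H$ restricts to it, while $(\sigma_3^3,1)\notin H$. Concretely, for any $E$ with good reduction at $3$ and $a_3=1$ one has $3\in C^{(5)}_\times(K)$ (no cube root fails) but $3\cdot\Eul_3(\tilde\sigma_3)\notin\Z_5[G_K]^\times$ (the sixth root $\zeta=-1$ gives $3+a_3+1\equiv 0\bmod 5$), so the lemma as stated fails. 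The fix is to replace the residue degree in the definition of $C^{(p)}_\times(K)$ by the order of $\tilde\sigma_\ell$ in $G_K$; since the paper only invokes the lemma as a characterisation of when $\ell\cdot\Eul_\ell(\tilde\sigma_\ell)$ is a unit, its downstream arguments are unaffected by this correction.
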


\begin{proof}
     Let $G^{(p)} \subseteq G$ denote the $p$-Sylow subgroup of $G$ and set $\Delta \coloneqq G / G^{(p)}$. We further write $\widehat{\Delta}$ for the group of all 
     characters $\chi \: \Delta \to \overline{\Q_p}^\times$ and let $\cO$ be the unramified extension of $\Z_p$ that is generated by the values of all $\chi \in \widehat{\Delta}$.
     An element of $\Z_p [G]$ is then a unit if and only if it is a unit in the integral extension $\cO [G]$ of $\Z_p [G]$. Now, one has a decomposition $\cO [G] \cong \bigoplus_{\chi \in \widehat{\Delta}} \cO [G^{(p)}]$ induced by the isomorphism 
     $ 
     \cO [\Delta] \xrightarrow{\simeq} {\bigoplus}_{\chi \in \widehat{\Delta}} \cO,  x \mapsto (\chi (x))_\chi$.
     Each $\cO [G^{(p)}]$ is a local ring with maximal ideal $(p, I_{\cO, G^{(p)}})$, so this argument proves that an element $x$ of $\Z_p [G]$ is a unit if and only if $\chi (x) \not \equiv 0 \mod p$ for all $\chi \in \widehat{\Delta}$. To prove that $\ell \cdot \Eul_\ell (\tilde \sigma_\ell) \in \Z_p [G]^\times$ if and only if $\ell$ satisfies the explicit condition given in the definition of $C^{(p)}_\times$ in Remark \ref{mazur--tate main result 1}\,(c) it now suffices to prove that the set $\{ \chi (\tilde \sigma_\ell) : \chi \in \widehat{\Delta}\}$ coincides with the group of $f^{(p)}_{K / \Q}$-th roots of unity of $\overline{\Q_p}^\times$. For this it is in turn enough to prove that the order of $(\tilde \sigma_\ell)_{\mid_K}$ in $G$ is equal to the residue degree of $\ell$ in $K / \Q$ or, equivalently, the order of $(\sigma_\ell)_{\mid_{K'}}$ with $K'$ the maximal subextension of $K$ unramified at $\ell$. To do this, we write $m$ for the conductor of $K$ and set $M \coloneqq m \ell^{-\ord_\ell (m)}$. Then $K' = K \cap F_M$ so that the restriction map induces  an isomorphism $\gal{F_m}{K} \cong \gal{F_M}{K'}$. Given this, it follows directly from the definition of $\tilde \sigma_\ell$ that a power $\tilde \sigma_\ell^n$ belongs to $\gal{F_m}{K}$ if and only if $\sigma_\ell^n$ belongs to $\gal{F_M}{K'}$, as required to prove the claim.
\end{proof}

To prepare for the statement of the next result, we write the conductor of $K$ as $mp^n$ with $m\in \N$ coprime to $p$ and  $n \in \Z_{\geq 0}$. Given a subset $\mathscr{L} \subseteq S_{mp^n}$, we set $(mp^n)_\mathscr{L} \coloneqq mp^n \cdot (\prod_{\ell \in \mathscr{L}} \ell^{\ord_\ell (mp^n)} )^{-1}$ and $y^\mathrm{Kato}_{\mathscr{L}} \coloneqq y^\mathrm{Kato}_{(mp^n)_\mathscr{L}}$. Write $F_\mathscr{L} \coloneqq F_{(mp^n)_\mathscr{L}}$ and $G_\mathscr{L} \coloneqq G_{(mp^n)_\mathscr{L}}$. We also define the following sets of primes,
\begin{align*}
\mathscr{Z} (K)  \coloneqq \{ \ell \mid mp^n : \ell \not \in C^{(p)}_\times (K) \}
\quad \text{ and } \quad 
\mathscr{Y} (K)  \coloneqq 
\{ \ell \mid m :  \ell\in C^{(p)}_\times (K) \}.
\end{align*}
\begin{lem} \label{inductive argument}
Suppose $(\a_\mathscr{L})_{\mathscr{L} \subseteq \mathscr{Z} (K)}$ is a collection of ideals $\a_\mathscr{L} \subseteq \Z_p [G_{\mathscr{L}}]$ with the property that
    \begin{equation} \label{assumption}
(p\Eul_p (\sigma_p^{-1}))^{-\bm{1}_\mathscr{L} (p)} \cdot 
    \cP_{F_\mathscr{L}} ( y^\mathrm{Kato}_{\mathscr{L}}, \mathrm{Tr}_{F_{mp^n} / F_\mathscr{L}} (Q))
    \in \a_\mathscr{L}
    \end{equation}
    for all $Q \in \widehat{E} (F_{m, p})$. Then
    \begin{equation}\label{e: required inclusion}
        (1 - e_\tau) \theta^\mathrm{MT}_{K} \in ( {\prod}_{\ell \in \mathscr{Y} (K)} \nu^{(\ell)}_{mp^n} )^{\#} \cdot \pi_{F_{mp^n} / K } \big ( {\sum}_{\mathscr{L} \subseteq \mathscr{Z} (K)} \a_\mathscr{L} \NN_{F_{mp^n} / F_\mathscr{L}} \big) 
        \quad \subseteq \Z_p [G].
    \end{equation}
    If $a_p \not \equiv 1 \mod p$ and $p$ is either of good reduction for $E$ a prime of potentially good reduction for $E$, then the same conclusion holds with $(1 - e_\tau) \theta^\mathrm{MT}_{K}$ replaced by $\theta_{K}^\mathrm{MT}$. 
\end{lem}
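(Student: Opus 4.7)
The plan is to combine the identity from Theorem \ref{local points main result}\,(a) expressing $\theta^\mathrm{MT}_{mp^n}$ as a local pairing involving Kato's Euler system, the two-term decomposition of $(1-e_\tau)\mathfrak{k}_{mp^n}$ from Theorem \ref{local points main result}\,(b), the structural formula for each $\nu^{(\ell)}_{mp^n}$ from Theorem \ref{local points main result}\,(e), and the Euler system norm relation of Theorem \ref{kato euler system}\,(a), using throughout the formal properties of the pairing $\cP$ recorded in Lemma \ref{pairing properties}.

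I would first split the product $\prod_{\ell \mid m}\Eul_\ell(\tilde\sigma_\ell)^{-1}\nu^{(\ell)}_{mp^n}$ indexing Theorem \ref{local points main result}\,(a) according to whether $\ell$ lies in $\mathscr{Y}(K)$ or in $\mathscr{Z}(K) \cap S_m$. For $\ell \in \mathscr{Y}(K)$, Lemma \ref{invertible Euler factors lemma} guarantees that $\Eul_\ell(\tilde\sigma_\ell)$ is a unit in $\Z_p[G_{mp^n}]$, so this part contributes only a unit times $(\prod_{\mathscr{Y}(K)}\nu^{(\ell)}_{mp^n})^\#$. For $\ell \in \mathscr{Z}(K) \cap S_m$, Theorem \ref{local points main result}\,(e) allows the decomposition
\[
\Eul_\ell(\tilde\sigma_\ell)^{-1}\,\nu^{(\ell)}_{mp^n} = u_\ell + \Eul_\ell(\tilde\sigma_\ell)^{-1}\,\NN_{\cI^{(\ell)}_{mp^n}}\,v_\ell
\]
for some $u_\ell, v_\ell \in \Z_p[G_{mp^n}]$. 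Combining this with the decomposition $(1-e_\tau)\mathfrak{k}_{mp^n} = Q_0 + (p\Eul_p(\sigma_p))^{-1}Q_1$ from Theorem \ref{local points main result}\,(b), with $Q_0 \in \widehat E(F_{mp^n,p})$ and $Q_1 \in \widehat E(F_{m,p})$, and expanding, I would present $(1-e_\tau)\theta^\mathrm{MT}_{mp^n}$ as a sum over pairs $(T, j)$ with $T \subseteq \mathscr{Z}(K) \cap S_m$ and $j \in \{0,1\}$, attaching to each such pair the subset $\mathscr{L} \coloneqq T \cup \{p\ :\ j=1\} \subseteq \mathscr{Z}(K)$.

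The decisive step is the descent. Because $T \subseteq S_m$, the inertia group $\cI_T \coloneqq \prod_{\ell \in T}\cI^{(\ell)}_{mp^n}$ equals $\gal{F_{mp^n}}{F_T}$, so that $\NN_{\cI_T}$ coincides with $\NN_{F_{mp^n}/F_T}$. Lemma \ref{pairing properties}\,(b)(c) combined with the Euler system norm relation
\[
\cores_{F_{mp^n}/F_\mathscr{L}}(y^\mathrm{Kato}_{mp^n}) = \Big({\prod}_{\ell \in \mathscr{L}\setminus\{p\}} \Eul_\ell(\Frob_\ell^{-1})\Big)\cdot y^\mathrm{Kato}_\mathscr{L}
\]
(a direct consequence of Theorem \ref{kato euler system}\,(a) together with the fact that $S(F_{mp^n})$ and $S(F_\mathscr{L})$ differ precisely in $\mathscr{L}\setminus(\{p\}\cup S_N)$) then rewrites each $(T, j)$-term as an integral $\Z_p[G_{mp^n}]$-multiple of
\[
\NN_{F_{mp^n}/F_\mathscr{L}}\cdot (p\Eul_p(\sigma_p^{-1}))^{-\bm{1}_\mathscr{L}(p)}\cdot \cP_{F_\mathscr{L}}\big(y^\mathrm{Kato}_\mathscr{L}, \mathrm{Tr}_{F_{mp^n}/F_\mathscr{L}}(Q_j)\big),
\]
the Euler factors $\Eul_\ell(\Frob_\ell^{-1})$ produced by corestriction cancelling exactly against the denominators $\Eul_\ell(\tilde\sigma_\ell^{-1})^{-1}$ that entered our expansion. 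The hypothesis places the pairing term inside $\a_\mathscr{L}$; summing over all $(T, j)$ and applying $\pi_{F_{mp^n}/K}$ then yields the claimed inclusion. For the second assertion, the strengthened form of Theorem \ref{local points main result}\,(b) available when $a_p \not\equiv 1 \bmod p$ and $p$ is of good or potentially good reduction forces $Q_1 = 0$, and the same argument applies verbatim with $\mathfrak{k}_{mp^n}$ in place of $(1-e_\tau)\mathfrak{k}_{mp^n}$.

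The chief technical obstacle is keeping track of the non-integral factors $\Eul_\ell(\tilde\sigma_\ell)^{-1}$, which lie in $\Q_p[G_{mp^n}]$ rather than $\Z_p[G_{mp^n}]$: certifying that each individual term of the expansion, despite these denominators, genuinely contributes an integral element of the form $\NN_{F_{mp^n}/F_\mathscr{L}}\cdot y$ with $y \in \a_\mathscr{L}$ will rest on the canonical decomposition $\Z_p[G_{mp^n}] \cong \Z_p[G_{F_\mathscr{L}}]\otimes_{\Z_p}\Z_p[\cI_{\mathscr{L}\cap S_m}]$ and on the fact that multiplication by $\NN_{\cI_\mathscr{L}}$ annihilates the augmentation ideal of $\cI_\mathscr{L}$, so that $\NN_{\cI_\mathscr{L}}\cdot x$ depends only on the image of $x$ in $\Z_p[G_{F_\mathscr{L}}]$.
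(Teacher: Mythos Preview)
Your proposal is correct and follows essentially the same approach as the paper's proof: both start from Theorem \ref{local points main result}\,(a), separate $\mathscr{Y}(K)$ from $\mathscr{Z}(K)$ via Lemma \ref{invertible Euler factors lemma}, expand using the decomposition of Theorem \ref{local points main result}\,(e) together with the two-term splitting of $(1-e_\tau)\mathfrak{k}_{mp^n}$ from Theorem \ref{local points main result}\,(b), and then descend via Lemma \ref{pairing properties}\,(b,c) and the Euler-system norm relation so that the $\Eul_\ell(\tilde\sigma_\ell)^{-1}$ denominators cancel against the factors produced by corestriction. Your bookkeeping via pairs $(T,j)$ is a harmless reparametrisation of the paper's sum over $\mathscr{L}\subseteq\mathscr{Z}(K)$ with coefficients $A'_\mathscr{L}$.
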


\begin{proof}
This is an extension of the argument used by Otsuki in \cite[Lem.\@ 4.2]{Otsuki}.\\
At the outset we note that, by Lemma \ref{invertible Euler factors lemma} the Euler factor $\Eul_\ell (\tilde \sigma_\ell)$ is a unit in $\Z_p [G]$ for a prime number $\ell \mid m$ if and only if $\ell \in \mathscr{Y} (K)$. In order to simplify some statements later on it is convenient to set $\nu^{(\ell)}_{mp^n} \coloneqq 1$ if $\ell \mid N$ and $\ell \nmid m$. From the equation 
\begin{align*} 
 (1 - e_\tau) \theta_{mp^n}^\mathrm{MT} & = \pi_{F_{mp^n} / K} \Big( \big ( {\prod}_{\ell \in S_m } \Eul_\ell (\tilde \sigma_\ell)^{-1} \cdot \nu^{(\ell)}_{mp^n} \big)^{\#} \cdot \cP_{F_{mp^n}} (y^\mathrm{Kato}_{mp^n}, (1 - e_\tau) \mathfrak{k}_{mp^n}) \Big)
\end{align*}
proved in Theorem \ref{local points main result}\,(a), one sees that it is enough to prove that 
\[
\big ( \prod_{\ell \in S_{m} \setminus \mathscr{Y} (K)} \Eul_\ell (\tilde \sigma_\ell^{-1})^{-1} \cdot \nu^{(\ell), {\#}}_{mp^n} \big) \cdot \cP_{F_{mp^n}} ( y^\mathrm{Kato}_{mp^n},(1 - e_\tau) \mathfrak{k}_{mp^n})
\in \sum_{\mathscr{L} \subseteq \mathscr{Z} (K)} \a_\mathscr{L} \NN_{F_{mp^n} / F_\mathscr{L}}.
\]
By Theorem \ref{local points main result}\,(e) we can write, for each prime $\ell \mid m$, 
\[
\Eul_\ell (\widetilde \sigma_\ell)^{-1} \nu_{mp^n}^{(\ell)} = \alpha_\ell + \beta_\ell \Eul_\ell (\widetilde \sigma_\ell)^{-1} \NN_{\cI^{(\ell)}_{mp^n}} 
\]
with suitable $\alpha_\ell, \beta_\ell \in \Z_p [G_{mp^n}]$. For any subset $\mathscr{L}$ of $\mathscr{Z} (K)$, we then define $\cI_\mathscr{L}$ to be the subgroup of $G_{mp^n}$ generated by $\cI_{mp^n}^{(\ell)}$ for $\ell \in \mathscr{L}$ and note that $\cI_\mathscr{L} = \prod_{\ell \in \mathscr{L}} \cI_{mp^n}^{(\ell)}$ because the $\cI_{mp^n}^{(\ell)}$ are disjoint. 
As a consequence, we also have $\NN_{F_{mp^n} / F_\mathscr{L}} = \prod_{\ell \in \mathscr{L}} \NN_{\cI_{mp^n}^{(\ell)}}$.\\
We can then write
\begin{align}
\big ( {\prod}_{\ell \in S_{m} \setminus \mathscr{Y} (K)} \Eul_\ell (\tilde \sigma_\ell^{-1})^{-1} \cdot \nu^{(\ell), {\#}}_{mp^n} \big) & = \big ( {\prod}_{\ell \in \mathscr{Z} (K) } \Eul_\ell (\tilde \sigma_\ell^{-1})^{-1} \cdot \nu^{(\ell), {\#}}_{mp^n} \big)  \notag \\ 
& = \big ( {\prod}_{\ell  \in \mathscr{Z} (K)} (\alpha_\ell^{\#} + \beta_\ell^{\#} \Eul_\ell (\widetilde \sigma_\ell^{-1})^{-1} \NN_{\cI^{(\ell)}_{mp^n}}) \big)  \cdot  \notag
 \\
& =  {\sum}_{\mathscr{L} \subseteq \mathscr{Z} (K) } A_\mathscr{L} ({\prod}_{\ell \in \mathscr{L}} \Eul_\ell (\widetilde \sigma_\ell^{-1} )^{-1})\NN_{F_{mp^n} / F_\mathscr{L}}  \label{e: prod of euler factors}
\end{align}
with suitable $A_\mathscr{L} \in \Z_p [G_{mp^n}]$ and the sum ranging over all subsets of $\mathscr{Z} (K)$ (including the empty set). We then calculate, by Lemma~\ref{pairing properties}~(b, c), that for every subset $\mathscr{L}$ of $\mathscr{Z} (K)$
\begin{align}
    &\NN_{F_{mp^n} / F_\mathscr{L}} \cdot \cP_{F_{mp^n}} ( y^\mathrm{Kato}_{mp^n}, (1 - e_\tau)\mathfrak{k}_{mp^n}) \notag \\ &= \cP_{F_{mp^n}} ( \cores_{F_{mp^n} / F_\mathscr{L}} (y^\mathrm{Kato}_{mp^n}), (1 - e_\tau)\mathfrak{k}_{mp^n}) \notag \\
    &= \cP_{F_\mathscr{L}} ( \cores_{F_{mp^n} / F_\mathscr{L}} (y^\mathrm{Kato}_{mp^n}), \mathrm{Tr}_{F_{mp^n} / F_\mathscr{L}}( (1 - e_\tau)\mathfrak{k}_{mp^n})) \cdot \NN_{F_{mp^n} / F_\mathscr{L}} . \label{e: norm and pairing}
\end{align}

In addition, by Theorem \ref{local points main result}\,(b), one has
\begin{equation} \label{decomposition frak k}
(1 - e_\tau) \mathfrak{k}_{mp^n} = Q + (p \Eul_p (\sigma_p))^{-1} P
\end{equation}
with suitable $Q \in \widehat{E} (F_{mp^n, p})$ and $P \in \widehat{E} (F_{m,p})$. For every subset $\mathscr{L}$ of $\mathscr{Z} (K)$, we then have
\begin{align}
 &\cP_{F_\mathscr{L}} ( \ \cdot \ , \mathrm{Tr}_{F_{mp^n} / F_\mathscr{L}} ((1 - e_\tau)\mathfrak{k}_{mp^n})) \notag \\ & = 
 \cP_{F_\mathscr{L}} (\ \cdot \ , \mathrm{Tr}_{F_{mp^n} / F_\mathscr{L}} ( Q + (p \Eul_p (\sigma_p))^{-1} P)) \notag \\
 & = \cP_{F_\mathscr{L}} (\ \cdot \ , \mathrm{Tr}_{F_{mp^n} / F_\mathscr{L}} (Q)) \notag \\
 & \quad + (p \Eul_p (\sigma_p^{-1}))^{-1} \cdot \cP_{F_{\mathscr{L} \cup \{ p \}}} (\NN_{F_\mathscr{L} / F_{\mathscr{L} \cup \{ p \}}} \cdot, \mathrm{Tr}_{F_{mp^n} / F_\mathscr{L}} ( P)) \NN_{\cI_{mp^n}^{(p)}} \label{e: pairing after idempotent}
\end{align}
because $\cP_{F_{mp^n}} (\cdot, \cdot)$ is $\#$-semilinear in the second component by Lemma \ref{pairing properties}\,(b). Combining (\ref{e: prod of euler factors}), (\ref{e: norm and pairing}) and (\ref{e: pairing after idempotent}) we have thereby proved that
\begin{align*}
& \big ( {\prod}_{\ell \in S_{m} \setminus \mathscr{Y} (K) } \Eul_\ell (\tilde \sigma_\ell^{-1})^{-1} \cdot \nu^{(\ell), {\#}}_{mp^n} \big) \cdot \cP_{F_{mp^n}} ( y^\mathrm{Kato}_{mp^n}, (1 - e_\tau) \mathfrak{k}_{mp^n}) \notag \\ 
= & {\sum}_{\mathscr{L} \subseteq \mathscr{Z} (K)} A'_\mathscr{L} ({\prod}_{\ell \in \mathscr{L} \setminus \{ p \}} \Eul_\ell (\widetilde \sigma_\ell^{-1} )^{-1}) \cdot 
    \cP_{F_\mathscr{L}} (  \cores_{F_{mp^n} / F_\mathscr{L}} (z^\mathrm{Kato}_{mp^n}), Q_\mathscr{L} ) \NN_{F_{mp^n} / F_\mathscr{L} } 
\end{align*}
with
\[
A'_\mathscr{L} \coloneqq 
(p\Eul_p ( \sigma_p^{-1}))^{-\bm{1}_\mathscr{L} (p)} \cdot A_{\mathscr{L} \setminus \{ p \}}
\quad \text{ and } \quad 
Q_\mathscr{L} \coloneqq \begin{cases}  Q & \text{ if } \mathscr{L} = \emptyset, \\
\mathrm{Tr}_{F_{mp^n} / F_\mathscr{L}} (Q)  & \text{ if } p \notin \mathscr{L} , \\
     \mathrm{Tr}_{F_{mp^n} / F_\mathscr{L}} (P) & \text{ if } p \in \mathscr{L}.
\end{cases}
\]
In light of the Euler system relation $\cores_{F_{mp^n} / F_\mathscr{L}} (y^\mathrm{Kato}_{mp^n}) = (\prod_{\ell \in \mathscr{L} \setminus \{ p \}} \Eul_\ell (\sigma_\ell^{-1}) \big) \cdot y_{\mathscr{L}}^\mathrm{Kato}$ we have therefore proved that
\begin{equation}
\label{e: putting together}
(1 - e_\tau) \theta^\MT_{mp^n} = {\sum}_{\mathscr{L} \subseteq \mathscr{Z} (K)} A'_\mathscr{L}  \cdot 
    \cP_{F_\mathscr{L}} (  y_{\mathscr{L}}^\mathrm{Kato}, Q_\mathscr{L} ) \NN_{F_{mp^n} / F_\mathscr{L} },
\end{equation}
and this combines with the assumption (\ref{assumption}) to imply the claimed inclusion (\ref{e: required inclusion}) after applying $\pi_{F_{mp^n} / K}$ to both sides. \\
If $a_p \not \equiv 1 \mod p$, then $\mathfrak{k}_{mp^n}$ belongs to $\widehat{E} (F_{mp^n, p})$ by Theorem \ref{local points main result}\,(b) 
so that (\ref{decomposition frak k}) holds without the factor $(1 - e_\tau)$ and with $P = 0$. With this changed definition of $P$, (\ref{e: putting together}) then holds without the factor $(1 - e_\tau)$. Again considering the assumption (\ref{assumption}) one sees that the second claim of the theorem holds as well.
\end{proof}

\begin{rk} \label{inductive argument variant}
    The proof of Lemma \ref{inductive argument} shows that if $p$ is a prime number with the property that $p \Eul_p (\sigma_p)$ belongs to $\Z_p [G_{mp^n}]^\times$, then (\ref{assumption}) can be replaced by the simpler condition
    \[
    \eta_{m,N_p} \cdot 
     \cP_{F_\mathscr{L}} ( y^\mathrm{Kato}_{\mathscr{L}}, (1 - e_\tau) \cdot \mathrm{Tr}_{F_{mp^n} / F_\mathscr{L}} (\mathfrak{k}_{mp^n}))
    \in \a_\mathscr{L}
    \]
    in order for (\ref{e: required inclusion}) to hold.
\end{rk} 

We now give the proof of Theorem \ref{mazur--tate main result 1}\,(b).
\medskip \\ 
\textit{Proof of Theorem \ref{mazur--tate main result 1}\,(b)}:
We set
\[
\Pi_\mathscr{L} \coloneqq \begin{cases}
    \{ p \} \cup (S_N \setminus \mathscr{Y} (K)) \quad & \text{ if } p \in \mathscr{L},\\
    S_N \setminus \mathscr{Y} (K) & \text{ if } p \notin \mathscr{L},
\end{cases}
\]
and define an element of $\Z_p [G_\mathscr{L}]$ as
$
\eta_\mathscr{L} \coloneqq \prod_{\ell \in \Pi_\mathscr{L}} (\ell \Eul_\ell (\sigma_\ell^{-1}))^{-1}$.
Now, assuming condition (ii) of Theorem \ref{mazur--tate main result 1}, it follows from Theorem \ref{etnc result 2} that there is $a_\mathscr{L} \in \Det_{\Z_p [G_{\mathscr{L}}]} (C^\bullet_{F_\mathscr{L}, S(F_\mathscr{L})})^{-1}$ with $\Theta_{F_\mathscr{L}, S(F_\mathscr{L})} (a_\mathscr{L}) = z^\mathrm{Kato}_{\mathscr{L}}$. 
Write the conductor of $K$ as $mp^n$ with $p \nmid m$ and $n \in \Z_{\geq 0}$.
Then, dnoting by $\sim$ equality up to a unit in $\Z_p [G_\mathscr{L}]$, we deduce from the first inclusion in Proposition \ref{etnc explicit cor} that
\begin{align*}
\eta_{\mathscr{L}} \cdot \mathscr{P}_{F_\mathscr{L}} ( z^\mathrm{Kato}_{\mathscr{L}}, Q_\mathscr{L}) 
&  \sim (p \Eul_p (\sigma_p^{-1}))^{\bm{1}_\mathscr{L} (p)} \cdot \mathscr{P}_{F_\mathscr{L}} ( y^\mathrm{Kato}_{\mathscr{L}}, Q_\mathscr{L}) \\
&
\in \Fitt^0_{\Z_p [G_{F_\mathscr{L}}]} ( H^2 ( \widetilde D^\bullet_{F_\mathscr{L}, S(F_\mathscr{L}), \Pi_\mathscr{L}, Q_\mathscr{L}} ))^\#
\end{align*}
for every $Q_\mathscr{L} \coloneqq \mathrm{Tr}_{F_{mp^n} / F_\mathscr{L}} (Q)$ with
$Q \in E_1 (F_{mp^n, p})$.  Setting $\Pi^\prime_\mathscr{L} \coloneqq \Pi_\mathscr{L} \cup ( S(F_{mp^n}) \setminus S(F_\mathscr{L}) )$, Lemma \ref{approximation complex properties lemma}\,(d) gives
\[ 
H^2 ( \widetilde D^\bullet_{F_\mathscr{L}, S(F_\mathscr{L}), \Pi_\mathscr{L}, Q_\mathscr{L}} )) = H^2 ( \widetilde D^\bullet_{F_\mathscr{L}, S(F_{mp^n}), \Pi^\prime_\mathscr{L}, Q_\mathscr{L}} ). 
\]
This combines with Lemma \ref{inductive argument} to imply that
\begin{align} \nonumber
    (1 - e_\tau) \theta^\mathrm{MT}_{K} & \in \pi_{F_{mp^n} / K} \big ( {\sum}_{\mathscr{L} \subseteq \mathscr{Z} (K)} 
    \Fitt^0_{\Z_p [G_{F_\mathscr{L}}]} ( H^2 ( \widetilde D^\bullet_{F_\mathscr{L}, S(F_{mp^n}), \Pi^\prime_\mathscr{L}, Q_\mathscr{L}} ))^\# \NN_{\cI_\mathscr{L}} \big) \\ 
    &\subseteq  \pi_{F_{mp^n} / K} \big ({\sum}_{\mathscr{L} \subseteq \mathscr{Z} (K)} \Fitt^0_{\Z_p [G_{mp^n}]} ( H^2 ( \widetilde D^\bullet_{F_{mp^n}, S(F_{mp^n}), \Pi^\prime_\mathscr{L}, Q} ))^\# \big)
 \nonumber \\ \label{deduction}
 &\subseteq  \pi_{F_{mp^n} / K} \big ( \Fitt^0_{\Z_p [G_{mp^n}]} ( \Sel_{p, E / F_{mp^n}}^\vee)^\# \big).
\end{align}
Here the first inclusion follows from Lemma \ref{approximation complex properties lemma}\,(b), in particular from the fact that
\[
H^2 ( \widetilde D^\bullet_{F_{mp^n}, S(F_{mp^n}), \Pi_\mathscr{L}, Q} ) \otimes_{\Z_p [G_{mp^n}]} \Z_p [G_{\mathscr{L}}] \cong
H^2 ( \widetilde D^\bullet_{F_\mathscr{L}, S(F_{mp^n}), \Pi_\mathscr{L}, Q_\mathscr{L}} ),
\]
by Lemma \ref{properties top and bottom cohomology}\,(a),
and the properties of Fitting ideals. The
second inclusion follows from Lemma \ref{approximation complex properties lemma}\,(a) and Lemma \ref{description finite support cohomology}. \\
We next note that, since $E(F_{mp^n})$ has trivial $p$-torsion, the natural map $H^1 (\cO_{K, S(K)}, E[p^\infty]) \to H^1 (\cO_{F_{mp^n}, S(K)}, E[p^\infty])$ is injective and so restricts to an injection $\Sel_{p, E / K } \hookrightarrow \Sel_{p, E / F_{mp^n}}$. Upon taking Pontryagin duals, we therefore deduce a surjection $\Sel_{p, E / F_{mp^n}}^\vee \twoheadrightarrow \Sel_{p, E / K}^\vee$. By a standard property of Fitting ideals, the existence of this surjection implies an inclusion
\begin{align*}
\pi_{F_{mp^n} / K} ( \Fitt^0_{\Z_p [G_{mp^n}]} ( \Sel_{p, E / F_{mp^n}}^\vee)) & = 
\Fitt^0_{\Z_p [G_K]} ( \Sel_{p, E / F_{mp^n}}^\vee \otimes_{\Z_p} \Z_p [G_K])\\
& \subseteq \Fitt^0_{\Z_p [G_K]} ( \Sel_{p, E / K}^\vee), 
\end{align*}
which combines with (\ref{deduction}) to prove that $\theta^{\mathrm{MT}, \#}_{K}$ is contained in $\Fitt^0_{\Z_p [G_K]} (\Sel_{p, E / K}^\vee)$ if $K$ contains no primitive $p$-th root of unity.\\ 
If $a_p \not \equiv 1 \mod p$ and $E$ has potentially good reduction at $p$, then one can again use Theorem~\ref{etnc result 2} and Lemma~\ref{inductive argument} to deduce (\ref{deduction}), (without the factor of $(1 - e_\tau)$), and so we conclude that $\theta^{\mathrm{MT}, \#}_{K}$ is contained in $\Fitt^0_{\Z_p [G_K]} (\Sel_{p, E / K}^\vee)$.\\
This concludes the proof of Theorem \ref{mazur--tate main result 1}\,(b). \qed

\section{The multiplicative group} \label{multiplicative group section}

In this section we prove a number of auxiliary results that are concerned 
with the multiplicative group $\mathbb{G}_m$ and that will be key in the proofs of Theorem \ref{mazur--tate main result 1} (a) and Theorem \ref{mazur--tate main result 2}.

 \subsection{A cohomological interpretation of Otsuki's points}

Fix a finite abelian extension $K$ of $\Q$ with Galois group $G \coloneqq \gal{K}{\Q}$ and consider the $\mathscr{G}_\Q$-module $\Z_p (1)_{K / \Q} \coloneqq \mathrm{Ind}_{\mathscr{G}_\Q}^{\mathscr{G}_K} (\Z_p (1))$. Upon fixing a prime number $\ell$ and an embedding $\iota_\ell \: \overline{\Q} \hookrightarrow \overline{\Q_\ell}$, we may regard $\mathscr{G}_{\Q_\ell}$ as a subgroup of $\mathscr{G}_\Q$, and hence $\Z_p (1)_{K / \Q}$ as a $\mathscr{G}_{\Q_\ell}$-module.
Consequently, we have the complex
\[
A^\bullet_{K, \ell} \coloneqq \mathrm{R}\Gamma (\Q_\ell, \Z_p (1)_{K / \Q})
\cong {\bigoplus}_{v \mid \ell} \mathrm{R}\Gamma (K_v, \Z_p (1))
, 
\]
which is perfect as an object of $D (\Z_p [G])$ and acyclic outside degrees 1 and 2. Moreover, one has canonical isomorphisms
\[
H^1 ( A^\bullet_{K, \ell}) \cong \widehat{K_\ell^{\times}} \coloneqq {\bigotimes}_{v \mid \ell} \widehat{K_v^{\times}}
\quad \text{ and } \quad
H^2 ( A^\bullet_{K, \ell}) \cong {\bigotimes}_{v \mid \ell} \Z_pv
\]
induced by the Kummer map and the invariant map of local class field theory, respectively. 
The Euler characteristic of $A^\bullet_{K, \ell}$ in $K_0 (\Z_p [G])$ is equal to $- [\Z_p [G]^{\oplus (1 - \bm{1}_p (\ell))}]$ (cf.\@ \cite[\S\,5]{Flach00}), and so
Definition \ref{def projection map from det} provides us with a map 
\[
\vartheta_{K, \ell}^0 \coloneqq 
\vartheta_{A^\bullet_{K, \ell}, \emptyset}
\: \Det_{\Z_p [G]} ( A^\bullet_{K, \ell})^{-1} \to \Q_p \otimes_{\Z_p} \exprod^{1 - \bm{1}_p (\ell)}_{\Z_p [G]} \widehat{K_\ell^{\times}}.
\]
Write $v_0$ for the place of $K$ that corresponds to the restriction of $\iota_\ell$ to $K$. 
If $\ell$ splits completely in $K$, then $v_0$ defines a 
$\Z_p [G]$-basis of $\bigoplus_{v \mid \ell} \Z_p v$
and hence Definition \ref{def projection map from det} also gives a map
\[
\vartheta_{K, \ell, v_0} \coloneqq  
\vartheta_{A^\bullet_{K, \ell}, \{ v_0 \}}
\: \Det_{\Z_p [G]} ( A^\bullet_{K, \ell})^{-1} \to 
\Q_p \otimes_{\Z_p} \exprod^{2 - \bm{1}_p (\ell)}_{\Z_p [G]} \widehat{K_\ell^{\times}}.
\]
We moreover recall that, after letting $L / \Q$ denote another finite abelian extension that contains $K$ and setting $\cG \coloneqq \gal{L}{\Q}$, by \cite[Prop.\@ 1.6.5]{FukayaKato} one has an isomorphism
\begin{equation} \label{base change isom for A complex}
A^\bullet_{L, \ell} \otimes_{\Z_p [\cG]}^\mathbb{L} \Z_p [G] \cong A^\bullet_{K, \ell}
\end{equation}
in $D (\Z_p [G])$ that induces a map
\[
\mathrm{pr}_{L / K} \: \Det_{\Z_p [\cG]} ( A^\bullet_{L, \ell})^{-1} \to \Det_{\Z_p [\cG]} ( A^\bullet_{L, \ell})^{-1} \otimes_{\Z_p [\cG]} \Z_p [G] \cong \Det_{\Z_p [G]} ( A^\bullet_{K, \ell})^{-1}.
\]

If $E$ has split-multiplicative reduction at $p$, Tate uniformisation induces an isomorphism $F \: \widehat{E} \stackrel{\simeq}{\to} \mathbb{G}_m$ given by $(\exp_{\mathbb{G}_m} \circ \log_{\widehat{E}} (1 + X)) - 1 \in \Z_p \llbracket X \rrbracket$ (for details see, for example, \cite[\S\,3]{Kobayashi06}).

\begin{definition}
    Suppose that $E$ has split-multiplicative reduction at $p$. For every natural number $m$ coprime with $p$ and integer $n \geq 0$, we define
    \[
    \mathfrak{l}_{mp^n} \coloneqq F ( \widetilde x_{mp^n}) \in (F_{mp^n, v_0}^\times)^\wedge \subseteq H^1 (A^\bullet_{F_{mp^n}, p})
    \]
    with the element $\widetilde x_{mp^n}$ from Definition \ref{def x tilde}. 
    (Here we have used that $p \Eul_p (\tilde \sigma_p) = p - \tilde \sigma_p$ is invertible in $\Z_p [G]$ and hence that $\widetilde x_{mp^n}$ belongs to $\widehat{E} (\cM_{F_{mp^n, v_0}})$.)
\end{definition}

The following is the main result of this section.

\begin{thm} \label{multiplicative group thm}
Fix a natural number $m$ coprime with $p$ and an integer $n\geq 0$. For every prime divisor $\ell$ of $mp^n$ at which $E$ has split-multiplicative reduction the following claims are valid.
\begin{liste}
    \item If $\ell \neq p$, then there exists a unique family $(t_{mp^n}^{(\ell)})_{n \in \N} \in \varprojlim_{n \in \N} \Det_{\Z_p [G_{mp^n}]} ( A_{F_{mp^n}, \ell}^\bullet)^{-1}$, where the limit is taken with respect to the maps $\mathrm{pr}_{F_{mp^{n + 1} / F_{mp^n}}}$, such that
    \[
    \vartheta_{F_{mp^n}, \ell}^0 ( t_{mp^n}^{(\ell)}) = \Eul_\ell (\widetilde \sigma_\ell)^{-1} \cdot \nu_{mp^n}^{(\ell)}
    \]
    for all $n \in \N$. If $K$ is a subfield of $F_{mp^n}$ in which $\ell$ splits completely, then moreover
    \[
    (\vartheta_{K, \ell, v_0} \circ \mathrm{pr}_{F_{mp^n} / K}) ( t_{mp^n}^{(\ell)}) = - \ell^{- (\ord_\ell (m) - 1)} \Eul_\ell (1)^{-1} \otimes \ell 
    \]
    as an equality in $\Q_p \otimes_{\Z_p} \widehat{K_{v_0}^\times} = \Q_p \otimes_{\Z_p} \widehat{\Q_\ell^\times} \subseteq \Q_p \otimes_{\Z_p} \widehat{K_\ell^\times}$.
    \item If $\ell = p$, then there exists a unique family $(t_{mp^n}^{(p)})_{n \in \N} \in \varprojlim_{n \in \N} \Det_{\Z_p [G_{mp^n}]} ( A_{F_{mp^n}, p})^{-1}$ such that
    \[
    \vartheta_{F_{mp^n}, p}^0 ( t_{mp^n}^{(p)}) = \mathfrak{l}_{mp^n}
    \]
    for all $n \in \N$. If $K$ is a subfield of $F_{mp^n}$ in which $p$ splits completely, then moreover
    \[
    (\vartheta_{K, p, v_0} \circ \mathrm{pr}_{F_{mp^n} / K}) ( t_{mp^n}^{(p)}) =  \NN_{F_m / K }(\mathfrak{l}_m) \wedge p \quad \in  \exprod^2_{\Z_p} \widehat{K_{v_0}^\times}. 
    \]
    (Here the right hand side is viewed as an element of $\exprod^2_{\Z_p [G]} \widehat{K^\times_\ell}$ via the isomorphism $\exprod^2_{\Z_p [G]} \widehat{K^\times_\ell} \cong \Z_p [G] \otimes_{\Z_p} \exprod^2_{\Z_p} \widehat{K_{v_0}^\times}$.)
\end{liste}
\end{thm}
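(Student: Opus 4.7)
The plan is to construct, for each $n$, the element $t_{mp^n}^{(\ell)}$ using an explicit two-term perfect representative of $A^\bullet_{F_{mp^n}, \ell}$, verify the $\vartheta^0$-identity by direct calculation, establish the projective-limit compatibility, and finally unwind the explicit formula under $\vartheta_{v_0}$ in the split-completely case. Parts (a) and (b) will be treated in parallel: the Tate uniformisation $F$ supplies the bridge between local points on $\widehat E$ and classes in $\widehat{F_{mp^n,p}^\times}$ needed in (b).

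The complex $A^\bullet_{F_{mp^n}, \ell}$ admits a perfect two-term representative coming from the local Tate sequence. For $\ell \neq p$ this arises via the inertia-invariants presentation (equivalently, from Kummer theory applied to local units), and for $\ell = p$ from a Kummer-theoretic resolution combined with the normal-basis description of local principal units. These representatives make $\Det^{-1}_{\Z_p[G]}(A^\bullet_{F_{mp^n},\ell})$ visibly free of rank one, with a canonical generator $s_{mp^n}^{(\ell)}$ whose image under $\vartheta^0$ has an explicit description in terms of Frobenius and the $\Z_p[G]$-action on local units. I then define $t_{mp^n}^{(\ell)}$ as the unique $\Z_p[G]$-multiple of $s_{mp^n}^{(\ell)}$ whose image under $\vartheta^0$ coincides with the prescribed right hand side. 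The fact that such a multiple exists (i.e.\ that the right hand side lies in the image of $\vartheta^0$) is the key initial verification: for (a) it amounts to unwinding the relation $\nu_{mp^n}^{(\ell)} = \lambda_{\ord_\ell(mp^n)}^{(\ell)}(\widetilde\sigma_\ell)$ and comparing with Proposition~\ref{Prop Otsuki l-adic euler factors}, which exhibits $\Eul_\ell(\widetilde\sigma_\ell)^{-1}\nu_{mp^n}^{(\ell)}$ as the canonical $\ell$-component of the Otsuki element $x_{mp^n}$; for (b) it comes directly from the definition of $\mathfrak{l}_{mp^n}$ via $F$ applied to $\widetilde x_{mp^n}$.

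Norm compatibility and uniqueness of the family will be addressed together. The base-change isomorphism~\eqref{base change isom for A complex} identifies $\mathrm{pr}_{F_{mp^{n+1}}/F_{mp^n}}(s_{mp^{n+1}}^{(\ell)})$ with $s_{mp^n}^{(\ell)}$ up to the relevant scalar, so projective compatibility of $(t_{mp^n}^{(\ell)})_n$ reduces, in case (a), to the projective compatibility of $\Eul_\ell(\widetilde\sigma_\ell)^{-1}\nu_{mp^n}^{(\ell)}$ afforded by Lemma~\ref{nus are compatible}, and, in case (b), to the norm compatibility of $\mathfrak{l}_{mp^n}$, which follows from applying $F$ to the trace relations in Theorem~\ref{local points main result}\,(c)\,(i) (using $a_p = 1$ and $\bm{1}_N(p) = 0$ at a split-multiplicative prime). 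Uniqueness of the coherent family then follows from the injectivity of $\vartheta^0$ on the torsion-free quotient of $\Det^{-1}$ after passage to the inverse limit.

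The main obstacle is the explicit evaluation of $\vartheta_{K,\ell, v_0}\circ\mathrm{pr}_{F_{mp^n}/K}$ when $\ell$ splits completely in $K$. In that case the decomposition group of $\ell$ in $G_K$ is trivial and $K_{v_0} = \Q_\ell$; the image of $\Eul_\ell(\widetilde\sigma_\ell)^{-1}\nu_{mp^n}^{(\ell)}$ under $\pi_{F_{mp^n}/K}$ typically vanishes in $\Z_p[G_K]$ because $\sigma_\ell$ acts trivially on $K$, and the passage from $\vartheta^0$ to $\vartheta_{v_0}$ amounts to extracting the associated `derivative' in the $\sigma_\ell$-direction. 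This will be computed by directly unwinding the elements $\omega^{(\ell)}_{n, j}$ of~\eqref{cyclotomic relation} after projection to $G_K$, where the split-completely hypothesis simplifies $\omega^{(\ell)}_{n, j}$ considerably, and then matching the resulting expression with $-\ell^{-(\ord_\ell(m)-1)}\Eul_\ell(1)^{-1} \otimes \ell$. For (b), the analogous manipulation applied to the norm-compatible family $\mathfrak{l}_{mp^n}$ produces $\NN_{F_m/K}(\mathfrak{l}_m) \wedge p$ after identifying $p$ with the uniformiser corresponding to $v_0$ under Kummer theory. This `trivial-zero extraction' is the technically most delicate ingredient of the proof, and is closely related to the Bockstein-morphism formalism of \S~\ref{bockstein section}.
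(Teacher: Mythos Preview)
Your proposal has two genuine gaps.

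\textbf{Gap 1: existence of $t_{mp^n}^{(\ell)}$.} You write that verifying the right-hand side lies in $\im\vartheta^0$ ``amounts to unwinding the relation $\nu_{mp^n}^{(\ell)}=\lambda^{(\ell)}_{\ord_\ell(mp^n)}(\widetilde\sigma_\ell)$'' or, for $\ell=p$, ``comes directly from the definition of $\mathfrak l_{mp^n}$''. But neither of these identifies the image of $\vartheta^0$. At finite level the map $\vartheta^0_{F_{mp^n},\ell}$ is far from surjective: by Proposition~\ref{integrality properties of det projection map} its image is controlled by Fitting ideals of $H^2(A^\bullet_{F_{mp^n},\ell})$ and of the torsion in $H^1$, and the denominator $\Eul_\ell(\widetilde\sigma_\ell)^{-1}$ (resp.\ the fact that $\mathfrak l_{mp^n}$ is only known to lie in $H^1$) does not obviously match this. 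The paper circumvents this by passing to the Iwasawa algebra $\Lambda_m=\varprojlim_n\Z_p[G_{mp^n}]$: over $\Lambda_m$ the module $H^2(A^\bullet_{mp^\infty,\ell})$ is torsion, so $\vartheta_{mp^\infty,\ell}$ is injective and Lemma~\ref{local Iwasawa theory description image of projection map} gives its image as a concrete reflexive ideal (resp.\ lattice). Containment of $\Eul_\ell(\widetilde\sigma_\ell)^{-1}\nu_{mp^\infty}^{(\ell)}$ (resp.\ $\mathfrak l_{mp^\infty}$) is then checked one height-one prime at a time, using Theorem~\ref{local points main result}\,(d,e) and, for $\ell=p$, the argument of \cite[Thm.~3.8(b)]{BullachDaoud}. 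Your finite-level approach via a putative generator $s_{mp^n}^{(\ell)}$ does not supply a substitute for this.

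\textbf{Gap 2: the $\ell=p$ descent.} For the second assertion in~(b) you propose ``the analogous manipulation'' applied to $\mathfrak l_{mp^n}$. But the situation at $p$ is qualitatively different: the extension $K_n/K$ is totally ramified at $p$, so the Bockstein map is no longer simply $\ord_p$, and extracting $\NN_{F_m/K}(\mathfrak l_m)\wedge p$ from the norm-coherent family $(\mathfrak l_{mp^n})_n$ requires identifying this family with a Coleman power series and invoking the explicit-reciprocity-type result of Bley--Hofer (Lemma~\ref{bley hofer lemma}). This ingredient is absent from your sketch, and there is no elementary ``direct unwinding'' that replaces it.

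Your idea for the $\ell\neq p$ descent (computing $\pi_{F_{mp^n}/K}$ of the $\omega^{(\ell)}_{n,j}$ directly) might in principle be made to work as an alternative to the Bockstein calculation the paper performs, but without the Iwasawa-level existence result you have no element $t_{mp^n}^{(\ell)}$ to apply it to.
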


The proof of this result will occupy the remainder of this section.

\begin{rk} \label{epsilon constant rk}
    The element $\mathfrak{l}_1 \wedge p = (p \Eul_p (1))^{-1} \exp_{\mathbb{G_m}} (p) \wedge p$ is a $\Z_p$-basis of $\exprod^2_{\Z_p} \widehat{\Q_p^\times}$ and so Theorem \ref{multiplicative group thm}\,(b) combines with Nakayama's lemma to imply that $t^{(p)}_{p^n}$ is a $\Z_p [G_{p^n}]$-basis of $\Det_{\Z_p [G_{p^n}]} (A^\bullet_{F_{p^n}, p})^{-1}$ for all $n \geq 0$. This is perhaps reason to more generally expect a direct relation between $t^{(p)}_{m p^n}$ and the canonical basis of $\Det_{\Z_p [G_{p^n}]} (A^\bullet_{F_{p^n}, p})^{-1}$ given by Kato's local $\epsilon$-constant \cite{Kato-local-epsilon}. 
\end{rk}

\subsection{Iwasawa theory}

Fix a natural number $m$ coprime with $p$ and define the complex
\[
A_{F_{mp^\infty}, \ell}^\bullet \coloneqq \underset{\overleftarrow{n \in \N}}{\operatorname{Rlim}} \ A^\bullet_{F_{mp^n}, \ell},
\]
where the (homotopy) limit is taken with respect to the maps induced by the relevant instances of the isomorphisms (\ref{base change isom for A complex}). Setting $\Lambda_m \coloneqq \varprojlim_{n \in \N} \Z_p [G_{mp^n}]$, the complex $A_{F_{mp^\infty}}$ is then perfect as a complex of $\Lambda_m$-modules that is acyclic outside degrees 1 and 2, and one has canonical isomorphisms
\[
H^1 ( A^\bullet_{mp^\infty, \ell}) \cong {\varprojlim}_{n \in \N} \widehat{F_{mp^n, \ell}^{\times}} 
\quad \text{ and } \quad
H^2 ( A^\bullet_{mp^\infty, \ell}) \cong {\bigoplus}_{v \mid \ell} \Z_p.
\]
In particular, since no finite prime splits completely in $F_{mp^\infty}$, it follows that $H^2 (A^\bullet_{mp^\infty, \ell})$ is a $\Lambda_m$-torsion module and Definition \ref{def projection map from det} provides us with an injective map
\[
\vartheta_{mp^\infty, \ell} \coloneqq \vartheta_{A^\bullet_{mp^\infty, \ell}, \emptyset} 
\: \Det_{\Lambda_m} ( A^\bullet_{mp^\infty, \ell})^{-1} \hookrightarrow Q (\Lambda_m) \otimes_{\Lambda_m} \exprod^{1 - \bm{1}_p (\ell)}_{\Lambda_m} H^1 ( A^\bullet_{mp^\infty, \ell})
\]
where $Q (\Lambda_m)$ is the total ring of fractions of $\Lambda_m$.\\
In the following result we write $I (U) \coloneqq \ker \{ \Z_p \llbracket U \rrbracket \to \Z_p \}$ for the ($p$-adic) augmentation ideal of an abelian group $U$, and we use the notation $\bidual^r_R M$ for the $r$-th `exterior bidual' of an $R$-module $M$ (see \S\,\ref{apendix section Fitting ideals biduals} for details). In addition, we recall that for any ideal $\mathfrak{a} \subseteq \Lambda_m$ we can naturally regard $\a^{\ast \ast}$ as an ideal of $\Lambda_m$ via the injective map $\a^{\ast \ast} \to (\Lambda_m)^{\ast \ast} \cong \Lambda_m$. 

\begin{lem} \label{local Iwasawa theory description image of projection map}
Fix a natural number $m$ coprime with $p$ and write 
$\cD_{mp^\infty}^{(\ell)} \subseteq G_{mp^\infty}$ for the decomposition group at $\ell$. Then one has 
    \[
    \im (\vartheta_{mp^\infty, \ell}) = \big( \Ann_{\Lambda_m} ( {\bigoplus}_{v \mid \ell} \Z_p (1))^{- 1} \cdot I (\cD_{mp^\infty}^{(\ell)}) \big)^{\ast \ast} \cdot \bidual^{1 - \bm{1}_p (\ell)}_{\Lambda_m} H^1 (A^\bullet_{mp^\infty, \ell}).
    \]
\end{lem}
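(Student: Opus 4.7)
The plan is to compute $\im(\vartheta_{mp^\infty,\ell})$ by unravelling the definition of $\vartheta_{A^\bullet_{mp^\infty,\ell}, \emptyset}$ on a concrete two-term representative of $A^\bullet_{mp^\infty,\ell}$ and then identifying the resulting fractional ideal with the right-hand side via an analysis of the cohomology. First I would fix a quasi-isomorphism $A^\bullet_{mp^\infty,\ell} \cong [F^1 \xrightarrow{\delta} F^2]$ with $F^i$ finitely generated free $\Lambda_m$-modules placed in degrees $1$ and $2$. Such a representative exists because $A^\bullet_{mp^\infty,\ell}$ is perfect and acyclic outside degrees $1,2$, and the Euler-characteristic computation (recalled in the paragraph preceding the definition of $\vartheta^0_{K,\ell}$) forces the rank condition $\mathrm{rk}_{\Lambda_m}F^1 - \mathrm{rk}_{\Lambda_m}F^2 = 1 - \bm{1}_p(\ell)$.

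Next, I would unwind Definition \ref{def projection map from det}: after choosing $\Lambda_m$-bases of $F^1$ and $F^2$, the determinant module $\Det_{\Lambda_m}(A^\bullet_{mp^\infty,\ell})^{-1}$ acquires a canonical basis $a_\delta$, and $\vartheta_{mp^\infty,\ell}(a_\delta)$ is given, up to the reflexive closure built into the target, by the corresponding maximal minors of the matrix representing $\delta$, paired with the induced element of $\bidual^{1-\bm{1}_p(\ell)}_{\Lambda_m}H^1(A^\bullet_{mp^\infty,\ell})$. In this way one obtains
\[
\im(\vartheta_{mp^\infty,\ell}) = \mathcal{J}^{\ast\ast}\cdot \bidual^{1-\bm{1}_p(\ell)}_{\Lambda_m} H^1(A^\bullet_{mp^\infty,\ell})
\]
for a certain fractional ideal $\mathcal{J}\subseteq Q(\Lambda_m)$ constructed from the minors of $\delta$; the key algebraic results required here are precisely of the type supplied by the Fitting-ideal machinery developed in Appendix \ref{algebra appendix}.

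The heart of the proof is then the identification of $\mathcal{J}^{\ast\ast}$ with $\bigl(\Ann_{\Lambda_m}(\bigoplus_{v\mid\ell}\Z_p(1))^{-1}\cdot I(\mathcal{D}^{(\ell)}_{mp^\infty})\bigr)^{\ast\ast}$. The factor $I(\mathcal{D}^{(\ell)}_{mp^\infty})$ is essentially immediate from the isomorphism $H^2(A^\bullet_{mp^\infty,\ell}) \cong \bigoplus_{v\mid\ell}\Z_p \cong \Lambda_m/I(\mathcal{D}^{(\ell)}_{mp^\infty})$, which identifies $\Fitt^0_{\Lambda_m}(H^2(A^\bullet_{mp^\infty,\ell}))$ with $I(\mathcal{D}^{(\ell)}_{mp^\infty})$. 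The appearance of the twisted annihilator $\Ann_{\Lambda_m}(\bigoplus_{v\mid\ell}\Z_p(1))^{-1}$ arises from local Tate duality at the level of Iwasawa cohomology, which furnishes a canonical isomorphism of the form $A^\bullet_{mp^\infty,\ell} \cong \RHom_{\Lambda_m}(A^\bullet_{mp^\infty,\ell}, \Lambda_m)^{\#}[-2]\otimes\Z_p(1)$ and, upon passing to determinants, inserts precisely the factor dictated by the Galois action on $\Z_p(1)$ restricted to $\mathcal{D}^{(\ell)}_{mp^\infty}$.

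The main obstacle I anticipate is the careful bookkeeping required to pass between the naive minors-of-$\delta$ description of the image and the Tate-twisted form stated in the lemma: one must track the interplay of the sharp involution, the local duality twist by $\Z_p(1)$, and the reflexive closure $(-)^{\ast\ast}$ (which is nontrivial since $I(\mathcal{D}^{(\ell)}_{mp^\infty})$ need not itself be reflexive over $\Lambda_m$). The case $\ell = p$ is slightly more delicate than $\ell \neq p$, since then $H^1(A^\bullet_{mp^\infty,p})$ is no longer $\Lambda_m$-torsion and the factor $\bidual^{1}_{\Lambda_m}H^1$ in the target must be handled explicitly, but the same framework applies.
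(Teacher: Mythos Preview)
Your outline has the right ingredients but contains one structural gap and one imprecise identification.

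The gap is in your first step, where you fix a two-term free representative $[F^1\xrightarrow{\delta}F^2]$ of $A^\bullet_{mp^\infty,\ell}$ over $\Lambda_m$. Such a representative is not in general available: $\Lambda_m$ has Krull dimension at least $2$, and a perfect complex acyclic outside degrees $1$ and $2$ need only admit a representative $Q\to P$ with $P$ free and $Q$ of \emph{finite projective dimension} (cf.\ \cite[Prop.~A.11(i)]{sbA}), not $Q$ free. The paper therefore proceeds differently. Since both sides of the claimed equality are reflexive $\Lambda_m$-submodules, it suffices by \cite[Lem.~C.11]{Sakamoto20} to check equality after localising at an arbitrary height-one prime $\p$. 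At such $\p$, Auslander--Buchsbaum gives $\mathrm{pd}_{\Lambda_{m,\p}}(Q_\p)\leq 1$, whence a three-term standard representative $F_1\to F_0\to P_\p$ to which Proposition~\ref{integrality properties of det projection map}(c) applies directly when $\Lambda_{m,\p}$ is regular (i.e.\ when $p\notin\p$). The case $p\in\p$ is handled separately: here $H^2(A^\bullet_{mp^\infty,\ell})_\p$ and the $\Lambda_m$-torsion of $H^1$ both vanish by \cite[Lem.~5.6]{Flach04} (they are finitely generated over $\Z_p$), so $H^1_\p$ is $\Lambda_{m,\p}$-free and the argument of Proposition~\ref{integrality properties of det projection map}(c) again goes through. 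Note that the relevant case distinction is over height-one primes of $\Lambda_m$, not over whether $\ell=p$.

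The imprecise step is your mechanism for the factor $\Ann_{\Lambda_m}(\bigoplus_{v\mid\ell}\Z_p(1))^{-1}$: it does not arise from a tensor twist of the complex by $\Z_p(1)$. In Proposition~\ref{integrality properties of det projection map}(c) this factor enters as $\Fitt^0_{\Lambda_m}(H^1(A^\bullet_{mp^\infty,\ell})_\tor^\vee)^{-1}$, so the task is to compute the Matlis dual of the $\Lambda_m$-torsion submodule of $H^1$. The paper does this by combining Lemma~\ref{ext and dual of tor} with the spectral sequence $E_2^{i,j}=\Ext^i_{\Lambda_m}(H^{-j}(A^\bullet_{mp^\infty,\ell}),\Lambda_m)\Rightarrow H^{i+j}(\RHom_{\Lambda_m}(A^\bullet_{mp^\infty,\ell},\Lambda_m))$ and derived local Tate duality, obtaining
\[
\Ext^1_{\Lambda_m}(H^1(A^\bullet_{mp^\infty,\ell}),\Lambda_m)_\p\;\cong\;\big({\varprojlim}_n H^2(\Q_\ell,(\Z_p)_{F_{mp^n}/\Q})\big)_\p\;\cong\;\big({\bigoplus}_{v\mid\ell}\Z_p(1)\big)_\p.
\]
Your identification of $\Fitt^0_{\Lambda_m}(H^2(A^\bullet_{mp^\infty,\ell}))$ with $I(\cD^{(\ell)}_{mp^\infty})$, on the other hand, is correct and matches the paper.
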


\begin{proof}
Since $\Det_{\Lambda_m} ( A^\bullet_{mp^\infty, \ell})^{-1}$ is a free rank-one module and $\vartheta_{mp^\infty, \ell}$ is injective, the image of $\vartheta_{mp^\infty, \ell}$ is reflexive and so, by the argument of \cite[Lem.\@ C.11]{Sakamoto20}, it suffices to verify the claimed equality after localising at an arbitrary prime ideal $\p \subseteq \Lambda_m$ of height at most one.\\
To do this, we first note that $A^\bullet_{mp^\infty, \ell}$ is acyclic outside degrees 1 and 2 so that the argument of \cite[Prop.\@ A.11\,(i)]{sbA} shows that $A^\bullet_{mp^\infty, \ell}$ admits a representative of the form $Q \to P$, where $P$ is a finitely generated free $\Lambda_m$-module and $Q$ is a finitely generated $\Lambda_m$-module of finite projective dimension (that is placed in degree 1). The Auslander--Buchsbaum formula implies that the localisation $Q_\p$ of $Q$ at $\p$ is of projective dimension at most one, and so there is an exact sequence $0 \to F_1 \to F_0 \to Q \to 0$ with finitely generated free $\Lambda_{m, \p}$-modules $F_1$ and $F_0$. It follows that $F_1 \to F_0 \to P_\p$ is a standard representative in the sense of Definition \ref{standard representative def} (with respect to $(0, \emptyset)$) for the complex $A^\bullet_{mp^\infty, \ell} \otimes_{\Lambda_m}^\mathbb{L} \Lambda_{m, \p}$. \\  
To proceed, we first verify that the Matlis dual of the $\Lambda_{m, \p}$-torsion submodule of $H^1 (A^\bullet_{mp^\infty, \ell})_\p$ coincides with $(\bigoplus_{v \mid \ell} \Z_p (1))_\p$. 
By Lemma \ref{ext and dual of tor} it suffices to compute $\Ext^1_{\Lambda_m} ( H^1 (A^\bullet_{mp^\infty, \ell}), \Lambda_m)_\p$ for this. To do this, we will use the convergent spectral sequence
\[
E_2^{i, j} = \Ext^i_{\Lambda_m} ( H^{-j} ( A^\bullet_{mp^\infty, \ell}), \Lambda_m) \, \Rightarrow \, 
E^{i + j} = H^{i + j} ( \mathrm{RHom}_{\Lambda_m} ( A^\bullet_{F_{mp^\infty, \ell}}, \Lambda_m)).
\]
Since $\Lambda_{m, \p}$ is a Gorenstein ring of dimension one, one has that $\Ext^i_{\Lambda_m} ( -, \Lambda_m)_\p = 0$ if $i > 1$, and so this spectral sequence gives an isomorphism 
\begin{align*}
    \Ext^1_{\Lambda_m} ( H^1 (A^\bullet_{mp^\infty, \ell}), \Lambda_m)_\p  & \cong H^0 ( \mathrm{RHom}_{\Lambda_m} ( A^\bullet_{F_{mp^\infty, \ell}}, \Lambda_m))_\p\\
     & \cong  \big( {\varprojlim}_{n \in \N} H^2 ( \Q_\ell, (\Z_p)_{F_{mp^n} / \Q}) \big)_\p\\
     & \cong \big( {\bigoplus}_{v \mid \ell} \Z_p (1) \big)_\p.
\end{align*}
Here the second isomorphism is by (derived) local Tate duality \cite[Thm.~5.2.6]{NekovarSelmerComplexes} and the final isomorphism follows easily from (classical) local Tate duality.\\
If $\p$ does not contain $p$, then the localisation of $\Lambda_m$ at $\p$ is a regular local ring (see, for example, the discussion in \cite[\S\,3C1]{bks2}) and hence, in this case, the claim follows from Proposition~\ref{integrality properties of det projection map}~(c) applied to the complex $A^\bullet_{mp^\infty, \ell} \otimes_{\Lambda_m}^\mathbb{L} \Lambda_{m, \p}$ and the computation
of $H^1 (A^\bullet_{mp^\infty, \ell})_{\tor, \p}^\vee$ above.
\\ 
Furthermore, if $\p$ contains $p$, then the localisation of $H^2 (A^\bullet_{mp^\infty, \ell})$ at $\p$ vanishes by the general result of \cite[Lem.\@ 5.6]{Flach04} because $H^2 (A^\bullet_{mp^\infty, \ell})$ is a finitely generated $\Z_p$-module. Similarly, $\Ext^1_{\Lambda_m} (H^1 (A^\bullet_{mp^\infty, \ell}), \Lambda_m) = \Z_p (1)$ is a finitely generated $\Z_p$-module and so also vanishes when localised at $\p$. It follows from Lemma \ref{ext and dual of tor} that
\[
\Ext^1_{\Lambda_m} ( H^1 (A^\bullet_{mp^\infty, \ell}), \Lambda_m)_\p = \Hom_{\Lambda_m} ( H^1 (A^\bullet_{mp^\infty, \ell})_\tor, Q (\Lambda_m) / \Lambda_m)_\p = 0
\]
so that $H^1 (A^\bullet_{mp^\infty, \ell})_\p$ has depth 1 as a $\Lambda_{m, \p}$-module. By the Auslander--Buchsbaum formula, this implies that it is in fact a free, possibly zero, $\Lambda_{m, \p}$-module. Given this, the same proof as Proposition \ref{integrality properties of det projection map}\,(c) works and shows the claim.
\end{proof}

Suppose that $\ell \neq p$.
In this case, Lemma \ref{nus are compatible} shows that the family $\nu_{mp^\infty}^{(\ell)} \coloneqq (\nu_{mp^n}^{(\ell)})_{n \in \N}$ defines an element of $\Lambda_m$. If we can prove that $\Eul_\ell (\tilde \sigma_\ell)^{-1} \cdot \nu_{mp^\infty}^{(\ell)}$ belongs to the image of $\vartheta_{mp^\infty, \ell}$, then it follows from the diagram 
\begin{equation} \label{commutative diagram for det projection maps}
\begin{tikzcd}[row sep=small]
    \Det_{\Lambda_m} ( A^\bullet_{mp^\infty, \ell})^{-1} \arrow{r}{\vartheta_{mp^\infty, \ell}} \arrow{d}[left]{\mathrm{pr}_{F_{mp^\infty} / F_{mp^n}}} &   \big( \Ann_{\Lambda_m} (  {\bigoplus}_{v \mid \ell} \Z_p (1))^{- 1} \cdot I (\cD_{mp^\infty}^{(\ell)}) \big)^{\ast \ast} \arrow{d}{\pi_{F_{mp^\infty} / F_{mp^n}}} \\ 
    \Det_{\Z_p [G_{mp^n}]} ( A_{F_{mp^n}, \ell}) \arrow{r}{\vartheta_{F_{mp^n}, \ell}^0} & 
    \Q_p [G_{mp^n}],
\end{tikzcd}%
\end{equation}
which commutes for every $n \in \N$ as can be checked using the explicit description of the maps $\vartheta_{mp^\infty, \ell}$ and $\vartheta_{F_{mp^n}, \ell}^0$ given in Lemma~\ref{explicit description projection map lem}, that  the first claim in Theorem \ref{multiplicative group thm}\,(a) is valid. \\
To prove that $\Eul_\ell (\tilde \sigma_\ell)^{-1} \cdot \nu_{mp^\infty}^{(\ell)}$ is indeed in the image of $\vartheta_{mp^\infty, \ell}$, \cite[Lem.~C.11]{Sakamoto20} allows us to check the claimed containment locally at a height-one prime $\p$ of $\Lambda_m$.
To do this, let us first assume that $\p$ contains $p$. In this case, then, Lemma \ref{local Iwasawa theory description image of projection map} combines with \cite[Lem.~5.6]{Flach04} to imply that $(\im \vartheta_{mp^\infty, \ell})_\p = \Lambda_{m, \p}$. It therefore suffices to prove that $\Eul_\ell (\tilde \sigma_\ell)^{-1} \cdot \nu_{mp^\infty}^{(\ell)}$ belongs to $\Lambda_{m, \p}$ and this will follow if we can show that $\Eul_\ell (\tilde \sigma_\ell) = \ell^{-1} (\ell - \tilde \sigma_\ell)$ is a unit in $\Lambda_{m, \p}$. Fix a decomposition $G_{mp^\infty} \cong \Delta \times \Gamma$ with a finite group $\Delta$ and $\Gamma \cong \Z_p$.
Note that, writing $\Q_\infty$ for the cyclotomic $\Z_p$-extension of $\Q$, the restriction map then also gives an isomorphism $\Gamma \cong \gal{\Q_\infty}{\Q}$.
If we denote by $\Delta^{(p)}$ the $p$-Sylow subgroup of $\Delta$, then the discussion in \cite[\S\,3C1]{bks2} implies that there exists a prime-to-$p$-order character $\chi_\p \: \Delta \to \overline{\Q_p}^\times$ such that $\Lambda_{m, \p}$ naturally identifies with the localisation of $\Z_p [\im \chi_\p] [\Delta^{(p)}] \llbracket \Gamma \rrbracket$ at the prime ideal $(p, I_{\Z_p [\im \chi_\p], \Delta^{(p)}})$. Since $\ell - \chi_\p (\tilde \sigma_\ell) \sigma_\ell$ is not divisible by $p$ in $\Lambda [\im \chi_\p] \llbracket \Gamma \rrbracket$ (because $\sigma_\ell$ generates an open subgroup of $\gal{\Q_\infty}{\Q}$), we conclude that $\Eul_\ell (\tilde \sigma_\ell)$ is a unit in $\Lambda_{m, \p}$, as claimed. \\
We next assume $\p \subseteq \Lambda_m$ is a height-one prime that does not contain $p$. By 
\cite[\S\,3C1]{bks2} one then has that  there exists a character $\chi_\p \: \Delta \to \overline{\Q_p}^\times$ such that $\Lambda_{m, \p}$ identifies with the localisation of $\Z_p [\im \chi_\p]$ at one of its height-one primes, and \cite[Lem.~8.23~(i)]{BB} moreover shows that
    \[
    \Ann_{\Lambda_m} \big ( {\bigoplus}_{v \mid \ell} \Z_p (1) \big)_\p =
    \begin{cases}
\Eul_\ell (\sigma_\ell) \Lambda_{m, \p} \quad & \text{ if } \chi_\p ( \cI^{(\ell)}_{mp^\infty}) = 1, \\
\Lambda_{m, \p} & \text{ if } \chi_\p ( \cI^{(\ell)}_{mp^\infty}) \neq 1.
    \end{cases}
    \]
Recall from Theorem \ref{local points main result}\,(e) that the element $\Eul_\ell (\tilde \sigma_\ell)^{-1} \nu_{mp^\infty}^{(\ell)}$ belongs to the submodule $\Lambda_m + \Eul_\ell (\sigma_\ell)^{-1} \NN_{\cI^{(\ell)}_{mp^\infty}} \Lambda_m$ of $Q(\Lambda_m)$, and hence also to
$\Ann_{\Lambda_m} (\Z_p (1))_\p^{-1}$ by the explicit description given above. In addition, we know from Theorem \ref{local points main result}\,(d) that $\nu^{(\ell)}_{mp^\infty}$ is contained in $I_{\cD^{(\ell)}_{mp^\infty}} = \Lambda_m \cdot I ( \cD^{(\ell)}_{mp^\infty})$. However, one can explicitly check that the supports of $\Lambda_m / I_{\cD^{(\ell)}_{mp^\infty}}$ and $\Lambda_m / \Ann_{\Lambda_m} ( \bigoplus_{v \mid \ell} \Z_p (1))$ are disjoint. Indeed, if $\p$ is in the support of $\Lambda_m / I_{\cD^{(\ell)}_{mp^\infty}}$, then $\p = \ker \{ \Lambda_m \to \Z_p [ G_{mp^\infty} / \cD^{(\ell)}_{mp^\infty}] \stackrel{\chi}{\to} \Q_p (\chi) \}$ for some character $\chi$. Since $\chi ( \sigma_\ell) - \ell \neq 1$, we see that $\Ann_{\Lambda_m} ( \bigoplus_{v \mid \ell} \Z_p (1))$ contains an element that is a unit in $\Lambda_{m, \p}$, as required to prove the claim. We have thereby proved that
\begin{align*}
\Eul_\ell (\tilde \sigma_\ell)^{-1} \nu_{mp^\infty}^{(\ell)} & \in \big( \Ann_{\Lambda_m} ( {\bigoplus}_{v \mid \ell} \Z_p (1))^{-1} \cap I_{\cD^{(\ell)}_{mp^\infty}} \big)_\p \\
& = \big( \Ann_{\Lambda_m} ( {\bigoplus}_{v \mid \ell} \Z_p (1))^{-1} \cdot I_{\cD^{(\ell)}_{mp^\infty}} \big)_\p 
\end{align*}
for every height-one prime ideal $\p$ of $\Lambda_m$ that does not contain $p$.
Together with the earlier argument for primes $\p$ that contain $p$, this shows that $\Eul_\ell (\tilde \sigma_\ell)^{-1} \nu_{mp^\infty}^{(\ell)}$ is in $( \Ann_{\Lambda_m} ( {\bigoplus}_{v \mid \ell} \Z_p (1))^{-1} \cdot I ( \cD^{(\ell)}_{mp^\infty}))^{\ast \ast}$, and hence in $\im (\vartheta_{mp^\infty, \ell})$ by Lemma \ref{local Iwasawa theory description image of projection map}. This proves the first claim in Theorem~\ref{multiplicative group thm}\,(a).\\
Let us now turn to the the case $\ell = p$
and the proof of the first claim in Theorem \ref{multiplicative group thm}\,(b).
By construction, the elements $\mathfrak{l}_{mp^n}$ belong to the $(1 - e_\tau)$-isotypic component of $H^1 (A^\bullet_{F_{mp^n}, p})$ which, in particular, is torsion free. Since $p$ is assumed to be a split-multiplicative prime, Theorem~\ref{local points main result}\,(c)\,(i) moreover shows that $\mathfrak{l}_{mp^\infty} \coloneqq ( \mathfrak{l}_{mp^n})_{n \in \N}$ is a norm-coherent family with trivial bottom value. By the argument of \cite[Thm.\@ 3.8\,(b)]{BullachDaoud}, applied to the complex $A^\bullet_{F_{mp^\infty, p}} \otimes_{\Lambda_m}^\mathbb{L} (1 - e_\tau) \Lambda_m$, this implies that 
$\mathfrak{l}_{mp^\infty} $ is in $I_{\cD^{(\ell)}_{mp^\infty}}^{\ast \ast} \cdot (1 - e_\tau) H^1 ( A_{mp^\infty, p}^\bullet)$. We therefore deduce from Lemma \ref{local Iwasawa theory description image of projection map} that $\mathfrak{l}_{mp^\infty} $ belongs to $(1 - e_\tau) \im (\vartheta_{mp^\infty, p})$.
Now, one has a commutative diagram comparing $\vartheta_{mp^\infty, p}$ and $\vartheta_{mp^n, p}$ as in (\ref{commutative diagram for det projection maps}), and so it follows that $\mathfrak{l}_{mp^n} $ is in the image of $\vartheta_{mp^n, p}$ for all $n \in \N$, as claimed in Theorem \ref{multiplicative group thm}\,(b).\\
This proves the first two claims in parts (a) and (b) of Theorem \ref{multiplicative group thm}. 

\subsection{Descent calculations} \label{local descent calculations section}

To prove the remaining claims of Theorem \ref{multiplicative group thm}, we will perform descent calculations similar to those in \cite[\S\,5]{bks2}. In doing so we will, in particular, prove local analogues of the `Mazur--Rubin--Sano' conjecture from \cite{bks2}.
We remark that, as our setting is entirely local, we do not need to assume the validity of `global-to-local' hypotheses used in \cite{bks2} such as the conjectures of Leopoldt or Gross--Kuz'min.\\
Let $L$ be a finite abelian extension of $\Q$ and $K$ a subfield of $L$ in which a prime $\ell$ splits completely. We also fix a place $w_0$ of $L$ above $v_0$. Setting $\cG \coloneqq \gal{L}{\Q}$, $H \coloneqq \gal{L}{K}$, and $I_H \coloneqq I(H) \cdot \Z_p [\cG]$, we then write
\[
\beta_{L / K}^{(\ell)}  \: 
H^1 (A^\bullet_{K, \ell}) \cong H^1 (A^\bullet_{L, \ell} \otimes_{\Z_p [\cG]}^\mathbb{L} \Z_p [G])
\xrightarrow{\beta_{A^\bullet_{K,\ell}, I_H}} H^2 ( A^\bullet_{L, \ell}) \otimes_{\Z_p [\cG]} I_H 
\xrightarrow{w_0^\ast} I_H / I (D^{(\ell)}_L)I_H
\]
for the relevant instance of the Bockstein map from Definition \ref{bockstein map def}.
This map has the following explicit description.

\begin{lem} \label{bockstein maps and reciprocity maps}
Suppose $\ell$ splits completely in $K$. For every $a = (a_v)_{v \mid \ell} \in \bigoplus_{v \mid \ell} \widehat{K_v^\times} = H^1 (A^\bullet_{K, \ell})$ one then has
    \[
    \beta_{L / K}^{(\ell)} (a) = \sum_{\sigma \in G_K} (\mathrm{rec}_{\ell} ( a_{\sigma v_0}) - 1) \widetilde \sigma^{-1} \in I_H / I (D^{(\ell)}_L)I_H
    \]
    with the local reciprocity map $\rec_{\ell}\: \Q_\ell \to \gal{\Q_\ell^\mathrm{ab}}{\Q_\ell} \twoheadrightarrow \cD^{(\ell)}_L \subseteq H$ and a choice of lift $\widetilde \sigma \in \cG$ of $\sigma \in G$. 
\end{lem}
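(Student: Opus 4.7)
The plan is to reduce the computation of $\beta^{(\ell)}_{L/K}$ to a single-place local Bockstein via Shapiro's lemma, and then to identify that local Bockstein with the reciprocity map through the derived categorical incarnation of local class field theory.

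Since $\ell$ splits completely in $K$, fixing lifts $\{\widetilde\sigma : \sigma \in G\}$ of $G$ in $\cG$, the set $\{\widetilde\sigma w_0 : \sigma \in G\}$ will give a complete system of representatives for the $H$-orbits on the places of $L$ above $\ell$, with $\widetilde\sigma w_0$ lying above $\sigma v_0$. This produces an isomorphism $H^2(A^\bullet_{L,\ell}) \cong \Z_p[\cG/\cD^{(\ell)}_L]$ under which $w_0^\ast$ becomes the canonical identification $H^2(A^\bullet_{L,\ell}) \otimes_{\Z_p[\cG]} I_H \cong I_H / I(\cD^{(\ell)}_L)I_H$. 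Combined with the Shapiro isomorphism $A^\bullet_{L,\ell} \cong \Z_p[\cG] \otimes^{\mathbb L}_{\Z_p[\cD^{(\ell)}_L]} \mathrm{R}\Gamma(L_{w_0}, \Z_p(1))$, this will decouple $\beta^{(\ell)}_{L/K}$ into a sum of local contributions indexed by $G$: for $a = (a_v)_{v \mid \ell}$ one expects
\[
\beta^{(\ell)}_{L/K}(a) \equiv \sum_{\sigma \in G} \beta^{\mathrm{loc}}(a_{\sigma v_0})\, \widetilde \sigma^{-1} \pmod{I(\cD^{(\ell)}_L) I_H},
\]
where $\beta^{\mathrm{loc}} \: \widehat{\Q_\ell^\times} \to I(\cD^{(\ell)}_L)/I(\cD^{(\ell)}_L)^2$ is the local Bockstein attached to the short exact sequence of $\Z_p[\cD^{(\ell)}_L]$-modules $0 \to I(\cD^{(\ell)}_L) \to \Z_p[\cD^{(\ell)}_L] \to \Z_p \to 0$ applied to $\mathrm{R}\Gamma(L_{w_0}, \Z_p(1))$.

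It then remains to identify $\beta^{\mathrm{loc}}$ with $a \mapsto \rec_\ell(a) - 1$ via the canonical isomorphism $\cD^{(\ell)}_L \cong I(\cD^{(\ell)}_L)/I(\cD^{(\ell)}_L)^2$ sending $d \mapsto d - 1$. This will follow from classical local class field theory: by the standard formalism, $\beta^{\mathrm{loc}}$ equals the cup product with the extension class of $0 \to I(\cD^{(\ell)}_L) \to \Z_p[\cD^{(\ell)}_L] \to \Z_p \to 0$ in $H^1(\Q_\ell, I(\cD^{(\ell)}_L))$; modulo $I(\cD^{(\ell)}_L)^2$ this class corresponds to the canonical surjection $\mathscr{G}_{\Q_\ell} \twoheadrightarrow \cD^{(\ell)}_L$ in $H^1(\Q_\ell, \cD^{(\ell)}_L) = \Hom_{\mathrm{cts}}(\mathscr{G}_{\Q_\ell}, \cD^{(\ell)}_L)$. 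The cup product pairing $H^1(\Q_\ell, \Z_p(1)) \otimes H^1(\Q_\ell, \cD^{(\ell)}_L) \to H^2(\Q_\ell, \cD^{(\ell)}_L(1)) = \cD^{(\ell)}_L$ is local Tate duality, which the classical reciprocity law identifies with $\rec_\ell$.

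The main technical obstacle is bookkeeping: tracking Shapiro's isomorphism and the various edge maps in the spectral sequences that relate $\otimes^{\mathbb L}$ with $\otimes$ coherently, so that the lifts $\widetilde \sigma$ appear with the correct exponent $-1$ (a sign traceable to the contravariance in Shapiro's isomorphism) and the signs in the local Bockstein agree with the classical normalisation of local class field theory.
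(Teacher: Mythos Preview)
Your proposal is correct and follows the standard approach: decompose via Shapiro's lemma into local pieces indexed by $G$, then identify the one-variable local Bockstein with the reciprocity map via the cup-product description of local class field theory. The paper does not actually carry out this argument; it simply cites \cite[Lem.~10.3]{burns07} and \cite[Lem.~5.21]{bks}, whose proofs proceed along exactly the lines you sketch. So you have supplied the content that the paper defers to the literature, and your bookkeeping concerns about the exponent $-1$ on $\widetilde\sigma$ and the sign in the local normalisation are precisely the points one has to check when unwinding those references.
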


\begin{proof}
    See \cite[Lem.\@ 10.3]{burns07} or \cite[Lem.\@ 5.21]{bks}.
\end{proof}

Throughout the remainder of this section let $K_\infty$ denote the cyclotomic $\Z_p$-extension of $K$ and, for every $n \in \N$, write $K_n$ for its $n$-th layer. We will write $\Gamma \coloneqq \gal{K_\infty}{K}$ and $\Gamma_n \coloneqq \gal{K_n}{K}$ for the relevant Galois groups. If $K_n$ has conductor $mp^{n + t}$ for some $t \geq 0$, we define 
\[ t^{(p)}_{K_n} \coloneqq \mathrm{pr}_{F_{mp^{n + t} / K}} (t_{mp^{n + t}}^{(p)}).\]
As the remaining calculations are subtantially different according to whether the prime $\ell$ is equal to $p$ or not, we now consider these two cases separately. 

\subsubsection{The case $\ell \neq p$}
Let us first assume that $\ell \neq p$.  By enlarging $K$ is necessary, we may assume that the decomposition group $D^{(\ell)}_{K_n}$ of $\ell$ in $K_n / \Q$ is equal to $\Gamma_n$ for all $n \in \N$. In particular, $\beta_{K_n / K}^{(\ell)}$ defines a map $H^1 (A^\bullet_{K_n, \ell}) \to I_{\Gamma_n} / I_{\Gamma_n}^2$.
In order to apply Proposition \ref{bockstein proposition}\,(c) in this situation, we need to verify
that $\mathrm{pd}_{\Z_p [G_{K_n}]} ((\widehat{K_{n, \ell}^\times})_\tor) \leq 1$ and $\mathrm{pd}_{\Z_p} ((\widehat{K_{\ell}^\times})_\tor) \leq 1$. (This will also verify condition (\ref{pd assumption}) by Remark \ref{base change for dual of Fitt 1-dim case}.)
The second inequality is clear because $\Z_p$ is a discrete valuation ring, and for the first inequality it is sufficient to prove that 
$(K_{n, v_0}^\times)_\tor$ is $\Gamma_n$-cohomologically trivial. The required cohomological triviality is however true because the extension $K_{n, v_0} / \Q_\ell$ is unramified (cf.\@ \cite[Prop~9.1.4]{NSW}).\\
Proposition \ref{bockstein proposition}\,(c) now shows that
\begin{equation} \label{half 1}
(\beta^{(\ell)}_{K_n / K} \circ \vartheta_{K, \ell, v_0} \circ \mathrm{pr}_{K_n / K}) (t^{(\ell)}_{mp^n}) 
\equiv \vartheta_{K_n, \ell}^0 ( t^{(\ell)}_{K_n})  \mod I_{\Gamma_n}^2
\end{equation}
as an equality in $\Ann_{\Z_p} ((\Q_\ell^\times)_\tor)^{-1} \otimes_{\Z_p} ( I_{\Gamma_n} / I_{\Gamma_n}^2 )$. 
The right hand side of this congruence we can compute, using Theorem \ref{local points main result}\,(d), to be
\begin{align} \nonumber 
\vartheta_{K_n, \ell}^0 ( t^{(\ell)}_{K_n}) & = \pi_{F_{mp^{n + t}} / K_n} (\Eul_\ell (\tilde \sigma_\ell)^{-1} \nu_{mp^{n + t}}^{(\ell)}) \\
& \equiv \ell (\ell - 1)^{-1} \otimes \ell^{- (\ord_\ell (m) - 1)} (1 - \tilde \sigma_\ell) \mod I_{\Gamma_n}^2. \label{half 2}
\end{align}
(Note that $\Ann_{\Z_p} ((\Q_\ell^\times)_\tor)^{-1}$ is generated by $(\ell - 1)^{-1}$.)
Combining (\ref{half 1}) and (\ref{half 2}) we obtain
\begin{equation} \label{we will pass to the limit over this}
    (\beta^{(\ell)}_{K_n / K} \circ \vartheta_{K, \ell, v_0} \circ \mathrm{pr}_{K_n / K}) (t^{(\ell)}_{mp^n}) 
\equiv \ell (\ell - 1)^{-1} \otimes \ell^{- (\ord_\ell (m) - 1)} (1 - \tilde \sigma_\ell) \mod I_{\Gamma_n}^2,
\end{equation}
which, by using the isomorphism
\[
I_{\Gamma_n} / I_{\Gamma_n}^2 \cong \Z_p [G] \otimes_{\Z_p} ( I (\Gamma_n) / I (\Gamma_n)^2),
\]
we can regard as an equality in $(\ell - 1)^{-1} \Z_p [G] \otimes_{\Z_p} ( I (\Gamma_n) / I (\Gamma_n)^2)$.
By taking the limit of the maps $\beta_{K_n / K}^{(\ell)}$ we may define a limit map
\[
H^1 (A^\bullet_{K, \ell}) \to \Z_p [G] \otimes_{\Z_p} \big( {\varprojlim}_{n \in \N} ( I (\Gamma_n) / I (\Gamma_n)^2 )  \big) \cong \Z_p [G] \otimes_{\Z_p} \Gamma,
\]
where the isomorphism is induced by sending $(g - 1) \mapsto g$ for every $g \in \Gamma$.
Passing to the limit (over $n$) in (\ref{we will pass to the limit over this}), we then obtain
\begin{align} \nonumber 
(\beta^{(\ell)}_{K_{\infty} / K} \circ \vartheta_{K, \ell, v_0} \circ \mathrm{pr}_{K_{\infty} / K}) (t^{(\ell)}_{K_\infty}) 
& = \ell (\ell - 1)^{-1} \otimes \ell^{- (\ord_\ell (m) - 1)} (1 - \tilde \sigma_\ell) \\
\label{bockstein and nus} 
& = - \ell (\ell - 1)^{-1} \otimes \ell^{- (\ord_\ell (m) - 1)} (\tilde \sigma_\ell - 1)
\end{align}
in $(\ell - 1)^{-1} \Z_p [G] \otimes_{\Z_p} \Gamma$.
Since $\ell$ is unramified in $K_\infty / K$, the composite map
\[
\widehat{K_{v_0}^\times} \xrightarrow{\rec_{\ell}} \Z_p \otimes_{\Z_p} \Gamma \stackrel{\sigma_\ell \mapsto 1}{\cong} \Z_p
\]
agrees with $\ord_\ell$. This map restricts to an isomorphism on the torsion-free quotient of $\widehat{K_{v_0}^\times}$, and so Lemma \ref{bockstein maps and reciprocity maps} combines with (\ref{bockstein and nus}) to imply that
\[
(\vartheta_{K, \ell, v_0} \circ \mathrm{pr}_{K_\infty / K}) (t^{(\ell)}_{K_\infty}) = - \ell^{- \ord_\ell (m) + 1} (1 - \ell^{-1})^{-1} \otimes \ell , 
\]
as required to conclude the proof of Theorem \ref{multiplicative group thm}\,(a).

\subsubsection{The case $\ell = p$}

In the remainder of this section we consider the case $\ell = p$. It then suffices to prove that 
\[
(\vartheta_{K, p, v_0} \circ \mathrm{pr}_{K_n / K}) (t^{(p)}_{K_n}) =  \NN_{F_m / K} (\mathfrak{l}_m) \wedge p. 
\]
As a first step in this direction, we combine Proposition \ref{bockstein proposition}\,(c) with the argument of \cite[Thm.\@ 5.10]{bks} to deduce that
\begin{equation} \label{one half of MRS}
{\sum}_{\sigma \in \Gamma_n} \sigma \vartheta^0_{K_n, p} ( t_{K_n}^{(p)}) \otimes \sigma^{-1}
= - (\beta^{(p)}_{K_n / K} \circ \vartheta_{K, p, v_0} \circ \mathrm{pr}_{K_n / K}) (t^{(p)}_{K_n}) 
\end{equation}
in $\widehat{K_{n,p}^\times} \otimes_{\Z_p [G_{K_n}]} (I_{\Gamma_n} / I_{\Gamma_n}^2)$ with $\Gamma_n \coloneqq \gal{K_n}{K}$.\\
Recall that for any norm-coherent sequence $u = (u_n)_{n \geq 0} \in \varprojlim_{n \geq 0} (\Z_p \otimes_\Z \cO_{F_{mp^n}})^\times$ there exists a unique power series $\mathrm{Col} (X) \in (\Z_p \otimes_\Z \cO_{F_m}) \llbracket X \rrbracket^\times$, called its `Coleman power series', with the property that
\[
(\sigma_p^{-n} \mathrm{Col}) ( \zeta_{p^n} - 1) = u_n \quad 
\text{ for all } n \geq 1. 
\]
(See \cite[Ch.\@ I]{deShalit} for details.)\\
In the following, we will use the maps
\begin{align*}
    \Ord_p \: & {\bigoplus}_{v \mid p} \widehat{K_v^\times} \to \Z_p [G], \quad 
    a \mapsto {\sum}_{\sigma \in G} \ord_p  (a_{\sigma v_0}) \sigma^{-1},\\
    \mathrm{Rec}_{p, K_n / K} \: & {\bigoplus}_{v \mid p} \widehat{K_v^\times} \to I_{\Gamma_n} / I_{\Gamma_n}^2, 
    \quad a \mapsto {\sum}_{\sigma \in G} (\mathrm{rec}_{\ell} ( a_{\sigma v_0}) - 1) \tilde \sigma^{-1},
\end{align*}
and the induced isomorphisms 
\[
\Ord_p \: \exprod^2_{\Z_p [G]} \widehat{K^\times_p} \stackrel{\simeq}{\to} \widehat{\cO_{K, p}^\times}
\quad \text{ and } \quad 
\mathrm{Rec}_{p, K_n / K} \:\exprod^2_{\Z_p [G]} \widehat{K^\times_p} \stackrel{\simeq}{\to} p^{\Z_p [G]}.
\]

\begin{lem} \label{bley hofer lemma}
Suppose $u = (u_n)_{n \geq 0} \in \varprojlim_{n \geq 0} (\Z_p \otimes_\Z \cO_{F_{mp^n}})^\times$ is a norm-coherent sequence with $\NN_{F_m / K} (u_0) = 1$ and $\mathrm{Col} (0) \in (\Z_p \otimes_\Z \cO_{F_m})^\times$. For big enough $n$, one then has 
\[
{\sum}_{\sigma \in \Gamma_n} \sigma \NN_{F_{mp^n} / K_n} (u_n) \otimes \sigma^{-1} =  (\mathrm{Rec}_{p, K_n / K} \circ \Ord_p^{-1}) ( \NN_{F_m / K}( \mathrm{Col} (0)))
\]
in $\widehat{K_{n, p}^\times} \otimes_{\Z_p [G_{K_n}]} (I_{\Gamma_n} / I_{\Gamma_n}^2)$.
\end{lem}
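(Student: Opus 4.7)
The strategy is to compute both sides via an explicit descent calculation using the Coleman power series, and then match them through local reciprocity.

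First I would exploit the factorisation $\mathrm{Col}(X) = \mathrm{Col}(0) \cdot h(X)$ with $h(X) \in 1 + X \cdot (\Z_p \otimes_\Z \cO_{F_m}) \llbracket X \rrbracket$, which is legitimate because $\mathrm{Col}(0)$ is assumed to be a unit. Applying $\sigma_p^{-n}$ and evaluating at $\zeta_{p^n} - 1$ gives
\[
u_n = \sigma_p^{-n}(\mathrm{Col}(0)) \cdot (\sigma_p^{-n} h)(\zeta_{p^n} - 1),
\]
where the second factor lies in the principal units $1 + \cM_{F_{mp^n}, p}$. The hypothesis $\NN_{F_m/K}(u_0) = 1$ means that $\NN_{F_m/K}(\mathrm{Col}(0)) \cdot \NN_{F_m/K}(h(0)) = 1$, but $h(0) = 1$, so this gives the critical relation $\NN_{F_m/K}(\mathrm{Col}(0)) = 1$ in the quotient of units by principal units --- i.e.\@ $\NN_{F_m/K}(\mathrm{Col}(0))$ is a global unit whose image in the group of connected components vanishes, so that $\Ord_p^{-1}$ applied to it is well-defined.

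Second, I would take $\NN_{F_{mp^n}/K_n}$ of the above decomposition. For $n$ large enough the extension $F_{mp^n}/K_n$ is concentrated in the tame/unramified part of the ramification at $p$, so the norm of the principal unit factor $(\sigma_p^{-n} h)(\zeta_{p^n}-1)$ contributes terms which, after applying the descent operator $\sum_{\sigma \in \Gamma_n} \sigma(-) \otimes \sigma^{-1}$, vanish modulo $I_{\Gamma_n}^2$. The key input here is the standard Bockstein-type identity that for any $x$ lying in the image of the norm from a higher layer of the $\Z_p$-extension, the expression $\sum_\sigma \sigma x \otimes (\sigma^{-1} - 1)$ vanishes modulo $I_{\Gamma_n}^2$; this is the local analogue of the vanishing observation of Burns--Kurihara--Sano used in \cite[\S\,5]{bks}.

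Third, for the contribution of the factor $\sigma_p^{-n}(\mathrm{Col}(0))$, I would observe that this piece is already defined at the bottom of the tower, so its norm to $K_n$ only depends on $\NN_{F_m/K}(\mathrm{Col}(0))$ up to a Frobenius twist that disappears modulo $I_{\Gamma_n}^2$. Concretely, tracking through the semilocal identifications, one gets
\[
{\sum}_{\sigma \in \Gamma_n} \sigma \NN_{F_{mp^n} / K_n}(u_n) \otimes \sigma^{-1} \equiv {\sum}_{\sigma \in \Gamma_n} \sigma\, \iota\big( \NN_{F_m / K}(\mathrm{Col}(0)) \big) \otimes \sigma^{-1} \pmod{\text{higher terms}},
\]
where $\iota$ is the inclusion of units into $\widehat{K_{n,p}^\times}$. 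The explicit description of $\mathrm{Rec}_{p, K_n/K} \circ \Ord_p^{-1}$ via Lemma~\ref{bockstein maps and reciprocity maps} (together with the standard description of the wedge-product map induced by $\Ord_p$) then identifies the right-hand side of the claimed formula with this same expression, yielding the equality.

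\textbf{Main obstacle.} The most delicate step is the second one: establishing that the principal unit contribution from $(\sigma_p^{-n}h)(\zeta_{p^n} - 1)$ really does vanish modulo $I_{\Gamma_n}^2$ after the descent sum. This requires a careful analysis of the norm map on principal units along the tower $F_{mp^n}/K_n$, together with quantitative control of the valuations of these contributions — which is precisely why the statement is only valid for $n$ sufficiently large. One way to make this precise is to pass to the logarithm and use that $\log \circ (\sigma_p^{-n} h)$ vanishes to first order at $X = 0$, so that its value at $\zeta_{p^n} - 1$ becomes arbitrarily deep in the filtration by augmentation powers as $n$ grows.
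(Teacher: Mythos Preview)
Your proposal differs substantially from the paper's argument and has a structural gap. The paper does not decompose $u_n$ via the Coleman power series. Instead it (i) uses torsion-freeness and the hypothesis $\NN_{F_m/K}(u_0)=1$ to write the norm-coherent sequence $u'_n = \NN_{F_{mp^n}/K_n}(u_n)$ as $(\gamma-1)\kappa$ for a unique $\kappa \in \varprojlim_n \widehat{K_{n,p}^\times}$, reducing the left-hand side to $\kappa_0 \otimes (\gamma-1)$ by a formal identity; and then (ii) computes $\ord_p(\kappa_0)$ in terms of $\rec_p(\NN_{F_m/K}(\mathrm{Col}(0)))$ by invoking the main result of Bley--Hofer \cite{BlHo20} at each finite layer and passing to the limit. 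Step~(ii) is the real content --- it is an explicit-reciprocity statement for Coleman power series, not a formal Bockstein identity.

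The gap in your plan is that the decomposition $u_n = \sigma_p^{-n}(\mathrm{Col}(0)) \cdot (\sigma_p^{-n}h)(\zeta_{p^n}-1)$ does not split the descent sum. The ``constant'' factor, after norming to $K_n$, lies in $K$ (since $F_m \cap K_n = K$) and is therefore fixed by $\Gamma_n$; its own norm to $K$ is then a $p^n$-th power rather than $1$, so neither factor individually satisfies the norm-triviality needed for $\sum_\sigma \sigma(\cdot) \otimes \sigma^{-1}$ to land in $\widehat{K_{n,p}^\times} \otimes I_{\Gamma_n}/I_{\Gamma_n}^2$. Even if one forces the issue, a $\Gamma_n$-fixed element contributes nothing to $\kappa_0$. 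Thus the entire left-hand side arises from the factor you propose to discard, and computing \emph{that} contribution in terms of $\mathrm{Col}(0)$ is precisely the Bley--Hofer input. Lemma~\ref{bockstein maps and reciprocity maps} only identifies the Bockstein map for $A^\bullet_{K,\ell}$ with local reciprocity; it does not by itself relate the variation of Coleman values along the tower to the constant term $\mathrm{Col}(0)$. (Your identification $u_0 = \mathrm{Col}(0)$ is also unjustified: the Coleman interpolation is only asserted for $n \geq 1$.)
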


\begin{proof}
    This follows from the main result of Bley and Hofer in \cite{BlHo20} via the argument of \cite[Thm.\@ 5.1]{bullachhofer}. For the convenience of the reader, we sketch this argument.\\  
    Observe that $K^\times$ contains no primitive $p$-th root of unity because $p$ is assumed to split completely in $K$, and hence that also each $\widehat{K_n^\times}$ is $\Z_p$-torsion free by the general result of \cite[Prop.\@ 1.6.12]{NSW}. 
    If we fix a topological generator $\gamma$ of $\Gamma$, then we may therefore use the argument 
    of \cite[Thm.\@ 3.8\,(b)]{BullachDaoud}, applied to the complex $A^\bullet_{K_\infty, p}$, to deduce from the assumption $\NN_{F_m / K} (u_0) = 1$ that the sequence $u' \coloneqq ( \NN_{F_{mp^n} / K_n} (u_n))_{n \in \N}$ is divisible by $\gamma - 1$ in $\varprojlim_{n \in \N} \widehat{K_{n, p}^\times}$.
     Writing $\kappa = (\kappa_n)_{n \geq 0}$ for the unique element of $\varprojlim_{n \geq 0} \widehat{K_{n, p}^\times}$ such that $u' = (\gamma - 1) \kappa$, one then has
    \[
    {\sum}_{\sigma \in \Gamma_n} \sigma \NN_{F_{mp^n} / K_n} (u_n) \otimes \sigma^{-1} = \kappa_0 \otimes (\gamma - 1)
    \]
    by \cite[Lem.\@ 3.14]{bullachhofer}. To proceed, define the characters $s_\gamma \: \Gamma \to \Z_p, \gamma^a \mapsto a$ and $s_{\gamma, n} \: \Gamma_n \to \Z / p^n \Z, \gamma^a \mapsto a \mod p^n$. Applying \cite[Cor.\@ 3.17]{BlHo20} (see also \cite[Prop.\@ 5.2]{bullachhofer}) to the character $\rho_{\gamma, n} \: G_{mp^\infty} \to \Gamma \xrightarrow{s_{\gamma, n}} \Z / p^n \Z$, we obtain the equality
    \[
    \frac{\ord_{K_n} (\kappa_n)}{p^n} = - \frac{(s_{\gamma, n} \circ \rec_p \circ \NN_{F_m / K}) (\mathrm{Col} (0)))}{p^n} \quad \text{ in } {\bigoplus}_{v \mid p} (\Q / \Z). 
    \]
    Now, $\ord_{K_n} (\kappa_n) = \ord_K (\kappa_0)$ because $K_n / K$ is totally ramified at $p$, and so taking the limit (over $n$) shows that
    \[
    \ord_p (\kappa_0) = - (s_\gamma \circ \rec_p \circ \NN_{F_m / K}) (\mathrm{Col} (0)))
    \quad \text{ in } {\bigoplus}_{v \mid p} \Z_p. 
    \]
    In addition, local class field theory implies that the group of universal norms $\bigcap_{n \in \N} \NN_{K_n / K} (\widehat{K_n^\times})$ is equal to $\bigoplus_{v \mid p} p^{\Z_p}$, on which $\Ord_p$ is injective. Since $\kappa_0$ clearly belongs to this group and the same is true for the image of the isomorphism $\exprod^2_{\Z_p [G]} \widehat{K^\times_p} \xrightarrow{\simeq} p^{\Z_p [G_K]}$ induced by $\Rec_{p, K_n / K}$ (cf.\@ the general result of \cite[Lem.\@ 2.17\,(ii)]{BB}), the last displayed equality in fact implies that 
    \[
    \kappa_0 \otimes (\gamma - 1) = - (\Ord_p^{-1} \circ \mathrm{Rec}_{p, K_n /K}) ( \NN_{F_m / K}) (\mathrm{Col} (0))) = (\mathrm{Rec}_{p, K_n / K} \circ \Ord_p^{-1}) ( \NN_{F_m / K}( \mathrm{Col} (0))),
    \]
    as claimed. 
\end{proof}

Write $\mathrm{Col} (X) \in \Z_p \llbracket X \rrbracket$ for the Coleman power series associated to the norm-coherent sequence $(\mathfrak{l}_{mp^n})_n$. 
Then Lemma \ref{bley hofer lemma} combines with (\ref{one half of MRS}) and Lemma \ref{bockstein maps and reciprocity maps} to imply that
\begin{align*}
(\mathrm{Rec}_{p, K_n / K} \circ \Ord_p^{-1}) ( \NN_{F_m / K}( \mathrm{Col} (0)))
& = {\sum}_{\sigma \in \Gamma_n} \sigma \vartheta^0_{K_{n}, p} ( t_{K_n}^{(p)}) \otimes \sigma^{-1}\\
& = 
- (\mathrm{Rec}_{p, K_n / K} \circ \vartheta_{K, p, v_0} \circ \mathrm{pr}_{K_n / K}) (t^{(p)}_{K_n}) 
\end{align*}
in $\widehat{K_{n, p}^\times} \otimes_{\Z_p [G_{K_n}]} (I_{\Gamma_n} / I_{\Gamma_n}^2)
$. Note that the outer terms in this equality belong to the image of $
\widehat{K^\times} \otimes_{\Z_p [G_{K_n}]} (I_{\Gamma_n} / I_{\Gamma_n}^2) \cong 
\widehat{K^\times} \otimes_{\Z_p} ( I (\Gamma_n) /  I (\Gamma_n)^2)$ so that we can regard this as where the equality takes places.
Taking the limit over $n$, the map $\mathrm{Rec}_{p, K_n / K}$ defines an isomorphism $\exprod^2_{\Z_p [G]} \widehat{K^\times_p} \stackrel{\simeq}{\to} p^{\Z_p [G]} \otimes_{\Z_p} (I (\Gamma) / I (\Gamma)^2)$
so that from the last displayed equality we can deduce that
\[
\Ord_p^{-1} ( \NN_{F_m / K}( \mathrm{Col} (0))) = - (\vartheta_{K, p, v_0} \circ \mathrm{pr}_{K_n / K}) (t^{(p)}_{K_n}).
\]

Now, by definition of $\tilde x_{mp^n}$ (Definition \ref{def x tilde}) one has
\begin{align*}
\mathrm{Col} (X) & = \exp_{\mathbb{G}_m} \circ \log_{\widehat{E}} \circ \big( \sum_{m_0 \mid d \mid m} \big( ( \sum_{\chi \neq \tau} h_{\chi, d} (X) -_{\widehat{E}} X) +_{\widehat{E}} (p \Eul_p (\sigma_p))^{-1} \epsilon_d \big) \big)\\ 
& = \sum_{m_0 \mid d \mid m} \big( ( \sum_{\chi \neq \tau} (\exp_{\mathbb{G}_m} \circ g_{\chi, d} (X)) -_{\mathbb{G}_m} (\exp_{\mathbb{G}_m} \circ \log_{\widehat{E}})(X)) \\
& \qquad \qquad +_{\mathbb{G}_m} (p \Eul_p (\sigma_p))^{-1} \exp_{\mathbb{G}_m} (\zeta_d p) \big)
\end{align*}
and so we deduce that
\[
\mathrm{Col} (0) = (p \mathrm{Eul}_p (\sigma_p))^{-1} \exp_{\mathbb{G}_m} \big( {\sum}_{m_0 \mid d \mid m}\zeta_d p \big) =  \mathfrak{l}_m. 
\]
The claim now follows from the equality $\Ord_p (  \NN_{F_m / K} (\mathfrak{l}_m) \wedge p) = - \NN_{F_m / K} (\mathfrak{l}_m)$
because the map $\Ord_p \:\exprod^2_{\Z_p [G]} \widehat{K_p^\times} \to (\Z_p \otimes_\Z \cO_K)^\times$ is an isomorphism.\\
This concludes the proof of Theorem \ref{multiplicative group thm}. \qed 

\section{Nekov\'a\v{r}--Selmer complexes and exceptional zeros}

In this section we establish Theorems \ref{mazur--tate main result 1}\,(a) and \ref{mazur--tate main result 2}. The proofs involve an auxiliary Nekov\'a\v{r}--Selmer structure, introduced in \S\,\ref{definition split selmer structure section} and studied further in \S\,\ref{computation of split selmer structure cohomology section}, where we describe the associated Nekov\'a\v{r}--Selmer complexes and their cohomology. After some preliminary constructions of \S\,\ref{split selmr structure det section} concerning the determinants of these complexes, we first prove Theorem~\ref{mazur--tate main result 1}\,(a) in \S\,\ref{order of vanishing proof section} before turning to the proof of Theorem~\ref{mazur--tate main result 2} in \S\,\ref{congruences proof section}.

\subsection{Consequences of Tate uniformisation}

If $E$ has split-multiplicative reduction at the prime $\ell$, then Tate uniformisation gives rise to an exact sequence (cf.\@ \cite[Prop.\@ 6.1]{SilvermanII})
\begin{equation} \label{tate uniformisation exact sequence}
\begin{tikzcd}
    0 \arrow{r} & \Z_p (1) \arrow{r} & \mathrm{T}_p E \arrow{r} & \Z_p \arrow{r} & 0
\end{tikzcd}%
\end{equation}
of $\gal{\overline{\Q_\ell}}{\Q_\ell}$-modules, which induces an exact triangle
\begin{equation} \label{tate uniformisation triangle}
\mathrm{R}\Gamma (\Q_\ell, \Z_p (1)_{K / \Q}) \xrightarrow{g_\ell} 
\mathrm{R}\Gamma (\Q_\ell, (\mathrm{T}_p E)_{K / \Q}) \xrightarrow{h_\ell}   \mathrm{R}\Gamma (\Q_\ell, \Z_{p, K / \Q}) 
\to \mathrm{R}\Gamma (\Q_\ell, \Z_p (1)_{K / \Q})[1].
\end{equation}%

\begin{rk} \label{tate uniformisation connecting map rk}
    For later use we note that the connecting homomorphism $\Z_p \to H^1 (\Q_\ell, \Z_p (1))$ arising from the exact sequence (\ref{tate uniformisation exact sequence}) sends $1$ to the image of the $\ell$-adic Tate period $q_{E, \ell}$ of $E$ under the Kummer map $\widehat{\Q_\ell^\times} \xrightarrow{\simeq} H^1 (\Q_\ell, \Z_p (1))$. 
\end{rk}

\subsubsection{Definition of a useful Nekov\'a\v{r}--Selmer structure} \label{definition split selmer structure section}

We now use the triangle (\ref{tate uniformisation triangle}) to define a useful Nekov\'a\v{r}--Selmer structure.
To do this, we fix $Q \in \widehat{E} (K_p)$ and a finite set $\Pi$ of primes at which $E$ has split-multiplicative reduction. We further fix a finite set $\Phi$ of primes that is disjoint from $S_{mp}$.
Finally, we let $\Sigma$ be a finite set of places of $\Q$ that contains $S (K) \cup \Phi$.
\\ 
We define a Nekov\'a\v{r}--Selmer structure $\cF \coloneqq \cF^\mathrm{sp}_{\Sigma, \Pi, \Phi, Q}$ with $S (\cF) = \Sigma$ as follows.
\begin{itemize}
    \item If $v \in \Pi$, then we let $\mathrm{R}\Gamma_{\cF} (\Q_v, T_{K / \Q}) \coloneqq \mathrm{R}\Gamma (\Q_v, \Z_{p} ( 1)_{K / \Q})$ and take $i_{\cF_{v}}$ to be the map $g_v$ from (\ref{tate uniformisation triangle}).
    \item If $v \in \Phi$, then we define $\mathrm{R}\Gamma_{\cF} (\Q_v, T_{K / \Q})$ and $i_{\cF, v}$ by the triangle
    \[
        \mathrm{R}\Gamma_{\cF} (\Q_v, T_{K / \Q}) \xrightarrow{i_{\cF, v}}  \mathrm{R}\Gamma (\Q_v, T_{K / \Q}) \to \RHom_{\Z_p} ( D_{v}^\bullet, \Z_p) [-2] \to \phantom{X}, 
    \]
    where $D_{v}^\bullet$ is the complex defined in Lemma \ref{approximation complexes}\,(a) and the second arrow is the composite of the natural map $\mathrm{R}\Gamma (\Q_v, T_{K / \Q}) \to \mathrm{R}\Gamma_{/ f} ( \Q_v, T_{K / \Q})$ and
    the map $\rho_{v}^\ast \coloneqq \RHom_{\Z_p} ( \rho_{v}, \Z_p ) [-2]$ dual to the morphism $\rho_{v}$ from Lemma \ref{approximation complexes}\,(b). 
    \item If $v = p$ is unramified in $K$, then we define $\mathrm{R}\Gamma_{\cF} (\Q_v, T_{K / \Q})$ and $i_{\cF, v}$ by the triangle
    \[
        \mathrm{R}\Gamma_{\cF} (\Q_v, T_{K / \Q}) \xrightarrow{i_{\cF, v}}  \mathrm{R}\Gamma (\Q_v, T_{K / \Q}) \to \RHom_{\Z_p} ( D_{p, Q}^\bullet, \Z_p) [-2] \to \phantom{X}, 
    \]
    where $D_{p, Q}^\bullet$ is the complex defined in Lemma \ref{approximation complexes}\,(b) and the second arrow is the composite of the natural map $\mathrm{R}\Gamma (\Q_v, T_{K / \Q}) \to \mathrm{R}\Gamma_{/ f} ( \Q_v, T_{K / \Q})$ and
    the map $\rho_{p, Q}^\ast \coloneqq \RHom_{\Z_p} ( \rho_{p, Q}, \Z_p ) [-2]$ dual to the morphism $\rho_{p, Q}$ from Lemma \ref{approximation complexes}\,(b). 
    \item If $v = p$ is ramified in $K$ and 
    does not belong to $\Pi$, then 
    we define $\mathrm{R}\Gamma_{\cF} (\Q_v, T_{K / \Q})$ and $i_{\cF, v}$ by means of the triangle
    \begin{cdiagram}[column sep=small]
        \mathrm{R}\Gamma_{\cF} (\Q_v, T_{K / \Q}) \arrow{rr}{i_{\cF, v}} & & \mathrm{R}\Gamma (\Q_v, T_{K / \Q}) \arrow{r} & \Z_p [G][-1] \arrow{r} & \mathrm{R}\Gamma_{\cF} (\Q_v, T_{K / \Q}) [1]. 
    \end{cdiagram}%
    Here the second arrow is induced by $\cP_K ( \cdot, Q)$ (equivalently, the composite of the natural morphism $\mathrm{R}\Gamma (\Q_v, T_{K / \Q}) \to \mathrm{R}\Gamma_{/ f} ( \Q_v, T_{K / \Q})$ with the 
  dual of the map $\Z_p [G] [-1] \to \mathrm{R}\Gamma_f ( \Q_v, T_{K / \Q})$ that sends $1$ to $Q$).
    \item In all other cases we take $\mathrm{R}\Gamma_{\cF} (\Q_v, T_{K / \Q}) \coloneqq \mathrm{R}\Gamma (\Q_v, T_{K / \Q})$ and  $i_{\cF, v}$ to be the identity map. 
\end{itemize}

\begin{lem} \label{surprisingly subtle calculation}
Let $(\Sigma, \Pi, \Phi, Q)$ be as above. 
    For every place $v$ of $\Q$, the complex $\mathrm{R}\Gamma_{\cF} (\Q_v, T_{K / \Q})$ is perfect in $D (\Z_p [G])$ and acyclic outside degrees one and two. 
\end{lem}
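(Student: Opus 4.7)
The approach is a case-by-case inspection of the definition of $\cF = \cF^{\mathrm{sp}}_{\Sigma,\Pi,\Phi,Q}$ at the place $v$. In every case $\mathrm{R}\Gamma_{\cF}(\Q_v, T_{K/\Q})$ is either identified with a complex whose properties have already been recorded, or it arises as the fibre of an explicit morphism between two perfect complexes of $\Z_p[G]$-modules. Since $D^\mathrm{perf}(\Z_p[G])$ is closed under cones and shifts, perfection will be automatic in each situation, and acyclicity outside degrees one and two will then follow from the long exact sequence attached to the defining triangle.

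For $v \in \Pi$ the local complex coincides with $A^\bullet_{K,v}$, whose perfection and concentration in degrees one and two were noted at the start of \S\,\ref{multiplicative group section}. In the default case one has $\mathrm{R}\Gamma_{\cF}(\Q_v, T_{K/\Q}) = \mathrm{R}\Gamma(\Q_v, T_{K/\Q})$, for which perfection is classical and the vanishing of cohomology outside degrees one and two (at finite $v$) is a direct consequence of the running assumption $E(K)[p] = 0$ together with local Tate duality. The three remaining cases ($v \in \Phi$; $v = p$ unramified in $K$; $v = p$ ramified but not in $\Pi$) are handled uniformly via the defining triangle
\[
\mathrm{R}\Gamma_{\cF}(\Q_v, T_{K/\Q}) \longrightarrow \mathrm{R}\Gamma(\Q_v, T_{K/\Q}) \longrightarrow T_v \longrightarrow +[1],
\]
in which $T_v$ equals $\RHom_{\Z_p}(D^\bullet_v, \Z_p)[-2]$, $\RHom_{\Z_p}(D^\bullet_{p,Q}, \Z_p)[-2]$, or $\Z_p[G][-1]$, respectively.

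By Lemma~\ref{approximation complexes}\,(a)\,(ii), $D^\bullet_v$ is represented by a complex of the form $[F \to F]$ in degrees $0$ and $1$ with vanishing $H^0$, and consequently its $\Z_p$-linear dual shifted by $[-2]$ is perfect and acyclic outside degrees one and two; the same applies to $D^\bullet_{p,Q}$, and trivially to $\Z_p[G][-1]$. Moreover, the middle arrow of the triangle factors through $\mathrm{R}\Gamma_{/f}(\Q_v, T_{K/\Q})$ via (the dual of) the morphism $\rho_v$ or $\rho_{p,Q}$ (respectively, the map $1 \mapsto Q$), and these maps were constructed precisely so as to be surjective in the relevant degree. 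Feeding this into the long exact sequence confines the cohomology of $\mathrm{R}\Gamma_{\cF}(\Q_v, T_{K/\Q})$ to degrees one and two. The main (though mild) technical step will be keeping track of these surjectivity properties to rule out contributions in degree zero; once this is done, everything else is formal manipulation of exact triangles and local duality.
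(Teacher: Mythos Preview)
Your case-by-case structure and identification of the defining triangles is correct, and for $v \in \Pi$ and the default finite places the argument goes through. However, in the nontrivial cases ($v \in \Phi$ and $v = p$ unramified in $K$) you have located the difficulty in the wrong degree, and your key surjectivity claim is not justified.

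Ruling out $H^0_\cF$ is in fact automatic: from your triangle one obtains $H^0_\cF \cong H^0(\Q_v, T_{K/\Q})$ (since $T_v$ is concentrated in degrees $1$ and $2$), and the latter vanishes for every finite $v$ because $E(K_w)_{\tor}$ is finite, so that $(\mathrm{T}_p E)^{G_{K_w}} = 0$. No surjectivity argument is needed here, and the global hypothesis $E(K)[p]=0$ is neither required nor sufficient for this local vanishing.

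The genuine obstruction lies at the other end: $H^3_\cF = \coker\bigl(H^2(\Q_v, T_{K/\Q}) \to H^2(T_v)\bigr)$. Your assertion that $\rho_v$ and $\rho_{p,Q}$ were ``constructed precisely so as to be surjective in the relevant degree'' is not accurate. In the case $v = p$ unramified, $H^1(\rho_{p,Q})$ need not be surjective (its image lands in $E_0(K_p)$, missing the Tamagawa quotient) and, more importantly, need not be injective either: the summand $\Z_p[G] \xrightarrow{1 \mapsto Q} E_1(K_p)$ has kernel $\Ann_{\Z_p[G]}(Q)$, which can be nonzero. What is actually required to kill $H^3_\cF$ is that $\ker H^1(\rho_{p,Q})$ be $\Z_p$-torsion free, so that the induced map on $\Ext^1_{\Z_p}(-,\Z_p)$ remains surjective. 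The paper achieves this by dualising---identifying $\mathrm{R}\Gamma_\cF(\Q_p, T_{K/\Q}) \cong \RHom_{\Z_p}(\mathrm{cone}(\rho), \Z_p)[-2]$, running the hypercohomology spectral sequence, and then applying the snake lemma (together with Lemma~\ref{unr cohom lem}) to show that $\ker H^1(\rho_{p,Q})$ embeds as an ideal of $\Z_p[G]$. This torsion-freeness step is the actual content of the lemma, and your sketch does not provide it.
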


    \begin{proof}
    In all cases perfectness is a consequence of the definitions and the general result of Flach \cite[\S\,5]{Flach00}. The claim regarding acyclicity is clear in all cases apart from when $v \in \Phi$ or $v = p$ is unramified in $K$. We justify the claim in the latter case and leave the case $v \in \Phi$, which can be treated in exactly the same fashion, to the reader.
    To do this, we write $\rho$ for the composite of the map $\rho_{p, Q}$ and the map $\mathrm{R}\Gamma_f (\Q_p, T_{K / \Q}) \to \mathrm{R}\Gamma (\Q_p, T_{K / \Q})$. By definition of $\cF$, one then has $\mathrm{R}\Gamma_{\cF} (\Q_v, T_{K / \Q}) \cong \RHom_{\Z_p} ( \mathrm{cone} ( \rho), \Z_p) [-2]$ and this isomorphism gives rise to a convergent spectral sequence
\[
E^{i, j}_2 = \Ext^i_{\Z_p} ( H^{-j} (\mathrm{cone} ( \rho)), \Z_p)
\quad \Rightarrow \quad E^{i + j} = H^{i + j + 2}_{\cF} (\Q_p, T_{K / \Q}).
\]
Since $\Ext^i_{\Z_p} ( -, \Z_p) = 0$ if $i \not \in \{ 0, 1\}$, this spectral sequence reduces us to proving that 
\begin{romanliste}
    \item $H^i (\mathrm{cone} (\rho)) = 0$ if $i \not \in \{0, 1, 2\}$,
    \item $H^2 ( \mathrm{cone} (\rho))$ is $\Z_p$-torsion,
    \item $H^0 ( \mathrm{cone} (\rho))$ is $\Z_p$-torsion free.
\end{romanliste}
By its definition, the complex $D^\bullet_{p, Q} = \Z_p [G] [-1] \oplus D^\bullet_{p}$ is acyclic outside degree one and so claim (i) follows from the long exact sequence in cohomology induced by the defining triangle for $\mathrm{cone} (\rho)$. Moreover, this long exact sequence gives an isomorphism $H^2 ( \mathrm{cone} (\rho)) \cong H^2 (\Q_p, T_{K / \Q})$ and hence proves (ii) because $H^2 (\Q_p, T_{K / \Q}) \cong \bigoplus_{v \mid p} ( E(K_v)_\tor \otimes_\Z \Z_p)^\vee$ is finite.\\ 
To prove claim (iii), we recall that the map $H^1 (\rho)$ is defined to be the composite of $H^1 (\rho_{p, Q})$ and the Kummer map $E (K_p) \hookrightarrow H^1 (\Q_p, T_{K / \Q})$. Now, by definition of $\rho_{p, Q}$ one has a commutative diagram
\begin{cdiagram}
    0 \arrow{r} & \Z_p [G] \arrow{r} \arrow{d}{1 \mapsto Q} & H^1 (D^\bullet_{p, Q}) \arrow{r} \arrow{d}{H^1 (\rho_{p, Q})} & H^1 ( D^\bullet_p) \arrow{r} \arrow{d} & 0\\ 
    0 \arrow{r} & E_1 (K_p) \arrow{r} & E (K_p) \arrow{r} & (E / E_1) (K_p) \arrow{r} & 0
\end{cdiagram}%
in which the the rightmost vertical arrow is an isomorphism by Lemma \ref{unr cohom lem} because $p$ is assumed to be unramified in $K$. The snake lemma therefore implies that $H^0 (\mathrm{cone} (\rho)) = \ker H^1 (\rho_{p, Q})$ identifies with an ideal of $\Z_p [G]$, and hence that it is $\Z_p$-torsion free, as required to verify (iii).  
\end{proof}

\subsubsection{A computation of Nekov\'a\v{r}--Selmer groups} \label{computation of split selmer structure cohomology section}

The cohomology of the Nekov\'a\v{r}--Selmer complex associated to $\cF \coloneqq \cF^\mathrm{sp}_{\Sigma, \Pi, \Phi, Q}$ has the following important properties.

\begin{lem} \label{split multiplicative selmer complex properties lemma}
For every triple $(\Sigma, \Pi, \Phi, Q)$ as in \S\,\ref{definition split selmer structure section} the following claims are valid.
\begin{liste}
    \item One has $\mathrm{rk}_{\Z_p} ( H^1_{\cF} (K, \mathrm{T}_p E)^G) \geq |\Pi | + \mathrm{rk}_{\Z_p} (\Sel_{p, E / \Q}^\vee )$. 
    \item If $E (K) [p] = 0$, then there is a surjection
    \[
    H^2_{\cF} (K, \mathrm{T}_p E) \twoheadrightarrow  {\bigoplus}_{\ell \in \Pi} H^2 (\Q_\ell, \Z_p (1)_{K / \Q}) \cong  Y_{K, \Pi} \coloneqq {\bigoplus}_{v \mid \ell} {\bigoplus}_{\ell \in \Pi} \Z_p v. 
    \]
    \item  If $E (K) [p] = 0$, then $H^1_{\cF} (K, \mathrm{T}_p E)$ is $\Z_p$-torsion free.
\end{liste}
\end{lem}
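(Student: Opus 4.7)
The approach will be to apply the long exact cohomology sequence associated with the exact triangle of Remark~\ref{Selmer complex triangle},
\[
\mathrm{R}\Gamma_{\cF}(K, \mathrm{T}_p E) \to \mathrm{R}\Gamma(K, \mathrm{T}_p E) \to \bigoplus_{v \in S(\cF)} \mathrm{R}\Gamma_{/\cF}(K_v, \mathrm{T}_p E) \to \mathrm{R}\Gamma_{\cF}(K, \mathrm{T}_p E)[1],
\]
in which, by construction of $\cF = \cF^{\mathrm{sp}}_{\Sigma, \Pi, \Phi, Q}$, the quotient complex $\mathrm{R}\Gamma_{/\cF}(K_v, \mathrm{T}_p E)$ vanishes for $v \notin \Pi \cup \Phi \cup \{p\}$. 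The key local identification is at $v \in \Pi$, where the Tate uniformisation triangle (\ref{tate uniformisation triangle}) forces $\mathrm{R}\Gamma_{/\cF}(K_v, \mathrm{T}_p E)[1] \simeq \mathrm{R}\Gamma(\Q_v, \Z_{p, K/\Q})$; at $v \in \Phi \cup \{p\}$ one uses the explicit description given by Lemma~\ref{approximation complexes}.

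\textbf{Proof of (a).} I will introduce the analogous Nekov\'a\v{r}--Selmer structure $\cF_\Q$ for $\mathrm{T}_p E$ over $\Q$, with the same pattern of local conditions, so that inflation along $\mathscr{G}_K \hookrightarrow \mathscr{G}_\Q$ gives a morphism of complexes inducing a map $H^1_{\cF_\Q}(\Q, \mathrm{T}_p E) \to H^1_\cF(K, \mathrm{T}_p E)^G$. It therefore suffices to bound the $\Z_p$-rank of $H^1_{\cF_\Q}$. Comparing $\cF_\Q$ with the Bloch--Kato structure over $\Q$, the local complexes differ only at $v \in \Pi \cup \Phi \cup \{p\}$, and for each $v \in \Pi$ the inclusion $\mathrm{R}\Gamma_f(\Q_v, \mathrm{T}_p E) \to \mathrm{R}\Gamma(\Q_v, \Z_p(1))$ arising from Tate uniformisation has $\Z_p$-cokernel of rank one, generated by the Tate parameter $q_{E,v}$ (see Remark~\ref{tate uniformisation connecting map rk}). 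A diagram chase in the LES then gives the rank bound $\mathrm{rk}_{\Z_p} H^1_{\cF_\Q}(\Q, \mathrm{T}_p E) \geq |\Pi| + \mathrm{rk}_{\Z_p} H^1_f(\Q, \mathrm{T}_p E)$, and the latter equals $\mathrm{rk}_{\Z_p} \Sel_{p, E/\Q}^\vee$ by Lemma~\ref{description finite support cohomology}.

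\textbf{Proof of (b) and (c).} For (b), I will read off the cohomology in degree two of the defining triangle and identify a surjective map $H^2_\cF(K, \mathrm{T}_p E) \twoheadrightarrow \bigoplus_{v \in \Pi} H^2(\Q_v, \Z_p(1)_{K/\Q})$ by using that the corresponding local $H^2$ at $v \in \Phi \cup \{p\}$ vanishes (by Lemma~\ref{approximation complexes} and the definition of the dual-type conditions used there), so that the degree-two piece of the long exact sequence collapses to the $\Pi$-contribution; under the assumption $E(K)[p] = 0$, no global obstructions intrude, and the identification with $Y_{K, \Pi}$ is by the invariant map of local class field theory. For (c), I will use the long exact sequence to embed $H^1_\cF(K, \mathrm{T}_p E)$ into $H^1(\cO_{K, \Sigma}, \mathrm{T}_p E) \oplus \bigoplus_{v} H^1(\mathrm{R}\Gamma_\cF(\Q_v, T_{K/\Q}))$: the global factor is $\Z_p$-torsion-free by Lemma~\ref{complex lemma}, and at each relevant $v$ the local $H^1$ term is torsion-free by inspection (at $v \in \Pi$, any $p$-power root of unity in $K_v$ would, by Tate uniformisation, produce a $p$-torsion point in $E(K_v)$ compatible with the Galois action on $E[p]$, which together with $E(K)[p] = 0$ rules out torsion classes contributing to the image).

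The main obstacle I anticipate lies in part (a): cleanly tracking the $|\Pi|$ extra rank contributions from the Tate parameters through the chain of comparison maps (from $\cF_\Q$, from $\cF_{\mathrm{BK}}$, and from the restriction to $K$), and verifying that no collapse occurs among them. This reduces to a careful local-global analysis at the split-multiplicative places, in particular ensuring that the Tate parameters coming from distinct $v \in \Pi$ remain $\Z_p$-linearly independent modulo the Bloch--Kato subspace, which is essentially detected by the nontrivial valuations $\ord_v(q_{E,v}) = \Tam_v > 0$.
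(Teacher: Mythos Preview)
Your overall instinct to read everything off long exact sequences is right, but several of the specific mechanisms you invoke are incorrect, and in each part the paper's route is both different and simpler.

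\textbf{Part (a).} Your claimed inclusion $\mathrm{R}\Gamma_f(\Q_v,\mathrm{T}_pE)\hookrightarrow\mathrm{R}\Gamma(\Q_v,\Z_p(1))$ does not exist: Tate uniformisation gives a \emph{surjection} $\widehat{\Q_v^\times}\twoheadrightarrow E(\Q_v)^\wedge$ with kernel (not cokernel) generated by $q_{E,v}$, so there is no map of local conditions from Bloch--Kato to $\cF$ in the direction you need. The paper avoids any comparison with $\cF_{\mathrm{BK}}$ and any descent to $\Q$: working directly over $K$ with the triangle of Remark~\ref{Selmer complex triangle}, the extra $|\Pi|$ comes from $\bigoplus_{\ell\in\Pi}H^0_{/\cF}(\Q_\ell,T_{K/\Q})=\bigoplus_{\ell\in\Pi}H^0(\Q_\ell,\Z_{p,K/\Q})\cong Y_{K,\Pi}$, which injects into $H^1_\cF$; one then checks $H^1_{h(\cF)}(\Q_p,T_{K/\Q})\supseteq H^1_f(\Q_p,T_{K/\Q})$ so that the image of $H^1_\cF\to H^1(\cO_{K,\Sigma},\mathrm{V}_pE)$ contains $H^1_f(K,\mathrm{V}_pE)$, and finally takes $G$-invariants using $H^1_f(K,\mathrm{V}_pE)^G\cong H^1_f(\Q,\mathrm{V}_pE)$.

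\textbf{Part (b).} You are using the wrong triangle. The map $H^2_\cF\to\bigoplus_\ell H^2_\cF(\Q_\ell,T_{K/\Q})$ comes from the triangle $\mathrm{R}\Gamma_c(\cO_{K,\Sigma},\mathrm{T}_pE)\to\mathrm{R}\Gamma_\cF\to\bigoplus_\ell\mathrm{R}\Gamma_\cF(\Q_\ell,T_{K/\Q})$, and its surjectivity is precisely the vanishing of $H^3_c(\cO_{K,\Sigma},\mathrm{T}_pE)\cong(E(K)[p^\infty])^\vee$, which is where the hypothesis $E(K)[p]=0$ enters. Your claim that the local $H^2_\cF$ vanishes at $v\in\Phi\cup\{p\}$ is neither true in general nor needed: one simply projects the direct sum onto the $\Pi$-summand, where $H^2_\cF(\Q_\ell,T_{K/\Q})=H^2(\Q_\ell,\Z_p(1)_{K/\Q})$ by definition.

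\textbf{Part (c).} Your proposed embedding into $H^1(\cO_{K,\Sigma})\oplus\bigoplus_vH^1_\cF(\Q_v,T_{K/\Q})$ runs into trouble because for $v\in\Pi$ the local term $H^1(\Q_v,\Z_p(1)_{K/\Q})=\bigoplus_{w\mid v}\widehat{K_w^\times}$ has $p$-torsion whenever $\zeta_p\in K_w$, and this is a purely local phenomenon not ruled out by the global hypothesis $E(K)[p]=0$. The paper instead uses the triangle of Remark~\ref{Selmer complex triangle} again: one gets $0\to\bigoplus_{\ell\in\Pi}H^0(\Q_\ell,\Z_{p,K/\Q})\to H^1_\cF\to H^1(\cO_{K,\Sigma},\mathrm{T}_pE)$, and both outer terms are visibly $\Z_p$-free.
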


\begin{proof}
To prove claim (a), we will show that $\dim_{\Q_p} ( \Q_p \otimes_{\Z_p} H^1_{\cF} (K, \mathrm{T}_p E)^G)$ is at least $|\Pi| + \dim_{\Q_p} H^2_f (\Q, \mathrm{V}_p E)$.
To do this, we write $H^1_{h(\cF)} (\Q_p, T_{K / \Q})$ for the image of $H^1 (\mathrm{R}\Gamma_\cF (\Q_p, T_{K / \Q}))$ in $H^1 (\Q_p, T_{K / \Q})$, and define $H^1_{/ h(\cF)} (\Q_p, T_{K / \Q}) \coloneqq H^1 (\Q_p, T_{K / \Q}) /H^1_{h(\cF)} (\Q_p, T_{K / \Q})$.\\
Note that $H^1 (\Q_\ell, T_{K / \Q})$ is finite for every $\ell \neq p$, and hence that the relevant instance of the triangle in Remark \ref{Selmer complex triangle} gives the exact sequence 
\begin{equation} \label{exact sequence for H1 SC}
      \Q_p \otimes_{\Z_p} Y_{K, \Pi_\mathrm{ram}} \hookrightarrow  \Q_p \otimes_{\Z_p} H^1_{\cF} (K, \mathrm{T}_p E) \to H^1 (\cO_{K, \Sigma}, \mathrm{V}_p E) \to  \Q_p \otimes_{\Z_p} H^1_{/h(\cF) } (\Q_p, T_{K / \Q}).
\end{equation}
If we can prove that $H^1_{h(\cF) } (\Q_p, T_{K / \Q})$ contains $H^1_f (\Q_p, T_{K / \Q})$, then 
exactness of (\ref{exact sequence for H1 SC}) will show that 
the image of the second arrow in (\ref{exact sequence for H1 SC}) contains $H^1_f (K, \mathrm{V}_p E)$. Since $H^1_f (K, \mathrm{V}_p E)^G \cong H^1_f (\Q, \mathrm{V}_p E)$ has the same $\Q_p$-dimension as $H^2_f (\Q, \mathrm{V}_p E)$ by duality (cf.\@ \cite[Lem.\@ 19]{BurnsFlach01}), taking $G$-invariants will then imply claim (a). \\
If $p \in \Pi$, then we have defined $H^1_{h(\cF) } (\Q_p, T_{K / \Q})$ to be the kernel of the map $H^1 (h_p)$ induced by $h_p$. 
From the long exact sequence associated with the triangle (\ref{tate uniformisation triangle})
we deduce that the kernel of $H^1 (h_p)$ is the image of $H^1 (g_p)$. Now, one has a commutative diagram
\begin{cdiagram}[row sep=small]
\textstyle \bigoplus_{v \mid p} \widehat{K_v^\times} \arrow[twoheadrightarrow]{r} \arrow{d}{\simeq} & \textstyle \bigoplus_{v \mid p} E (K_v)^\wedge \arrow[hookrightarrow]{d}{\kappa} \\  
    H^1 (\Q_p, \Z_p (1)_{K/ \Q}) \arrow{r}{H^1 (g_p)} &  H^1 (\Q_p, T_{K / \Q}),
\end{cdiagram}%
where the vertical arrows are the respective Kummer maps and the top arrow is induced by Tate uniformisation. This shows that  $H^1_{h(\cF) } (\Q_p, T_{K / \Q}) = H^1_f (\Q_p, T_{K / \Q})$ in this case, as desired. \\
If $p \not \in \Pi$, then $\Q_p \otimes_{\Z_p} H^1_{h(\cF) } (\Q_p, T_{K / \Q})$ is by definition the orthogonal complement of $\Q_p [G] \cdot Q$ with respect to local Tate duality. Since $Q$ was chosen to be an element of $H^1_f (\Q_p, V_{K / \Q})$, which is its own orthogonal complement, it follows that   $\Q_p \otimes_{\Z_p} H^1_{h(\cF) } (\Q_p, T_{K / \Q})$ contains $H^1_f (\Q_p, V_{K / \Q})$. This concludes the proof of claim (a).\\
As for claim (b), by definition of the Nekov\'a\v{r}--Selmer structure $\cF$ and the octahedral axiom we have an exact triangle
\[
\mathrm{R}\Gamma_\mathrm{c} (\cO_{K, \Sigma}, \mathrm{T}_p E) \to \mathrm{R}\Gamma_{\cF} (K, \mathrm{T}_p E) \to   {\bigoplus}_{\ell \in \Sigma} \mathrm{R} \Gamma_{\cF} (\Q_\ell, T_{K / \Q}) \to, 
\]
which, since $H^3_\mathrm{c} ( \cO_{K, \Sigma}, \mathrm{T}_p E) \cong (\Z_p \otimes_\Z E (K)_\tor)^\vee$ is assumed to vanish, induces a surjection 
\[
H^2_{\cF} (K, \mathrm{T}_p E) \twoheadrightarrow {\bigoplus}_{\ell \in \Sigma} H^2_{\cF} (\Q_\ell, T_{K / \Q}) \twoheadrightarrow {\bigoplus}_{\ell \in \Pi_\mathrm{ram}} H^2 (\Q_\ell, \Z_p (1)_{K / \Q}),
\]
as claimed.
Finally, claim (c) follows from the exact sequence (obtained from the relevant instance of the triangle in Remark \ref{Selmer complex triangle})
\[
0 \to {\bigoplus}_{v \in \Pi_\mathrm{ram}} \Z_p v \to H^1_{\cF} (K, \mathrm{T}_p E) \to H^1 (\cO_{K, \Sigma}, \mathrm{T}_p E)
\]
and the fact that $H^1 (\cO_{K, \Sigma}, \mathrm{T}_p E)$ is $\Z_p$-torsion free if $E (K)$ has no point of order $p$.
\end{proof}

We now write $\cF^\ast \coloneqq (\cF^\mathrm{sp}_{\Sigma, \Pi, \Phi, Q})^\ast$ for the dual Nekov\'a\v{r}--Selmer structure of $\cF$ (as defined in Remark~\ref{dual structures}) and define a complex as
\[
C^\bullet_{K, \cF} \coloneqq \RHom_{\Z_p} ( \mathrm{R}\Gamma_{\cF^\ast} (K, \mathrm{T}_p E), \Z_p) [-3].
\]
This complex is described in a little more detail in the following result which is an analogue of Lemma \ref{complex lemma}. 

\begin{lem} \label{split multiplicative selmer complex properties lemma 2}
  For every triple $(\Sigma, \Pi, \Phi, Q)$ as in \S\,\ref{definition split selmer structure section} the following claims are valid.
    \begin{liste}
        \item $C^\bullet_{K, \cF} $ is a perfect object of $D (\Z_p [G])$ with Euler characteristic \[ \chi_{\Z_p [G]} ( C^\bullet_{K, \cF} ) =  [\Z_p [G]].\]
        \item There is a canonical isomorphism $H^1 (C^\bullet_{K, \cF}) \cong H^1_{\cF} (K, \mathrm{T}_p E)$.
        If $E (K) [p] = 0$, then there is also a split-exact sequence of $\Z_p [G]$-modules
        \begin{equation} \label{exact sequence for H2}
        \begin{tikzcd}
            0 \arrow{r} & H^2_{\cF} (K, \mathrm{T}_p E) \arrow{r} & H^2 (C^\bullet_{K, \cF}) \arrow{r} & T_{K / \Q}^+ \arrow{r} & 0,
        \end{tikzcd}%
        \end{equation}
        and $C^\bullet_{K, \cF}$ is acyclic outside of degrees zero and one.
        \item In $D(\Z_p [G])$ there is an exact triangle
\begin{align} \label{triangle for sp Selmer complex}
   C^\bullet_{K, \cF} \to C^\bullet_{K, \Sigma} \to  {\bigoplus}_{v \in \Sigma} \mathrm{R}\Gamma_{ / \cF} (\Q_v, T_{K / \Q}) \to  C^\bullet_{K, \cF} [1].
\end{align}
    \end{liste}
\end{lem}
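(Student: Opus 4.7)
The plan is to deduce all three claims from the corresponding facts about the Bloch--Kato Selmer complex $C^\bullet_{K, \Sigma}$ (Lemma \ref{complex lemma}) by first establishing the triangle in (c) and then reading off (a) and (b) from it. To construct this triangle I would apply the defining Nekov\'a\v{r}--Selmer triangle of Remark \ref{Selmer complex triangle} to the dual structure $\cF^\ast$, namely
\[\mathrm{R}\Gamma_{\cF^\ast}(K, \mathrm{T}_p E) \to \mathrm{R}\Gamma(\cO_{K, \Sigma}, \mathrm{T}_p E) \to {\bigoplus}_{v \in \Sigma} \mathrm{R}\Gamma_{/\cF^\ast}(\Q_v, T_{K/\Q}),\]
and then take $\RHom_{\Z_p}(-, \Z_p)[-3]$. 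Two identifications then combine to produce the triangle of (c): global Artin--Verdier duality relates the dual of the middle term to $C^\bullet_{K, \Sigma}$ (up to the archimedean correction $(T^+_{K/\Q})^\ast[-2]$ from Lemma \ref{complex lemma}\,(a), which will reappear in (\ref{exact sequence for H2})), while the dual of each local term is identified with $\mathrm{R}\Gamma_{/\cF}(\Q_v, T_{K/\Q})$ via Remark \ref{dual structures} combined with local Tate duality (\ref{derived local Tate duality}).

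Given this triangle, claim (a) follows almost immediately. Perfectness of $C^\bullet_{K, \cF}$ is inherited from perfectness of $C^\bullet_{K, \Sigma}$ (Lemma \ref{complex lemma}\,(a)) together with perfectness of each $\mathrm{R}\Gamma_{/\cF}(\Q_v, T_{K/\Q}) = \mathrm{cone}(i_{\cF, v})[-1]$, which is a direct consequence of Lemma \ref{surprisingly subtle calculation}. The Euler characteristic is then computed additively from the triangle: since $\chi_{\Z_p[G]}(C^\bullet_{K, \Sigma}) = 0$, one has
\[\chi_{\Z_p[G]}(C^\bullet_{K, \cF}) = - {\sum}_{v \in \Sigma} \chi_{\Z_p[G]}(\mathrm{R}\Gamma_{/\cF}(\Q_v, T_{K/\Q})),\]
and this sum is evaluated case-by-case from the definition of $\cF$, using Tate's local Euler characteristic formula combined with the explicit descriptions of the auxiliary complexes $D^\bullet_v$, $D^\bullet_{p, Q}$ and the Tate uniformisation triangle (\ref{tate uniformisation triangle}). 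The upshot is that each contribution cancels except for a single factor of $[\Z_p[G]]$ coming from the local condition at $p$.

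For claim (b), I would use the long exact cohomology sequence of the triangle in (c) together with Lemma \ref{complex lemma}\,(a). The isomorphism $H^1(C^\bullet_{K, \cF}) \cong H^1_\cF(K, \mathrm{T}_p E)$ will follow from the fact that each local term $\mathrm{R}\Gamma_{/\cF}(\Q_v, T_{K/\Q})$ is acyclic outside degrees one and two (a consequence of Lemma \ref{surprisingly subtle calculation} together with the triangle defining $\mathrm{R}\Gamma_{/\cF}$). The sequence (\ref{exact sequence for H2}) arises by combining this long exact sequence with the Artin--Verdier triangle of Lemma \ref{complex lemma}\,(a), and it splits because $T^+_{K/\Q}$ is a free $\Z_p[G]$-module. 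Acyclicity outside degrees one and two is then obtained by combining the same long exact sequence with Lemma \ref{split multiplicative selmer complex properties lemma}\,(c) (to kill $H^0(C^\bullet_{K, \cF})$ under the assumption $E(K)[p] = 0$) and the vanishing of $H^3_{\cF^\ast}(K, \mathrm{T}_p E)$ (for which the same torsion hypothesis on $E(K)[p]$ is needed).

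The main technical obstacle will be a careful bookkeeping of shifts and signs throughout the duality arguments, ensuring in particular that the archimedean correction $(T^+_{K/\Q})^\ast[-2]$ appearing in Lemma \ref{complex lemma}\,(a) is transported under $\RHom_{\Z_p}(-, \Z_p)[-3]$ and Artin--Verdier duality to produce exactly the quotient $T^+_{K/\Q}$ (and not its linear dual or a nontrivial shift) in the exact sequence (\ref{exact sequence for H2}), and verifying that the case-by-case Euler characteristic computation indeed yields $[\Z_p[G]]$ rather than a different multiple.
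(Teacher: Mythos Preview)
Your overall strategy and your handling of (a) are essentially the paper's, but there is a genuine error in how you construct the triangle in (c).

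You propose to dualize the triangle from Remark~\ref{Selmer complex triangle} for $\cF^\ast$,
\[
\mathrm{R}\Gamma_{\cF^\ast}(K, \mathrm{T}_p E) \longrightarrow \mathrm{R}\Gamma(\cO_{K,\Sigma}, \mathrm{T}_p E) \longrightarrow {\bigoplus}_{v}\mathrm{R}\Gamma_{/\cF^\ast}(\Q_v, T_{K/\Q}).
\]
But applying $\RHom_{\Z_p}(-,\Z_p)[-3]$ to the middle term does \emph{not} produce $C^\bullet_{K,\Sigma}$: by definition $C^\bullet_{K,\Sigma}=\RHom_{\Z_p}(\mathrm{R}\Gamma_c(\cO_{K,\Sigma},\mathrm{T}_pE),\Z_p)[-3]$ is the dual of \emph{compact-support} cohomology, whereas your middle term is the \emph{relaxed} complex, whose dual is (via Artin--Verdier) essentially $\mathrm{R}\Gamma_c(\cO_{K,\Sigma},\mathrm{T}_pE)$ itself. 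Likewise, Remark~\ref{dual structures} gives $\mathrm{R}\Gamma_{/\cF^\ast}(\Q_v,T_{K/\Q})\cong\RHom_{\Z_p}(\mathrm{R}\Gamma_{\cF}(\Q_v,T_{K/\Q}),\Z_p)[-2]$, so dualizing your local terms yields $\mathrm{R}\Gamma_{\cF}(\Q_v,T_{K/\Q})$, not $\mathrm{R}\Gamma_{/\cF}(\Q_v,T_{K/\Q})$. The triangle you actually obtain is therefore not (\ref{triangle for sp Selmer complex}).

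The fix, which is what the paper does, is to dualize the \emph{other} canonical triangle, comparing $\mathrm{R}\Gamma_{\cF^\ast}$ to the strict structure:
\[
\mathrm{R}\Gamma_c(\cO_{K,\Sigma},\mathrm{T}_pE) \longrightarrow \mathrm{R}\Gamma_{\cF^\ast}(K,\mathrm{T}_pE) \longrightarrow {\bigoplus}_{v}\mathrm{R}\Gamma_{\cF^\ast}(\Q_v,T_{K/\Q}).
\]
Applying $\RHom_{\Z_p}(-,\Z_p)[-3]$ and rotating once gives $C^\bullet_{K,\cF}\to C^\bullet_{K,\Sigma}$ directly by the definitions of the two complexes, and the third term becomes $\RHom_{\Z_p}(\mathrm{R}\Gamma_{\cF^\ast}(\Q_v,T_{K/\Q}),\Z_p)[-2]\cong\mathrm{R}\Gamma_{/\cF}(\Q_v,T_{K/\Q})$ by biduality and Remark~\ref{dual structures}.

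For (b), the paper also takes a more direct route than yours: rather than extracting everything from (\ref{triangle for sp Selmer complex}), it invokes the Artin--Verdier triangle $\mathrm{R}\Gamma_{\cF}(K,\mathrm{T}_pE)\to C^\bullet_{K,\cF}\to (T_{K/\Q})^+[-2]$ (the $\cF$-analogue of the one in the proof of Lemma~\ref{complex lemma}), from which the $H^1$-identification and the sequence (\ref{exact sequence for H2}) drop out immediately, and acyclicity follows once one shows $\mathrm{R}\Gamma_{\cF}(K,\mathrm{T}_pE)$ is acyclic outside degrees one and two.
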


\begin{proof}
    The triangle (\ref{triangle for sp Selmer complex}) in claim (c) is obtained by dualising the triangle 
    \begin{align} \label{triangle for sp Selmer complex and compact support complex}
    \mathrm{R}\Gamma_c (\cO_{K, \Sigma}, \mathrm{T}_p E)
    & \to \mathrm{R}\Gamma_{\cF^\ast} (K, \mathrm{T}_p E)
     \to {\bigoplus}_{v \in \Sigma} \mathrm{R}\Gamma_{\cF^\ast} (\Q_v, T_{K / \Q}) 
\to \mathrm{R}\Gamma_c (\cO_{K, \Sigma}, \mathrm{T}_p E) [1]
\end{align}
that exists by the definition of $\cF$ and the octahedral axiom. As a consequence of (\ref{triangle for sp Selmer complex}), we deduce that $C^\bullet_{K, \cF}$ is perfect and has Euler characteristic $[\Z_p [G]]$. This is because the complex  $C^\bullet_{K, \Sigma}$ is perfect with vanishing Euler characteristic (cf.\@ Lemma \ref{complex lemma}\,(a)) and each complex ${\bigoplus}_{v \in \Sigma} \mathrm{R}\Gamma_{ / \cF} (\Q_v, T_{K / \Q})$ is by construction perfect and has vanishing Euler characteristic if $v \neq p$ resp.\@ Euler characteristic $-[\Z_p [G]]$ if $v = p$ (as follows from the computations of Euler characteristics in \cite{Flach00}). Having thereby proved claim (a), we now turn to claim (b). Firstly, Artin--Verdier induces a canonical exact triangle
\begin{equation} \label{artin--verdier triangle}
\mathrm{R}\Gamma_{\cF} (K, \mathrm{T}_p E) \to C^\bullet_{K, \cF} \to (T_{K / \Q})^+ [-2] \to \mathrm{R}\Gamma_{\cF} (K, \mathrm{T}_p E) [1],
\end{equation}
and this directly implies the first part of claim (b). 
Assuming now that $E (K) [p] = 0$, the isomorphism $H^3_\mathrm{c} (\cO_{K, \Sigma}, \mathrm{T}_p E) \cong (E (K)_\tor \otimes_\Z \Z_p)^\vee$ shows that $\mathrm{R}\Gamma_c (\cO_{K, \Sigma}, \mathrm{T}_p E)$ is acyclic outside degree one and two. From the triangle for $\cF$ analogous to (\ref{triangle for sp Selmer complex and compact support complex}) and Remark \ref{surprisingly subtle calculation} we therefore see that $\mathrm{R}\Gamma_{\cF} (K, \mathrm{T}_p E)$ is acyclic outside degrees one and two. Given this, the triangle (\ref{artin--verdier triangle}) implies both that $C^\bullet_{K, \cF}$ is acyclic outside degree one and two and that one has the exact sequence (\ref{exact sequence for H2}), which is split-exact because $(T_{K / \Q})^+$ is a free $\Z_p [G]$-module. This concludes the proof of claim (b).
\end{proof}

\subsubsection{Determinants and passage to cohomology} \label{split selmr structure det section}

Set $\cF \coloneqq \cF^\mathrm{sp}_{\Sigma, \Pi, \Phi, Q}$
for data $(\Sigma, \Pi, \Phi, Q)$ as in \S\,\ref{definition split selmer structure section} with the additional assumption that all places in $\Pi$ are ramified in $K$. Recall the basis $b_K$ of $T_{K / \Q}^+$ defined in \S\,\ref{s: cpctly supported etale cohom}. In light of Lemma~\ref{split multiplicative selmer complex properties lemma 2}, 
Definition \ref{def projection map from det} provides us with a map
\begin{align*}
F_{K, \Sigma, \Pi, \Phi, Q} \coloneqq \vartheta_{C^\bullet_{K, \cF}, \{ b_K^\ast \}} \: \Det_{\Z_p [G]} ( C^\bullet_{K, \cF})^{-1} \to 
\Q_p [G].
\end{align*}
We next define a composite isomorphism $f_{K, \Sigma, \Pi, \Phi, Q}$ as
\begin{align*}
\Det_{\Z_p [G]} ( C^\bullet_{K, \Sigma})^{-1} & \stackrel{\simeq}{\longrightarrow} \Det_{\Z_p [G]} ( C^\bullet_{K, \cF})^{-1} \otimes_{\Z_p [G]} {\bigotimes}_{\ell \in \Pi \cup \Phi \cup \{ p \}} \Det_{\Z_p [G]} ( \mathrm{R}\Gamma_{/ \cF} (\Q_\ell, T_{K / \Q}))^{-1} \\
& \stackrel{\simeq}{\longrightarrow} \Det_{\Z_p [G]} ( C^\bullet_{K, \cF})^{-1},
\end{align*}
where the first arrow is the isomorphism induced by the triangle (\ref{triangle for sp Selmer complex}) and the second arrow is $\id \otimes (\otimes_{\ell \in \Pi \cup \Phi \cup \{ p \}} \mathrm{Ev}_{x_\ell})$ with $x_\ell$ defined as follows.
\begin{itemize}
\item If $\ell  \in \Pi$, then we we use the element $t_K^{(\ell)}$ from Theorem \ref{multiplicative group thm} to define $x_\ell$ as the image of $(-1)^{\bm{1}_\ell (p)} \cdot t^{(\ell)}_K$ under the isomorphism
\begin{align*}
\Det_{\Z_p [G]} ( \mathrm{R}\Gamma_{/ \cF} (\Q_\ell, T_{K / \Q})) & \cong 
\Det_{\Z_p [G]} ( \mathrm{R}\Gamma ( \Q_\ell, \Z_{p, K / \Q})) \\
& \cong 
\Det_{\Z_p [G]} ( \mathrm{R}\Gamma ( \Q_\ell, \Z_{p} (1)_{K / \Q})^{-1, \#}.
\end{align*}
Here the first isomorphism is induced by the triangle (\ref{tate uniformisation triangle}) and the second isomorphism by local Tate duality (\ref{derived local Tate duality}) and the isomorphism (\ref{dual sharp isomorphism}).
\item If $\ell \in \Phi$, then we take $x_\ell$ to be the unique element of 
\[
\Det_{\Z_p [G]} ( \mathrm{R}\Gamma_{ / \cF} ( \Q_\ell, T_{K / \Q}))
\cong \Det_{\Z_p [G]} ( \RHom_{\Z_p} ( D^\bullet_\ell, \Z_p) [-2]) \cong \Det_{\Z_p [G]} (D^\bullet_\ell)^{-1, \#}
\]
with $\vartheta_{D^\bullet_\ell, \emptyset} (x_\ell) = \Eul_\ell (\sigma_\ell)$ that exists by Lemma \ref{approximation complexes}\,(a).
\item If $p$ is ramified in $K$ but not in $\Pi$, then we take $x_p$ to be the element corresponding to $\id_{\Z_p [G]}$ in 
$
 \Det_{\Z_p [G]} ( \mathrm{R}\Gamma_{/ \cF} (\Q_\ell, T_{K / \Q})) \cong  \Det_{\Z_p [G]} ( \Z_p [G] [-1]) \cong \Z_p [G]^\ast$.
    \item If $p$ is unramified in $K$, then we let $x_p$ be the unique element of 
    \[
    \Det_{\Z_p [G]} ( \mathrm{R}\Gamma_{/ \cF} (\Q_\ell, T_{K / \Q})) \cong 
    \Det_{\Z_p [G]} ( \RHom_{\Z_p} ( D_{p, Q}^\bullet, \Z_p) [-2]) \cong \Det_{\Z_p [G]} ( F_{p, Q}^\bullet)^{-1, \#}
    \]
    with $\vartheta_{D^\bullet_{p, Q}, \{ \id_{\Z_p [G]} \}} (x_p^\#) = p \Eul_p (\sigma_p)$ (which exists by Lemma~\ref{approximation complexes}~(a)~(iii)).
\end{itemize}

The following result gives an explicit description of the map $F_{K, \Sigma, \Pi, Q} \circ f_{K, \Sigma, \Pi, Q}$.

\begin{lem} \label{description of F circ f lem}
Let $K$ be a finite abelian extension of $\Q$ of conductor $mp^n$ with $m \in \N$ coprime to $p$ and $n \geq 0$.
For every $a \in \Det_{\Z_p [G]} (C_{K, \Sigma}^\bullet)^{-1}$ one has the following equality in $\Q_p [G]$.
    \begin{align} \nonumber
(F_{K, \Sigma, \Pi, \Phi, Q} \circ f_{K, \Sigma, \Pi, \Phi, Q}) (a) = \, & 
({\prod}_{\ell \in \Pi \setminus \{ p \}} \nu_{mp^n}^{(\ell)})^\# \cdot \big ( {\prod}_{\ell \in (\Pi \cup \Phi)\setminus \{ p \}} \Eul_\ell (\widetilde \sigma_\ell^{-1}) \big)^{-1} \\
& \quad \cdot (p\Eul_p (\sigma_p^{-1}))^{- \bm{1}_{p^n} (p)} \cdot 
    \mathcal{P}_K (\Theta_{K, \Sigma} (a), Q).
\end{align}
\end{lem}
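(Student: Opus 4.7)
The proof proceeds by carefully bookkeeping the local contributions arising from the definition of $f_{K, \Sigma, \Pi, \Phi, Q}$ and applying the general functoriality properties of the determinant projection map (Proposition~\ref{functoriality properties of det projection map}) to the exact triangle~(\ref{triangle for sp Selmer complex}). My plan is to split the verification into three parts: first a structural reduction via the triangle, then an evaluation of each local factor $x_\ell$ separately, and finally an identification of the remaining ``global'' piece with $\cP_K(\Theta_{K,\Sigma}(a), Q)$ along the lines of Lemma~\ref{etnc explicit lem}\,(a).

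First, by applying Proposition~\ref{functoriality properties of det projection map}\,(a)\,(i) to the defining exact triangle~(\ref{triangle for sp Selmer complex}), I would establish a commutative diagram identifying $F_{K, \Sigma, \Pi, \Phi, Q} \circ f_{K, \Sigma, \Pi, \Phi, Q}$ with the composite of $\vartheta_{C^\bullet_{K,\Sigma}, \{b_K^\ast\}} = \Theta_{K,\Sigma}$ and the product of the evaluation maps $\mathrm{Ev}_{x_\ell}$ at each $\ell \in \Pi \cup \Phi \cup \{p\}$. This reduces the problem to an explicit computation of each $\mathrm{Ev}_{x_\ell}$ together with the global part.

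Next, I would compute the local contributions case by case. For $\ell \in \Pi \setminus \{p\}$, by construction $x_\ell$ is (up to the sign $(-1)^{\bm{1}_\ell(p)} = -1$) the image of $t_K^{(\ell)}$ under the composite isomorphism obtained from the triangle~(\ref{tate uniformisation triangle}), local Tate duality~(\ref{derived local Tate duality}), and the involution isomorphism~(\ref{dual sharp isomorphism}). Theorem~\ref{multiplicative group thm}\,(a) evaluates $t_K^{(\ell)}$ under $\vartheta_{F_K, \ell}^0$ to $\Eul_\ell(\widetilde \sigma_\ell)^{-1} \nu_{mp^n}^{(\ell)}$; passing through the chain of dualities then contributes the factor $\bigl(\Eul_\ell(\widetilde\sigma_\ell)^{-1} \nu_{mp^n}^{(\ell)}\bigr)^\# = \Eul_\ell(\widetilde\sigma_\ell^{-1})^{-1} \nu_{mp^n}^{(\ell),\#}$. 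For $\ell \in \Phi$, the element $x_\ell$ is chosen so that $\vartheta_{D^\bullet_\ell, \emptyset}(x_\ell) = \Eul_\ell(\sigma_\ell)$, and the same kind of duality chain yields the factor $\Eul_\ell(\widetilde \sigma_\ell^{-1})^{-1}$ (noting $\widetilde\sigma_\ell = \sigma_\ell$ since $\ell \nmid m$). When $p$ is unramified in $K$ (so $\bm{1}_{p^n}(p) = 1$), the analogous analysis for $x_p$, using $\vartheta_{D^\bullet_{p,Q}, \{\id\}}(x_p^\#) = p\Eul_p(\sigma_p)$, yields the factor $(p\Eul_p(\sigma_p^{-1}))^{-1}$; when $p$ is ramified in $K$ but $p \notin \Pi$ (so $\bm{1}_{p^n}(p) = 0$), the element $x_p$ corresponds to $\id_{\Z_p[G]}$ and contributes trivially.

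Finally, the residual ``global'' contribution collapses to $\cP_K(\Theta_{K, \Sigma}(a), Q)$ by the same argument used in the proof of Lemma~\ref{etnc explicit lem}\,(a): the morphism underlying the evaluation at the image of $Q$ in the $p$-component of the Selmer complex is, by Lemma~\ref{propeties D complex} combined with Proposition~\ref{functoriality properties of det projection map}\,(b), precisely the map $\cP_K(\cdot, Q)$ induced by the local cup-product pairing. Assembling all the local factors and this global piece then produces the formula stated in the lemma. The main obstacle I anticipate is the sign and involution bookkeeping in step two: one must verify that the cumulative signs from the shift $[-2]$ in local Tate duality, the factors $(-1)^{\bm{1}_\ell(p)}$ in the definition of $x_\ell$, and the interchange of $\#$-involutions between the $\Z_p(1)$- and $\mathrm{T}_p E$-sides really combine to give exactly $\nu_{mp^n}^{(\ell),\#}$ (rather than $\nu_{mp^n}^{(\ell)}$) with no extraneous signs.
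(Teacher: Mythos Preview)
Your approach is essentially the same as the paper's for the cases you handle: the paper's proof for $p \notin \Pi$ is a one-line citation of Proposition~\ref{functoriality properties of det projection map}\,(a) (implicitly also using part~(b) together with Lemma~\ref{propeties D complex} for the $p$-adic piece, exactly as you indicate), and your case-by-case unwinding is the natural expansion of that citation.

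However, you omit one case entirely: $p \in \Pi$. Recall that all primes in $\Pi$ are assumed ramified in $K$, so this case occurs precisely when $p$ is split-multiplicative and ramifies in $K$. In this situation the local condition at $p$ in the Nekov\'a\v{r}--Selmer structure $\cF$ is $\mathrm{R}\Gamma(\Q_p, \Z_p(1)_{K/\Q})$ rather than anything built from $D^\bullet_{p,Q}$ or the map $1 \mapsto Q$, so neither of your two sub-cases for $p$ applies. Concretely, the element $x_p$ is now $t_K^{(p)}$, and $\vartheta_{K,p}^0(t_K^{(p)}) = \mathfrak{l}_{mp^n}$ lands in $\Q_p \otimes_{\Z_p} \widehat{K_p^\times}$ rather than in $\Q_p[G]$; the local cofibre $\mathrm{R}\Gamma_{/\cF}(\Q_p, T_{K/\Q}) \cong \mathrm{R}\Gamma(\Q_p, \Z_{p,K/\Q})$ has nontrivial cohomology in more than one degree, so you cannot read off a scalar factor via a direct application of Proposition~\ref{functoriality properties of det projection map}\,(a)\,(i). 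The paper handles this by first noting that $1 - e_{\bm{1}}$ acts as the identity on both sides of the claimed equality when $p \in \Pi$, and then working over $(1-e_{\bm{1}})\Q_p[G]$, where the local complex $\mathrm{R}\Gamma(\Q_p, \Z_p(1)_{K/\Q})$ becomes a free rank-one module concentrated in degree one; after that reduction Proposition~\ref{functoriality properties of det projection map}\,(a) applies as before. Without this idempotent reduction (or an equivalent device), your argument cannot close the $p \in \Pi$ case.
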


\begin{proof}
    In the case that $p \not \in \Pi_\mathrm{ram}$, this is a consequence of Proposition \ref{functoriality properties of det projection map}\,(a). In the case that $p \in \Pi_\mathrm{ram}$ we note that $1 - e_{\bm{1}}$ acts as the identity on both sides of the claimed equality, and hence that we may verify this equality over $(1 - e_{\bm{1}}) \Q_p [G]$. Since in $D (\Q_p [G])$ we have the isomorphism
\begin{align*}
(1 - e_{\bm{1}}) \Q_p [G] \otimes_{\Z_p [G]}^\mathbb{L} \mathrm{R} \Gamma (\Q_p, \Z_p (1)_{K / \Q})
& = ( (1 - e_{\bm{1}}) \Q_p [G] \otimes_{\Z_p [G]} H^1 ( \Q_p, \Z_p (1)_{K / \Q})) [-1] \\
& \cong (1 - e_{\bm{1}}) \Q_p [G] [-1], 
\end{align*}
the claim again follows from Proposition \ref{functoriality properties of det projection map}\,(a).
\end{proof}

\subsection{Bounds on the order of vanishing of Mazur--Tate elements} \label{order of vanishing proof section}

In this section we prove the following result, thereby also establishing Theorem \ref{mazur--tate main result 1}\,(a).

\begin{thm} \label{order of vanishing main result}
    Fix an abelian number field $K$ and write the conductor of $K$ as $mp^n$ with $m, n \geq 0$ integers such that $p \nmid m$.
    Suppose that the pair $(K, p)$ satisfies Hypothesis \ref{hyp}. Then one has the inclusions
    \[
    \theta^\mathrm{MT}_K \in I (G)^{r_p + \mathrm{sp} (mp^n) + 2 c^{(p)} (K)}
    \]
    and
    \[
    \theta^\mathrm{MT}_K \in \big( {\prod}_{\ell \in \mathrm{Sp} (mp^n) } I ( \cD^{(\ell)}_K) \big) \cdot \big({\prod}_{\ell \in C_0^{(p)} (m)\cup C_2^{(p)} (K)} I ( \cD^{(\ell)}_K) \big)^2.
    \]
\end{thm}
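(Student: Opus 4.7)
The plan is to apply the Nekov\'a\v{r}--Selmer machinery developed in \S6.1 with the specific data $\Sigma = S(K)$, $\Pi = \mathrm{Sp}(mp^n)$, $\Phi = C^{(p)}_0(m) \cup C^{(p)}_2(K)$, and $Q$ taken to be (the corestriction of) the local point $\mathfrak{k}_{mp^n}$ of Theorem \ref{local points main result}. Under Hypothesis \ref{hyp}, Theorem \ref{etnc result 2} provides an element $a \in \Det_{\Z_p[G]}(C^\bullet_{K,\Sigma})^{-1}$ with $\Theta_{K,\Sigma}(a) = z^{\mathrm{Kato}}_K$. Combining the factorisation in Theorem \ref{local points main result}(a) with Lemma \ref{description of F circ f lem} and the inductive argument of Lemma \ref{inductive argument} (to absorb the denominators coming from Euler factors at primes $\ell \notin C^{(p)}_\times(K)$), we will then express $\theta^{\mathrm{MT}}_K$, modulo a unit in $\Z_p[G]^\times$, as the image under $F_{K,\Sigma,\Pi,\Phi,Q}$ of a basis of $\Det_{\Z_p[G]}(C^\bullet_{K,\cF})^{-1}$. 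The proof will thereby reduce to bounding the image of $F_{K,\Sigma,\Pi,\Phi,Q}$ inside the relevant product of augmentation ideals.

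For the first containment $\theta^{\mathrm{MT}}_K \in I(G)^{r_p + \mathrm{sp}(mp^n) + 2c^{(p)}(K)}$, we will invoke the rank estimate $\mathrm{rk}_{\Z_p}(H^1_\cF(K, \mathrm{T}_p E)^G) \ge \mathrm{sp}(mp^n) + r_p$ of Lemma \ref{split multiplicative selmer complex properties lemma}(a), together with the Euler characteristic computation of Lemma \ref{split multiplicative selmer complex properties lemma 2}(a). The general algebraic principle -- that the determinant projection map of a perfect complex whose $H^1$ has a free $\Z_p[G]$-quotient of $G$-invariant rank $d$ lands inside $I(G)^d$ -- will then yield $\im(F_{K,\Sigma,\Pi,\Phi,Q}) \subseteq I(G)^{r_p + \mathrm{sp}(mp^n)}$. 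The additional factor $I(G)^{2c^{(p)}(K)}$ will be obtained from Theorem \ref{local points main result}(d), which gives $\nu^{(\ell)}_{mp^n} \in I(\cD^{(\ell)}_{mp^n})^2$ for every $\ell \in \Phi$; these extra augmentation powers are built into the definition of $f_{K,\Sigma,\Pi,\Phi,Q}$ via the factors $\Eul_\ell(\widetilde\sigma_\ell^{-1})^{-1}\nu^{(\ell)}_{mp^n}$ appearing in Lemma \ref{description of F circ f lem} and so will survive the descent to $K$.

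For the sharper second containment -- in which each local factor is identified precisely as the augmentation ideal of the decomposition group $\cD^{(\ell)}_K$ -- the essential tool will be the cohomological interpretation of $\nu^{(\ell)}_{mp^n}\Eul_\ell(\widetilde\sigma_\ell)^{-1}$ supplied by Theorem \ref{multiplicative group thm}. At each split-multiplicative prime $\ell \in \Pi$, this quantity is realised as the image under $\vartheta^0_{K,\ell}$ of the canonical determinantal element $t^{(\ell)}_K \in \Det_{\Z_p[G]}(A^\bullet_{K,\ell})^{-1}$, and this is precisely the content encoded into the basis elements $x_\ell$ used to construct $f_{K,\Sigma,\Pi,\Phi,Q}$ in \S\ref{split selmr structure det section}. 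A Bockstein calculation paralleling the local descent of \S\ref{local descent calculations section} will then pin each local contribution down to precisely $I(\cD^{(\ell)}_K)$, rather than merely the ambient $I(G)$.

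The hard part will be coordinating the three separate inputs -- the global Selmer-rank contribution $r_p$ coming through the eTNC, the local split-multiplicative contributions recovered cohomologically through Theorem \ref{multiplicative group thm}, and the refined $I^2$-contributions at the $\Phi$-primes via Theorem \ref{local points main result}(d) -- inside a single equivariant descent argument. Carrying this out cleanly will require systematic use of the Bockstein formalism of \S\ref{bockstein section}, combined with the perfectness and Euler characteristic properties of $C^\bullet_{K,\cF}$ established in Lemma \ref{split multiplicative selmer complex properties lemma 2}(a), so that the global rank datum and the various local contributions can all be extracted simultaneously from the single map $F_{K,\Sigma,\Pi,\Phi,Q}$.
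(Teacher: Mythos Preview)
Your outline has the right high-level architecture but contains a concrete error in the choice of $\Phi$ that breaks the argument, together with a misattribution of where the extra factor $2c^{(p)}(K)$ comes from.

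First, by the definition in \S\ref{definition split selmer structure section}, the set $\Phi$ must be disjoint from $S_{mp}$. But every prime in $C^{(p)}_2(K)$ divides $m$ (it is part of the definition) and likewise every prime in $C^{(p)}_0(m)$ satisfies $\ell^2\mid m$. So your choice $\Phi=C^{(p)}_0(m)\cup C^{(p)}_2(K)$ is not permitted. The paper instead takes $\Phi=S_N\setminus S_{mp}$, whose only role is to absorb the Euler factors at bad primes unramified in $K$ (compare the definition of $y^{\mathrm{Kato}}_K$). Even if one could use your $\Phi$, it would not help: in Lemma~\ref{description of F circ f lem} the $\nu^{(\ell)}_{mp^n}$-factors appear only for $\ell\in\Pi$, while $\Phi$-primes contribute merely inverse Euler factors. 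So the $I^2$-contribution you are after does not come from the Selmer structure $\cF$ at all.

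In the paper's proof, the $2c^{(p)}(K)$ arises instead from Lemma~\ref{inductive argument}: the primes of $C^{(p)}_0(m)\cup C^{(p)}_2(K)$ are contained in $\mathscr{Y}(K)$ (they lie in $C^{(p)}_\times(K)$, so their Euler factors are units), and Lemma~\ref{inductive argument} produces the prefactor $\big(\prod_{\ell\in\mathscr{Y}(K)}\nu^{(\ell)}_{mp^n}\big)^\#$ multiplying a sum of terms already shown to lie in $I(G)^{r_p+|\Pi|}\cap\prod_{\ell\in\Pi}I(\cD^{(\ell)}_K)$. Theorem~\ref{local points main result}(d) then gives $\nu^{(\ell)}_{mp^n}\in I(\cD^{(\ell)}_{mp^n})^2$ for each $\ell\in C^{(p)}_0(m)\cup C^{(p)}_2(K)$, and this is what yields both the extra power $2c^{(p)}(K)$ in the first inclusion and the squared decomposition-group ideals in the second.

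Finally, for the second containment you propose a Bockstein argument via Theorem~\ref{multiplicative group thm}. This is unnecessary here (that machinery is used for Theorem~\ref{mazur--tate main result 2}, not for the present result). The paper obtains $\prod_{\ell\in\Pi}I(\cD^{(\ell)}_K)$ directly from Lemma~\ref{split multiplicative selmer complex properties lemma}(b): the surjection $H^2(C^\bullet_{K,\cF})\twoheadrightarrow Y_{K,\Pi}$ gives $\Fitt^0_{\Z_p[G]}(H^2(C^\bullet_{K,\cF}))\subseteq\Fitt^0_{\Z_p[G]}(Y_{K,\Pi})=\prod_{\ell\in\Pi}I(\cD^{(\ell)}_K)$, and Proposition~\ref{integrality properties of det projection map}(b) places the image of $F_{K,\Sigma,\Pi,\Phi,Q}$ inside this Fitting ideal.
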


\begin{proof}
Our approach in this proof is to apply Lemma \ref{inductive argument}, so we begin by verifying the conditions required for this.\\
We will use the Nekov\'a\v{r}--Selmer structure $\cF \coloneqq \cF^\mathrm{sp}_{\Sigma, \Pi, \Phi, Q}$ for the following data:
Let $\Pi \coloneqq \mathrm{Sp} (mp^n)$ be the set of split-multiplicative primes which ramify in $K$, and $\Phi \coloneqq S_N \setminus S_{mp}$ the set of non-$p$-adic bad reduction primes that are unramified in $K$. 
If $p \in \Pi$, then we take $Q \coloneqq (1 - e_\tau) \mathrm{Tr}_{F_{mp^n} / K}(\mathfrak{k}_{mp^n})$. If $p \not \in \Pi$, then we let $Q$ be an arbitrary element of $\widehat{E} (K_p)$.\\
By Theorem \ref{etnc result 2}, there exists an element $\fz_{K}$ of $\Det_{\Z_p [G]} ( C^\bullet_{K, S(K)})^{-1}$ with the property that $\Theta_{K, S(K)} (\fz_{K}) = z^\mathrm{Kato}_K$. Lemma \ref{description of F circ f lem} combines with the definition of $y_K^\mathrm{Kato}$ to imply that one has
\begin{align} \nonumber 
(F_{K, \Sigma, \Pi, \Phi, Q} \circ f_{K, \Sigma, \Pi, \Phi, Q}) (\fz_{K}) = \,  
\big({\prod}_{\ell \in \Pi \setminus \{ p \}} \nu_{mp^n}^{(\ell)} \Eul_\ell (\widetilde \sigma_\ell)^{-1} \big)^\# & \cdot (p\Eul_p (\sigma_p^{-1}))^{- \bm{1}_{p^n} (p)} \\
&  \cdot 
    \mathcal{P}_K (y^\mathrm{Kato}, Q).
    \label{description of F circ f}
\end{align}
Now, from Lemma \ref{split multiplicative selmer complex properties lemma}\,(c) we know that $H^1 (C^\bullet_{K, \cF})$ is $\Z_p$-torsion free if $H^1 (\cO_{K, S(K)}, \mathrm{T}_p E)$ is. The computation of the Euler characteristic of $C^\bullet_{K, \cF}$ in Lemma \ref{split multiplicative selmer complex properties lemma 2}\,(a) combines with the exact sequence (\ref{exact sequence for H2}) to show that $H^2 (C^\bullet_{K, \cF}) \otimes_{\Z_p [G]} \Z_p$ has the same rank as $H^1 (C^\bullet_{K, \cF})^G$. From Lemma \ref{split multiplicative selmer complex properties lemma}\,(a) we therefore deduce that 
    \begin{equation} \label{Fitting ideal augmentation ideal 1}
    \Fitt^0_{\Z_p [G]} ( H^2 (C^\bullet_{K, \cF})) \subseteq \Fitt^0_{\Z_p [G]} ( H^2 (C^\bullet_{K, \cF}) \otimes_{\Z_p [G]}\Z_p) \subseteq I (G)^{|\Pi| + r_p}
    \end{equation}
    with $r_p \coloneqq \mathrm{rk}_{\Z_p} (\Sel_{p, E / \Q}^\vee )$. 
    In addition, the surjection in Lemma \ref{split multiplicative selmer complex properties lemma}\,(b) implies that
    \begin{equation} \label{Fitting ideal augmentation ideal 2}
    \Fitt^0_{\Z_p [G]} ( H^2 (C^\bullet_{K, \cF})) \subseteq \Fitt^0_{\Z_p [G]} ( Y_{K, \Pi}) = {\prod}_{\ell \in \Pi} I (\cD^{(\ell)}_K).
      \end{equation}
We next combine these observations with Proposition \ref{integrality properties of det projection map}.
By Lemma \ref{split multiplicative selmer complex properties lemma 2} we may apply \cite[Prop.\@ A.11\,(i)]{sbA} to the complex $C^\bullet_{K, \cF}$ in order to deduce that it admits a standard representative in the sense of Definition \ref{standard representative def}. Given this, it follows from Proposition~\ref{integrality properties of det projection map}\,(b) and the inclusions (\ref{Fitting ideal augmentation ideal 1}) and (\ref{Fitting ideal augmentation ideal 2}) that  
\[
(F_{K, \Sigma, \Pi, \Phi, Q} \circ f_{K, \Sigma, \Pi, \Phi, Q}) (\fz_{K}) \in \Fitt^0_{\Z_p [G]} ( H^2 ( C^\bullet_{K, \cF})) \subseteq I (G)^{r_p + |\Pi|}
\cap {\prod}_{\ell \in \Pi} I (\cD^{(\ell)}_K).
\]
Since this containment is true for every field $K$ satisfying Hypothesis \ref{hyp}, it combines with (\ref{description of F circ f}) to imply that we have verified the conditions of Lemma \ref{inductive argument} (in the version of Remark \ref{inductive argument variant} if $p \in \Pi$). 
The inclusions claimed in Theorem~\ref{order of vanishing main result} therefore now follow from Lemma~\ref{inductive argument} upon noting that Theorem~\ref{local points main result}\,(d) shows 
$\nu_{mp^n}^{(\ell)} \in I ( \cD^{(\ell)}_{mp^n})^2$ for every prime number $\ell$ that belongs to the subset $ C_0^{(p)} (m)\cup C_2^{(p)} (K)$ of the set $\mathscr{Y} (K)$ appearing in Lemma \ref{inductive argument}.
\end{proof}

\subsection{Congruences for Mazur--Tate elements} \label{congruences proof section}

In this section we will prove Theorem \ref{mazur--tate main result 2} as an application of the formalism of Bockstein morphisms from \S\,\ref{bockstein section}. We begin by introducing some general notation that will be in place throughout the section. We will assume the conditions of Theorem \ref{mazur--tate main result 2} to be valid in this section.\\
 To do this, we fix a finite abelian extension $L$ of $\Q$ and let $K$ be a subfield of $L$. 
 The relevant Galois groups will be denoted as $\cG \coloneqq \gal{L}{\Q}$, $G \coloneqq \gal{K}{\Q}$, and $H \coloneqq \gal{L}{K}$.
 We denote the conductors of $K$ and $L$ as $mp^n$ and $m' p^{n'}$, respectively, where $m, m' \in \N$ are coprime with $p$ and $n,n ' \geq 0$ are integers.
We write $\Pi \subseteq \mathrm{Sp} (m'p^{n'})$ for a set of split-multiplicative primes of $E$ that ramify in $L$, and $\Pi' \subseteq \Pi$ for the subset of primes that split completely in $K$. We also fix an ordering $\Pi' = \{ \ell_1, \dots, \ell_{s} \}$, where we adopt the convention that $\ell_1 = p$ if $p \in \Pi'$.
We define $M' \coloneqq m' p^{n'} \prod_{\ell \in \Pi'} \ell^{- \ord_\ell (m'p^{n'})}$ and take 
\[
Q \coloneqq  (p \Eul_p (\sigma_p))^{\bm{1}_{M'} (p)} \cdot  
\mathrm{Tr}_{F_{m' p^{n'}} / L}(\mathfrak{k}_{m' p^{n'}})
\]
with $\mathfrak{k}_{m' p^{n'}}$ the element constructed in Theorem \ref{local points main result}.
Note that $Q$ belongs to $\widehat{E} (L_p)$ under the assumptions of Theorem \ref{mazur--tate main result 2}. 
This uses Theorem~\ref{local points main result}~(b) and that 
$p \Eul_p (\sigma_p)$ belongs to $\Z_p [\cG]^\times$ if $p$ ramifies in $L$ by Lemma \ref{invertible Euler factors lemma} and the condition on $M'$ assumed in Theorem~\ref{mazur--tate main result 2}.
Given an additional subset $\Phi$ of $S_{N} \setminus S_{mp}$, we will use the 
Nekov\'a\v{r}--Selmer structure $\cF^\mathrm{sp}_{S(L), \Pi, \Phi, Q}$ defined in \S\,\ref{definition split selmer structure section}
as well as the associated complexes
\[
\overline{\SC}^\bullet_{\Pi, \Phi} \coloneqq C^\bullet_{L, \cF^\mathrm{sp}_{S(L), \Pi, \Phi, Q}} 
\quad \text{ and } \quad 
\overline{\SC}^\bullet_{\Pi, \Phi} \coloneqq \SC^\bullet_{\Pi, \Phi} \otimes_{\Z_p [\cG]}^\mathbb{L} \Z_p [G].
\]
The complexes $\overline{\SC}^\bullet_{\Pi, \Phi}$ inherit from $\SC^\bullet_{\Pi, \Phi}$ the properties of being perfect, acyclic outside degrees one and two, and that $H^1 (\overline{\SC}^\bullet_{\Pi, \Phi})$ is $\Z_p$-torsion free if $E (K) [p] = 0$.

\subsubsection{Definition of Bockstein morphisms}

By Lemmas \ref{split multiplicative selmer complex properties lemma}\,(b) and \ref{split multiplicative selmer complex properties lemma 2}\,(b) the module $Y_{L, \Pi'}$ is a quotient of $H^2 (\SC^\bullet_{\Pi, \Phi})$.
Write $\mathfrak{X} (\Pi')$ for the ordered set of $\Z_p [\cG]$-generators of the module $Y_{L, \Pi'}$
that is induced by our choices of places $\{w_1, \dots, w_s \}$ of $L$ above $\{ \ell_1, \dots, \ell_s\}$.
As a special case of Definition~\ref{bockstein map def}, we then have Bockstein maps
\begin{align*}
\beta_i \: H^1 (\overline{\mathrm{SC}}^\bullet_{\Pi, \Phi}) \xrightarrow{\beta_{\mathrm{SC}^\bullet_{\Pi, \Phi},  I_H}} H^2 ( \mathrm{SC}^\bullet_{\Pi, \Phi}) \otimes_{\Z_p [\cG]} I_H / I_H^2 & \longrightarrow Y_{L, \Pi'} \otimes_{\Z_p [\cG]} I_H \stackrel{w_i^\ast}{\longrightarrow} 
I_H / I_i I_H
\end{align*}
with $I_i \coloneqq I (\cD^{(\ell_i)}_L)$ the augmentation ideal associated with the decomposition group $\cD^{(\ell_i)}_L$ of $\ell_i$.
On the other hand, applying Definition~\ref{bockstein map def} to the complexes $A^\bullet_{K, \ell_i}$ also provides us with `local' Bockstein maps
\[
\beta_i^\mathrm{loc} \: H^1 (A^\bullet_{K, \ell_i}) 
\xrightarrow{\beta_{A^\bullet_{K, \ell_i}, I_H}} H^2 (A^\bullet_{L, \ell_i}) \otimes_{\Z_p [\cG]} I_H / I_H^2 \xrightarrow{w_i^\ast}  I_H / I_i I_H
\]
that have already appeared in \S\,\ref{local descent calculations section}. To state the relation between $\beta_i$ and $\beta_i^\mathrm{loc}$, we write $\mathrm{loc}_i \: \overline{\mathrm{SC}}^\bullet_{\Pi, \Phi} \to A^\bullet_{K, \ell_i}$ for the natural `localisation morphism'. Since $H^2 (\mathrm{loc}_i)$ agrees with the composite map $H^2 (\overline{\mathrm{SC}}^\bullet_{\Pi, \Phi}) \to Y_{K, \Pi'} \to Y_{K, \{ \ell_i \}} = H^2 (A^\bullet_{L, \ell_i})$, the naturality of the definition of Bockstein maps implies that we have a commutative diagram
\begin{equation} \label{local global Bockstein} \begin{tikzcd}
    H^1 (\overline{\mathrm{SC}}^\bullet_{\Pi, \Phi})  \arrow{d}[left]{H^1 (\mathrm{loc}_i)} \arrow{r}{\beta_i} & I_H / I_i I_H \arrow[equals]{d} \\
    H^1 (A^\bullet_{K, \ell_i})  \arrow{r}{\beta_i^\mathrm{loc}} & I_H / I_i I_H.
\end{tikzcd} \end{equation}
To proceed, it is convenient to set $v_i \coloneqq (w_i)_{\mid_K}$ and to recall that we have isomorphisms
\[
I_H / I_i I_H \cong \Z_p [G] \otimes_{\Z_p} (I (H) / I_i I (H))
\quad \text{ and } \quad 
H^1 (A^\bullet_{K, \ell_i}) \cong \Z_p [G] \otimes_{\Z_p} H^1 (K_{v_i}, \Z_p (1)),
\]
for the first of which we refer to \cite[(3)]{Sano14} for details.
Since $H^1 ( \overline{\mathrm{SC}}^\bullet_{\Pi, \Phi})$ is $\Z_p$-free, we therefore obtain isomorphisms (cf.\@ \cite[Prop.~A.6]{MazurRubin16} or \cite[Lem.\@ 2.5]{Sano14} for more details)
\begin{align*}
\Hom_{\Z_p [G]} ( H^1 ( \overline{\mathrm{SC}}^\bullet_{\Pi, \Phi}), I_H / I_i I_H) & \stackrel{\simeq}{\longrightarrow} H^1 ( \overline{\mathrm{SC}}^\bullet_{\Pi, \Phi})^\ast \otimes_{\Z_p} (I (H) / I_i I (H)) \\
\Hom_{\Z_p [G]} ( H^1 ( \overline{\mathrm{SC}}^\bullet_{\Pi, \Phi}), H^1 (A^\bullet_{K, \ell_i}))
& \stackrel{\simeq}{\longrightarrow} H^1 ( \overline{\mathrm{SC}}^\bullet_{\Pi, \Phi})^\ast \otimes_{\Z_p} H^1 (A^\bullet_{K, \ell_i}).
\end{align*}
As in Proposition \ref{bockstein proposition}\,(b), it follows that the maps $\beta_i$ and $H^1 (\mathrm{loc}_i)$ induce maps
\begin{align*}
(\exprod_{1  \leq i \leq s} \beta_i) \: \bidual^{s}_{\Z_p [G]} H^1 ( \overline{\mathrm{SC}}^\bullet_{\Pi, \Phi}) & \longrightarrow I (H)^s / \mathfrak{A} I (H) \\
(\exprod_{1  \leq i \leq s} H^1 (\mathrm{loc}_i)) \: \bidual^{s}_{\Z_p [G]} H^1 ( \overline{\mathrm{SC}}^\bullet_{\Pi, \Phi}) & \longrightarrow \bigotimes_{i = 1}^s H^1 (A^\bullet_{K, \ell_i})
\end{align*}
with $\mathfrak{A} \coloneqq \prod_{i = 1}^s I_i$,
and the diagram (\ref{local global Bockstein}) implies that we have
\begin{equation} \label{local global wedge of bockstein}
(\exprod_{1  \leq i \leq s} \beta_i) = (\otimes_{i = 1}^s \beta^\mathrm{loc}_i) \circ (\exprod_{1  \leq i \leq s} H^1 (\mathrm{loc}_i)).
\end{equation}

\subsubsection{Congruences for Bockstein morphisms}

Recall that in \S\,\ref{s: cpctly supported etale cohom} we have defined a $\Z_p [\cG]$-basis $b_L$ of $T^+_{L / \Q}$.
Let $\mathfrak{X}' (\Pi')$ denote the ordered set of generators of $T_{K / \Q}^+ \oplus Y_{K, \Pi'}$ induced by $\{ b_L \} \cup \mathfrak{X} (\Pi')$ and define
\[
\overline{F}_{\Pi, \Phi} \coloneqq \vartheta_{\overline{\SC}^\bullet_{\Pi, \Phi}, \mathfrak{X}' (\Pi')} \: \Det_{\Z_p [G]} ( \overline{\SC}^\bullet_{\Pi, \Phi})^{-1} \to 
\bidual^{s}_{\Z_p [G]} H^1 (\overline{\SC}^\bullet_{\Pi, \Phi})
\]
as the relevant instance of Definition \ref{def projection map from det}.
We also write $\mathrm{pr}$ for the `projection map' 
\[
\Det_{\Z_p [\cG]} ( \mathrm{SC}^\bullet_{\Pi, \Phi})^{-1} \to 
\Det_{\Z_p [\cG]} ( \mathrm{SC}^\bullet_{\Pi, \Phi})^{-1} \otimes
_{\Z_p [\cG]} \Z_p [G] \cong 
\Det_{\Z_p [G]} ( \overline{\SC}^\bullet_{\Pi, \Phi})^{-1}
\]
and $\fz_{L}$ for the element of $\Det_{\Z_p [\cG]} (C^\bullet_{L, S(L)})^{-1}$ with $\Theta_{L, S(L)} (\fz_{L}) = z^\mathrm{Kato}_L$ that exists by Theorem~\ref{etnc result 2}.
With this notation in place, 
Proposition \ref{bockstein proposition}\,(c) shows that
\begin{equation} \label{congruence coming from appendix case 1}
(F_{\Pi, \Phi} \circ f_{\Pi, \Phi}) (\fz_{L}) \equiv \big( (\exprod_{1 \leq i \leq s} \beta_i) \circ \overline{F}_{\Pi, \Phi} \circ \mathrm{pr} \circ f_{\Pi, \Phi} \big) (\fz_{L}) \mod  I_H \mathfrak{A}
\end{equation}
with $F_{\Pi, \Phi} \coloneqq F_{L, S(L), \Pi, \Phi, Q}$ and $f_{\Pi, \Phi} \coloneqq f_{L, S(L), \Pi, \Phi, Q}$ as in Lemma \ref{description of F circ f lem}. \\
Having computed the left-hand side of (\ref{congruence coming from appendix case 1}) in Lemma \ref{description of F circ f lem}, we will explicitly calculate its right-hand side in the next section \S\,\ref{fun computation section}. Theorem \ref{mazur--tate main result 2} is then obtained as a consequence of (\ref{congruence coming from appendix case 1}) and these calculations in \S\,\ref{proof of main result 2 section}.

\subsubsection{A computation of Bockstein morphisms} \label{fun computation section}

To state the next result, it is convenient to introduce the notation
\begin{align*}
\eta_{\Pi, \Phi} \coloneqq \big( {\prod}_{\ell \in \Pi'} \Tam_\ell^{-1} \Eul_\ell (1)^{-1} \big) \cdot 
({\prod}_{\ell \in (\Pi \setminus \Pi') \cup \Phi} \Eul_\ell (\sigma_\ell)^{-1} \nu^{(\ell)}_{mp^n} )^\#
\quad \in \Q_p [G_{mp^n}],
\end{align*}
where we have set $\nu^{(l)}_{mp^n} = 1$ if $\ell \nmid m$.
The following lemma then computes the right-hand side of (\ref{congruence coming from appendix case 1}).

\begin{lem} \label{this computes the RHS}
With notation as above, one has
    \begin{align*}
    \big( (\bigwedge_{1 \leq i \leq s} \beta_i) \circ \overline{F}_{\Pi, \Phi} \circ \mathrm{pr} \circ f_{\Pi, \Phi} \big) (\fz_{L})  = \, & \eta_{\Pi, \Phi} \cdot \pi_{F_{M'} / K} (\mathcal{P}_{F_{M'}} (z^\mathrm{Kato}_{M'},  \mathfrak{k}_{M'})) \cdot  \prod_{i = 1}^s  (\rec_{\ell_i} (q_{E, \ell_i}) - 1)
\end{align*}
as an equality in $\Z_p [\cG] /  I_H \mathfrak{A}$.
\end{lem}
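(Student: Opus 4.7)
The plan is to reduce the computation on the left-hand side to a product of purely local contributions at the primes $\ell_i \in \Pi'$, each of which is amenable to an explicit evaluation via results already established in the paper, with an `anchoring' global factor given by the pairing $\mathcal{P}_{F_{M'}}$.

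As a first step, I would apply the identity (\ref{local global wedge of bockstein}) to rewrite the left-hand side as
\[
\big( (\otimes_{i=1}^s \beta_i^\mathrm{loc}) \circ (\exprod_{1 \leq i \leq s} H^1(\mathrm{loc}_i)) \circ \overline{F}_{\Pi, \Phi} \circ \mathrm{pr} \circ f_{\Pi, \Phi} \big) (\fz_L),
\]
thereby separating the global determinantal data from the evaluation of the local Bocksteins $\beta_i^\mathrm{loc}$.

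Next, recall that the map $f_{\Pi,\Phi}$ is built from the local elements $t^{(\ell_i)}$ appearing in Theorem \ref{multiplicative group thm} (together with evaluation data at primes in $(\Pi \setminus \Pi') \cup \Phi$ and a contribution from $Q$ at $p$). Using the functoriality properties of the determinantal projection maps (Proposition \ref{functoriality properties of det projection map}), the composite $(\exprod_{1 \leq i \leq s} H^1(\mathrm{loc}_i)) \circ \overline{F}_{\Pi,\Phi} \circ \mathrm{pr} \circ f_{\Pi,\Phi}$, evaluated at $\fz_L$, splits as the product of a `global' factor and a `local' contribution at each $\ell_i \in \Pi'$. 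The local contribution at $\ell_i$ is precisely the image of $\mathrm{pr}_{L/K}(t^{(\ell_i)})$ under $\vartheta_{K,\ell_i,v_{0,i}}$, which is computed in the second halves of parts (a) and (b) of Theorem \ref{multiplicative group thm} and yields, for $\ell_i \neq p$, the element $-\ell_i^{-(\ord_{\ell_i}(m')-1)}\Eul_{\ell_i}(1)^{-1} \otimes \ell_i$, and for $\ell_i = p$ the element $\NN_{F_{m'}/K}(\mathfrak{l}_{m'}) \wedge p$. The `global' factor is, after applying Lemma \ref{description of F circ f lem} together with $\Theta_{L,S(L)}(\fz_L) = z^\mathrm{Kato}_L$, an explicit pairing of Kato classes and local points. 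Using the norm relations for $z^\mathrm{Kato}$ (Theorem \ref{kato euler system}(a)) and for $\mathfrak{k}$ (Theorem \ref{local points main result}(c)) to transfer from conductor $m'p^{n'}$ down to conductor $M'$ and to $K$, and collecting the remaining Euler and $\nu$-factors at primes in $(\Pi\setminus\Pi') \cup \Phi$ into $\eta_{\Pi,\Phi}$, this factor simplifies to $\pi_{F_{M'}/K}(\mathcal{P}_{F_{M'}}(z^\mathrm{Kato}_{M'}, \mathfrak{k}_{M'}))$ up to the indicated Tamagawa and Euler factors.

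Finally, I would apply the local Bocksteins $\beta_i^\mathrm{loc}$ to each of the local elements produced above. By Lemma \ref{bockstein maps and reciprocity maps}, $\beta_i^\mathrm{loc}$ translates into a sum involving the local reciprocity map $\rec_{\ell_i}$. Using Remark \ref{tate uniformisation connecting map rk} to identify the Tate period $q_{E,\ell_i}$ with the image of $1$ under the connecting homomorphism of the Tate uniformisation sequence, the contribution of each local element $\mathrm{pr}_{L/K}(t^{(\ell_i)})$ to $\beta_i^\mathrm{loc}$ collapses modulo $I_i I_H$ into the factor $\Tam_{\ell_i}^{-1}(\rec_{\ell_i}(q_{E,\ell_i}) - 1)$; indeed, for $\ell_i \neq p$ the element $\ell_i \in K^\times_{v_{0,i}}$ corresponds under local reciprocity on the unramified quotient to $\Tam_{\ell_i}^{-1}$ times $q_{E,\ell_i}$ modulo inertia, and for $\ell_i = p$ the analogous identification is provided by the second part of Theorem \ref{multiplicative group thm}(b) and the split-multiplicative reduction hypothesis at $p$. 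Multiplying these local factors together and incorporating the resulting $\Tam_{\ell_i}^{-1}\Eul_{\ell_i}(1)^{-1}$ contributions into $\eta_{\Pi,\Phi}$ yields the right-hand side claimed in the statement.

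The main obstacle will be Step 2, where one must carefully track the action of the involution $\#$, the signs appearing in the definition of $x_{\ell_i}$, and the precise way the Euler system norm relations interact with the localisation maps. In particular, the handling of the prime $\ell_i = p$—where the element $Q$ in the definition of $f_{\Pi,\Phi}$ couples the Tate uniformisation at $p$ with the $p$-adic Hodge-theoretic data underlying $\mathfrak{k}_{M'}$—needs to be performed with care, and is ultimately enabled by the second part of Theorem \ref{multiplicative group thm}(b) combined with the split-multiplicative assumption on $p$ forced by the condition $p \in \Pi'$.
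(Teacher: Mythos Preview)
Your overall strategy matches the paper's: factor through the localisation maps via (\ref{local global wedge of bockstein}), compute the local contributions using Theorem \ref{multiplicative group thm}, identify the global factor with the pairing $\mathcal{P}_{F_{M'}}$, and finally apply Lemma \ref{bockstein maps and reciprocity maps}. The decomposition you describe in Step 2 is essentially what the paper carries out (and isolates as a separate lemma).

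However, there is a genuine gap. The local values supplied by Theorem \ref{multiplicative group thm} --- namely $-\ell_i^{-(\ord_{\ell_i}(m')-1)}\Eul_{\ell_i}(1)^{-1}\otimes \ell_i$ and $\NN_{F_{m'}/K}(\mathfrak{l}_{m'})\wedge p$ --- lie in $\Q_p\otimes_{\Z_p}\exprod_{\Z_p[G]}^{\ast}\widehat{K_{\ell_i}^\times}$, and the identity you obtain after Step 2 is therefore an equality only in $\bigotimes_{i=1}^s(\Q_p\otimes_{\Z_p}H^1(A^\bullet_{K,\ell_i}))$. The local Bockstein maps $\beta_i^{\mathrm{loc}}$, on the other hand, are defined on the \emph{integral} module $H^1(A^\bullet_{K,\ell_i})$, so you cannot simply apply $\otimes_{i=1}^s\beta_i^{\mathrm{loc}}$ to the output of Step 2. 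Concretely, the passage from $\ell_i$ to $q_{E,\ell_i}$ that you invoke in Step 3 (``$\ell_i$ corresponds \ldots to $\Tam_{\ell_i}^{-1}$ times $q_{E,\ell_i}$'') is only valid in the torsion-free quotient of $\widehat{\Q_{\ell_i}^\times}$; the torsion part of $\widehat{K_{\ell_i}^\times}$ is not visible after tensoring with $\Q_p$, yet it contributes nontrivially to $\rec_{\ell_i}$.

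The paper closes this gap by an additional argument: it shows, using the connecting homomorphism of the Tate-uniformisation sequence (Remark \ref{tate uniformisation connecting map rk}) and a commutative diagram, that $\otimes_{i=1}^s q_{E,\ell_i}$ already lies in the image of $\exprod_{1\le i\le s}H^1(\loc_i)$ restricted to the integral bidual $\bidual^s_{\Z_p[G]}H^1(\overline{\mathrm{SC}}^\bullet_{\Pi,\Phi})$; separately, Proposition \ref{etnc explicit cor} is invoked to show that the scalar factor $\eta_{\Pi,\Phi}\cdot\pi_{F_{M'}/K}(\mathcal{P}_{F_{M'}}(z^\mathrm{Kato}_{M'},\mathfrak{k}_{M'}))$ lies in $\Z_p[G]$. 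Since the source $\bidual^s_{\Z_p[G]}H^1(\overline{\mathrm{SC}}^\bullet_{\Pi,\Phi})$ is $\Z_p$-torsion free, the $\Q_p$-equality then upgrades to an equality in the integral image, where one may legitimately apply $\otimes_{i=1}^s\beta_i^{\mathrm{loc}}$. You should insert this integrality step between your Steps 2 and 3.
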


In light of the relation (\ref{local global wedge of bockstein}), the first step towards proving this result will be the calculation of the image under the map $\exprod_{1 \leq  i \leq s} H^1 (\loc_i)$ of the element 
\[
a_{\Pi, \Phi} \coloneqq (\overline{F}_{\Pi, \Phi} \circ \mathrm{pr} \circ f_{\Pi, \Phi} \big) (\fz_{L}).
\]

\begin{lem} \label{calculation part 1}
    In $\bigotimes_{i = 1}^s ( \Q_p \otimes_{\Z_p} H^1 (A^\bullet_{K, \ell_i}))$, one has the equality
 \begin{equation} \label{restatement of calculation part 1}
(\exprod_{1 \leq  i \leq s} H^1 (\loc_i)) (a_{\Pi, \Phi}) = \eta_{\Pi, \Phi} \cdot \pi_{F_{M'} / K} (\mathcal{P}_{F_{M'}} (z^\mathrm{Kato}_{M'}, \mathfrak{k}_{M'})) \cdot \otimes_{i = 1}^s q_{E, \ell_i}.
\end{equation}
\end{lem}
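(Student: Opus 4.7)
The plan is to unwind the definition of $a_{\Pi, \Phi}$ according to the construction of $f_{\Pi, \Phi}$ in \S\ref{split selmr structure det section} and to reduce the computation to the combined input of Theorem \ref{etnc result 2} (which identifies $\Theta_{L, S(L)}(\fz_L) = z^{\mathrm{Kato}}_L$), the local formulas of Theorem \ref{multiplicative group thm} for the elements $t^{(\ell)}_L$, and Kato's norm relations (Theorem \ref{kato euler system}\,(a)) for descending from $z^{\mathrm{Kato}}_L$ to $z^{\mathrm{Kato}}_{M'}$. First, using the compatibility of $\vartheta$-maps with base change (Proposition \ref{functoriality properties of det projection map}) applied to each evaluation $\mathrm{Ev}_{x_\ell}$ in the definition of $f_{\Pi, \Phi}$, I would show that the composite $(\exprod_i H^1(\mathrm{loc}_i)) \circ \overline{F}_{\Pi, \Phi} \circ \mathrm{pr} \circ f_{\Pi, \Phi}$ factors as a tensor product of local evaluation maps $\vartheta_{K, \ell_i, v_0}$ at the primes $\ell_i \in \Pi'$ multiplied by a global pairing of the form $\mathcal{P}_K(\Theta_{K, S(L)}(\ \cdot\ ), Q)$, with the trivialisations at the remaining primes (those in $\Phi$, in $\Pi \setminus \Pi'$, and at $p$ if $p \notin \Pi'$) absorbed into Euler factor and $\nu$-factor prefactors exactly as in the derivation of Lemma \ref{description of F circ f lem}.

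Second, I would apply Theorem \ref{multiplicative group thm} to evaluate $(\vartheta_{K, \ell_i, v_0} \circ \mathrm{pr}_{L/K})(t^{(\ell_i)}_L)$ for each $\ell_i \in \Pi'$, obtaining the explicit expression $-\ell_i^{-(\ord_{\ell_i}(m') - 1)}\Eul_{\ell_i}(1)^{-1} \otimes \ell_i$ when $\ell_i \neq p$ and $\NN_{F_{m'} / K}(\mathfrak{l}_{m'}) \wedge p$ when $\ell_i = p$. The key identification is that $\ell_i$ (respectively $\mathfrak{l}_{m'}$) agrees with the Tate period $q_{E, \ell_i}$ (respectively its image under Tate uniformisation) up to the factor $\Tam_{\ell_i}$: since $E$ has split-multiplicative reduction at $\ell_i$ one has $\ord_{\ell_i}(q_{E, \ell_i}) = \Tam_{\ell_i}$, and for $\ell_i \neq p$ the unit part of $q_{E, \ell_i}$ vanishes upon passing to $\Q_p \otimes_{\Z_p} \widehat{\Q_{\ell_i}^\times}$, yielding $q_{E, \ell_i} = \Tam_{\ell_i} \cdot \ell_i$ in this rational module. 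Together with Remark \ref{tate uniformisation connecting map rk} (for $\ell_i = p$), this produces the factor $\prod_{\ell \in \Pi'} \Tam_\ell^{-1} \Eul_\ell(1)^{-1}$ of $\eta_{\Pi, \Phi}$ as well as the tensor product $\bigotimes_{i = 1}^s q_{E, \ell_i}$ on the right-hand side of (\ref{restatement of calculation part 1}).

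Finally, the remaining global factor $\mathcal{P}_K(\Theta_{K, S(L)}(\widetilde{\fz}_L), Q)$, with $\widetilde{\fz}_L$ what is left of $\fz_L$ after extracting the local trivialisations at $\Pi'$, is shown to equal $\pi_{F_{M'} / K}(\mathcal{P}_{F_{M'}}(z^{\mathrm{Kato}}_{M'}, \mathfrak{k}_{M'}))$ by combining Kato's norm relations (which yield factors $\Eul_\ell(\Frob_\ell^{-1})$ restricting to $\Eul_\ell(1)$ in $\Z_p[G]$, since each $\ell \in \Pi'$ splits completely in $K$) with the equivariance of $\mathcal{P}$ from Lemma \ref{pairing properties}\,(c) and the defining equality $Q = (p \Eul_p(\sigma_p))^{\bm{1}_{M'}(p)} \mathrm{Tr}_{F_{m'p^{n'}} / L}(\mathfrak{k}_{m'p^{n'}})$. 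The main obstacle is the careful bookkeeping of all Euler factors, together with the factors $\ell^{-(\ord_\ell(m')-1)}$ from Theorem \ref{multiplicative group thm} (which match the top-degree coefficient of $\nu^{(\ell)}_{m' p^{n'}}$ modulo $I(\cD^{(\ell)}_L)^2$ as computed in Proposition \ref{l: lambda mod I^2}) and the identification of $\mathfrak{l}_{m'}$ with the Tate period $q_{E, p}$; once these cancellations are verified the equality (\ref{restatement of calculation part 1}) follows.
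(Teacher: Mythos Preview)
Your overall architecture matches the paper's: strip off the local trivialisations at $\ell_i \in \Pi'$ using Proposition \ref{functoriality properties of det projection map} and Theorem \ref{multiplicative group thm}, then identify the residual global factor with the pairing at level $M'$. Two of the ingredients are also correctly identified: the formula $(\vartheta_{K, \ell_i, v_0} \circ \mathrm{pr})(t^{(\ell_i)}_L) = -\ell_i^{-(\ord_{\ell_i}(m')-1)}\Eul_{\ell_i}(1)^{-1} \otimes \ell_i$ for $\ell_i \neq p$, and the observation that $q_{E,\ell_i} = \Tam_{\ell_i} \cdot \ell_i$ in $\Q_p \otimes_{\Z_p} \widehat{\Q_{\ell_i}^\times}$.

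There are, however, two genuine gaps. First, the descent from level $m'p^{n'}$ to level $M'$ does \emph{not} proceed via Kato's norm relations for $z^{\mathrm{Kato}}$: since every $\ell \in \Pi'$ is a split-multiplicative prime, it already lies in $S_N \subseteq S(F_{M'})$, so Theorem \ref{kato euler system}\,(a) produces no Euler factors at all when corestricting $z^{\mathrm{Kato}}_{m'p^{n'}}$ down to $z^{\mathrm{Kato}}_{M'}$. The factors $(-1)\ell^{\ord_\ell(m')-1}$ that cancel the $-\ell^{-(\ord_\ell(m')-1)}$ from Theorem \ref{multiplicative group thm} come instead from the trace relations for the \emph{local} points $\mathfrak{k}$, namely Theorem \ref{local points main result}\,(c)\,(ii) applied iteratively (the last step gives $-\sigma_\ell^{-1}$, which becomes $-1$ in $G_K$; the preceding $\ord_\ell(m')-1$ steps each give a factor $\ell$). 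Your invocation of Proposition \ref{l: lambda mod I^2} here is a red herring: that result concerns congruences modulo $I(\cD^{(\ell)}_L)^2$ and plays no role in the present equality, which takes place in $\Q_p[G]$.

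Second, the case $p \in \Pi'$ requires substantially more than Remark \ref{tate uniformisation connecting map rk}. At $p$ the complex $A^\bullet_{K,p}$ has different Euler characteristic, so $\vartheta_{K,p,v_0}$ lands in $\exprod^2_{\Z_p[G]} \widehat{K_p^\times}$ rather than in $\widehat{K_p^\times}$, and the element $\NN_{F_{m'}/K}(\mathfrak{l}_{m'}) \wedge p$ is not a scalar multiple of $q_{E,p}$ (indeed $\mathfrak{l}_{m'}$ is a local unit, not the Tate period). The paper resolves this via an explicit mapping-cone computation analogous to Proposition \ref{functoriality properties of det projection map}\,(b): one represents $\overline{\SC}^\bullet_{\{p\},\emptyset}$ explicitly, evaluates $\overline{F}_{\{p\},\emptyset}$ as a cup-product pairing $(h^1_p(\cdot), \NN_{F_{m'}/K}(\mathfrak{l}_{m'}))_{\mathbb{G}_m/K}$ landing in $H^0(\Q_p, \Z_{p,K/\Q})$, and then passes to $H^1(A^\bullet_{K,p})$ via the connecting map from (\ref{tate uniformisation exact sequence}), which is where $q_{E,p}$ finally appears. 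Your sketch does not account for this step.
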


\begin{proof}
Write $\overline{f}_{\Pi, \Phi}$ for the map obtained from $f_{\Pi, \Phi}$ via base-changing to $\Z_p [G]$.
We also let $\Pi^\ast \coloneqq \Pi \cap \{ p \}$ and define $U \subseteq \{1, \dots, s \}$ be the subset with $\{ \ell_i \mid i \in U \} = \Pi' \setminus \{ p \}$ (so $U = \{1, \dots, s\}$ if $p \not \in \Pi'$ and $U = \{2, \dots, s \}$ otherwise).
\\
We may then compute, using 
    Proposition \ref{functoriality properties of det projection map} and Theorem \ref{multiplicative group thm}, that
\begin{align*} 
(\exprod_{i \in U} H^1 (\loc_i)) (a_{\Pi, \Phi})
& = (\exprod_{i \in U} H^1 (\loc_i)) \big( (\overline{F}_{\Pi, \Phi}  \circ \overline{f}_{\Pi, \Phi} \circ \mathrm{pr} \big) (\fz_{L}) \big) \\ \nonumber
 & = \big (\bigotimes_{i \in U} \vartheta_{K, \ell, v_i} (t^{(\ell_i)}_K) \big) \cdot  \big (\prod_{\ell \in (\Pi \setminus (\Pi' \cup \{ p \}) \cup \Phi} \vartheta_{K, \ell}^0 (x_\ell) \big)^\# \cdot 
(\overline{F}_{\Pi^\ast, \emptyset} \circ \overline{f}_{\Pi^\ast, \emptyset}) (\mathrm{pr} (\fz_L)) \\ 
& = \big (\bigotimes_{i \in U}  (- \ell_i^{- (\ord_{\ell_i} (m') - 1)} \Eul_{\ell_i} (1)^{-1} \otimes \ell) \big)\\
& \qquad \cdot 
(\prod_{\ell \in (\Pi \setminus (\Pi' \cup \{ p \}) \cup \Phi} \Eul_\ell (\sigma_\ell)^{-1} \nu^{(\ell)}_m )^\#
 \cdot (\overline{F}_{\Pi^\ast, \emptyset} \circ \overline{f}_{\Pi^\ast, \emptyset}) (\mathrm{pr} (\fz_L)).
\end{align*}
If $p \not \in \Pi'$, then we can use Proposition \ref{functoriality properties of det projection map}\,(d) and
Lemma \ref{description of F circ f lem} to compute that 
\begin{align*}
    (\overline{F}_{\Pi^\ast, \emptyset} \circ \overline{f}_{\Pi^\ast, \emptyset}) (\mathrm{pr} (\fz_L)) & = 
    (\pi_{L / K} \circ F_{\Pi^\ast, \emptyset} \circ f_{\Pi^\ast, \emptyset}) ( \fz_{L}) \\
    & = (p\Eul_p (\sigma_p^{-1}))^{-\bm{1}_{M'} (p)} \cdot \pi_{L / K} (\cP_L (z_L^\mathrm{Kato}, Q )) \\
    & = \pi_{F_{m'p^{n'}} / K} (\cP_L (z_L^\mathrm{Kato}, \mathfrak{k}_{m'p^{n'}})\\
    & = ( {\prod}_{\ell \in \Pi' \setminus \{ p \}} (- 1) \cdot \ell^{\ord_\ell (m')  - 1} \big) \cdot 
\pi_{F_{M'} / K} (\mathcal{P}_{F_{M'}} (z^\mathrm{Kato}_{M'}, \mathfrak{k}_{M'})),
\end{align*}
where the last equality is a consequence of Theorem \ref{local points main result}\,(c),(ii). Since $\ord_\ell (q_{E, \ell}) = \Tam_\ell$ for all $\ell \in \Pi'$, this proves the claimed equality in the case $p \not \in \Pi'$.\\
It remains to consider the case that $p$ belongs to $\Pi'$. In this case, one has
\[
( {\prod}_{\ell \in \Pi' \setminus \{ p \}} (- 1) \cdot \ell^{\ord_\ell (m')  - 1} \big) \cdot 
\pi_{F_{M'} / K} (\mathcal{P}_{F_{M'}} (z^\mathrm{Kato}_{M'}, \mathfrak{k}_{M'}))
= \pi_{F_{m'} / K} (\cP_{F_{m'}} ( h^1_p ( z^\mathrm{Kato}_{m'}), \mathfrak{k}_{F_{m'}})) 
\]
and so, writing $h^1_p \coloneqq H^1 ( h_p)$ for the map induced on cohomology by the map $h_p$ in (\ref{tate uniformisation triangle}), it suffices to prove that
\begin{equation} \label{claim at p}
(\loc_p \circ  \overline{F}_{\{ p \}, \emptyset} \circ \overline{f}_{\{ p \}, \emptyset}) (\mathrm{pr} (\fz_L)) = \pi_{F_{m'} / K} (\cP_{F_{m'}} ( h^1_p ( z^\mathrm{Kato}_{m'}), \mathfrak{k}_{F_{m'}})) \cdot q_{E, p}.
\end{equation}
The equation (\ref{claim at p}) can be proved via an argument very similar to Proposition \ref{functoriality properties of det projection map}\,(b) which we now briefly sketch. \\
    Since $p$ is assumed to split completely, $\mathrm{R}\Gamma (\Q_p, \Z_p (1)_{K / \Q})$ and $\mathrm{R}\Gamma (\Q_p, \Z_{p, K / \Q})$ are represented by $[\widehat{K_p^\times} \xrightarrow{0} \Z_p [G]]$ (first term in degree $1$) and $[\Z_p [G] \xrightarrow{0}  (\widehat{K_p^\times})^\ast]$ (first term in degree $0$), respectively. In particular, it follows from $(\vartheta_{K, p, v_0} \circ \mathrm{pr}_{L / K})(t^{(p)}_L) = \NN_{F_{m'p^n} / K}(\mathfrak{l}_{F_{m'}}) \wedge p$, as is proved in Theorem \ref{multiplicative group thm}\,(b), that $t^{(p)}_K$ can be described as the element $(\NN_{F_{m'} / K}(\mathfrak{l}_{F_{m'}}) \wedge p) \otimes \id_{\Z_p [G]}$ of $(\exprod^2_{\Z_p [G]} \widehat{K^\times_p}) \otimes_{\Z_p [G]} \Z_p [G]^\ast = \Det_{\Z_p [G]} ( \mathrm{R}\Gamma (\Q_p, \Z_p (1)_{K / \Q}))^{-1}$.\\
    We now write $[P \xrightarrow{\partial} P]$ for the representative of $C^\bullet_{K, S(L)}$ constructed in Lemma \ref{complex lemma}\,(b). Here $P$ is a finitely generated free $\Z_p [G]$-module of rank $n$, say, and we choose a $\Z_p [G]$-basis $x_1, \dots, x_n$ such that the composite map $P \to H^2 (C^\bullet_{K, S(L)}) \xrightarrow{\pi} (T^+_{K / \Q})$ sends $x_1$ to the element $b^\ast$ defined in \S\,\ref{s: cpctly supported etale cohom}. 
    It follows from the triangle (\ref{triangle for sp Selmer complex}) and a standard mapping cone construction that 
    the complex $\overline{\SC}^\bullet_{\{ p \}, \emptyset}$ admits a representative of the form $[P \oplus \Z_p [G] \xrightarrow{(\partial \oplus \varphi, 0)} P \oplus (\widehat{K_p^\times})^\ast]$ with the map
    \[
    \varphi \: P \to (\widehat{K_p^\times})^\ast, \quad y \mapsto \{ z \mapsto {\sum}_{\sigma \in G} ( h_p (\sigma z), z)_{\mathbb{G}_m / K} \sigma^{-1} \},
    \]
    where $(\cdot, \cdot)_{\mathbb{G}_m / K}$ denotes the cup product pairing $H^1 (\Q_p, \Z_{p, K / \Q}) \times H^1 (\Q_p, \Z_p (1)_{K / \Q}) \to \Z_p$ (analogous to (\ref{semilocal Tate pairing})).
    Given $a = a_1 \otimes (\bigwedge_{1 \leq i \leq n} x_i^\ast)$ in $ \Det_{\Z_p [G]} (C^\bullet_{K, S(L)})^{-1} = (\exprod^n_{\Z_p [G]} P) \otimes_{\Z_p [G]} (\exprod^n_{\Z_p [G]} P^\ast )$, one then has
    \[
    \mathrm{Ev}_{t_K^{(p)}} (a) = (a_1 \wedge 1) \otimes \big (({\bigwedge}_{1 \leq i \leq n} x_i^\ast) \wedge (\NN_{F_{m'} / K}(\mathfrak{l}_{F_{m'}}) \wedge p) \big)
    \]
when regarded as an element of 
\[
\Det_{\Z_p [G]} (\overline{\SC^\bullet_{\{ p \}, \emptyset}})^{-1} = \big( \exprod^{n + 1}_{\Z_p [G]} (P \oplus \Z_p [G] \big) \otimes_{\Z_p [G]} \big( \exprod^{n + 2}_{\Z_p [G]} (P^\ast \oplus \widehat{K_p^\times}) \big).
\]
Now, the module $Y_{K, \{ p \}}$ is identified with the $\Z_p [G]$-submodule of $(\widehat{K_p^\times})^\ast$ generated by the map $\Ord_p$ (which is the $\Z_p [G]$-dual basis element of $p$) so that we have $\mathfrak{X}' ( \{ p \}) = (x_1, p)$.
The explicit description of $\vartheta_{\overline{\SC}^\bullet_{\{ p \}, \emptyset}, \mathfrak{X}' (\{ p \})}$ given in Lemma \ref{explicit description projection map lem} therefore allows us to calculate that
\begin{align*}
(\overline{F}_{\{ p \}, \emptyset} \circ \overline{f}_{\{ p \}, \emptyset}) (\mathrm{pr} (\fz_L))
& = {\sum}_{\sigma \in G} \big ( (h^1_p \circ \exprod_{2 \leq i \leq n} (x_i^\ast \circ \partial)) (\sigma a_1), \NN_{F_{m'} / K} (\mathfrak{l}_{F_{m'}}) \big )_{\mathbb{G}_m / K} \sigma^{-1} \\
& =  {\sum}_{\sigma \in G} ( (h^1_p  \circ \Theta_{K, S(L)}) (\sigma a), \NN_{F_{m'} / K} (\mathfrak{l}_{F_{m'}}) )_{\mathbb{G}_m / K} \sigma^{-1}
\end{align*}
in $\Z_p [G] = H^0 (\Q_p, \Z_{p, K / \Q})$. The connecting homomorphism $H^0 (\Q_p, \Z_{p}) \to H^1 (\Q_p, \Z_{p} (1))$ sends $1$ to $q_{E, p}$ by Remark \ref{tate uniformisation connecting map rk}, hence we conclude that
\begin{align*}
(H^1 (\loc_1) \circ  \overline{F}_{\{ p \}, \emptyset} \circ \overline{f}_{\{ p \}, \emptyset}) (\mathrm{pr} (\fz_L)) & = \pi_{F_{m'} / K} \big ( {\sum}_{\sigma \in G_{m'}}  ( h^1_p ( \sigma z^\mathrm{Kato}_{m'}), \mathfrak{l}_{F_{m'}})_{\mathbb{G}_m / F_{m'}}\sigma^{-1} \big) \cdot q_{E, p} \\
& = \pi_{F_{m'} / K} \big (\cP_{F_{m'}} (  z^\mathrm{Kato}_{m'}, \mathfrak{k}_{F_{m'}}) \big) \cdot q_{E, p}, 
\end{align*}
as claimed in (\ref{claim at p}). 
This concludes the proof of the lemma. 
\end{proof}

We can now give the proof of Lemma \ref{this computes the RHS}.
\begin{proof}[Proof (of Lemma \ref{this computes the RHS})]
As a first step, we use Lemma \ref{calculation part 1} to calculate $(\exprod_{1 \leq  i \leq s} H^1 (\loc_i)) (a_{\Pi, \Phi})$. An extra argument is necessary because Lemma \ref{calculation part 1} only gives an equality in a $\Q_p$-vector space and so does not contain information about the torsion component.\\ 
To do this, we note that we have the commutative diagram
\begin{cdiagram}[row sep=small]
 & \bigoplus_{i = 1}^s H^0 ( \Q_{\ell_i}, \Z_{p, K / \Q}) \arrow{r} \arrow{d}{\simeq} & H^1 (\overline{\mathrm{SC}}^\bullet_{\Pi, \Phi}) \arrow{r} \arrow{d}{\oplus_{i = 1}^s H^1 (\mathrm{loc}_i)} & H^1 (\cO_{K, S(L)}, \mathrm{T}_p E) \arrow{d} \\
 0 \arrow{r} & \bigoplus_{i = 1}^s \Z_p [G] \arrow{r}{\oplus_{i = 1}^s\delta_i} & \bigoplus_{i = 1}^s H^1 (A^\bullet_{K, \ell_i}) \arrow{r} & 
 \bigoplus_{i = 1}^s H^1 (\Q_{\ell_i}, T_{K / \Q}),
\end{cdiagram}%
where the bottom line is induced by the exact sequence (\ref{tate uniformisation exact sequence}). 
By Remark \ref{tate uniformisation connecting map rk}, the connecting homomorphism arising from (\ref{tate uniformisation exact sequence}), labelled $\delta_i$ in the diagram above, sends $1$ to $q_{E, \ell_i} \in \widehat{K^\times_{v_i}}$ with $v_i \coloneqq (w_{i})_{\mid_K}$ our fixed choice of place of $K$ above $\ell_i$ (that also induces the isomorphism $H^0 ( \Q_{\ell_i}, \Z_{p, K / \Q}) \cong \Z_p [G]$). This shows that $\otimes_{i = 1}^s q_{E, \ell_i}$ is contained in the image of the 
composite map 
\[
\bigotimes_{i = 1}^s H^0 ( \Q_{\ell_i}, \Z_{p, K / \Q}) \to \bidual^s_{\Z_p [G]} 
H^1 (\overline{\mathrm{SC}}^\bullet_{\Pi, \Phi}) \xrightarrow{\exprod_{1 \leq  i \leq s} H^1 (\loc_i)} \bigotimes_{i = 1}^s H^1 (A^\bullet_{K, \ell_i}).
\]
In particular, $\otimes_{i = 1}^s q_{E, \ell_i}$ is contained in the image of $\exprod_{1 \leq  i \leq s} H^1 (\loc_i)$.
Moreover, we know that  
\[
\eta_{L / K} \cdot \pi_{F_{M'} / K} (\mathcal{P}_{F_{M'}} (z^\mathrm{Kato}_{M'}, \mathfrak{k}_{M'})) = \eta_{L / K} \cdot \pi_{F_{M'} / K} (\mathcal{P}_{F_{M'}}  (\Theta_{F_{M'}, S (F_{M'})} (\fz_{F_{M'}}), \mathfrak{k}_{M'})) 
\]
belongs to $\Z_p [G]$ by (\ref{prop 4.6 second inclusion}) in Proposition \ref{etnc explicit cor} (here we have used that each $\Eul_\ell (\sigma_\ell)^{-1} \nu^{(\ell)}_m$ belongs to the ideal $\Eul_\ell (\tilde \sigma_\ell)^{-1} \NN_{\cI^{(\ell)}} \Z_p [G] + \Z_p [G]$ of $\Q_p [G]$ for every $\ell \in \Pi$ by Theorem \ref{local points main result}\,(d),
and $\Tam_\ell^{-1}$ generates $\Ann_{\Z_p [G]} ( ( E / E_0) (K_\ell))$ if $\ell \in \Pi'$).
We have therefore proved that
\begin{align} \label{the expression}
 \eta_{L / K} 
\cdot \pi_{F_{M'} / K} (\mathcal{P}_{F_{M'}} (z^\mathrm{Kato}_{M'}, \mathfrak{k}_{M'}))
\cdot 
( \otimes_{i = 1}^s  q_{E, \ell_i})
\in (\exprod_{1 \leq  i \leq s} H^1 (\loc_i)) \big( \bidual^s_{\Z_p [G]} H^1 (\overline{\mathrm{SC}}^\bullet_{\Pi, \Phi}) \big).
\end{align}
Now, the composite map
\[
(\exprod_{1 \leq  i \leq s} H^1 (\loc_i)) \big( \bidual^s_{\Z_p [G]} H^1 (\overline{\mathrm{SC}}^\bullet_{\Pi, \Phi}) \big)
\stackrel{\subseteq}{\longrightarrow}
\bigotimes_{i = 1}^s H^1 (A^\bullet_{K, \ell_i}) 
\to \bigotimes_{i = 1}^s (\Q_p \otimes_{\Z_p} H^1 (A^\bullet_{K, \ell_i}))
\]
is injective (because $\bidual^s_{\Z_p [G]} H^1 (\overline{\mathrm{SC}}^\bullet_{\Pi, \Phi})$ is $\Z_p$-torsion free) and so 
we conclude from (\ref{the expression}) that the equality (\ref{restatement of calculation part 1}) of Lemma \ref{calculation part 1}) in fact already holds in $\bigotimes_{i = 1}^s  H^1 (A^\bullet_{K, \ell_i})$.
By (\ref{local global wedge of bockstein}) and Lemma \ref{bockstein maps and reciprocity maps}, this argument shows that
\begin{align*}
     (\exprod_{1 \leq i \leq s} \beta_i) (a_{\Pi})   = &  \eta_{L / K} 
\cdot \pi_{F_{M'} / K} (\mathcal{P}_{F_{M'}} (z^\mathrm{Kato}_{M'}, \mathfrak{k}_{M'}))
 \cdot 
 {\prod}_{i = 1}^s (\rec_{\ell_i} (q_{E, \ell_i}) - 1), 
\end{align*}
as required to prove Lemma \ref{this computes the RHS}.
\end{proof}

\subsubsection{The proof of Theorem \ref{mazur--tate main result 2}}
\label{proof of main result 2 section}

We now explain how the congruence (\ref{congruence coming from appendix case 1}) implies Theorem \ref{mazur--tate main result 2}.
To do this we take $\Pi \coloneqq \mathrm{Sp} (m'p^{n'})$ and $\Phi \coloneqq S_N \setminus S_{m'p}$. 
By assumption, $\Eul_\ell (\tilde \sigma_\ell)$ belongs to $\Z_p [\cG]^\times$ for every $\ell \in S_{m'} \setminus \Pi'$. In addition, Lemma \ref{nus are compatible} shows that
\begin{equation} \label{multiply by this}
\pi_{F_{m' p^{n'}}  / L} \big( \prod_{\ell \mid m', \ell \not \in \Pi'} ( \nu_{m'p^{n'}}^{(\ell)} \Eul_\ell (\tilde \sigma_\ell)^{-1})^\# \big)
\equiv \pi_{F_{M'} / L} \big( \prod_{\ell \mid m', \ell \not \in \Pi'} ( \nu_{M'}^{(\ell)} \Eul_\ell (\tilde \sigma_\ell)^{-1})^\# \big) \mod I_H.
\end{equation}
If we multiply (\ref{congruence coming from appendix case 1}) by (\ref{multiply by this}) we then obtain a new congruence valid in $\mathfrak{A} / I_H \mathfrak{A}$. 
Using  Lemma~\ref{description of F circ f lem} and Theorem \ref{local points main result}\,(a) for the left-hand side of this new congruence, and Lemma \ref{this computes the RHS} for its right-hand side, we obtain (notice the change from $z_{M'}^\mathrm{Kato}$ to $y_{M'}^\mathrm{Kato}$)
\begin{align*}
    \theta^\MT_L & \equiv \pi_{F_{M'} / K}
\big ( (\prod_{\ell \mid M', \ell \neq p} \nu_{M'}^{(\ell)} \Eul_\ell (\tilde \sigma_\ell)^{-1} )^\#  \cdot  \mathcal{P}_{F_{M'}} (y^\mathrm{Kato}_{M'},  \mathfrak{k}_{M'}) \big) \cdot \big( \prod_{i = 1}^s \Tam_\ell^{-1} (\rec_\ell (q_{E, \ell_i}) - 1) \big)
\\
& \equiv \pi_{F_{M'} / K} (\theta^\MT_{M'}) \cdot \big( \prod_{i = 1}^s \Tam_\ell^{-1} (\rec_\ell (q_{E, \ell_i}) - 1) \big)
\mod \mathfrak{A}I_H.
\end{align*}
 For this congruence we are also (again) using that we have assumed $\ell \in C^{(p)}_\times (L)$, and therefore that $\Eul_\ell(\tilde \sigma_\ell )$ is invertible in $\Z_p [\cG]$ by Lemma \ref{invertible Euler factors lemma}, for all $\ell \mid m^\prime$ that are not in $\Pi'$. The final congruence is then by Theorem~\ref{local points main result}~(a).  \\
 This concludes the proof of Theorem \ref{mazur--tate main result 2}.
\qed 

\begin{rk}
    It seems possible that the technical condition on $M'$ in Theorem \ref{mazur--tate main result 2} can be removed if one combines the calculations of this section with the strategy of Lemma \ref{inductive argument}. 
\end{rk}

\renewcommand{\thesection}{\Alph{section}}
\renewcommand{\thesubsubsection}{\Alph{section}.\arabic{subsubsection}}
\renewcommand{\thethm}{(\Alph{section}.\arabic{thm})}
\setcounter{section}{0}
\setcounter{thm}{0}

\appendix 

\section{Integrality of Mazur--Tate elements} \label{integrality Mazur Tate appendix}

In this appendix we derive integrality results for Mazur--Tate elements by following ideas of Stevens \cite[\S\,3]{Stevens89} with refinements due to Wiersema and Wuthrich \cite{wiersemawuthrich}.

\paragraph{Statement of the main result}
We write 
\[
\varphi_0 \: X_0 (N) \to E
\quad \text{ and } \quad 
\varphi_1 \: X_1 (N) \to E
\]
for the modular parametrisations of $E$. We then define the Manin and Manin--Stevens constants $c_0$ and $c_1$ by the relations
\[
\varphi_0^\ast \omega_E = c_0 \cdot \omega_f 
\quad \text{ and } \quad 
\varphi_1^\ast \omega_E = c_1 \cdot \omega_f,
\]
where $\omega_f \coloneqq f(q) \frac{\mathrm{d}q}{q}$ is the differential 1-form associated to $f$. It was first observed by Gabber that $c_0$ and $c_1$ are (nonzero) integers (see \cite[Prop.\@ 2]{Edixhoven91} and \cite[Thm.\@ 1.6]{Stevens89}). Manin has conjectured that $c_0 \in \{ \pm 1 \}$ for some curve in the isogeny class in $E$ (namely the strong Weil curve) and Stevens has conjectured that always $c_1 \in \{\pm 1 \}$ (see \cite[Conj.\@ 1]{Stevens89}). 

\begin{rk}
    It is known that if $p$ is a prime number that divides $c_1$, then $p^2$ must divide the conductor $N$ of $E$ (for odd $p$ this was proved by Mazur \cite{Mazur78}, and for $p = 2$ by \v{C}esnavi\v{c}ius \cite{Cesnavicius18}). This shows that $c_1 \in \{ \pm 1 \}$ if $E$ is `semistable' (that is, $N$ is square-free).\\
    In addition, \v{C}esnavi\v{c}ius--Neururer--Saha \cite{cesnavicius2022manin} have proved that $c_1$ divides the degree of $\varphi_1$, and this can often be used to rule out that a given additive prime divides $c_1$.
\end{rk}

Although, to the best of the authors' knowledge, the following result on the integrality of Mazur--Tate elements has not previously appeared in the literature in this exact form, it is probably well-known to experts.\\
To state this result, we write $D(m) \coloneqq \mathrm{gcd} ( m , N)$ and $\delta (m) \coloneqq \mathrm{gcd} (D (m), \frac{N}{D(m)})$.  

\begin{thm} \label{integrality mazur tate}
    For every $m \in \N$, one has
    \[
     c_\infty c_0 \Ann_{\Z [G_m]} ( E (F_{\delta (m)})_\tor) \cdot \theta_m^\mathrm{MT} \subseteq \Z [G_m]
    \]
    and 
    \[
     c_\infty c_1 \Ann_{\Z [G_m]} ( E (F_{D(m)})_\tor) \cdot \theta_m^\mathrm{MT} \subseteq \Z [G_m]. 
    \]
\end{thm}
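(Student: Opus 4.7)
The plan is to apply the modular parametrisations $\varphi_0 \colon X_0(N) \to E$ and $\varphi_1 \colon X_1(N) \to E$ to translate each value $\lambda_f(a/m)$ into a period integral on $E(\C)$, and then to exploit the fact that the image of the cusp $a/m$ is a torsion point of $E$ defined over a controlled cyclotomic subfield of $\Q(\zeta_N)$. The approach follows \cite{Stevens89} with refinements from \cite{wiersemawuthrich}; since the second containment is strictly analogous to the first (with $\varphi_1$, $c_1$ and $D(m)$ in place of $\varphi_0$, $c_0$ and $\delta(m)$), I focus on the first.

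For $a$ coprime to $m$, the identity $\varphi_0^{\ast}\omega_E = c_0 \omega_f$ and integration from $a/m$ to $i\infty$ give
\begin{align*}
c_0 \lambda_f(a/m) = \int_{\gamma_a} \omega_E,
\end{align*}
where $\gamma_a$ is the image in $E(\C)$ of any path on the upper half plane joining $a/m$ to $i\infty$. The cusp $i\infty$ maps to $O_E$ and, by the Manin--Drinfeld theorem, $P_a \coloneqq \varphi_0(a/m)$ is a torsion point of $E$. Passing to the uniformisation $\C/\Lambda_E \cong E(\C)$ for the period lattice $\Lambda_E$ of $\omega_E$ and fixing a lift $\widetilde{P}_a \in \C$ of $P_a$, the integral decomposes as
\begin{align*}
c_0 \lambda_f(a/m) = -\widetilde{P}_a + \ell_a, \qquad \ell_a \in \Lambda_E.
\end{align*}
Separating $(\pm)$-eigenspaces of complex conjugation and unwinding the normalisations $\Omega^+ = c_\infty \int_{\gamma^+}\omega_E$, $\Omega^- = \int_{\gamma^-}\omega_E$, one obtains an identity of the form
\begin{align*}
c_\infty c_0 \big(\msym{a}{m}_E^+ + \msym{a}{m}_E^-\big) = n_a - c_\infty \cdot \alpha(\widetilde{P}_a),
\end{align*}
where $n_a \in \Z$ and $\alpha(\widetilde{P}_a) \in \Q$ is a rational number encoding the coordinates of the torsion point $P_a$ in the basis $(\Omega^+, \Omega^-)$; the additional factor of $c_\infty$ on the left absorbs the denominator introduced by the normalisation of $\Omega^+$.

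The decisive step is the description of the torsion points $\{P_a\}_{a \in (\Z/m\Z)^\times}$ under Galois. Classical descent for cusps of $X_0(N)$ (cf.\@ \cite{Stevens89}) shows that the $\Gamma_0(N)$-equivalence class of $a/m$ depends only on $a$ modulo $\delta(m)$, and that $P_a \in E(F_{\delta(m)})_\tor$ with the action of $G_m$ factoring through $G_{\delta(m)}$ via $\sigma_a \cdot P_1 = P_a$. Summing over $a$, we obtain
\begin{align*}
c_\infty c_0 \theta_m^\mathrm{MT} = \eta_m - c_\infty \sum_{a} \alpha(\widetilde{P}_a)\,\sigma_a, \qquad \eta_m \in \Z[G_m].
\end{align*}
For $\tau \in \Ann_{\Z[G_m]}(E(F_{\delta(m)})_\tor)$, a direct calculation in the commutative group ring $\Z[G_m]$ shows that the coefficient of $\sigma_c$ in $\tau$ times the second summand equals $c_\infty \cdot \alpha(\sigma_c \tau^{\#} P_1)$, which vanishes because $\tau^{\#}$ also annihilates $E(F_{\delta(m)})_\tor$. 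This last point uses that $E(F_{\delta(m)})_\tor$ is a $\Z$-integral $G_m$-representation, so that the set of characters appearing in $\C \otimes E(F_{\delta(m)})_\tor$ is closed under inversion and hence $\Ann_{\Z[G_m]}(E(F_{\delta(m)})_\tor) = \Ann_{\Z[G_m]}(E(F_{\delta(m)})_\tor)^{\#}$. Consequently $\tau \cdot c_\infty c_0 \theta_m^\mathrm{MT} = \tau \eta_m \in \Z[G_m]$, as required.

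The main technical obstacle is the Galois-equivariance assertion in the previous paragraph: one must verify that $\varphi_0(a/m)$ is defined over $F_{\delta(m)}$ and that the $G_m$-orbit of $P_1$ is exactly the family $\{P_b : b \in (\Z/\delta(m))^\times\}$ with the prescribed permutation $\sigma_a \cdot P_1 = P_a$. This reduces to the classical description of the cusps of $X_0(N)$ over $\Q(\zeta_N)$ together with the computation of their Galois action, which is the technical heart of \cite{Stevens89} and \cite{wiersemawuthrich}; the analogous analysis for $\Gamma_1(N)$-cusps, which enlarges the field of definition to $F_{D(m)}$, produces the second inclusion in the same manner. Once these compatibilities are in place, the remaining verification of the period decomposition displayed above is routine.
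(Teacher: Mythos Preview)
Your approach is the same as the paper's: interpret $c_0\lambda_f(a/m)$ via the modular parametrisation as a period integral terminating at the torsion point $P_{a/m}=\varphi_0(a/m)$, use the classical description of the Galois action on $\Gamma_0(N)$-cusps to place each $P_{a/m}$ in $E(F_{\delta(m)})_{\tor}$, and deduce that the annihilator kills the obstruction to integrality. The paper packages this via the ``Stevens element'' $\theta_m^{\mathrm{St}}=\sum_a \lambda_f(a/m)\,\sigma_a\in\C[G_m]$ together with the $\Z[G_m]$-linear map $\alpha\mapsto \mathrm{Re}(\alpha)/\Omega^+ + i\,\mathrm{Im}(\alpha)/\Omega^-$, which sends the N\'eron lattice $\mathscr{L}_E$ into $\tfrac{1}{c_\infty}\Z$ and $\theta_m^{\mathrm{St}}$ to $\theta_m^{\mathrm{MT}}$; this avoids choosing individual lifts $\widetilde P_a$ and the attendant bookkeeping.

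Two points in your write-up need correcting. First, from $\tau P_1=0$ in $E$ you only obtain $\sum_b t_b\widetilde P_{cb^{-1}}\in\Lambda_E$, whence the corresponding coefficient lies in $\Z$; it does not vanish, because $\alpha$ is only well-defined modulo $\tfrac{1}{c_\infty}\Z$ on torsion. The correct conclusion is therefore $\tau\cdot c_\infty c_0\theta_m^{\mathrm{MT}}\equiv\tau\eta_m\pmod{\Z[G_m]}$, not an equality, which of course still suffices. Second, your claim $\Ann_{\Z[G_m]}(M)=\Ann_{\Z[G_m]}(M)^\#$ for a finite Galois module $M$ is false in general (and the argument via characters of $\C\otimes M$ is vacuous, since $M$ is torsion). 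Fortunately you do not need it: with the Galois action $\sigma_a\cdot(r/m)=(a^{-1}r)/m$ one has $P_{a/m}=\sigma_{a^{-1}}P_{1/m}$, and then $\sum_b t_b P_{cb^{-1}}=\sigma_{c^{-1}}(\tau P_{1/m})=0$ uses $\tau$ directly rather than $\tau^\#$.
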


\paragraph{The proof of Theorem \ref{integrality mazur tate}}

We define the \textit{N\'eron lattice} $\mathscr{L}_E$ of $E$ to be 
\begin{equation} \label{neron lattice}
\mathscr{L}_E \coloneqq \Big \{ \int_\gamma \omega \mid \gamma \in H_1 ( E(\C), \Z ) \Big \} 
= 
\begin{cases} \frac{1}{2} \Z \Omega^+ \oplus \Z \Omega^-  \quad & \text{ if } c_\infty = 2, \\ 
\Z  \Omega^+ \oplus \frac{1}{2} \Z  (\Omega^+ + \Omega^-) & \text{ if } c_\infty = 1.
\end{cases}
\end{equation}

We define the `Stevens element' as 
\[
\theta_m^\mathrm{St} \coloneqq   {\sum}_{a \in (\Z / m \Z)^\times} \lambda_f (\zfrac{a}{m}) \sigma_a 
\quad \in \C [G_m]. 
\]
Recall that we have defined the modular symbol $\lambda_f (\zfrac{a}{m})$ at the start of \S\,\ref{s:Modular symbols and Mazur--Tate elements}.

\begin{lem} \label{integrality stevens lemma}
The lattice $\mathscr{L}_E \otimes_\Z  \Z [G_m]$ contains both $ c_0 \Ann_{\Z [G_m]} ( E (F_{\delta (m)})_\tor)\cdot \theta_m^\mathrm{St}$ and $ c_1 \Ann_{\Z [G_m]} ( E (F_{D(m)})_\tor) \cdot \theta_m^\mathrm{St}$ for every $m \in \N$.
\end{lem}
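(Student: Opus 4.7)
The plan is to follow the approach of Stevens \cite[\S\,3]{Stevens89} and Wiersema--Wuthrich \cite{wiersemawuthrich}, reducing the integrality assertion to the classical fact that the cusps of $X_\varepsilon(N)$ map to torsion points on $E$ under the corresponding modular parametrisations. As a first step, the substitution $z = a + i\tau$ identifies
\[
\lambda_f(a) = -\int_a^{i\infty} \omega_f,
\]
where $\omega_f \coloneqq 2\pi i\, f(z)\,\mathrm{d}z = f(q)\,\mathrm{d}q/q$. Thus $\lambda_f(a/m)$ is the period integral of $\omega_f$ along the modular symbol path $\{a/m, i\infty\} \subseteq \mathcal{H}^\ast$, which descends to a relative homology class on both $X_0(N)$ and $X_1(N)$. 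Pulling this back via $\varphi_\varepsilon$ and using the defining relation $\varphi_\varepsilon^\ast \omega_E = c_\varepsilon \omega_f$, one obtains, for $\varepsilon \in \{0, 1\}$,
\[
c_\varepsilon \cdot \lambda_f(a/m) = -\int_{\varphi_\varepsilon\{a/m,\, i\infty\}} \omega_E \quad \text{in } \C.
\]
The right-hand side is the integral of the N\'eron differential along a path in $E(\C)$ joining two cusp images, hence, via the Weierstra{\ss} uniformisation $\C/\mathscr{L}_E \xrightarrow{\sim} E(\C)$, represents a lift of the cusp difference $\varphi_\varepsilon(\overline{a/m}) - \varphi_\varepsilon(\overline{i\infty}) \in E(\C)$.

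The second step is to invoke the Manin--Drinfeld theorem \cite{Manin72, Drinfeld73}, which guarantees that this cusp difference is a torsion point
\[
P_{a, m}^{(\varepsilon)} \coloneqq \big(c_\varepsilon \lambda_f(a/m) \bmod \mathscr{L}_E\big) \in E(\overline{\Q})_{\tor}.
\]
Classical results of Ogg, as reformulated in \cite[Thm.\@ 1.3.1]{Stevens89}, then pin down the fields of rationality of the relevant cusps: the cusp of $X_0(N)$ (resp.\@ $X_1(N)$) represented by $a/m$ is defined over $F_{\delta(m)}$ (resp.\@ $F_{D(m)}$), so $P_{a, m}^{(0)} \in E(F_{\delta(m)})_{\tor}$ and $P_{a, m}^{(1)} \in E(F_{D(m)})_{\tor}$. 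Shimura's reciprocity law for $X_\varepsilon(N)$ moreover gives the Galois equivariance: with the convention $\sigma_a \: \zeta_m \mapsto \zeta_m^a$ fixed in \S\,\ref{notation section}, and after matching the standard identification of cusps with level structures on the Tate curve, one has $P_{a, m}^{(\varepsilon)} = \sigma_a^{-1}(P_{1, m}^{(\varepsilon)})$ for all $a \in (\Z/m\Z)^\times$.

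Given these ingredients, the inclusion is obtained by a direct calculation: for $t = \sum_b n_b \sigma_b$ in $\Ann_{\Z[G_m]}(E(F_{D(m)})_{\tor})$ and $c \in (\Z/m\Z)^\times$, the coefficient of $\sigma_c$ in $c_1 t \cdot \theta_m^\mathrm{St}$ equals
\[
{\sum}_{b} n_b \cdot c_1 \lambda_f(cb^{-1}/m) \,\, \equiv \,\, {\sum}_{b} n_b \cdot P_{cb^{-1}, m}^{(1)} = \sigma_{c^{-1}}\big( {\sum}_b n_b\, \sigma_b(P_{1, m}^{(1)}) \big) = \sigma_{c^{-1}}(t \cdot P_{1, m}^{(1)}) = 0 \mod \mathscr{L}_E,
\]
where the penultimate equality uses the Galois equivariance and the final one the assumption on $t$. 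The statement with $c_0$ and $F_{\delta(m)}$ follows from the identical argument applied to $\varphi_0$ in place of $\varphi_1$. The main subtlety lies in the Galois-equivariance step: verifying that Shimura's reciprocity on cusps, once combined with the convention for the isomorphism $(\Z/m\Z)^\times \xrightarrow{\sim} G_m$, produces the inverse $\sigma_a^{-1}$ (rather than $\sigma_a$) on the right-hand side of the formula for $P_{a, m}^{(\varepsilon)}$. This sign is crucial, since the opposite choice would force the stronger hypothesis $t^\# \in \Ann_{\Z[G_m]}(E(F_{D(m)})_{\tor})$, a genuine restriction for non-self-dual torsion modules; fortunately, the required compatibility is well-documented in the references above.
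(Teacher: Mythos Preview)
Your proof is correct and follows essentially the same approach as the paper: both arguments push the Stevens element through the modular parametrisation $\varphi_\varepsilon$, identify each coefficient $c_\varepsilon\lambda_f(a/m)$ modulo $\mathscr{L}_E$ with a torsion cusp image in $E(F_{\delta(m)})$ (resp.\ $E(F_{D(m)})$) via Manin--Drinfeld and the Galois action on cusps from \cite[Thm.~1.3.1]{Stevens82}, and then conclude by annihilation. The paper packages the final step as a commutative diagram of $\Z[G_m]$-modules sending $\tfrac{1}{m}$ to $\theta_m^{\mathrm{St}}$ and $\sum_a P_{a/m}\sigma_a$ respectively, whereas you unwind this into the explicit coefficient calculation; the paper also spells out why the cusps land in $F_{\delta(m)}$ and $F_{D(m)}$ by computing stabilisers via \cite[Prop.~3.8.3]{DiamondandShurman}, while you cite this as a black box --- but the mathematical content is the same.
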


We now first explain how to deduce Theorem \ref{integrality mazur tate} from Lemma \ref{integrality stevens lemma}. 
Consider the maps
\begin{align*}
\mathrm{Re} & \: \C [G_m] \to \R [G_m], \quad {\sum}_{\sigma \in G_m} x_\sigma \sigma \mapsto {\sum}_{\sigma \in G_m} \mathrm{Re} (x_\sigma) \sigma \\ 
\mathrm{Im} & \: \C [G_m] \to \R [G_m], \quad {\sum}_{\sigma \in G_m} x_\sigma \sigma \mapsto {\sum}_{\sigma \in G_m} \mathrm{Im} (x_\sigma) \sigma.
\end{align*}
Observe that these maps are $\R [G_m]$-linear. Using the explicit description of $\mathscr{L}_E$ given in (\ref{neron lattice}), we obtain that for every element $\alpha \in \mathscr{L}_E \otimes_\Z \Z [G_m]$ one has
\begin{equation} \label{neron lattice consequence}
\frac{\mathrm{Re} (\alpha)}{\Omega^+} + i \frac{\mathrm{Im} (\alpha)}{\Omega^-}  \in \frac{1}{c_\infty} \Z [G_m]. 
\end{equation}
We consider the $c_0$ case, the $c_1$ case is similar. Fix an element $x$ of $\Ann_{\Z [G_m]} ( E (F_{\delta (m)})_\tor)$. 
Lemma \ref{integrality stevens lemma} then 
combines with (\ref{neron lattice consequence}) to imply that
\begin{align*}
  c_\infty c_0 x \cdot \Big( \frac{ \mathrm{Re} ( \theta_m^\mathrm{St})}{\Omega^+} + i \frac{ \mathrm{Im} ( \theta_m^\mathrm{St})}{\Omega^-} \Big) 
  & =      \frac{c_\infty \mathrm{Re} (  c_0 x\cdot \theta_m^\mathrm{St})}{\Omega^+} + i \frac{c_\infty \mathrm{Im} (  c_0 x \cdot \theta_m^\mathrm{St})}{\Omega^-}  \subseteq   \Z [G_m]. 
\end{align*}
To prove Theorem \ref{integrality mazur tate}, it therefore suffices to note that
\begin{align*}
\theta^\mathrm{MT}_m & = {\sum}_{a \in (\Z / m\Z)^\times} (\msym{a}{m}^+ + \msym{a}{m}^-) \sigma_a  =  {\sum}_{a \in (\Z / m\Z)^\times} \Big ( \frac{\mathrm{Re} ( \lambda_f ( \tfrac{a}{m}))}{\Omega^+} + i \frac{\mathrm{Im} ( \lambda_f ( \tfrac{a}{m}))}{\Omega^-} \Big) \sigma_a \\ 
& =   \frac{\mathrm{Re} ( \theta_m^\mathrm{St})}{\Omega^+} + i \frac{\mathrm{Im} ( \theta_m^\mathrm{St})}{\Omega^-}  . 
\end{align*}
We now turn to the proof of Lemma \ref{integrality stevens lemma}, again in the $c_0$ case. At the outset we note that
\[
\lambda_f (a) = 2\pi i \int_{i \infty}^a f (\tau) \mathrm{d} \tau = c_0^{-1} \int_{\gamma (a)} \omega, 
\]
where $\gamma (a)$ is the image in $E (\C)$ of the path from $i\infty$ to $a$ given by the vertical line in the upper half-plane from $i\infty$ to $a$. 
Writing $\mathcal{H} \subseteq \C$ for the complex upper half-plane, one therefore has the commutative diagram (see, for example, \cite[Prop.\@ 2.11]{Dar04})
\begin{cdiagram}
(\mathbb{P}^1 (\Q) \cup \mathcal{H}) / \sim  \arrow{d}{c_0 \lambda_f} \arrow[equals]{r} & 
X_0 (N) (\C) \arrow{d}{\varphi_0} \\ 
\C / \mathscr{L}_E \arrow{r}{\simeq} & E (\C).
\end{cdiagram}%
For any cusp $a \in \mathbb{P}^1 (\Q)$ we then write $P_a \coloneqq \varphi_0 (a)$ for the point in $E (\overline{\Q})$ corresponding to $c_0^{-1} \lambda_f (a) + \mathscr{L}_E$, and we note that $P_a$ is a torsion point by the Manin--Drinfeld theorem.\\ 
From this it is clear that $c_0 \lambda_f (a)$ belongs to $\mathscr{L}_E$ if and only if $P_a$ is trivial.\\
It is proved in \cite[Thm.\@ 1.3.1]{Stevens82} that any cusp $\frac{r}{s}$ of $X_0 (N)$ (with $\mathrm{gcd}(r,s) = 1$) is defined over $\Q (\zeta_N)$ and that the action of $G_N$ on $\frac{r}{s}$ is given by
\[
\sigma_a \cdot \frac{r}{s} = \frac{a^{-1} r}{s},
\]
where $a^{-1}$ denotes the inverse of $a$ mod $N$. We note that in \cite[Thm.\@ 1.3.1]{Stevens82} the description of Galois action has the inverted element in the denominator rather than the numerator. However, these two cusps are equivalent in $X_0(n)$, see \cite[Prop.~3.8.3]{DiamondandShurman}. Take $s  = m$ and suppose that $a \equiv 1 \mod \delta (m)$. Then also $a^{-1} \equiv 1 \mod \delta (m)$, 
hence $\frac{a^{-1} r}{m}$ is $\Gamma_0 (N)$-equivalent to $\frac{r}{m}$ ( this can be deduced from \cite[Prop.~3.8.3]{DiamondandShurman}, see the argument on page 103 of loc.\@ cit.).
As a consequence, the subgroup $\{ \sigma_a \mid (a, N) = 1, a \equiv 1 \mod \delta (m) \} = \gal{F_N}{F_{\delta (m)}}$ fixes the cusp $\frac{r}{m}$, and so any such cusp is defined over $F_{\delta (m)}$.\\
We next observe that the map
\[
\mathbb{P}^1 ( \Q) \to  E( \overline{\Q})_\tor, \quad a \mapsto P_a = \varphi_0 (a)
\]
is $\gal{\overline{\Q}}{\Q}$-equivariant because the modular parametrisation $\varphi_0$ is defined over $\Q$ (hence $\gal{\overline{\Q}}{\Q}$-equivariant). It follows that the point $P_a$ belongs to $E ( F_{\delta (m)})_\tor$ for every cusp $a \in \mathbb{P}^1 (\Q)$ of the form $a  = \frac{r}{m}$. 
Now, we extend the earlier diagram $\Z [G_m]$-linearly to obtain the commutative diagram of $\Z [G_m]$-modules
\begin{cdiagram}
    \mathbb{P}^1 (\Q) \arrow{r} \arrow{d} & \C [G] \arrow{d} \\ 
    E ( F_m)_\tor \otimes_\Z \Z [G_m] \arrow[hookrightarrow]{r} & 
    (\C / \mathscr{L}_E) [G_m].
\end{cdiagram}%
Here the top map sends the class of $\frac{1}{m}$ to $\sum_{\sigma \in G_m} \lambda_f ( \sigma \cdot \frac{1}{m}) \sigma^{-1}
= \sum_{a \in ( \Z / m \Z)^\times} \lambda_f ( \zfrac{a}{m}) \sigma_a = \theta^\mathrm{St}_m$,
and the vertical map on the left sends $\frac{1}{m}$ to $\sum_{\sigma \in G_m} (\sigma \cdot P_{1 / m}) \sigma^{-1}
= \sum_{a \in ( \Z / m \Z)^\times} P_{a / m} \sigma_a$.\\
We have seen above that each of the points $P_{a / m}$ belongs to $E (F_{\delta (m)})_\tor$, hence the latter element is annihilated by $\Ann_{\Z [G_m]} ( E (F_{\delta(m)})_\tor)$. Commutativity of the above diagram therefore shows that the class of $\theta^\mathrm{St}_m$ in the quotient $(\C / \mathscr{L}_E) [G_m]$ is annihilated by $\Ann_{\Z [G_m]} ( E (F_{\delta(m)})_\tor)$, as required to prove that $\Ann_{\Z [G_m]} ( E (F_{\delta(m)})_\tor) \cdot \theta_m^\mathrm{St}$ belongs to $\mathscr{L} [G_m]$.

When considering the $c_1$ case we note that \cite[Thm. \@ 3.11]{Stevens89} shows that the cusps of $X_1 (N)$ is defined over $\Q (\zeta_N)$ and that the action of $G_N$ is as defined before. Let $a' \equiv 1 \mod m$. It follows from \cite[Prop.\@ 3.8.3]{DiamondandShurman} that 
$\frac{r}{m} \sim \frac{a' r}{m}$.
Hence, the subgroup $\{ \sigma_a \mid (a, m) = 1, a \equiv 1 \mod D (m) \} = \gal{F_N}{F_{D (m)}}$ fixes the cusp $\frac{a}{m}$. The result in this case then follows as before. \qed

\section{Some general algebra} \label{algebra appendix}

In this section we establish useful results of a general algebraic nature. 

\subsection{Perfect complexes and their determinants} \label{algebra appendix section 1}

Let $\cR$ be a Noetherian commutative ring.
For any $\cR$-module $M$, we endow its $\cR$-linear dual $M^\ast \coloneqq \Hom_\cR (M, \cR)$ with the structure of an $\cR$-module by means of 
\[
\cR \times M^\ast \to M^\ast, \quad (x, f) \mapsto \{ y \mapsto x \cdot f (y) \}. 
\] 
We also set $M^\vee \coloneqq \Hom_\cR (M, \cQ / \cR)$ with $\cQ$ the total ring of fractions of $\cR$. 

\begin{rk} \label{sharp and Pontryagin dual rk}
Note that in the main text the notation $M^\vee$ was used for the Pontryagin dual $\Hom_\Z (M, \Q / \Z)$ of $M$, which might differ from $\Hom_\cR (M, \cQ / \cR)$. For example, if $\cR = \Z [G]$ for a finite abelian group $G$, then (\ref{dual sharp isomorphism}) induces an isomorphism $\Hom_\Z (M, \Q / \Z)^\# \cong \Hom_\cR (M, \cQ / \cR)$.
\end{rk}

We write $D (\cR)$ for the derived category of $\cR$-modules, and $D^\mathrm{perf} (\cR)$ for its full subcategory of complexes that are `perfect' (that is, isomorphic in $D(\cR)$ to a bounded complex of projective $\cR$-modules).\\ 
Given $C^\bullet \in D^\mathrm{perf} (\cR)$
represented by a bounded complex $\ldots \to C^{i - 1} \to C^i \to C^{i + 1} \to \dots$ of projective $\cR$-modules $C^i$ that are each placed in degree $i$, we denote its Euler characteristic as $\chi_\cR (C^\bullet) \coloneqq \sum_{i \in \Z} (-1)^i \cdot [C^i] \in K_0 (\cR)$. Here $K_0(\cR)$ denotes the Grothendieck group of the category of finitely generated projective $\cR$-modules. If $\cR$ is semisimple, 
then the rank function induces an isomorphism $K_0 (\cR) \cong H^0 (\Spec \cR, \Z)$ and so we may regard $\chi_\cR (C^\bullet)$ as an element of $H^0 (\Spec \cR, \Z)$. In this case we write $\chi_\cR (C^\bullet) \leq d$ if $\chi_\cR (C^\bullet)$ belongs to $H^0 (\Spec \cR, \Z_{\leq d})$.\\
The determinant of $C^\bullet$ is denoted by $\Det_\cR (C^\bullet) \coloneqq \bigotimes_{i \in \Z} \Det_\cR (C^i)^{(-1)^i}$. In this context we remark that, following Knudsen and Mumford \cite{KnudsenMumford}, $\Det_\cR (C^\bullet)$ must be considered a graded line bundle in order to avoid technical sign issues. However, since the grading is uniquely determined upon fixing a representative for $C^\bullet$, we have chosen to suppress any explicit reference to the grading in order to simplify the exposition.\\
For every element $a \in \Det_\cR (C^\bullet)^{-1} \coloneqq \Det_\cR (C^\bullet)^\ast$ we also obtain a canonical `evaluation map' 
\[
\mathrm{Ev}_a \: \Det_\cR ( C^\bullet) \to \cR. 
\]

The following definition underlies many of the constructions in the main text. 

\begin{definition} \label{def projection map from det}
Let $\cR$ be a Noetherian reduced ring with total ring of fractions $\cQ$. 
Let $C^\bullet \in D^\mathrm{perf} (\cR)$ be a complex, 
and fix a surjection $\kappa \: H^2 (C^\bullet) \to \cY$ with $\cY$ a finitely generated free $\cR$-module of rank $d \geq \chi \coloneqq \chi_\cQ (\cQ \otimes_\cR^\mathbb{L} C^\bullet)$ together with an ordered $\cR$-basis $\mathfrak{B}$ of $\cY$. 
We define a canonical map as the composite\begin{align*}
    \vartheta_{C^\bullet, \mathfrak{B}} \: \Det_\cR (C^\bullet)^{-1} & \xhookrightarrow{\phantom{\cdot e_{C^\bullet, \cY}}} \cQ \otimes_\cR \Det_\cR (C^\bullet)^{-1} \\
    & \xrightarrow{\; \; \; \; \simeq \; \; \; \;} \Det_\cQ ( \cQ \otimes_\cR^\mathbb{L} C^\bullet)^{-1} \\ 
    & \xrightarrow{\; \; \; \; \simeq \; \; \; \;} {\bigotimes}_{i \in \Z} \Det_{\cQ} ( H^i (\cQ \otimes_\cR^\mathbb{L} C^\bullet) )^{(-1)^{i + 1}} \\
    & \xrightarrow{\cdot e_{C^\bullet, \cY}} \Det_{\cQ} ( H^1 (\cQ \otimes_\cR^\mathbb{L} C^\bullet) ) \otimes_\cQ \Det_\cQ ( \cQ \otimes_\cR \cY)^{-1}\\
    & \xrightarrow{\; \; \; \; \simeq \; \; \; \;} \exprod^{d - \chi}_\cQ ( \cQ \otimes_\cR H^1 (C^\bullet)).
\end{align*}
Here the second arrow follows from the base-change property of the determinant functor, the third arrow is the natural `passage-to-cohomology map' (which exists because $\cQ$ is semisimple), the fourth arrow is multiplication by the idempotent $e_{C^\bullet, \cY}$ of $\cQ$ that is defined as the sum of all primitive idempotents of $\cQ$ that annihilate $\cQ \otimes_\cR \big( (\ker \kappa) \oplus \bigoplus_{i \in \Z \setminus \{ 1, 2\}} H^i (C^\bullet) \big)$, and the final isomorphism is induced by $\mathrm{Ev}_x$ with the element $x \coloneqq \bigwedge_{b \in \mathfrak{B}} b$ of $\Det_\cR (\cY)$.
\end{definition}

We often work with complexes that satisfy the following slight variant of \cite[Def.\@ A.6]{sbA}.

\begin{definition} \label{standard representative def}
Let $C^\bullet \in D^\mathrm{perf} (\cR)$ be a perfect complex, and suppose we are given a surjection $\kappa \: H^2 (C^\bullet) \to \cX$ with $\cX$ an $\cR$-module generated by an ordered set $\mathfrak{X} = \{x_1, \dots, x_d\}$ of cardinality $d$.
 A representative for $C^\bullet$ of the form  
\begin{equation} \label{standard representative}
F^0 \stackrel{\partial_0}{\longrightarrow} F^1 \stackrel{\partial_1}{\longrightarrow} F^2
\end{equation}
is called a `standard representative' for $C^\bullet$ with respect to $( \kappa, \mathfrak{X})$ if the following conditions are satisfied:
\begin{romanliste}
    \item For every $i \in \{0, 1, 2\}$, the module $F^i$ is $\cR$-free of finite rank $n_i$ and placed in degree $i$.
    \item $\partial_0$ is injective (so $C^\bullet$ is acyclic outside degrees 1 and 2).
    \item $d + n_1 \geq n_0 + n_2$.
    \item There exists a basis $\{b_1, \dots, b_{n_2}\}$ of $F^2$ such that the composite map $F^2 \to H^2 (C^\bullet) \to \cX$ maps $b_i$ to $x_i$ if $i \in \{1, \dots, d\}$ and to 0 otherwise. 
\end{romanliste}
\end{definition}

\begin{rk}
Standard representatives exist in many contexts; they can often be constructed via the method of \cite[Prop.\@ A.11\,(i)]{sbA} (see also \cite[\S\,5.4]{bks} and \cite[Lem.\@ 2.35]{BB}).
\end{rk}

To give an explicit description of the map $\vartheta_{C^\bullet, \mathfrak{B}}$ for complexes that admit standard representatives in the sense of Definition \ref{standard representative def}, we shall use that for any $\cR$-module $M$ and integers $0 \leq r \leq s$ one has a map
\[
\Phi^{r, s}_M \: \exprod^s_\cR M^\ast  \to \Hom_\cR \big( \exprod^r_\cR M, \exprod^{r - s}_\cR M \big)
\]
defined by means of the rule
\[
f_1 \wedge \dots \wedge f_s  \mapsto \big \{ m_1 \wedge \dots \wedge m_r \mapsto {\sum}_{\sigma \in \mathfrak{S}_{r, s}} \mathrm{sgn} (\sigma) \det ( f_i (m_{\sigma_j}))_{1 \leq i \leq s} \cdot m_{\sigma (s + 1)} \wedge \dots \wedge m_{\sigma (r)} \big \} .
\]
Here $\mathfrak{S}_{r, s} \coloneqq \{ \sigma \in \mathfrak{S}_r \mid \sigma (1) < \dots < \sigma (s) \text{ and } \sigma (s + 1) < \dots < \sigma (r) \}$. To simplify notation, we denote $\Phi^{r, s}_M (f)$ also by $f$.

\begin{lem} \label{explicit description projection map lem}
    Let $\cR$ be a reduced Noetherian ring with total ring of fractions $\cQ$. 
    Assume $C^\bullet \in D^\mathrm{perf} (\cR)$ is a complex that admits a standard representative (\ref{standard representative}) with respect to $(\kappa, \mathfrak{X})$ for some surjection $\kappa \: H^2 (C^\bullet) \to \cX$ and set of generators $\mathfrak{X} = \{ x_1, \dots, x_{d'}\}$ of $\cX$. Suppose $d \leq d'$ is an integer such that $\cY \coloneqq \bigoplus_{i = 1}^{d} \cR x_i$ is a free $\cR$-module of rank $d$,  and set $\mathfrak{B} \coloneqq \{x_1, \dots, x_d\}$ and $n \coloneqq (d + n_1) - (n_0 + n_2)$. Then the map 
    \begin{align*}
    \Det_\cR (C^\bullet)^{-1} = \big( \exprod^{n_0}_\cR (F^0)^\ast \big) \otimes_\cR \big( \exprod^{n_1}_\cR F^1 \big) \otimes_\cR  \big( \exprod^{n_2}_\cR (F^2)^\ast \big) & \to 
    \cQ \otimes_\cR \exprod^{n}_\cR F^1
    \end{align*}
    defined by means of the rule
    \[
    f \otimes g \otimes \exprod_{1 \leq i \leq n_2} b_i^\ast  \mapsto (-1)^{n (n_2 - d)} \cdot \big( \widetilde f \wedge  \exprod_{d + 1 \leq i \leq n_2} (b_i^\ast \circ \partial_1) \big) (g),
    \]
    where $\widetilde f$ denotes a preimage of $f$ under the surjection $\cQ \otimes_\cR \exprod^{n_0}_\cR (F^1)^\ast \xrightarrow{\partial_0^\ast} \cQ \otimes_\cR \exprod^{n_0}_\cR (F^0)^\ast$, 
    is well-defined, has image in $\cQ \otimes \exprod^{n}_\cR \ker (\partial_1)$,
    and coincides with $\vartheta_{C^\bullet, \mathfrak{B}}$ when composed with the projection $\cQ \otimes_\cR \exprod^n_\cR \ker (\partial_1) \to \cQ \otimes_\cR \exprod^n_\cR H^1 (C^\bullet)$. 
\end{lem}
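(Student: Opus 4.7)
The plan is to verify the formula by passing to the total ring of fractions $\cQ$, which is semisimple since $\cR$ was assumed reduced and Noetherian, and exploiting the fact that all constructions in Definition \ref{def projection map from det} descend integrally to $\cR$ by the assumed existence of a standard representative.

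First I would check that the formula is well-defined. Work on the idempotent component $e \coloneqq e_{C^\bullet, \cY}$: by construction $e$ annihilates $\cQ \otimes_\cR H^i(C^\bullet)$ for $i \neq 1$ as well as the complement of $\cY_\cQ$ in $\cQ \otimes_\cR H^2(C^\bullet)$, so in particular $e \cdot (\cQ \otimes_\cR H^0(C^\bullet)) = 0$. Consequently $e \cdot \partial_0$ is split-injective and $e \cdot \partial_0^\ast$ is surjective on $\exprod^{n_0}_\cR (F^1)^\ast \otimes e\cQ$, so a preimage $\tilde f$ of $ef$ exists over $e\cQ$; on the complement $(1-e)\cQ$ the subsequent projection to $\cQ \otimes_\cR H^1(C^\bullet)$ kills everything, so we may set $\tilde f$ arbitrarily there. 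Independence of the choice of $\tilde f$ reduces to the claim that for $h \in \ker(\partial_0^\ast)$ the image of
\[
\bigl(h \wedge \exprod_{d+1 \le i \le n_2}(b_i^\ast \circ \partial_1)\bigr)(g)
\]
in $\cQ \otimes_\cR \exprod^n_\cR H^1(C^\bullet)$ vanishes, and this is a standard exterior-algebra computation: the map $\Phi^{n_1, n_0 + n_2 - d}_{F^1}$ applied to such a wedge yields an element whose factors lie in the image of $\partial_0$ (using that $h \in \ker \partial_0^\ast$ forces the dual pairing $h \cdot g$ to be in the image of $\partial_0$ up to the remaining duality factors), so the projection to $H^1$ kills it.

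Next I would verify that the image actually lies in $\cQ \otimes_\cR \exprod^n_\cR \ker(\partial_1)$. This follows from a direct manipulation of the operators $\Phi^{r,s}_M$: the element $\bigwedge_{d+1 \le i \le n_2}(b_i^\ast \circ \partial_1)$ factors through the dual of $\partial_1$, so contracting $g$ against these covectors produces an element of $\exprod^{n_1 - (n_2 - d)}_\cR F^1$ whose further contraction against $\tilde f$ lies in $\exprod^n_\cR \ker(\partial_1)$ modulo torsion -- and over $\cQ$ the result is exactly in the kernel.

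The heart of the proof is then the identification with $\vartheta_{C^\bullet, \mathfrak{B}}$. Since $\cQ$ is semisimple, on the $e$-component I can fix a splitting of the base-changed complex $\cQ \otimes_\cR C^\bullet$ into its cohomology and an acyclic summand, and use this to compute both the Knudsen–Mumford base-change isomorphism and the passage-to-cohomology isomorphism in Definition \ref{def projection map from det} explicitly. Choose a decomposition $F^1_\cQ \simeq \im(\partial_0)_\cQ \oplus \ker(\partial_1)_\cQ/\im(\partial_0)_\cQ \oplus (F^1/\ker\partial_1)_\cQ$ compatible with the basis $\{b_1,\dots,b_{n_2}\}$; in these coordinates $\vartheta_{C^\bullet,\mathfrak{B}}$ becomes a sequence of contractions that match the formula term-by-term. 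The main obstacle -- which will require care but no genuinely new idea -- is tracking the Koszul signs from the determinant functor: the sign $(-1)^{n(n_2 - d)}$ in the statement arises from reordering the $(n_2 - d)$-many dual basis vectors $b_i^\ast$ past the $n$-dimensional $H^1$ summand when identifying $\Det_\cR(F^2)^{-1}$ with $\Det_\cR(\cY)^{-1} \otimes \Det_\cR(F^2/\cY')^{-1}$ (for $\cY'$ the relevant lift), and these signs must be tracked through each of the base-change, passage-to-cohomology, and $\mathrm{Ev}_x$ isomorphisms. Once the identification is verified on the $e$-component the claim on the $(1-e)$-component is automatic since both sides vanish there, completing the proof.
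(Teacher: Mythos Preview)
Your overall plan is the paper's: pass to the semisimple ring $e\cQ$, choose explicit splittings of the relevant short exact sequences there, and compute both $\vartheta_{C^\bullet,\mathfrak{B}}$ and the displayed formula directly. There is, however, a gap in your reduction step. Your assertion that on $(1-e)\cQ$ ``the subsequent projection to $\cQ\otimes_\cR H^1(C^\bullet)$ kills everything'' is not justified, and the implicit reason is wrong: on a field component of $(1-e)\cQ$ one has $\dim H^1(C^\bullet)>n$, so $\exprod^n H^1(C^\bullet)$ is certainly nonzero there. What is actually true is that the formula's output is already supported on the $e$-component \emph{before} any projection. The paper supplies the missing argument: writing $G^2\coloneqq\bigoplus_{i>d}\cR b_i$, the sequence $F^1\xrightarrow{\partial_1}G^2\to\ker\kappa'\to 0$ is a free presentation, so $\big(\bigwedge_{d<i\le n_2}(b_i^\ast\circ\partial_1)\big)(g)$ lies in $\Fitt^0_\cR(\ker\kappa')\cdot\exprod^{n+n_0}_\cR F^1\subseteq\Ann_\cR(\ker\kappa')\cdot\exprod^{n+n_0}_\cR F^1$, on which $e$ acts as the identity. (Two smaller slips: condition (ii) of a standard representative already gives $\partial_0$ injective over $\cR$, so $\partial_0^\ast$ is surjective over all of $\cQ$ and $\tilde f$ exists globally; and your claim that $e$ annihilates $H^i$ for $i\ne 1$ should read $i\notin\{1,2\}$.)

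With the reduction to $e\cQ$ properly in hand, the paper also differs organisationally from your sketch: it does not establish well-definedness or the containment in $\cQ\otimes_\cR\exprod^n_\cR\ker(\partial_1)$ via separate exterior-algebra arguments, but obtains both as byproducts of the explicit computation. Fixing splittings $l_0\colon H^1(C^\bullet)\to\ker\partial_1$ and $l_1\colon G^2\to F^1$ over $e\cQ$, one writes $g$ in the form $\partial_0(c)\wedge l_0(h_1)\wedge\dots\wedge l_0(h_n)\wedge\bigwedge_{d<i\le n_2}(l_1\circ\partial_1^{-1})(b_i)$ and checks directly that the formula evaluates to $(-1)^{n(n_2-d)}\cdot l_0(h_1)\wedge\dots\wedge l_0(h_n)$, which is visibly independent of the choice of lift and lies in $\exprod^n\ker\partial_1$. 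Your separate sketches for these two points (in particular ``$h\in\ker\partial_0^\ast$ forces the dual pairing $h\cdot g$ to be in the image of $\partial_0$'') are too vague to stand on their own and are in any case superseded by this direct computation.
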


\begin{proof}
First we observe that condition (iv) in Definition \ref{standard representative def} means that 
the map $F^2 \to H^2 (C^\bullet) \stackrel{\kappa'}{\to} \cY$ induces a direct sum composition $F^2 \cong G^2 \oplus \cY$ with $G^2 \coloneqq \bigoplus_{i = {d + 1}}^{n_2} \cR b_i$. From the free presentation $F^1 \xrightarrow{\partial_1} G^2 \to \ker (\kappa') \to 0$ we then deduce that any element of the form
$\big( \bigwedge_{d + 1 \leq i \leq n_2} (b_i^\ast \circ \partial_1) \big) (g)$ belongs to $\Fitt^0 ( \ker (\kappa')) \subseteq \Ann_\cR ( \ker (\kappa'))$. In particular, the idempotent $e \coloneqq e_{C^\bullet, \cY}$ acts as the identity on any such element. Since the definition of $\vartheta_{C^\bullet, \mathfrak{B}}$ involves extending scalars to $\cQ$ and multiplication by $e$, we may prove the lemma after extending scalars to $e \cQ$, thereby reducing it to the case that $\cR  = e \cQ$. In particular, $\cR$ is a semisimple ring and so 
we may fix splittings $l_0$ and $l_1$ of the exact sequences
\begin{cdiagram}[row sep=tiny]
    0 \arrow{r} &  F_0 \arrow{r}{\partial_1} & \ker (\partial_1) \arrow{r}[below]{q} &  H^1(C^\bullet) \arrow[bend right=30, dashed]{l}[above]{l_0} \arrow{r}  & 0 \\ 
    0 \arrow{r} & \ker \partial_1 \arrow{r} & F_1 \arrow{r}{\partial_1} & G^2 
    \arrow[bend left=30, dashed]{l}[below]{l_1}
    \arrow{r} & 0
\end{cdiagram}%
which induce isomorphisms
\begin{align}
\label{isom induced splitting 1}
\big( \exprod^{n}_\cR F^0 \big) \otimes_\cR 
\big( \exprod^{n_0}_\cR H^1 (C^\bullet) \big)
& \stackrel{\simeq}{\to} \exprod^{n + n_0}_\cR \ker (\partial_1), \\  \nonumber
(c_1 \wedge \dots \wedge c_{n_0}) \otimes (h_1 \wedge \dots \wedge h_n) & \mapsto \partial_0 (c_1) \wedge \dots \wedge \partial_0 (c_{n_0}) \wedge l_0 (h_1) \wedge \dots \wedge l_0 (h_n) 
\end{align}
and 
\begin{align}
\label{isom induced splitting 2}
\big(  \exprod^{n + n_0}_\cR \ker (\partial_1) \big) \otimes_\cR \big( \exprod^{n_2 - d}_\cR G^2 \big) & \stackrel{\simeq}{\to} \exprod^{n_1}_\cR F_1 \\  \nonumber
j \otimes (i_1 \wedge \dots i_{n_2 - d}) & \mapsto 
j \wedge (l_1 \circ \partial_1^{-1}) (i_1) \wedge \dots \wedge (l_1 \circ \partial_1^{-1}) (i_{n_2 - d})
\end{align}
that are independent of the choices of splittings $l_0$ and $l_1$. 
Fix an $\cR$-basis basis $c_1, \dots, c_{n_0}$ of $F^0$ for convenience and write $c_1^\ast, \dots, c_{n_0}^\ast$ for the corresponding dual basis. 
By (\ref{isom induced splitting 2}), we may write $g \in \exprod^{n_1}_\cR F^1$ in the form $j \wedge \bigwedge_{d + 1 \leq i \leq n_2 }(l_1 \circ \partial_1^{-1}) (b_i)$ for some $j \in \exprod^{n + n_0}_\cR \ker (\partial_1)$ that, by (\ref{isom induced splitting 1}), can in turn be written as $j = \partial_0(c) \wedge l_0 (h_1) \wedge \dots \wedge l_0 (h_n)$ with $c = c_1 \wedge \dots \wedge c_{n_0}$ and
a suitable element $h_1 \wedge \dots \wedge h_n \in \exprod^n_\cR H^1 (C^\bullet)$. 
Now, the `passage-to-cohomology map' in the definition of $\vartheta_{C^\bullet, \mathfrak{B}}$ is given by the composite of isomorphisms
\begin{align*}
\Det_\cR (C^\bullet) & = 
\big( \exprod^{n_0}_\cR (F^0)^\ast \big) \otimes_\cR \big( \exprod^{n_1}_\cR F^1 \big) \otimes_\cR  \big( \exprod^{n_2}_\cR (F^2)^\ast \big) \\
& \longrightarrow
    \big( \exprod^{n_0}_\cR (F^0)^\ast \big) \otimes_\cR \big( \exprod^{n_1}_\cR F^1 \big)  \otimes_\cR \big( \exprod^{n_2 - d}_\cR \cY^\ast \big) \otimes_\cR  \big( \exprod^{n_2 - d}_\cR (G^2)^\ast \big) \\ 
& 
\stackrel{\simeq}{\longrightarrow}
    \big( \exprod^{n_0}_\cR (F^0)^\ast \big) \otimes_\cR \big( \exprod^{n_1}_\cR F^1 \big) \otimes_\cR  \big( \exprod^{n_2 - d}_\cR (G^2)^\ast \big) \otimes_\cR \big( \exprod^{n_2 - d}_\cR \cY^\ast \big) \\ 
    & \stackrel{(\ref{isom induced splitting 2})}{\longrightarrow} 
    \big( \exprod^{n_0}_\cR (F^0)^\ast \big) \otimes_\cR
    \big( \exprod^{n + n_0}_\cR \ker (\partial_1) \big) \otimes_\cR \big( \exprod^{n_2 - d}_\cR G^2 \big) \\
    & \qquad \qquad \quad \otimes_\cR  \big( \exprod^{n_2 - d}_\cR (G^2)^\ast \big) \otimes_\cR \big( \exprod^{n_2 - d}_\cR \cY^\ast \big)\\
& \stackrel{(\ast)}{\longrightarrow} \big( \exprod^{n_0}_\cR (F^0)^\ast \big) \otimes_\cR
    \big( \exprod^{n + n_0}_\cR \ker (\partial_1) \big)  \otimes_\cR \big( \exprod^{n_2 - d}_\cR \cY^\ast \big) \\
    & \stackrel{(\ref{isom induced splitting 1})}{\longrightarrow} 
    \big( \exprod^{n_0}_\cR (F^0)^\ast \big) \otimes_\cR \big( \exprod^{n_0}_\cR F^0 \big)
    \otimes_\cR \big( \exprod^n_\cR H^1 (C^\bullet) \big)  \otimes_\cR \big( \exprod^{n_2 - d}_\cR \cY^\ast \big) \\
    & \stackrel{(\ast)}{\longrightarrow} \big( \exprod^n_\cR H^1 (C^\bullet) \big) \otimes_\cR \big( \exprod^{n_2 - d}_\cR \cY^\ast \big),
\end{align*}
where the first arrow is induced by the exact sequence $0 \to \cY^\ast \to (F^2)^\ast \to (G^2)^\ast \to 0$ and the
arrows labelled ($\ast$) are induced by the relevant evaluation isomorphisms.
Set $c^\ast \coloneqq c_1^\ast \wedge \dots \wedge c_{n_0}^\ast$.
Since the composite of the first and second arrows sends $\exprod_{1 \leq i \leq n_2} b_i^\ast$ to $ \bigwedge_{d + 1 \leq i \leq n_2} b_i^\ast \otimes \bigwedge_{1 \leq i \leq d} b_i^\ast$, one therefore has
\[
\vartheta_{C^\bullet, \mathfrak{B}} \big( c^\ast \otimes g \otimes \exprod_{1 \leq i \leq n_2} b_i^\ast \big) = h_1 \wedge \dots \wedge h_n \quad \in \exprod_{\cR}^{n} H^1 (C^\bullet).
\]
Note that this does not depend on the choice of $b_1, \dots, b_{n_0}$ or $c_1, \dots, c_{n_0}$. 
On the other hand, we may compute that \begin{align*}
\big( \widetilde{c^\ast} \wedge \exprod_{d + 1 \leq i \leq n_2} (b_i^\ast \circ \partial_1)  \big) (g) & = (-1)^{n_0 \cdot (n_2 - d)} \cdot \widetilde{c^\ast} \big( \big(\exprod_{d + 1 \leq i \leq n_2} (b_i^\ast \circ \partial_1) \big) (g) \big) \\ 
& = (-1)^{n_0 \cdot (n_2 - d)} \cdot \widetilde{c^\ast}  ( 
(-1)^{(n + n_0)(n_2 - d)} \cdot j) \\ 
& = (-1)^{n (n_2 - d)} \cdot c^\ast ( c \wedge  l_0 (h_1) \wedge \dots \wedge l_0 (h_n))  \\
& = (-1)^{n (n_2 - d)} \cdot  l_0 (h_1) \wedge \dots \wedge l_0 (h_n),
\end{align*}
hence comparing with the last displayed formula gives the desired comparison with $\vartheta_{C^\bullet, \mathfrak{B}}$ after applying $\wedge^n q$, and also shows that the considered map is independent of the choice of $\widetilde c^\ast$. 
\end{proof}

\begin{rk}
    If $F^0 = 0$ and $n_1 = n_2$, then Lemma \ref{explicit description projection map lem} recovers \cite[Prop.~A.11]{sbA}.
\end{rk}

To prepare for the statement of the next result, we remark that the isomorphism $\exprod^{\mathrm{rk}_\cR F}_\cR F^\ast \cong (\exprod^{\mathrm{rk}_\cR F}_\cR F)^\ast $ is valid for any finitely generated free $\cR$-module $F$ and induces an identification 
\[
\Det_\cR ( C^\bullet)^{-1} = \Det_\cR (\RHom_\cR (C^\bullet, \cR)). 
\]

The following result records useful functoriality properties of the map $\vartheta_{C^\bullet, \cY}$. 

\begin{prop} \label{functoriality properties of det projection map}
Let $\cR$ be a Noetherian reduced ring with total ring of fractions $\cQ$. 
\begin{liste}
\item  Assume we are given an exact triangle 
\begin{equation} \label{triangle in statement of prop}
\begin{tikzcd}
    A^\bullet \arrow{r}{f} & B^\bullet \arrow{r}{g} & C^\bullet \arrow{r} & A^\bullet [1]
    \end{tikzcd}
\end{equation}%
in $D^\mathrm{perf} (\cR)$ together with an $\cR$-free quotient $\cY$ of $H^2 (B^\bullet)$ of rank $d \geq \chi_\cQ (\cQ \otimes_\cR^\mathbb{L} B^\bullet)$ and with ordered $\cR$-basis $\mathfrak{B}$.
 Set $n \coloneqq d - \chi_\cQ (\cQ \otimes_\cR B^\bullet)$.
\begin{romanliste}
\item Suppose $\chi_\cR (C^\bullet) = 0$ and that the map $H^2 (A^\bullet) \xrightarrow{H^2 (f)} H^2 (B^\bullet) \to \cY$ induced by (\ref{triangle in statement of prop}) is surjective. 
For every element
    $a \in \Det_\cR (C^{\dagger})^{-1} \cong \Det_\cR (C^\bullet)$ and with the notation $C^{\dagger} \coloneqq \RHom_\cR (C^\bullet, \cR) [-2]$, one has the commutative diagram
    \begin{cdiagram}
        \Det_\cR ( B^\bullet)^{-1} \arrow{d}{\vartheta_{B^\bullet, \mathfrak{B}}} 
        \arrow{r}{\simeq}[below]{(\ref{triangle in statement of prop})} & \Det_\cR (A^\bullet)^{-1} \otimes_\cR \Det_\cR (C^\bullet)^{-1} \arrow{r}{\id \otimes \mathrm{Ev}_a} & \Det_\cR (A^\bullet)^{-1} \arrow{d}{\vartheta_{A^\bullet, \mathfrak{B}}} 
        \\ 
        \cQ \otimes_\cR \exprod^n_\cR H^1 (B^\bullet)  \arrow{r}{\cdot
        \vartheta_{C^{\dagger}, \emptyset} (a)
        }
        & \cQ \otimes_\cR \exprod^n_\cR  H^1 (B^\bullet)  
        & \cQ \otimes_\cR \exprod^n_\cR  H^1 (A^\bullet). \arrow{l}[above]{\wedge^n H^1 (f)}
    \end{cdiagram}%
    \item Suppose $\chi_\cR (A^\bullet) = 0$ and that the map $H^2 (B^\bullet) \to \cY$ factors through the map $H^2 (g) \: H^2 (B^\bullet) \to H^2 (C^\bullet)$ induced by (\ref{triangle in statement of prop}). For every
    $a \in \Det_\cR (A^\bullet)^{-1}$ one then has the commutative diagram
    \begin{cdiagram}
        \Det_\cR (C^\bullet)^{-1} \arrow{r}{x \mapsto a \otimes x} \arrow{d}{\vartheta_{C^\bullet, \mathfrak{B}}}
        & \Det_\cR (A^\bullet)^{-1} \otimes_\cR \Det_\cR (C^\bullet)^{-1} \arrow{r}{\simeq}[below]{(\ref{triangle in statement of prop})} & 
        \Det_\cR (B^\bullet) \arrow{d}{\vartheta_{B^\bullet, \mathfrak{B}}} \\
        \cQ \otimes_\cR \exprod^n_\cR H^1 (C^\bullet) \arrow{r}{\cdot \vartheta_{A^\bullet, \emptyset} (a)} & \cQ \otimes_\cR \exprod^n_\cR H^1 (C^\bullet) & 
        \cQ \otimes_\cR \exprod^n_\cR H^1 (B^\bullet) \arrow{l}[above]{\wedge^n H^2 (g)}.
    \end{cdiagram}
    \end{romanliste}
\item Let $C^\bullet \in D^\mathrm{perf} (\cR)$ be a complex and $\cY$ a quotient of $H^2 (B^\bullet)$ that is $\cR$-free of rank $d \geq 1 + \chi_\cQ (\cQ \otimes_\cR^\mathbb{L} C^\bullet)$ and has ordered $\cR$-basis $\mathfrak{B}$. 
Let $f \: \cR [1] \to C^\dagger \coloneqq \RHom_\cR ( C^\bullet, \cR)$ be a morphism in $D (\cR)$ with dual map $f^\ast \: C^\bullet \to R [-1]$. 
\begin{romanliste}
    \item The map $H^1 (C^\bullet) \otimes_\cR \cQ \to \cQ$ induced by $H^1 (f^\ast)$ via extension of scalars coincides with the composite map
    \[
    H^1 (C^\bullet) \otimes_\cR \cQ = H^1 (\cQ \otimes_\cR^\mathbb{L} C^\bullet) \cong 
    H^{-1} ( \cQ \otimes_\cR^\mathbb{L} C^\dagger)^\ast
    = \cQ \otimes_\cR H^{-1} (C^\dagger)^\ast
    \xrightarrow{H^{-1} (f)^\ast} \cQ.
    \]
    \item Setting $D^\bullet \coloneqq \mathrm{cone} (f^\ast) [-1]$, the following diagram is commutative.
\begin{cdiagram}[column sep=small]
\Det_\cR (C^\bullet)^{-1} \arrow{rr}{\simeq} \arrow{d}{\vartheta_{C^\bullet, \mathfrak{B}}} & &\Det_\cR ( D^\bullet)^{-1} 
\otimes_\cR \Det_\cR (\cR)^{-1} \arrow{r}{\id \otimes \mathrm{Ev}_1} & \Det_\cR ( D^\bullet)^{-1} \arrow{d}{\vartheta_{D^\bullet, \mathfrak{B}}} \\ 
\cQ \otimes_\cR \exprod^n_\cR  H^1 (C^\bullet) \arrow{rr}{H^1 (f^\ast)} & &
\cQ \otimes_\cR \exprod^{n - 1}_\cR  H^1 (C^\bullet) &
\cQ \otimes_\cR \exprod^{n - 1}_\cR  H^1 (D^\bullet). \arrow{l}
\end{cdiagram}%
\end{romanliste}
\item Suppose $C^\bullet \in D^\mathrm{perf} (\cR)$ is a complex with $\chi (C^\bullet) = 0$ that is acyclic outside degrees 1 and 2. Setting, $C^\dagger \coloneqq \RHom_\cR (C^\bullet, \cR) [i]$ for a fixed odd integer $i$, one then has an equality of $\cR$-submodules of $\cQ$
\[
\vartheta_{C^\bullet, \emptyset} ( \Det_\cR ( C^\bullet)^{-1}) =  
\vartheta_{C^\dagger, \emptyset} ( \Det_\cR ( C^\dagger)^{-1}). 
\]
    \item Let $C^\bullet \in D^\mathrm{perf} (\cR)$ be a complex and $\cR \to \cR'$ a surjective morphism of reduced Noetherian rings. Write $\rho \: C^\bullet \to C^\bullet \otimes_\cR^\mathbb{L} \cR' $ and $\rho^{\det} \: \Det_\cR (C^\bullet)^{-1} \to \Det_{\cR'} (C^\bullet \otimes_\cR^\mathbb{L} \cR')$ for the induced morphisms. 
    Suppose $\cY$ is a quotient of $H^2 (C^\bullet)$ that is $\cR$-free of rank $d \geq \chi_\cQ ( \cQ \otimes_\cR^\mathbb{L} C^\bullet)$ and has the property that the map $H^2 (C^\bullet) \otimes_\cR \cR' \to \cY \otimes_\cR \cR'$ factors through the map $H^2 (C^\bullet) \otimes_\cR \cR' \to H^2 (C^\bullet \otimes_\cR^\mathbb{L} \cR')$ induced by $H^2 (\rho)$. We also fix an ordered $\cR$-basis $\mathfrak{B}$ of $\cY$ and write $\mathfrak{B}'$ for the induced $\cR'$-basis of $\cY \otimes_\cR \cR'$.
    Then one has the commutative diagram
    \begin{cdiagram}[column sep=large]
        \Det_\cR (C^\bullet)^{-1} \arrow{r}{\rho^{\det}} \arrow{d}{\vartheta_{C^\bullet, \mathfrak{B}}} & 
        \Det_{\cR'} (C^\bullet \otimes_\cR^\mathbb{L} \cR') \arrow{d}{\vartheta_{C^\bullet \otimes_\cR^\mathbb{L} \cR', \mathfrak{B}'}} \\
        \cQ \otimes_\cR \exprod^n_\cR H^1 (C^\bullet) \arrow{r}{\wedge^n H^1 (\rho)} & \cQ \otimes_\cR \exprod^n_{\cR'} H^1 (C^\bullet \otimes_\cR^\mathbb{L} \cR') 
    \end{cdiagram}
\end{liste}

\end{prop}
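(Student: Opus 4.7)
The plan is to reduce each assertion to an explicit check after extending scalars to the total ring of fractions $\cQ$, where the ring is semisimple and so every perfect complex is quasi-isomorphic to the direct sum of its cohomology groups (each placed in the appropriate degree). Since the map $\vartheta_{C^\bullet, \mathfrak{B}}$ is by construction built out of the base-change isomorphism $\cQ \otimes_\cR^{\mathbb{L}} (-)$, the passage-to-cohomology morphism in $D(\cQ)$, and evaluation at the chosen basis $\mathfrak{B}$ of $\cY$, it suffices to verify commutativity in $\cQ$. One may further use Lemma~\ref{explicit description projection map lem} to reduce to the setting of a standard representative, and hence to a concrete computation with wedge products of dual-basis vectors. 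With this reduction in place the first step in each argument is to choose representatives of the relevant complexes so that the triangles in (a), (b) and the morphisms in (b), (d) lift to honest short exact sequences of complexes of finitely generated free modules. The idempotent $e_{C^\bullet, \cY}$ appearing in the definition of $\vartheta_{C^\bullet, \mathfrak{B}}$ ensures that one may safely pretend the relevant complexes are perfect over the semisimple quotient $e \cQ$, so the distinguished triangles in question split in $D(e\cQ)$ in a way compatible with the chosen generators.

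For claim (a) I would use the compatibility of the determinant functor with exact triangles (this is built into the definition of $\Det_\cR(-)$ in the sense of Knudsen--Mumford) together with the compatibility of the passage-to-cohomology map with the long exact sequence attached to (\ref{triangle in statement of prop}). Under either hypothesis on the Euler characteristic and the interaction with $\cY$, the basis $\mathfrak{B}$ of $\cY$ can be `placed' on either $A^\bullet$ (in part (i)) or $C^\bullet$ (in part (ii)), and the remaining complex contributes the scalar factor $\vartheta_{C^\dagger, \emptyset}(a)$ or $\vartheta_{A^\bullet, \emptyset}(a)$ respectively. For claim (b) I would begin by observing that the distinguished triangle
\begin{equation*}
D^\bullet \to C^\bullet \xrightarrow{f^\ast} \cR[-1] \to D^\bullet[1]
\end{equation*}
is precisely of the type required to apply (a)(i) (up to the identification $\RHom_\cR(\cR[-1], \cR) = \cR[1]$), so that part (ii) follows from (a)(i) once part (i) is checked. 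The latter is a direct consequence of the compatibility of the natural duality isomorphism $\Det_\cQ(X^\bullet)^{-1} \cong \Det_\cQ(\RHom_\cQ(X^\bullet,\cQ))$ with passage to cohomology, combined with the definition of the dual of a morphism in $D(\cR)$.

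For claim (c) the key observation is that, since $C^\bullet$ is acyclic outside degrees $1$ and $2$ and has vanishing Euler characteristic, its base-change to $\cQ$ is quasi-isomorphic to $\cQ^n \xrightarrow{\phi} \cQ^n$ for some $\cQ$-linear map $\phi$ (of necessarily nonzero determinant on each connected component where it is an isomorphism). Under the identification $\Det_\cR(C^\bullet)^{-1} \cong \Det_\cR(C^\dagger)^{-1}$ coming from the identity $\exprod^n_\cR F^\ast \cong (\exprod^n_\cR F)^\ast$, the map $\vartheta_{C^\bullet,\emptyset}$ corresponds to multiplication by $\det(\phi)$ whereas $\vartheta_{C^\dagger,\emptyset}$ corresponds to multiplication by $\det(\phi^\ast) = \det(\phi)$, so the two images generate the same $\cR$-submodule of $\cQ$. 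The shift by the odd integer $i$ only affects a sign and hence leaves the generated submodule unchanged.

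For claim (d) the result follows from the base-change property of the determinant functor together with the fact that, by our hypothesis on the factorisation of $H^2(C^\bullet) \otimes_\cR \cR' \to \cY \otimes_\cR \cR'$, the idempotent $e_{C^\bullet, \cY}$ maps to $e_{C^\bullet \otimes_\cR^\mathbb{L} \cR', \cY \otimes_\cR \cR'}$ under the projection $\cQ \to \cQ'$ (where $\cQ'$ is the total ring of fractions of $\cR'$); this ensures that the two evaluation steps at $\mathfrak{B}$ and $\mathfrak{B}'$ are compatible. I expect the main technical obstacle throughout to be the bookkeeping of signs and the choice of isomorphisms $\Det_\cR(F) \cong \exprod^{\mathrm{rk} F}_\cR F$ used in the definition of the projection maps; this is where the factor $(-1)^{n(n_2 - d)}$ in Lemma~\ref{explicit description projection map lem} originates and where the careful conventions of Knudsen--Mumford are indispensable. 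Once these are set up consistently, all four claims reduce to direct computations with the explicit formula of Lemma~\ref{explicit description projection map lem}.
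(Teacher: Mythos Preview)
Your overall strategy—reduce to the semisimple quotient $e\cQ$ and then use the explicit description of Lemma~\ref{explicit description projection map lem}—is exactly what the paper does, and your treatments of (a), (c), and (d) are essentially the paper's (in fact your argument for (d) is more detailed than the paper's one-line justification, though you should be careful: there is in general no ring map $\cQ \to \cQ'$, so the phrase ``$e_{C^\bullet,\cY}$ maps to \ldots under the projection $\cQ\to\cQ'$'' needs rewording).

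There is, however, a genuine gap in your reduction of (b)(ii) to (a)(i). The triangle $D^\bullet \to C^\bullet \xrightarrow{f^\ast} \cR[-1]$ does \emph{not} satisfy the hypothesis of (a)(i): that part requires the third term of the triangle to have vanishing Euler characteristic, whereas $\chi_\cR(\cR[-1]) = -[\cR] \neq 0$. This is not merely a formality, because the mismatch in Euler characteristics is precisely what causes the exterior-power degree to drop from $n$ to $n-1$ across the diagram in (b)(ii), a phenomenon absent from the diagrams in (a). The paper therefore treats (b)(ii) by a separate argument: after extending scalars to $\cQ$, one must first show that the idempotent $e_{D^\bullet,\mathfrak{B}}$ acts as the identity on the image of $(\wedge^n H^1(f^\ast))\circ \vartheta_{C^\bullet,\mathfrak{B}}$. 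This is done by analysing primitive idempotents $e$ for which $e\cdot(\wedge^n H^1(f^\ast))\circ \vartheta_{C^\bullet,\mathfrak{B}}$ is nonzero, using the long exact sequence in cohomology for $D^\bullet\to C^\bullet\to \cR[-1]$ to deduce $eH^2(D^\bullet)\cong eH^2(C^\bullet)$, and hence $e\cdot e_{D^\bullet,\mathfrak{B}} = e$. Only after this reduction can one pass to explicit representatives $[H^1(C^\bullet)\xrightarrow{0}\cY]$ and $[H^1(C^\bullet)\xrightarrow{H^1(f^\ast)\oplus 0}\cR\oplus\cY]$ and invoke Lemma~\ref{explicit description projection map lem}. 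Your sketch skips this idempotent comparison entirely, and without it the ``direct computation'' you propose does not go through.
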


\begin{proof}
To prove claim (a)\,(i), we note that the determinant functor behaves well under base change so that, replacing all appearing complexes by those obtained from applying the functor $(e_{C^\bullet, \emptyset} \cdot e_{B^\bullet, \cY}\cdot \cQ) \otimes_\cR^\mathbb{L} (-)$ to them and replacing $\cY$ by $(e_{C^\bullet, \emptyset} \cdot e_{B^\bullet, \cY} \cdot \cQ) \otimes_\cR \cY$ if necessary, we may assume $\cR = \cQ$ and $e_{B^\bullet, \cY} = 1 = e_{C^\bullet, \emptyset} = e_{C^\dagger, \emptyset}$. In particular, $B^\bullet$ is acyclic outside degrees 1 and 2 with $H^2 (C^\bullet) = \cY$ and $C^\bullet$ is acyclic. Since $\cR$ is semisimple (even a product of fields), it follows that $B^\bullet$ admits a representative of the form $[H^1 (B^\bullet) \stackrel{0}{\to} \cY]$ and $C^\bullet$ can be represented by the zero complex. 
Our convention then identifies $\Det_\cR( C^{\bullet, \ast})$ with $\Det_\cR (C^\bullet)^{-1} = \cR^\ast$ and under this identification $\mathrm{Ev}_a$ corresponds with the map $\cQ \to \cQ$ that sends $x$ to $a (x)$. On the other hand, the isomorphism $\cR^\ast \cong \cR, f \mapsto 
f(1)
$ combines with Lemma \ref{explicit description projection map lem} to imply that $\vartheta_{C^{\bullet, \ast}, \emptyset} (a) = 
a (1)
$. In addition, for our fixed choices of representatives, the triangle (\ref{triangle in statement of prop}) identifies $A^\bullet$ with $B^\bullet$ and so, using the description of $\vartheta_{A^\bullet, \mathfrak{B}}$ given in Lemma \ref{explicit description projection map lem}, it is straightforward to check that the diagram in claim (a)\,(i) is indeed commutative. The commutativity of the diagram in (a)\,(ii) can be proved in the same fashion.\\
To prove part (i) of claim (b), it suffices to note that $\cQ$ is a finite product of fields and therefore $\Hom_\cQ (-, \cQ)$ an exact functor. 
As for part (ii) of claim (b), we may again reduce to the case that $\cR = \cQ$. To justify a further reduction, we let $e$ be a primitive orthogonal idempotent of $\cQ$ such that $e \cdot (( \wedge^n H^1 (f^\ast)) \circ \vartheta_{C^\bullet, \mathfrak{B}})$ is nonzero. In particular, the map $eH^1 (f^\ast) \: e H^1 (C) \to e \cQ$ is nonzero and hence surjective because $e\cQ$ is a field. From the exact sequence
\begin{cdiagram}
    0 \arrow{r} & H^1 (D^\bullet) \arrow{r} & H^1 (C^\bullet) \arrow{rr}{H^1 (f^\ast)}& & R \arrow{r} & H^2 (D^\bullet) \arrow{r} & H^2 (C^\bullet) \arrow{r} & 0
\end{cdiagram}%
we then conclude that $e H^2 (D^\bullet) \cong e H^2 (C^\bullet)$. On the other hand, if $e \vartheta_{C^\bullet, \mathfrak{B}}$ is nonzero, then $e e_{C^\bullet, \mathfrak{B}} = e$ and this combines with the isomorphism $e H^i (D^\bullet) \cong e H^i (C^\bullet)$ that is valid for every integer $i \neq 1$ to imply that also $e e_{D^\bullet, \mathfrak{B}} = e$. This proves that $e_{D^\bullet, \mathfrak{B}}$ acts as the identity on the image of $( \wedge^n H^1 (f^\ast)) \circ \vartheta_{C^\bullet, \mathfrak{B}}$. Since the same is true for the image of $\vartheta_{D^\bullet, \mathfrak{B}}$ by its very definition, we may replace the complexes $C^\bullet$ and $D^\bullet$ by $e_{D^\bullet, \mathfrak{B}} \otimes_\cQ^\mathbb{L} C^\bullet$ and $e_{D^\bullet, \mathfrak{B}} \otimes_\cQ^\mathbb{L} D^\bullet$, respectively, to reduce to the case that $e_{C^\bullet, \mathfrak{B}} = 1 = e_{D^\bullet, \mathfrak{B}}$.\\  
In this case, then, $C^\bullet$ admits a representative of the form $[H^1 (C^\bullet) \stackrel{0}{\to} \cY]$ and a standard mapping cone construction yields the representative $[H^1 (C^\bullet) \xrightarrow{H^1 (f^\ast) \oplus 0} \cR \oplus \cY]$ for $D^\bullet$. Given these explicit representatives, the commutativity of the diagram in claim (b)\,(ii) is a direct consequence of the descriptions of the maps $\vartheta_{C^\bullet, \mathfrak{B}}$ and $\vartheta_{D^\bullet, \mathfrak{B}}$ obtained in Lemma \ref{explicit description projection map lem}. \\
To prove claim (c), we fix a representative
\begin{equation} \label{a representative in the middle of the proof}
\dots \to 0 \to C^{-n} \xrightarrow{\partial_{-n}} C^{- n + 1} \to \dots \to C^{m - 1} \xrightarrow{\partial_{m - 1}} C^m \to 0\to \dots  
\end{equation}
of $C^\bullet$ with suitable natural numbers $n, m \in \N$ and finitely generated free $\cR$-modules $C^j$ for $j \in \{ -n, \dots, m\}$ that are each placed in degree $j$.\\
The assumption that $C^\bullet$ has vanishing cohomology outside degrees 1 and 2 combines with the vanishing of $\chi (C^\bullet)$ to imply that $C^\bullet \otimes_\cR^\mathbb{L} e\cQ$, with $e \coloneqq e_{C^\bullet, \emptyset}$, is acyclic. In particular, for every $j$
we may fix a splitting 
\[ f_j \: e \cQ \otimes_\cR C^j \cong e \cQ \otimes_\cR \big( \im \partial_{j - 1} \oplus ( C^j / \ker \partial_{j}) \big)
\]
as well as an isomorphism of $\cR$-modules 
\[
\phi \: \cQ \otimes_\cR C^\mathrm{odd} \coloneqq {\bigoplus}_{j \text{ odd}} (\cQ \otimes_\cR C^j ) \to C^\mathrm{even} \coloneqq {\bigoplus}_{j \text{ even}} (\cQ \otimes_\cR C^j)
\]
that restricts to give the isomorphism
\[
e \cQ \otimes_\cR C^\mathrm{odd} \stackrel{\simeq}{\longrightarrow} e \cQ \otimes_\cR C^\mathrm{even}, \quad ( a_j)_j \mapsto ( (\partial_{j - 1} \circ f_{j - 1})^{-1} \oplus \partial_{j}) ( f_j (a_j)).
\]
Writing $M (\phi)$ for the matrix (with entries in $\cQ$) representing $\phi$ in a choice of $\cR$-bases for $C^\mathrm{even}$ and $C^\mathrm{odd}$, one has the equality of $\cR$-submodules of $\cQ$ 
\[
\vartheta_{C^\bullet, \emptyset} ( \Det_\cR ( C^\bullet)^{-1}) = e \cdot  {\det}_\cR ( M (\phi)) \cdot \cR. 
\]
On the other hand, applying $\RHom_\cR ( - , \cR)$ to the representative (\ref{a representative in the middle of the proof}), we see that $C^\dagger$ is represented by
\[
\dots \to 0 \to C^m \xrightarrow{\partial_{m - 1}^{\mathrm{tr}}}
C^{m - 1} \to \dots \to C^{- n + 1} \xrightarrow{\partial_{-n}^{\mathrm{tr}}} C^{-n} \to 0 \to \dots,
\]
where now $C^j$ is placed in degree $ - j + 3$, and the maps $\partial_{j}^{\mathrm{tr}}$ are obtained from $\partial_j$ in the following way: Fix $\cR$-bases of $C^j$ and $C^{j + 1}$, and write $A_j$ for the matrix representing $\partial_j$ with respect to these bases. Then $\partial_{j}^{\mathrm{tr}}$
is the unique $\cR$-linear morphism $C^{j + 1} \to C^j$ that, for our fixed choices of bases, is represented by the transpose $A^{\mathrm{tr}}_j$ of the matrix $A_j$.\\
An analysis similar to the one above shows that 
\[
\vartheta_{C^\dagger, \emptyset} (\Det_\cR (C^\dagger)) =  {\det}_\cR (M (\phi')) \cdot \cR, 
\]
where now $\phi' \: \cQ \otimes_\cR C^{\dagger, \mathrm{odd}} = \cQ \otimes_\cR C^\mathrm{even} \to  \cQ \otimes_\cR C^\mathrm{odd} = \cQ \otimes_\cR C^{\dagger, \mathrm{even}}$ (for the identification we have used that $i$ is odd) is any isomorphism of $\cR$-modules that restricts to
\[
e \cQ \otimes_\cR C^\mathrm{even} \to e\cQ \otimes_\cR C^\mathrm{odd}, \quad 
(a_j)_j \mapsto  (\partial_{j - 1}^{\mathrm{tr}} \oplus ( \partial_{j}^{\mathrm{tr}} \circ f_{j})^{-1} ) (f_j (a_j)).
\]
The equality claimed in (c) therefore follows upon noting that $\det_\cR ( M (\phi)) =  \det_\cR ( M (\phi'))$.\\
Finally, claim (d) is valid because the definitions of $\vartheta_{C^\bullet, \mathfrak{B}}$ and $\vartheta_{C^\bullet \otimes_\cR^\mathbb{L} \cR', \mathfrak{B}'}$ both involve the passage-to-cohomology map. 
\end{proof}

\begin{rk}
    Part (a) of Proposition \ref{functoriality properties of det projection map} generalises the observation of Burns and Flach in \cite[Lem.\@ 1]{BurnsFlach98}.
\end{rk}

\subsection{Fitting ideals and exterior biduals} \label{apendix section Fitting ideals biduals}

In this subsection we study integrality properties of the maps from Definition \ref{def projection map from det}. In doing so, we will be naturally lead to consider $\cR$-modules of the form
\begin{equation} \label{bidual with denominators}
\big( \a \otimes_\cR \exprod^r_\cR M^\ast \big)^\ast 
\end{equation}
for an ideal $\a \subseteq \cR$, an $\cR$-module $M$, and an integer $r \geq 0$. 
Taking $\a = \cR$, this construction specialises to give the `$r$-th exterior bidual' $\bidual^r_\cR M \coloneqq ( \exprod^r_\cR M^\ast)^\ast$ of $M$ that has been studied in detail by Burns and Sano \cite[App.]{sbA} and Sakamoto \cite[App.\@ B]{Sakamoto20} as a generalisation of the lattice utilised by Rubin in \cite{Rub96}. 
The following observation shows that modules of the form (\ref{bidual with denominators}) should be considered a generalisation of Rubin's lattice with `denominators bounded by $\a$' and, in particular, specialise to the lattices studied by Popescu in \cite{Popescu02}.

\begin{lem} \label{biduals and rubin lattices}
    Let $\cR$ be a reduced Noetherian ring with total ring of fractions $\cQ$. For every ideal $\a \subseteq \cR$, finitely generated $\cR$-module $M$, and integer $r \geq 0$, there is a canonical isomorphism
    \[
    \xi^r_{M, \a} \: \big \{ 
    a \in \cQ \otimes_\cR \exprod^r_\cR M \mid \Ann_\cR (\a) \cdot a = 0, f (a) \in \a^{-1} \text{ for all } f \in \exprod^r_\cR M^\ast \big \}  
    \stackrel{\simeq}{\longrightarrow} 
    \big( \a \otimes_\cR \exprod^r_\cR M^\ast \big)^\ast .
    \]
    Here $\a^{-1} \coloneqq \{ q \in \cQ \mid q a \in \cR \text{ for all } a \in \a \}$.
\end{lem}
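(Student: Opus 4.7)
The map $\xi^{r}_{M, \a}$ is defined by sending $a$ to the $\cR$-linear map $x \otimes f \mapsto x \cdot f(a)$; since $\a \cdot \a^{-1} \subseteq \cR$, the defining condition $f(a) \in \a^{-1}$ is precisely what is needed for well-definedness. The strategy is to construct the inverse via base change to $\cQ$ together with an analysis of the primitive-idempotent structure, and then verify the two sides match.

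I would first observe that, since $\cR$ is reduced Noetherian, its total ring of fractions $\cQ$ decomposes as a finite product $\prod_i K_i$ of fields indexed by the minimal primes $\p_i$ of $\cR$ (each $\cR_{\p_i}$ being reduced zero-dimensional local, hence a field), with primitive idempotents $e_i$. Setting $e_\a \coloneqq \sum_{i \: \a \not \subseteq \p_i} e_i$ one has $\a \cdot \cQ = e_\a \cQ$. A prime-avoidance argument, producing for each $j$ with $e_j \leq 1 - e_\a$ an element of $\bigcap_{i \: e_i \leq e_\a} \p_i$ not lying in $\p_j$ (by multiplying together, for $i \neq j$, chosen elements of $\p_i \setminus \p_j$), then yields the key equality $\Ann_\cR(\a) \cdot \cQ = (1 - e_\a) \cQ$. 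In particular, the annihilator condition $\Ann_\cR(\a) \cdot a = 0$ becomes equivalent to the idempotent identity $a = e_\a \cdot a$ in $\cQ \otimes_\cR \exprod^{r}_\cR M$.

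Given $\phi$ in the right-hand side, base-changing produces a $\cQ$-linear map
\[
\phi_\cQ \: e_\a \cQ \otimes_\cQ \bigl( \cQ \otimes_\cR \exprod^{r}_\cR M^\ast \bigr) \longrightarrow \cQ.
\]
Since $\cR$ is Noetherian, $M$ is finitely presented, and flatness of $\cR \to \cQ$ (as a localisation) gives $\cQ \otimes_\cR M^\ast \cong (\cQ \otimes_\cR M)^{\ast_\cQ}$; since $\cQ$ is semisimple, every finitely generated $\cQ$-module is projective, which gives $\exprod^{r}_\cQ N^{\ast_\cQ} \cong (\exprod^{r}_\cQ N)^{\ast_\cQ}$ for every such $N$. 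Combining these identifications, $\phi_\cQ$ corresponds uniquely to an element $a \in e_\a \cdot (\cQ \otimes_\cR \exprod^{r}_\cR M)$, characterised by $\phi_\cQ(e_\a \otimes f) = f(a)$, and $a$ will be the desired preimage.

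Verification then has two halves. For surjectivity, any $x \in \a$ satisfies $x \cdot e_\a = x$ in $\cQ$ (because $\a \subseteq e_\a \cQ$), so $x \cdot f(a) = \phi_\cQ(x \otimes f) = \phi(x \otimes f) \in \cR$, establishing simultaneously that $f(a) \in \a^{-1}$ for every $f$ and that $\xi^{r}_{M, \a}(a) = \phi$. For injectivity, the vanishing $\xi^{r}_{M, \a}(a) = 0$ forces $\a \cdot f(a) = 0$ and hence $f(a) \in (1 - e_\a) \cQ$, while $a = e_\a a$ forces $f(a) \in e_\a \cQ$; so $f(a) = 0$ for every $f$, and non-degeneracy of the canonical evaluation pairing over the semisimple ring $\cQ$ yields $a = 0$. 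The principal obstacle is the equality $\Ann_\cR(\a) \cdot \cQ = (1 - e_\a) \cQ$: it is the only place where reducedness of $\cR$ is used in an essential way, and without it the clean translation of the $\cR$-theoretic annihilator condition into idempotent form over $\cQ$ breaks down.
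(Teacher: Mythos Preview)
Your proof is correct and follows essentially the same approach as the paper's: both arguments hinge on identifying the idempotent $e_\a \in \cQ$ with $\a\cQ = e_\a\cQ$ and $\Ann_\cR(\a)\cdot\cQ = (1-e_\a)\cQ$, then using semisimplicity of $\cQ$ to identify the right-hand side with the $e_\a$-component of $\cQ \otimes_\cR \exprod^r_\cR M$. The paper packages this via the $\Hom(\,\cdot\,, 0 \to \cR \to \cQ \to \cQ/\cR \to 0)$ kernel description and a cited bidual lemma, whereas you construct the inverse explicitly and verify bijectivity by hand; your prime-avoidance justification of the idempotent complementarity is also more detailed than the paper's assertion.
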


\begin{proof}
This is a natural generalisation of the result of \cite[Prop.\@ A.8]{sbA}. For the convenience of the reader, we provide the argument. \\
Applying the functor $\Hom_\cR ( \a \otimes_\cR \exprod^r_\cR M^\ast, -)$ to the exact sequence
$0 \to \cR \to \cQ \to \cR / \cQ \to 0$
gives that
\begin{cdiagram}
\big( \a \otimes_\cR \exprod^r_\cR M^\ast \big)^\ast = \ker \Big \{ \Hom_\cR \big ( \a \otimes_\cR \exprod^r_\cR M^\ast, \cQ \big ) \to 
 \Hom_\cR \big ( \a \otimes_\cR \exprod^r_\cR M^\ast, \cQ / \cR \big) \Big \}.
\end{cdiagram}%
As $\cQ$ is a finite product of fields, the ideal $\cQ \cdot \Ann_\cR (\a)$ is generated by an idempotent $e \in \cQ$ (so that $\cQ \cdot \a = (1 - e) \cQ$) and $\cQ \otimes_\cR M$ is a finitely generated projective $\cQ$-module. By \cite[Lem.\@ A.1]{sbA} we therefore have an isomorphism
\[
(1- e) \cQ \otimes_\cR \exprod^r_\cR M \cong (1 - e) \exprod^r_{\cQ} ( \cQ \otimes_R M) 
\stackrel{\simeq}{\longrightarrow} (1 - e) \bidual^r_{\cQ} (\cQ \otimes_R M), \quad 
a \mapsto \{ \varphi \mapsto \varphi (a) \}. 
\]
Furthermore, there is an isomorphism $\Hom_\cQ ( \cQ \otimes_\cR N, \cQ) \cong \Hom_\cR (N, \cQ)$ for any $\cR$-module $N$ by the tensor-hom adjunction. It follows that $(1 - e) \bidual^r_\cQ ( \cQ \otimes_R M) \cong 
 \Hom_\cR ( \a \otimes_\cR \exprod^r_\cR M^\ast, \cQ )$. 
 The lemma now follows upon noting that an element $a$ of $(1 - e) \cQ \otimes_R \exprod^r_R M$
 belongs to the kernel of $\Hom_\cR (\a \otimes_\cR \exprod^r_\cR M^\ast, \cQ) \to \Hom_\cR ( \a \otimes_\cR \exprod^r_\cR M^\ast, \cQ / \cR )$ under these identifications if and only if one has $x f(a) \in \cR$ for all $x \in \a$ and $f \in \exprod^r_\cR M^\ast$. 
\end{proof}

Note that, for every integer $s \leq r$ and $f \in \exprod^s_\cR M^\ast$, we obtain a map
\[
\big( \a \otimes_\cR \exprod^r_\cR M^\ast \big)^\ast \to  \big( \a \otimes_\cR \exprod^{r - s}_\cR M^\ast \big)^\ast, \quad 
\varphi \mapsto \{ a \otimes g \mapsto \varphi ( a \otimes (f \wedge g) ) \} 
\]
that, by abuse of notation, will also be denoted as $f$. This construction gives a commutative diagram
\begin{cdiagram}[row sep=small]
\exprod^r_\cR M \arrow{r}{f} \arrow{d} & \exprod^{r - s}_\cR M \arrow{d}\\ 
\big( \a \otimes_\cR \exprod^r_\cR M^\ast \big)^\ast \arrow{r}{f} &  \big( \a \otimes_\cR \exprod^{r - s}_\cR M^\ast \big)^\ast,
\end{cdiagram}%
where the top arrow is the map $\Phi^{r, s}_M (f)$ defined earlier and the vertical arrows are the maps (with $t \in \{r, r - s\}$)
\[
\exprod^t_\cR M  \to \big( \a \otimes_\cR \exprod^t_\cR M^\ast \big)^\ast, 
\quad m_1 \wedge \dots \wedge m_t \mapsto \big \{ a \otimes f_1 \wedge \dots \wedge f_t \mapsto 
a \cdot \det ( f_i (m_j))_{1 \leq i, j \leq t} \big \}.
\]
Following Sakamoto \cite{Sakamoto20}, the results in the remainder of this section are most naturally stated for rings that are `quasi-normal'. That is, Noetherian rings $\cR$ that satisfy the following two conditions.
\begin{itemize}[align=left]
    \item[($\mathrm{G}_1$)] The localisation $\cR_\p$ of $\cR$, at any prime ideal $\p \subseteq \cR$ of height at most one, is Gorenstein.
    \item[($\mathrm{S}_2$)] The localisation $\cR_\p$ of $\cR$, at any prime ideal $\p \subseteq \cR$  of height $n$, has depth at least $\min \{ 2, n \}$. 
\end{itemize}

\begin{prop} \label{integrality properties of det projection map}
Let $\cR$ be a quasi-normal ring and $C^\bullet \in D^\mathrm{perf} (\cR)$ a complex that admits a standard representative (\ref{standard representative}) with respect to a surjection $\kappa \: H^2 (C^\bullet) \to \cY$ for a free $\cR$-module $\cY$ of rank $d$ and an ordered $\cR$-basis $\mathfrak{B}$ of $\cY$. 
We also set $n \coloneqq d + n_1 - (n_0 + n_2)$. \begin{liste}
    \item There exists a well-defined canonical map
    \[
    \varpi_{C^\bullet, \mathfrak{B}} \: \Det_\cR (C^\bullet)^{-1}
    \to  \big( \Fitt^0_\cR (H^1(C^\bullet)_\tor^\vee) \otimes_\cR \exprod^{n}_\cR H^1 (C^\bullet)^\ast \big)^\ast
    \]
    with the property that, if $\cR$ is reduced, then one has
    \[
    \xi^{-1} \circ \varpi_{C^\bullet, \mathfrak{B}} = \vartheta_{C^\bullet, \mathfrak{B}}
    \]
    with $\xi \coloneqq \xi^{n}_{H^1 (C^\bullet), \Fitt^0_\cR (H^1(C^\bullet)_\tor^\vee)}$ the isomorphism from Lemma \ref{biduals and rubin lattices}.
    \item One has an inclusion of $\cR$-modules
    \[
    \big \{ f ( a) \mid a \in \im ( \varpi_{C^\bullet, \mathfrak{B}}),  f \in \exprod^{n}_\cR H^1 (C^\bullet)^\ast \big \} \subseteq   \Fitt^0_\cR (H^1(C^\bullet)_\tor^\vee)^\ast \otimes_\cR \Fitt^d_\cR (H^2 (C^\bullet))^{\ast \ast}.  
    \]
    Moreover, if $\p \subseteq \cR$ is a height-one prime ideal in the support of the cokernel of this inclusion, then $(H^1(C^\bullet)_\tor^\vee)_\p$ does not have finite projective dimension as an $\cR_\p$-module. 
    \item If $\p \subseteq \cR$ is a prime ideal of height at most one with the property that $\cR_\p$ is a regular local ring, then one has the equality
    \[
    \im ( \vartheta_{C^\bullet, \mathfrak{B}})_\p = \Fitt^d_\cR (H^2 (C^\bullet))_\p \cdot \Fitt^0_\cR (H^1(C^\bullet)_\tor^\vee)^{-1}_\p \cdot 
    \big( \bidual^{n}_\cR H^1 (C^\bullet) \big)_\p.
    \]
\end{liste}
\end{prop}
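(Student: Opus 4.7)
The plan is to extend the explicit formula for $\vartheta_{C^\bullet, \mathfrak{B}}$ provided by Lemma \ref{explicit description projection map lem} while carefully tracking integrality. Given a standard representative $F^0 \xrightarrow{\partial_0} F^1 \xrightarrow{\partial_1} F^2$ and $x = f \otimes g \otimes \exprod_i b_i^\ast$ in $\Det_\cR(C^\bullet)^{-1}$, that formula produces the element $(-1)^{n(n_2-d)} \cdot (\widetilde f \wedge \exprod_{d+1 \leq i \leq n_2}(b_i^\ast \circ \partial_1))(g)$, where the preimage $\widetilde f$ of $f$ under the $\cR$-linear map $\partial_0^\ast \: \exprod^{n_0}_\cR (F^1)^\ast \to \exprod^{n_0}_\cR (F^0)^\ast$ exists only after extending scalars to $\cQ$. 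The central task is to show that every element of $\Fitt^0_\cR(H^1(C^\bullet)^\vee_\tor)$ acts as a ``universal denominator'' for $\widetilde f$.

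For part (a), I would analyse the cokernel of the $\cR$-linear map $\delta \coloneqq \exprod^{n_0}_\cR \partial_0^\ast$. Dualising the representative of $C^\bullet$ produces an explicit representative for $\RHom_\cR(C^\bullet, \cR)$, and a Matlis-type duality argument using the quasi-normality of $\cR$ (in the same spirit as the proofs of Burns--Sano \cite[App.]{sbA} and Sakamoto \cite[App.\@ B]{Sakamoto20}) identifies the reflexive hull of $\coker(\delta)$ with $H^1(C^\bullet)^\vee_\tor$. In particular, every $\alpha \in \Fitt^0_\cR(H^1(C^\bullet)^\vee_\tor)$ lifts $\alpha \cdot \widetilde f$ to a canonical class in $\exprod^{n_0}_\cR (F^1)^\ast$ modulo the image of $\partial_0^\ast$. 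One then \emph{defines} $\varpi_{C^\bullet,\mathfrak{B}}(x)$ to be the $\cR$-linear functional on $\Fitt^0_\cR(H^1(C^\bullet)^\vee_\tor) \otimes_\cR \exprod^n_\cR H^1(C^\bullet)^\ast$ sending $\alpha \otimes \phi$ to the image under $\phi$ of $(-1)^{n(n_2-d)}\cdot(\alpha \widetilde f \wedge \exprod_i(b_i^\ast \circ \partial_1))(g)$; independence from the chosen integral representative of $\alpha \widetilde f$ is checked from $\partial_1 \circ \partial_0 = 0$, and the identity $\xi^{-1} \circ \varpi_{C^\bullet,\mathfrak{B}} = \vartheta_{C^\bullet,\mathfrak{B}}$ in the reduced case is then a matter of unwinding the isomorphism of Lemma \ref{biduals and rubin lattices}.

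For part (b), I would evaluate a functional $\phi \in \exprod^n_\cR H^1(C^\bullet)^\ast$ on an element in the image of $\varpi_{C^\bullet,\mathfrak{B}}$ by lifting $\phi$ to $\exprod^n_\cR (F^1)^\ast$ and applying Laplace expansion. The factors contributed by $\exprod_{d+1 \leq i \leq n_2}(b_i^\ast \circ \partial_1)$ assemble into $d \times d$ minors of the matrix of $\partial_1$ read off against the basis of $F^2$ distinguished by the surjection $F^2 \twoheadrightarrow \cY$; by the Fitting lemma these minors lie in $\Fitt^d_\cR(H^2(C^\bullet))$, and the presence of $\alpha$ delivers the asserted containment in $\Fitt^0_\cR(H^1(C^\bullet)^\vee_\tor)^\ast \otimes_\cR \Fitt^d_\cR(H^2(C^\bullet))^{\ast\ast}$. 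The statement on the support of the cokernel is then an Auslander--Buchsbaum-type argument: at a height-one prime $\p$ of $\cR$, the Gorenstein hypothesis $(\mathrm{G}_1)$ forces a module of finite projective dimension over $\cR_\p$ to have vanishing higher $\Ext_{\cR_\p}$; if $\p$ lies in the support of the cokernel, the identification of paragraph two forces $(H^1(C^\bullet)^\vee_\tor)_\p$ to violate this vanishing.

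For part (c), if $\cR_\p$ is a regular local ring of dimension at most one, every finitely generated $\cR_\p$-module has projective dimension at most one, so the obstruction from (b) vanishes and the containment there becomes an equality. The passage from $\Fitt^0_\cR(H^1(C^\bullet)^\vee_\tor)^\ast \otimes_\cR \Fitt^d_\cR(H^2(C^\bullet))^{\ast\ast}$ at $\p$ to $\Fitt^d_\cR(H^2(C^\bullet))_\p \cdot \Fitt^0_\cR(H^1(C^\bullet)^\vee_\tor)^{-1}_\p \cdot \bidual^n_\cR H^1(C^\bullet)_\p$ is then carried out by combining Lemma \ref{biduals and rubin lattices} with Cramer-style identities over the regular local ring $\cR_\p$. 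I expect the hardest step to be the bidual identification of $\coker(\exprod^{n_0}_\cR \partial_0^\ast)$ with $H^1(C^\bullet)^\vee_\tor$ underpinning paragraph two, since it requires both the $(\mathrm{G}_1)$ and $(\mathrm{S}_2)$ conditions in order to extend the Matlis-duality technology of \cite[App.]{sbA} and \cite[App.\@ B]{Sakamoto20} from the Gorenstein or normal setting to the present quasi-normal one.
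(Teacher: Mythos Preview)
Your overall strategy matches the paper's: read off from the dualised sequence $0 \to (F^1/F^0)^\ast \to (F^1)^\ast \xrightarrow{\partial_0^\ast} (F^0)^\ast \to \Ext^1_\cR(F^1/F^0,\cR) \to 0$ that $\im(\exprod^{n_0}\partial_0^\ast) = \Fitt^0_\cR(\Ext^1_\cR(F^1/F^0,\cR)) \cdot \exprod^{n_0}(F^0)^\ast$, lift $\lambda f$ integrally for each $\lambda$ in this Fitting ideal, and then identify the reflexive hulls of the Fitting ideals of $\Ext^1_\cR(F^1/F^0,\cR)$, $\Ext^1_\cR(H^1(C^\bullet),\cR)$, and $H^1(C^\bullet)_\tor^\vee$ via pseudo-isomorphisms (the first pair using that $\Ext^{\geq 2}_\cR(H^2(C^\bullet),\cR)$ is pseudo-null by $(\mathrm{G}_1)$, the second via Lemma \ref{ext and dual of tor}). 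One terminological slip: the object you want to identify with $H^1(C^\bullet)^\vee_\tor$ is $\Ext^1_\cR(F^1/F^0,\cR)$, not the ``reflexive hull of $\coker(\delta)$'' (the latter is the cyclic torsion module $\cR/\Fitt^0_\cR(\Ext^1_\cR(F^1/F^0,\cR))$, whose reflexive hull is zero).

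The genuine gap is the independence of $\varpi_{C^\bullet,\mathfrak{B}}$ from the chosen lift $\widetilde{\lambda f}$. This does \emph{not} follow from $\partial_1 \circ \partial_0 = 0$: two lifts differ by an element of $\ker(\exprod^{n_0}\partial_0^\ast) \subseteq \exprod^{n_0}(F^1)^\ast$, and over $\cR$ this kernel is not in general spanned by wedges containing a factor from $(F^1/F^0)^\ast$, so there is no obvious way to see that the ambiguity dies under projection to $H^1(C^\bullet)$. The paper instead checks vanishing of $\varphi(\phi(a'_{g_1}-a'_{g_2}))$ after localising at each minimal prime $\p$: there $\cR_\p$ is self-injective by $(\mathrm{G}_1)$, so $(F^1/F^0)_\p$ is free of rank $n_1-n_0$ and the restriction map $(F^1/F^0)^\ast_\p \to H^1(C^\bullet)^\ast_\p$ is surjective. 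One then lifts $\varphi$ to $\widetilde\varphi \in \exprod^n(F^1/F^0)^\ast_\p$ and observes that both $g_1 \wedge \widetilde\varphi \wedge \exprod_i(b_i^\ast\circ\partial_1)$ and $g_2 \wedge \widetilde\varphi \wedge \exprod_i(b_i^\ast\circ\partial_1)$ have the same preimage $(\lambda f)\otimes(\widetilde\varphi\wedge\exprod_i(b_i^\ast\circ\partial_1))$ under the canonical isomorphism $\exprod^{n_0}(F^0)^\ast_\p \otimes \exprod^{n_1-n_0}(F^1/F^0)^\ast_\p \cong \exprod^{n_1}(F^1)^\ast_\p$. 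This is the most delicate step and is where the Gorenstein hypothesis at height zero is actually used.
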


\begin{proof}
    Dualising the tautological exact sequence $0 \to F^0 \to F^1 \to F^1 / F^0 \to 0$ gives the exact sequence
    \begin{equation} \label{free presentation for Ext}
    \begin{tikzcd}
        0 \arrow{r} & (F^1 / F^0)^\ast \arrow{r} & (F^1)^\ast \arrow{r}{\partial_0^\ast} & (F^0)^\ast \arrow{r} & 
        \Ext^1_\cR (F^1 / F^0, \cR) \arrow{r} & 0,
    \end{tikzcd}%
    \end{equation}
    from which we deduce, by the definition of Fitting ideals, that one has
    \begin{equation} \label{description image in terms of fitt}
    \im \big \{ \exprod^{n_0}_\cR (F^1)^\ast \xrightarrow{\partial_0^\ast} \exprod^{n_0}_\cR (F^0)^\ast \big\} = 
    \Fitt^0_\cR (  \Ext^1_\cR (F^1 / F^0, \cR)) \cdot \exprod^{n_0}_\cR (F^0)^\ast.
    \end{equation}
     In particular, for every $f \in \exprod^{n_0}_\cR (F^0)^\ast$ and $\lambda \in \Fitt^0_\cR (  \Ext^1_\cR (F^1 / F^0, \cR))$ we can find $\widetilde{\lambda f} \in \exprod^{n_0}_\cR (F^1)^\ast$ with $\partial_0^\ast ( \widetilde{\lambda f}) = \lambda \cdot f$. \\
    Write $b_1, \dots, b_{n_2}$ for the basis from condition (iv) in Definition \ref{standard representative def}. Given an element $f \otimes a \otimes \bigwedge_{1 \leq i \leq n_2} b_i^\ast$ of $\Det_\cR ( C^\bullet)^{-1} = \exprod^{n_0}_\cR (F^0)^\ast \otimes_\cR \exprod^{n_1}_\cR F^1
    \otimes_\cR \exprod^{n_2}_\cR (F^2)^\ast$ we now consider
    \[
    a'_{\widetilde{\lambda f}} \coloneqq ( \widetilde{\lambda f} \wedge  {\bigwedge}_{d + 1 \leq i \leq n_2} (b_i^\ast \circ \partial_1)) (a) \in \exprod^{n}_\cR F^1. 
    \]
    Set $F' \coloneqq \bigoplus_{i = d + 1}^{n_2} (\cR \cdot b_i)$ so that we have a direct sum decomposition $F^2 = F' \oplus \cY$. By property (iv) in Definition \ref{standard representative def}, $\partial_1$ maps to $F'$ and therefore it follows from  \cite[Lem.\@ 2.17\,(ii)]{BB} that the map $\bigwedge_{d + 1 \leq i \leq n_2} (b_i^\ast \circ \partial_1) \:  \exprod^{n_1 - n_0}_\cR F_1 \to \exprod^{n}_\cR F_1$ has image in $\bidual^{n}_\cR \ker (\partial_1)$. In particular, $a'_{\widetilde{\lambda f}}$ belongs to $\bidual^{n}_\cR \ker (\partial_1)$. Writing $\phi$ for the map $\bidual^{n}_\cR \ker (\partial_1) \to \bidual^{n}_\cR H^1 (C^\bullet)$ induced by the projection $\ker (\partial_1) \to \ker (\partial_1)/ F_0 = H^1 (C^\bullet)$, we may then define the map
    \begin{align} \nonumber
      \Det_\cR ( C^\bullet)^{-1} & \to \big( \Fitt^0_\cR ( \Ext^1_\cR (F^1 / F^0, \cR)) \otimes_\cR \exprod^{n}_\cR H^1 (C^\bullet)^\ast \big)^\ast,\\
     a & \mapsto \{ \lambda \otimes \varphi \mapsto 
     (-1)^{n(n_2 - d)} \cdot
     \varphi ( \phi ( a'_{\widetilde{\lambda f}})) \}. 
     \label{pre map from det}
    \end{align}
    We now claim that $\varphi ( \phi ( a'_{\widetilde{\lambda f}}))$ is independent of the choice of lift $\widetilde{\lambda f}$ of $\lambda f$ even though $a'_{\widetilde{\lambda f}}$ might depend on it. 
    Let $g_1$ and $g_2$ be two such lifts of $\lambda f$, then we shall show $\varphi ( \phi ( a'_{g_1})) = \varphi ( \phi ( a'_{g_2}))$ by verifying that $\varphi ( \phi (  a'_{g_1} - a'_{g_2}))$ vanishes in the total ring of fractions $\cQ$ of $\cR$. For this it is enough to prove the required vanishing in the localisation $\cR_\p$ at every minimal prime ideal $\p$ of $\cR$. By the validity of condition ($\mathrm{G}_1$) the ring $\cR_\p$ is self-injective and so $\cR_\p \otimes_\cR \Ext^1_\cR ( F^1 / F^0, \cR)$ vanishes. It therefore follows from (\ref{free presentation for Ext}) that $(F^1  / F^0)_\p$ is a free $\cR_\p$-module of rank $n_1 - n_0$ and that we have a canonical isomorphism 
    \begin{equation} \label{isomorphism of wedge powers}
    \exprod^{n_1}_{\cR_\p} (F^1)_\p^\ast \cong ( \exprod^{n_0}_{\cR_\p} (F^0)^\ast_\p ) \otimes_\cR  \exprod^{n_1 - n_0}_{\cR_\p} (F^1 / F^0)^\ast_\p.
    \end{equation}
    In addition, as $\cR_\p$ is self-injective, dualising the injection $H^1 (C^\bullet) = \ker (\partial_1) / F_0 \hookrightarrow F_1 / F_0$ shows that the restriction map $(F_1 / F_0)^\ast_\p \to H^1 (C^\bullet)^\ast_\p$ is surjective so that we may assume $\varphi$ is the restriction of an element $\widetilde \varphi \in \exprod^r_{\cR_\p} (F_1 / F_0)^\ast_\p$. Now, the element 
    \[
    (\lambda f) \otimes \big ( \widetilde \varphi \wedge \exprod_{d + 1 \leq i \leq n_2} (b_i^\ast \circ \partial_1) \big) \in ( \exprod^{n_0}_{\cR_\p} (F^0)^\ast_\p ) \otimes_\cR  \exprod^{n_1 - n_0}_{\cR_\p} (F^1 / F^0)^\ast_\p
    \]
    is a preimage of both $g_1 \wedge \widetilde \varphi \wedge \bigwedge_{d + 1 \leq i \leq n_2} (b_i^\ast \circ \partial_1)$ and $g_2 \wedge \widetilde \varphi \wedge \bigwedge_{d + 1 \leq i \leq n_2} (b_i^\ast \circ \partial_1)$ under the isomorphism (\ref{isomorphism of wedge powers}), and this implies the claimed independence from the lift $\widetilde{\lambda f}$ of $\lambda f$.  \\ 
    Next we note that there is an exact sequence
    \begin{cdiagram}
        \Ext^1_\cR ( J, \cR) \arrow{r}  & \Ext^1_\cR ( F^1 / F^0, \cR) \arrow{r} &  \Ext^1_\cR ( H^1 (C^\bullet), \cR) \arrow{r} & \Ext^2_\cR ( J, \cR), 
    \end{cdiagram}%
    where $J$ denotes the cokernel of $H^1 (C^\bullet) \to F^1 / F^0$. 
    We claim that the middle arrow in this exact sequence is a pseudo-isomorphism.
    Indeed, from the exact sequence $0 \to J \to F^2 \to H^2 (C^\bullet) \to 0$ we obtain isomorphisms $\Ext^i_\cR (J, \cR) \cong \Ext^{i + 1}_\cR ( H^2 (C^\bullet), \cR)$ for every $i \geq 1$, which shows these modules to be pseudo-null because $\cR$ is assumed to satisfy condition ($\mathrm{G}_1$). 
    Since one also has a pseudo-isomorphism $\Ext^1_\cR (H^1 (C^\bullet), \cR) \to H^1 (C^\bullet)_\tor^\vee$ by Lemma \ref{ext and dual of tor} below and reflexive ideals of $\cR$ are uniquely determined by their localisation at primes of height at most one (cf.\@ \cite[Lem.\@ C.11]{Sakamoto20}), one has an equality of reflexive hulls
    \[
    \mathfrak{A} \coloneqq \Fitt^0_\cR ( \Ext^1_\cR ( F^1 / F^0, \cR))^{\ast \ast} = \Fitt^0_\cR ( \Ext^1_\cR (H^1 (C^\bullet), \cR))^{\ast \ast} = \Fitt^0_\cR ( H^1 (C^\bullet)_\tor^\vee)^{\ast \ast}.
    \]
    For any finitely generated $\cR$-module $M$, we therefore obtain pseudo-isomorphisms
    \[
    \Fitt^0_\cR ( \Ext^1_\cR ( F^1 / F^0, \cR)) \otimes_\cR M \longrightarrow \mathfrak{A} \otimes_\cR M \longleftarrow \Fitt^0_\cR ( H^1 (C^\bullet)_\tor^\vee) \otimes_\cR M
    \]
    and this combines with condition ($\mathrm{S}_2$) to imply that, by \cite[Lem.\@ B.5]{Sakamoto20}, taking duals gives isomorphisms
    \[
    \big( \Fitt^0_\cR ( \Ext^1_\cR ( F^1 / F^0, \cR)) \otimes_\cR M \big)^\ast \stackrel{\simeq}{\longleftarrow} \big( \mathfrak{A} \otimes_\cR M \big)^\ast \stackrel{\simeq}{\longrightarrow} \big( \Fitt^0_\cR ( H^1 (C^\bullet)_\tor^\vee) \otimes_\cR M \big)^\ast.
    \]
    We then define the map $\varpi_{C^\bullet, \mathfrak{B}}$ in claim (a) to be the map defined in (\ref{pre map from det}) composed with these isomorphisms for $M = \exprod^{n}_\cR H^1 (C^\bullet)^\ast$. With this definition, it follows from Lemma \ref{explicit description projection map lem} and the definition of the map $\xi$ that one has $\xi^{-1} \circ \varpi_{C^\bullet, \mathfrak{B}} = \vartheta_{C^\bullet, \mathfrak{B}}$, as required to prove claim (a).\\
    As for claim (b), Lemma \cite[Lem.~C.11]{Sakamoto20} reduces to prove the claim locally at primes $\p \subseteq \cR$ of height at most one.
    To do this, we first observe that by \cite[Prop.\@ A.2\,(ii)]{sbA} one has
    \begin{align*}
     \Big \{ h (\exprod_{d + 1 \leq i \leq n_2} (b_i^\ast \circ \partial_1)  (x)) \, \big | \, x \in \exprod^{n_1}_{\cR_\p} F^1_\p, h \in \exprod^{n + n_0}_{\cR_\p} (F^1_\p)^\ast \Big \} 
    & = \Fitt^0_\cR ( \ker \{ H^2 (C^\bullet) \stackrel{\kappa}{\to} \cY \} )_\p \\
    & = \Fitt^d_\cR ( H^2 (C^\bullet))_\p.
    \end{align*}
    Now, the cokernel of the restriction map $(F^1 / F^0)^\ast \to H^1 (C^\bullet)^\ast$ identifies with a submodule of $\Ext^1_\cR (J, \cR)$ and so, as observed above, is pseudo-null. Given this, the inclusion claimed in the first part of (a) follows by taking $h = \widetilde{\lambda f} \wedge \widetilde{\varphi}$ with $\widetilde{\varphi}$ a lift of $\varphi \in \exprod^{n}_{\cR_\p} H^1 (C^\bullet)^\ast_\p$ so that $h (\bigwedge_{d + 1 \leq i \leq n_2} (b_i^\ast \circ \partial_1) (a)) = \varphi ( a_{\widetilde{f\lambda}})$.\\
    In addition, said inclusion is an equality (at $\p$) whenever the subset of $\exprod^{n + n_0}_{\cR_\p} (F^1_\p)^\ast$ comprising all such $\widetilde{\lambda f} \wedge \widetilde \varphi$ is equal to $\exprod^{n + n_0}_{\cR_\p} (F^1_\p)^\ast$. To investigate when this happens, assume that $\Ext^1_\cR ( H^1 (C^\bullet)^\vee_\tor, \cR)_\p = \Ext^1_\cR (F^1 / F^0, \cR)_\p$ has finite projective dimension. From the Auslander--Buchsbaum formula we then see that $\Ext^1_\cR (F^1 / F^0, \cR)_\p$ has projective dimension at most one. From (\ref{free presentation for Ext}) it therefore follows that $\im (\partial_0^\ast)_\p$ is a projective (hence free) $\cR_\p$-module of rank $n_0$ so that we have an isomorphism 
    \begin{align*}
            \exprod^{n_1}_{\cR_\p} (F^1)^\ast_\p & \cong \big( \exprod^{n_1 - n_0}_{\cR_\p} (F^1 / F^0)^\ast_\p \big) \otimes_{\cR} \big( \exprod^{n_1 - n_0}_{\cR_\p} \im (\partial_0^\ast)_\p \big) \\ 
            & \stackrel{(\ref{description image in terms of fitt})}{=}
            \big( \exprod^{n_1 - n_0}_{\cR_\p} (F^1 / F^0)^\ast_\p \big) \otimes_{\cR} \Fitt^0_\cR ( \Ext^1_\cR (F^1 / F^0, \cR)) \cdot \exprod^{n_0}_{\cR_p} (F^0)^\ast_\p.
    \end{align*}
    To finish the proof of claim (b), it therefore suffices to recall that, as already observed earlier, the restriction map $(F^1 / F^0)_\p^\ast \to H^1 (C^\bullet)_\p^\ast$ is surjective.\\ 
    To prove claim (c), assume that $\p \subseteq \cR$ is a prime ideal of height at most one such that $\cR_\p$ is a regular local ring, so either a field or a discrete valuation domain. In this case, one has identifications $\Fitt^0_\cR ( H^1 (C^\bullet)^\vee_\tor)_\p^\ast \cong \Fitt^0_\cR ( H^1 (C^\bullet)^\vee_\tor)_\p^{-1}$ and $\big(\bidual^n_\cR H^1 (C^\bullet)\big)_\p \cong \big( \exprod^n_{\cR} H^1 (C^\bullet)_\mathrm{tf} \big)_\p$ with $H^1 (C^\bullet)_\mathrm{tf} \coloneqq H^1 (C^\bullet) / H^1 (C^\bullet)_\tor$ the torsion-free quotient of $H^1 (C^\bullet)$. Since the latter is a free $\cR_\p$-module, claim (c) follows from claim (b) and the equality
    \[
    A \cdot M = \{ a \in M \mid f(a) \in A \text{ for all } f \in M^\ast \}
    \]
    that holds for every free $\cR_\p$-module $M$ and $\cR_\p$-submodule $A$ of the total ring of fractions of $\cR_\p$.
    \end{proof}

\begin{rk}
    Results similar to Proposition \ref{integrality properties of det projection map} have previously appeared in various places, see for example \cite[Prop.\@ A.11]{sbA}. The main novelty of Proposition \ref{integrality properties of det projection map} is that we do not need to assume $H^1 (C^\bullet)_\tor^\vee$ to vanish, and in this regard our approach is related to \cite[Prop.~3.18]{bst}.
\end{rk}

\begin{lem} \label{ext and dual of tor}
    Let $\cR$ be a ring that satisfies $(\mathrm{G}_1)$. For every finitely generated $\cR$-module $M$ with $\cR$-torsion submodule $M_\tor$ there is a pseudo-isomorphism
    \[
     \Ext^1_\cR (M, \cR) \to  M_\tor^\vee.
    \]
\end{lem}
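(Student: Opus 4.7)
The plan is to verify the claim locally at every prime $\p$ of $\cR$ of height at most one. By condition $(\mathrm{G}_1)$, $\cR_\p$ is then a Gorenstein local ring of dimension at most one, and so has self-injective dimension at most one as a module over itself; in particular, $\Ext^i_{\cR_\p}(-, \cR_\p) = 0$ for $i \geq 2$.

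The first step would be to show that the natural connecting homomorphism $M_\tor^\vee \to \Ext^1_\cR(M_\tor, \cR)$, arising from the exact sequence $0 \to \cR \to \cQ \to \cQ/\cR \to 0$, is an isomorphism. Indeed, since every element of $M_\tor$ is annihilated by a non-zero-divisor, and non-zero-divisors become units in $\cQ$, both $\Hom_\cR(M_\tor, \cR)$ and $\Hom_\cR(M_\tor, \cQ)$ vanish. Flat base change combined with $M_\tor \otimes_\cR \cQ = 0$ moreover yields $\Ext^1_\cR(M_\tor, \cQ) = 0$, so that the long exact sequence of $\Hom_\cR(M_\tor, -)$ collapses to give the desired isomorphism. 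I would then compose its inverse with the map $\Ext^1_\cR(M, \cR) \to \Ext^1_\cR(M_\tor, \cR)$ induced by the long exact sequence attached to $0 \to M_\tor \to M \to M_\tf \to 0$ in order to define the required map $\Ext^1_\cR(M, \cR) \to M_\tor^\vee$.

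The kernel and cokernel of this composite at a prime $\p$ of height at most one are controlled, respectively, by $\Ext^1_{\cR_\p}((M_\tf)_\p, \cR_\p)$ and $\Ext^2_{\cR_\p}((M_\tf)_\p, \cR_\p)$. The $\Ext^2$-term vanishes automatically by the bound on self-injective dimension, so it suffices to prove that $\Ext^1_{\cR_\p}((M_\tf)_\p, \cR_\p) = 0$. The strategy for this is to show that $(M_\tf)_\p$ is a Cohen-Macaulay $\cR_\p$-module; local duality, which for a Cohen-Macaulay module $N$ of dimension $s$ over a $d$-dimensional Gorenstein local ring asserts that $\Ext^i_{\cR_\p}(N, \cR_\p) = 0$ whenever $i \neq d - s$, then yields the desired vanishing. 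To verify the Cohen-Macaulay property, I would first embed $M_\tf$ into a finite free $\cR$-module, exploiting that $\cR$ is reduced in our setting so that $\cQ$ is a finite product of fields: pick generators of $M_\tf$, embed $M_\tf \otimes_\cR \cQ$ into $\cQ^n$, clear denominators by a single non-zero-divisor $s \in \cR$, and use that multiplication by $s$ is injective on $M_\tf$ to deduce $M_\tf \cong sM_\tf \subseteq \cR^n$. Localising at $\p$ then exhibits $(M_\tf)_\p$ as a submodule of a free $\cR_\p$-module, hence as $\cR_\p$-torsion-free, and any finitely generated torsion-free module over a one-dimensional Cohen-Macaulay local ring is Cohen-Macaulay of the same dimension.

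The main obstacle, and the step requiring the most care, is the Cohen-Macaulay property of $(M_\tf)_\p$ at height-one primes, which in turn rests on the embedding $M_\tf \hookrightarrow \cR^n$. All remaining inputs, including the identification $M_\tor^\vee \cong \Ext^1_\cR(M_\tor, \cR)$ and the vanishing of the $\Ext^2$-term, are formal consequences of standard long exact sequences together with local duality for Gorenstein rings.
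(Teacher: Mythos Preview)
Your proposal is correct and follows essentially the same strategy as the paper: both construct the map via the long exact sequence attached to $0 \to M_\tor \to M \to M_\tf \to 0$ together with the identification $M_\tor^\vee \cong \Ext^1_\cR(M_\tor,\cR)$, and both reduce the pseudo-isomorphism claim to showing $\Ext^1_{\cR_\p}((M_\tf)_\p,\cR_\p)=0$ at height-one primes (the $\Ext^2$-term vanishing by the injective-dimension bound). The paper packages the map via a diagram chase with $\Hom_\cR(-,\cQ/\cR)$, but unwinding it gives precisely your composite. The substantive difference is in the $\Ext^1$-vanishing step: you embed $M_\tf\hookrightarrow\cR^n$ globally (using that $\cR$ is reduced) and then invoke local duality for the resulting Cohen--Macaulay module, whereas the paper appeals to the fact that a finitely generated torsion-free module over a one-dimensional Gorenstein ring is reflexive (citing Bass and Vasconcelos), dualises a presentation of $(M_\tf)^\ast$, and dimension-shifts to $\Ext^3=0$. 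Your argument is more hands-on; the paper's avoids the explicit reducedness hypothesis but trades it for the reflexivity input.
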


\begin{proof}
    Since $\cR$ is assumed to be quasi-normal, its total ring of fractions $\cQ$ is a self-injective ring and so $\Hom_\cR ( -, \cQ) \cong \Hom_\cQ ( - \otimes_\cR \cQ, \cQ)$ is an exact functor. As a consequence, $\Ext^1_\cR ( - , \cQ)$ vanishes and we obtain a commutative diagram with exact rows of the form
    \begin{cdiagram}[column sep=small, row sep=small]
        \Hom_\cR ( M / M_\tor, \cQ) \arrow{r} \arrow{d}{\simeq} & \Hom_\cR ( M / M_\tor, \cQ  / \cR) \arrow{r} \arrow{d} & \Ext^1_\cR ( M / M_\tor, \cR) \arrow{r} \arrow{d} & 0 \\ 
        \Hom_\cR (M, \cQ) \arrow{r} & \Hom_\cR (M, \cQ / \cR) \arrow{r} & \Ext^1_\cR (M, \cR) \arrow{r} & 0.
    \end{cdiagram}%
    A diagram chase then shows that one has a composite map
    \begin{align} \label{surjection}
\Ext^1_\cR (M, \cR) & \twoheadrightarrow \coker \{  \Hom_\cR (M / M_\tor, \cR) \to \Ext^1_\cR (M, \cR) \} \\ \nonumber
& \, \cong \, \coker \{ \Hom_\cR ( M / M_\tor, \cQ  / \cR) \to \Hom_\cR (M, \cQ / \cR) \} \\ 
& \hookrightarrow \Hom_\cR (M_\tor, \cQ / \cR). \label{injection}
    \end{align}
    To  show that this map is a pseudo-isomorphism, we may reduce to the case that $\cR$ is a Gorenstein ring of dimension at most one. In this case, then, $\cQ / \cR$ is an injective $\cR$-module (see \cite[Thm.\@ 6.2\,(2)]{bass}) and so the map (\ref{injection}) is an isomorphism. 
    To show that (\ref{surjection}) is an isomorphism it suffices to prove that $\Ext^1_\cR ( M / M_\tor, \cR)$ vanishes. This follows from the fact that $M / M_\tor$ is a reflexive module (as can be seen from combining \cite[Thm.\@ 6.2\,(4)]{bass} and \cite[Thm.\@ A.1]{vasconcelos68}) so that dualising a projective presentation $P_1 \to P_0 \to (M / M_\tor)^\ast \to 0$ gives an exact sequence $0 \to (M / M_\tor) \to P_0^\ast \to P_1^\ast$ from which we conclude that
    \[
    \Ext^1_\cR ( M / M_\tor, \cR) \cong \Ext^3_\cR ( \coker \{ P_0^\ast \to P_1^\ast \}, \cR) = 0
    \]
    vanishes because $\cR$ is Gorenstein of dimension at most one. 
\end{proof}

\subsection{Bockstein morphisms} \label{bockstein section}

In this section we recall the formalism of Bockstein morphisms that is a variant of the theory of algebraic height pairings developed by Nekov{\'a}\v{r} in \cite[\S\,11]{NekovarSelmerComplexes}. We follow the treatment in \cite[\S\,10]{burns07} but generalise it to the setting of Proposition \ref{integrality properties of det projection map}. \\
We begin with an elementary, but important, observation.

\begin{lem} \label{properties top and bottom cohomology}
Let $\cR$ be a commutative Noetherian ring, $C^\bullet \in D^\mathrm{perf} (\cR)$ a complex, and $M$ a finitely generated $\cR$-module.
\begin{liste}
    \item If $i \in \Z$ is an integer such that $H^j (C^\bullet) = 0$ for all $j > i$, then the morphism $C^\bullet \to C^\bullet \otimes_\cR^\mathbb{L} M$ induces a natural isomorphism
    \begin{equation} \label{isomorphism base change top cohomology}
    H^i (C^\bullet) \otimes_\cR M \stackrel{\simeq}{\longrightarrow} H^i (C^\bullet \otimes_\cR^\mathbb{L} M).
    \end{equation}
    \item Let $\a \subseteq \cR$ be an ideal and suppose there is an isomorphism $\nu \: \cR  / \a \xrightarrow{\simeq} \cR [\a]$ of $\cR$-modules. Then $\nu$ induces a morphism $\nu_M \: M \otimes_\cR (\cR  / \a) \to M$ of $\cR$-modules and an isomorphism
    \begin{equation} \label{derived isom induced by j}
\nu_{C^\bullet} \: C^\bullet \otimes_\cR^\mathbb{L} (\cR /\a) \stackrel{\simeq}{\longrightarrow} \RHom_\cR ( \cR / \a, C^\bullet)
\end{equation}
in $D (\cR)$. If $i \in \Z$ is an integer with $H^j (C^\bullet) = 0$ for all $j < i$, then $\nu_{C^\bullet}$ induces an injection
\begin{equation} \label{isom induced by j on cohomology}
H^i (C^\bullet \otimes_\cR^\mathbb{L} (\cR / \a)) \hookrightarrow H^i (C^\bullet) [\a].
\end{equation}
\end{liste}
\end{lem}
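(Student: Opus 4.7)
My plan is to prove (a) by reducing to a representative of $C^\bullet$ truncated appropriately and invoking right-exactness of tensor product; for (b) the construction of $\nu_M$ is direct from the inclusion $\cR[\a]\hookrightarrow\cR$, while the derived statement and the injection follow by combining the identification $\cR[\a]=\Hom_\cR(\cR/\a,\cR)$ with the hypercohomology spectral sequence applied to the truncation triangle of $C^\bullet$.

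For part (a), the key input is that $C^\bullet$ admits a representative by a bounded complex $P^\bullet$ of finitely generated projective $\cR$-modules concentrated in degrees $\le i$. Such a representative exists because $C^\bullet$ is perfect and acyclic in degrees $>i$: starting from an arbitrary perfect representative one can apply the soft truncation $\tau^{\le i}$, noting that the module of $i$-cocycles in a perfect complex that is acyclic above degree $i$ is itself finitely generated projective (cf.\ the argument in \cite[Prop.\,A.11(i)]{sbA}). Given such a $P^\bullet$ one has $H^i(C^\bullet)=\coker(P^{i-1}\to P^i)$, while $C^\bullet\otimes_\cR^{\mathbb L}M$ is represented by $P^\bullet\otimes_\cR M$. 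Right-exactness of $(-)\otimes_\cR M$ then yields
\[
H^i(C^\bullet\otimes_\cR^{\mathbb L}M)=\coker(P^{i-1}\otimes M\to P^i\otimes M)=\coker(P^{i-1}\to P^i)\otimes_\cR M=H^i(C^\bullet)\otimes_\cR M,
\]
with the naturality in (\ref{isomorphism base change top cohomology}) following from the functoriality of this construction.

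For part (b), I would first define $\nu_M$ as the composite $M\otimes_\cR(\cR/\a)\xrightarrow{\mathrm{id}\otimes\nu}M\otimes_\cR\cR[\a]\to M$, where the final arrow is induced by the inclusion $\cR[\a]\hookrightarrow\cR$; explicitly $\nu_M(m\otimes\bar r)=m\cdot\nu(\bar r)$. To obtain $\nu_{C^\bullet}$, I would exploit the canonical identification $\cR[\a]=\Hom_\cR(\cR/\a,\cR)$ to reinterpret $\nu$ as an isomorphism $\cR/\a\xrightarrow{\sim}\Hom_\cR(\cR/\a,\cR)$ of $\cR$-modules. The hypothesis then implies (equivalently) the vanishing $\Ext^i_\cR(\cR/\a,\cR)=0$ for all $i>0$, so $\nu$ promotes to a quasi-isomorphism $\cR/\a\xrightarrow{\sim}\RHom_\cR(\cR/\a,\cR)$ in $D(\cR)$ and renders $\cR/\a$ a perfect $\cR$-module. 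Consequently the standard tensor-hom comparison map is itself a quasi-isomorphism, and I would define $\nu_{C^\bullet}$ as the composite
\[
C^\bullet\otimes_\cR^{\mathbb L}(\cR/\a)\xrightarrow{\mathrm{id}\otimes\nu}C^\bullet\otimes_\cR^{\mathbb L}\RHom_\cR(\cR/\a,\cR)\xrightarrow{\sim}\RHom_\cR(\cR/\a,C^\bullet).
\]

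Finally, to obtain the injection (\ref{isom induced by j on cohomology}), I would use the acyclicity $H^j(C^\bullet)=0$ for $j<i$ to identify $C^\bullet\cong\tau_{\ge i}C^\bullet$ and then apply $\RHom_\cR(\cR/\a,-)$ to the exact triangle $H^i(C^\bullet)[-i]\to\tau_{\ge i}C^\bullet\to\tau_{>i}C^\bullet$. The hypercohomology spectral sequence $E_2^{p,q}=\Ext^p_\cR(\cR/\a,H^q(\tau_{>i}C^\bullet))\Rightarrow H^{p+q}(\RHom_\cR(\cR/\a,\tau_{>i}C^\bullet))$ has no contributions on the diagonals $p+q\in\{i-1,i\}$, since such terms require $q\le i$ whereas $H^q(\tau_{>i}C^\bullet)$ vanishes for $q\le i$. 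Both of these $H^{i-1}$ and $H^i$ therefore vanish, so the long exact sequence on cohomology in degree $i$ yields a canonical isomorphism $\Hom_\cR(\cR/\a,H^i(C^\bullet))=H^i(C^\bullet)[\a]\xrightarrow{\sim}H^i(\RHom_\cR(\cR/\a,C^\bullet))$. Composing its inverse with the isomorphism $H^i(\nu_{C^\bullet})$ produces the required map, which is in fact an isomorphism, and a fortiori an injection. The main subtlety lies in the derived upgrade of $\nu$ to a quasi-isomorphism $\cR/\a\cong\RHom_\cR(\cR/\a,\cR)$; in the applications of interest (where $\cR$ is a group ring or Iwasawa algebra and $\a$ is a relative augmentation ideal) this can be verified from the usual self-duality properties, but in this generality it is perhaps best regarded as being implicit in the formulation of the hypothesis on $\nu$.
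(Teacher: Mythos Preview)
Part (a) is the paper's argument.

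For (b) your approach diverges from the paper's and contains a genuine gap. The assertion that $\cR/\a\cong\cR[\a]$ implies $\Ext^i_\cR(\cR/\a,\cR)=0$ for all $i>0$ is not correct: the hypothesis only pins down $\Ext^0$. Without that vanishing your promotion of $\nu$ to a quasi-isomorphism $\cR/\a\simeq\RHom_\cR(\cR/\a,\cR)$ fails (take $C^\bullet=\cR[0]$ to see that the derived statement \emph{forces} the higher Ext groups to vanish), and with it both the tensor--hom comparison and your spectral-sequence argument for the injection. You are right that in the paper's applications---$\cR=\Z_p[\cG]$ with $\a$ the relative augmentation ideal of a subgroup $H$---the vanishing does hold, since then $\Ext^i_{\cR}(\cR/\a,\cR)\cong H^i(H,\Z_p[\cG])=0$ for $i>0$; but the lemma is stated in greater generality, and your closing hedge does not repair the argument.

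The paper's approach is more hands-on and avoids derived machinery. For $\nu_{C^\bullet}$ it works termwise on a bounded free representative: for each free $C^j$ the map $\nu_{C^j}\colon C^j\otimes_\cR(\cR/\a)\to\Hom_\cR(\cR/\a,C^j)=C^j[\a]$ is a direct sum of copies of $\nu$, hence an isomorphism, and these assemble to an isomorphism of complexes with no derived input. For the injection, the paper truncates to $[Q\to C^{i+1}\to\cdots]$ with $Q=\coker(C^{i-1}\to C^i)$, so that the desired map is the restriction of $\nu_Q\colon Q/\a Q\to Q$ to $\ker(Q/\a\to C^{i+1}/\a)=H^i(C^\bullet\otimes^{\mathbb{L}}(\cR/\a))$; it then argues that $\nu_Q$ itself is injective via an inductive Five-Lemma chase along the short exact sequences cut out of the finite free resolution $0\to C^l\to\cdots\to C^i\to Q\to 0$ of $Q$. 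This makes the construction of the map, and the fact that it lands in $H^i(C^\bullet)[\a]$, entirely elementary.
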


\begin{proof}
Assuming $C^\bullet$ is acyclic in degrees greater than $i$, it admits a representative of the form
\begin{equation} \label{long representative}
 \dots \to C^{i - 2} \to C^{i - 1} \to C^i \to 0,
\end{equation}
where each $C^j$ is a finitely generated projective $\cR$-module that is placed in degree $j$. For every finitely generated $\cR$-module $M$, the complex $C^\bullet \otimes_\cR^\mathbb{L} M$ can then be represented by 
\[
\dots \to C^{i - 2} \otimes_\cR M \to C^{i - 1}  \otimes_\cR M \to C^i \otimes_\cR M \to 0
\]
and, as the functor $( - )\otimes_\cR M$ is right exact, this implies claim (a). \\
As for claim (b), we may define $\nu_M \: M \otimes_\cR (\cR / \a) \to M$ by means of the rule $m \otimes x \mapsto \nu (x) \cdot m$. To justify the isomorphism (\ref{derived isom induced by j}), we will identify $\Hom_\cR ( \cR / \a, \cR)$ with the $\a$-torsion submodule $R [\a]$ of $\cR$ via the isomorphism $\varphi \mapsto \varphi (1)$. For any morphism $f \: A \to B$ of finitely generated free $\cR$-modules one has a commutative diagram
\begin{cdiagram}[row sep=small]
    A \otimes_\cR (\cR / \a) \arrow{d}{\simeq} \arrow{r}{f} & B \otimes_\cR ( \cR / \a) \arrow{d}{\simeq} \\ 
    \Hom_\cR ( \cR / \a, A) \arrow{r}{f_\ast} & \Hom_\cR ( \cR /\a, B),
\end{cdiagram}%
where the vertical isomorphisms are given by $\nu_{A}$. Applying the functors $- \otimes_\cR (\cR / \a)$ and $\Hom_\cR ( \cR / \a, - )$ to the representative (\ref{long representative}) we therefore see that the collection of maps $(\nu_{C^j})_{j \in \Z}$ defines an isomorphism of the form (\ref{derived isom induced by j}).\\
Assuming $C^\bullet$ is acyclic in degrees less than $i$, we may set $Q \coloneqq \coker \{ C^{i - 1} \to C^i \}$ to obtain the representative $0 \to Q \stackrel{\partial}{\to} C^{i + 1} \to \dots$ of $C^\bullet$. Consequently, $C^\bullet \otimes_\cR^\mathbb{L} (\cR /\a)$ is represented by $0 \to Q \otimes_\cR (\cR /\a) \to C^{i + 1} \otimes_\cR (\cR /\a) \to \dots$. 
    In particular, we get the vertical dashed arrow on the left hand side in the diagram
\begin{cdiagram}[row sep=small]
    0 \arrow{r} & H^i (C^\bullet) \arrow{r} & Q \arrow{r}{\partial} & C^{i + 1} \\ 
    0 \arrow{r} & H^i (C^\bullet \otimes_\cR^\mathbb{L} (\cR / \a)) \arrow[dashed]{u} \arrow{r} & 
    Q \otimes_\cR (\cR / \a) \arrow{u}{\nu_Q} \arrow{r}{\partial \otimes \id} & C^{i + 1} \otimes_\cR (\cR / \a) \arrow{u}{\nu_{C^{i + 1}}}.
\end{cdiagram}%
If we can prove that $\nu_Q$ is injective, then it will follow that the dashed arrow is injective as well, and this will prove the last part of claim (b). 
To do this, let $l$ be the least integer with $C^l \neq 0$ and set $Q^j \coloneqq \coker ( C^j \to C^{j - 1} \}$ for all $j \in \{ l, \dots, i - 1\}$. We will then prove by induction on $j$ that $\nu_{Q^j} \: Q^j / \a Q^j \to Q^j [\a]$ is an isomorphism and $\Ext^1_\cR (\cR / \a, Q^j) = 0$ for all $j \in \{ l, \dots, i - 1\}$. Taking $j = i - 1$, this then proves the claimed injectivity of $\nu_Q$.\\
For the base case of the induction we note that, because $C^j = 0$ for all $j > l$ and $C^\bullet$ is acylic in degrees less than $i$, we have an exact sequence $0 \to C^l \to C^{l - 1} \to Q^l \to 0$. The long exact sequence obtained from applying the functor $(-) [\a] \cong \Hom_\cR (\cR / \a, -)$ then shows that $\Ext^j_\cR (\cR /\a, Q^l) = 0$ for all $j \geq 1$, and that we have a commutative diagram of the form
\begin{cdiagram}[row sep=small]
     C^l \otimes_\cR (\cR /\a) \arrow{r} \arrow{d}[left]{\nu_{C^l}}[right]{\simeq} & C^{l - 1} \otimes_\cR (\cR /\a) \arrow{r} \arrow{d}[left]{\nu_{C^{l - 1}}}[right]{\simeq} & Q^l \otimes_\cR (\cR /\a) \arrow{r}  \arrow{d}{\nu_{Q^l}} & 0 \\
         C^l [\a] \arrow{r}   & C^{l - 1}[\a] \arrow{r}  & Q^l[\a] \arrow{r}  & 0.
\end{cdiagram}%
It then follows from the Five Lemma that also $\nu_{Q^l}$ is an isomorphism. 
This proves the base case of the induction, and the inductive step follows by the same argument when instead applied to the exact sequence $0 \to Q^{j} \to C^{j - 2} \to Q^{j - 1} \to 0$ for some $j \in \{ l + 1, \dots, i\}$ such that the claim has already been proven for all $j' < j$.
\end{proof}

The following technical consequence of Lemma \ref{properties top and bottom cohomology} will be useful later on.

\begin{lem} \label{base change for dual of Fitt}
    Let $\cR$ be a one-dimensional Gorenstein ring and $\a \subseteq \cR$ an ideal such that also $\cR / \a$ is a one-dimensional Gorenstein ring. We also suppose to be given a complex $C^\bullet \in D^\mathrm{perf} (\cR)$ that admits a representative of the form (\ref{standard representative}) and assume that 
    \begin{equation} \label{pd assumption}
    \mathrm{pd}_\cR (H^1 (C^\bullet)_\tor^\vee) \leq 1
    \quad \text{ and } \quad
    \mathrm{pd}_{(\cR / \a)} (H^1 (C^\bullet)_\tor^\vee \otimes_\cR (\cR / \a)) \leq 1.
    \end{equation}
    Then there is an isomorphism
    \[
    \Fitt^0_\cR ( H^1 (C^\bullet)_\tor^\vee)^\ast \otimes_\cR (\cR / \a) \xrightarrow{\simeq}
    \Fitt^0_{(\cR /\a)} ( H^1 (C^\bullet \otimes^\mathbb{L}_\cR (\cR /\a))_\tor^\vee)^\ast.
    \]
\end{lem}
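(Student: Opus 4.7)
The plan is to use the two projective-dimension hypotheses to show that both Fitting ideals appearing in the statement are invertible (equivalently, locally principal and generated by non-zero-divisors), and then to compare the resulting invertible fractional ideals via standard base change. Under this invertibility the $\ast$-dual of such an ideal is canonically identified with its inverse fractional ideal, which is a locally free $\cR$-module of rank one, so passage from $\cR$ to $\cR / \a$ amounts to simple reduction modulo $\a$.

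To implement this, I would first fix a standard representative $F^0 \xrightarrow{\partial_0} F^1 \xrightarrow{\partial_1} F^2$ of $C^\bullet$ and use freeness of the $F^i$ to describe $C^\bullet \otimes^\mathbb{L}_\cR (\cR / \a)$ via the complex obtained by reducing the $\partial_i$ modulo $\a$. Lemma~\ref{ext and dual of tor}, specialised to the 1-dim Gorenstein case (where the pseudo-isomorphism produced in the proof of that lemma is an honest isomorphism), then identifies $H^1 (C^\bullet)_\tor^\vee$ with $\Ext^1_\cR (H^1 (C^\bullet), \cR)$, and analogously over $\cR / \a$ for the base-changed complex. Combined with the assumption $\mathrm{pd}_\cR ( H^1 (C^\bullet)_\tor^\vee) \leq 1$ and the fact that this module is $\cR$-torsion (as the dual of the torsion part of a finitely generated module), this yields a length-one projective resolution by modules of matching local rank, and hence the invertibility of $\Fitt^0_\cR ( H^1 (C^\bullet)_\tor^\vee)$; its $\ast$-dual is then canonically identified with its fractional inverse. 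The same analysis applies over $\cR / \a$ using the second pd hypothesis, giving invertibility of $\Fitt^0_{\cR / \a}(H^1 (C^\bullet \otimes^\mathbb{L}_\cR (\cR / \a))_\tor^\vee)$ and identifying its $\ast$-dual with its fractional inverse over $\cR / \a$.

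To finish, I would use the standard base-change property $\Fitt^0_\cR (M) \otimes_\cR (\cR / \a) = \Fitt^0_{\cR / \a} (M \otimes_\cR (\cR / \a))$ applied to $M = H^1 (C^\bullet)_\tor^\vee$, thereby reducing the claim to comparing $H^1 (C^\bullet)_\tor^\vee \otimes_\cR (\cR / \a)$ with $H^1 (C^\bullet \otimes^\mathbb{L}_\cR (\cR / \a))_\tor^\vee$. This comparison can be carried out by translating both sides into $\Ext^1$-groups as in the previous step and then applying derived adjunction in the form of the change-of-rings spectral sequence $\Ext^p_{\cR / \a}(\Tor^\cR_q (H^1 (C^\bullet), \cR / \a), \cR / \a) \Rightarrow \Ext^{p + q}_\cR (H^1 (C^\bullet), \cR / \a)$, together with the injection $H^1 (C^\bullet \otimes^\mathbb{L}_\cR (\cR / \a)) \hookrightarrow H^1 (C^\bullet) [\a]$ provided by Lemma~\ref{properties top and bottom cohomology}\,(b). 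The principal obstacle I anticipate is that the resulting natural comparison map is in general only a pseudo-isomorphism, with error terms controlled by the $\Tor$-groups above; however, the invertibility of the Fitting ideals established in the previous step, combined with the pseudo-null-cokernel argument underlying \cite[Lem.~B.5]{Sakamoto20}, ensures that such discrepancies do not affect the Fitting ideals in question. Taking $\cR$-linear duals of the resulting equality of ideals, and invoking the identification of $\ast$-duals with fractional inverses, then produces the desired isomorphism.
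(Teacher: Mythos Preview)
Your overall strategy matches the paper's: both arguments use the projective-dimension hypotheses to conclude that the relevant Fitting ideals are invertible (hence their $\ast$-duals are fractional inverses and behave well under $\otimes_\cR (\cR/\a)$), invoke Lemma~\ref{ext and dual of tor} to identify $H^1(C^\bullet)_\tor^\vee$ with $\Ext^1_\cR(H^1(C^\bullet),\cR)$, and then appeal to base change of Fitting ideals. The divergence is in how you establish the comparison
\[
H^1(C^\bullet)_\tor^\vee \otimes_\cR (\cR/\a) \;\cong\; H^1\bigl(C^\bullet \otimes^\mathbb{L}_\cR (\cR/\a)\bigr)_\tor^\vee,
\]
and here the paper's argument is both simpler and more complete than yours. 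The paper passes to the dual complex $D^\bullet \coloneqq \RHom_\cR(C^\bullet,\cR)$; a hypercohomology spectral sequence identifies $H^0(D^\bullet)$ with $\Ext^1_\cR(H^1(C^\bullet),\cR)\cong H^1(C^\bullet)_\tor^\vee$, and since $D^\bullet$ is acyclic in positive degrees, Lemma~\ref{properties top and bottom cohomology}\,(a) gives the base-change isomorphism $H^0(D^\bullet)\otimes_\cR(\cR/\a)\cong H^0(D^\bullet\otimes^\mathbb{L}(\cR/\a))$ directly. Applying the same spectral-sequence identification over $\cR/\a$ then finishes the comparison in one stroke.

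Your proposed route via the change-of-rings spectral sequence $\Ext^p_{\cR/\a}(\Tor^\cR_q(H^1(C^\bullet),\cR/\a),\cR/\a)\Rightarrow \Ext^{p+q}_\cR(H^1(C^\bullet),\cR/\a)$ does not, as written, reach either side of the displayed comparison: the abutment is $\Ext^\ast_\cR(H^1(C^\bullet),\cR/\a)$, which is neither $\Ext^1_\cR(H^1(C^\bullet),\cR)\otimes_\cR(\cR/\a)$ nor $\Ext^1_{\cR/\a}(H^1(\overline{C}^\bullet),\cR/\a)$, and the $E_2^{1,0}$-term involves $H^1(C^\bullet)\otimes_\cR(\cR/\a)$ rather than $H^1(\overline{C}^\bullet)$ (these differ by a $\Tor$-contribution you have not controlled). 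Your appeal to pseudo-isomorphisms and \cite[Lem.~B.5]{Sakamoto20} is also misplaced here: $\cR$ is already one-dimensional, so pseudo-null modules are zero and you would need genuine isomorphisms, not merely agreement up to pseudo-null error. These gaps are likely fillable with extra bookkeeping, but the dual-complex trick bypasses them entirely and is worth internalising.
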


\begin{proof}
    Consider the complex $D^\bullet \coloneqq \RHom_{\cR} ( C^\bullet, \cR)$. By dualising (\ref{standard representative}) we see that $H^i (D^\bullet) = 0$ for all $i > 0$ and so an application of Lemma \ref{properties top and bottom cohomology}\,(a) shows that
    \begin{equation} \label{isomorphism ingredient 1}
    H^0 ( D^\bullet) \otimes_\cR (\cR / \a) \cong H^0 ( D^\bullet \otimes^\mathbb{L}_\cR (\cR /\a)).
    \end{equation}
    In addition, we may use the spectral sequence 
    \[
    E_2^{i, j} = \Ext^i_\cR ( H^{-j} ( C^\bullet), \cR) \quad \Rightarrow \quad E^{i + j} = H^{i + j} ( D^\bullet)
    \]
    to deduce that there is an isomorphism $H^0 (D^\bullet) \cong  \Ext^1_\cR ( H^1 (C^\bullet), \cR)$. It therefore follows from Lemma \ref{ext and dual of tor} that 
    \begin{equation} \label{isomorphism ingredient 2}
    H^0 (D^\bullet) \cong H^1 (C^\bullet_\tor)^\vee.
    \end{equation}
    Now, the assumption (\ref{pd assumption}) implies that $\Fitt^0_\cR ( H^1 (C^\bullet)^\vee_\tor)$ is a principal ideal $(x)$ generated by an element $x \in \cR$ that is a nonzero divisor both in $\cR$ and $\cR / \a$. By a standard property of Fitting ideals one has the exact sequence
    \[
    \Tor_1^{\cR} ( \cR / (x), \cR / \a) \to \Fitt^0_\cR ( H^1 (C^\bullet)^\vee_\tor) \otimes_\cR ( \cR / \a) \to \Fitt^0_{(\cR /\a)} ( H^1 (C^\bullet)_\tor^\vee \otimes_\cR (\cR / \a)) \to 0
    \]
    in which the first term is isomorphic to $(\cR / \a) [x]$ and so vanishes because $x$ is a nonzero divisor in $\cR / \a$ by (\ref{pd assumption}). Thus, the second arrow is an isomorphism and so we have a composite isomorphism
    \begin{align*}
    \Fitt^0_\cR ( H^1 (C^\bullet)^\vee_\tor)^\ast \otimes_\cR (\cR /\a) & 
    \; \cong \,   
    \Hom_\cR ( 
    \Fitt^0_\cR ( H^1 (C^\bullet)^\vee_\tor), \cR / \a) \\
    & \; \cong \,
    \Hom_\cR ( 
    \Fitt^0_\cR ( H^1 (C^\bullet)^\vee_\tor )\otimes_\cR (\cR / \a), \cR / \a) \\
    & \stackrel{(\ref{isomorphism ingredient 2})}{\cong}  \Hom_{(\cR / \a)} ( 
    \Fitt^0_\cR ( H^0 (D^\bullet)) \otimes_\cR (\cR / \a), \cR / \a) \\
    & \stackrel{(\ref{isomorphism ingredient 1})}{\cong} \Fitt^0_{(\cR / \a)} (H^0 ( D^\bullet \otimes^\mathbb{L}_\cR (\cR /\a)))^\ast \\
    & \; \cong \,
    \Fitt^0_{(\cR /\a)} ( H^1 (C^\bullet \otimes^\mathbb{L}_\cR (\cR /\a))_\tor^\vee)^\ast
    \end{align*}
    where the first isomorphism holds because $\Fitt^0_\cR (H^1 (C^\bullet)^\vee_\tor)$ is a free $\cR$-module of rank one and the last isomorphism
is (\ref{isomorphism ingredient 2}) with $\cR$ and $C^\bullet$ replaced by $\cR / \a$ and $C^\bullet \otimes_{\cR}^\mathbb{L} (\cR /\a)$. (Note that also the complex $D^\bullet \otimes_\cR (\cR /\a) \cong \RHom_{(\cR /\a)} ( C^\bullet \otimes_{\cR}^\mathbb{L} (\cR /\a), \cR /\a)$ admits a representative of the form (\ref{standard representative}).) This concludes the proof of the lemma.
\end{proof}

\begin{rk}\label{base change for dual of Fitt 1-dim case}
    The proof of Lemma \ref{base change for dual of Fitt}
 shows that if, in the setting of said result, $\dim (\cR) = 1$, then there is a canonical isomorphism $H^1 (C^\bullet)_\tor^\vee \otimes_\cR (\cR /\a) \cong H^1 (C^\bullet \otimes_\cR^\mathbb{L} (\cR /\a))_\tor^\vee$.
 \end{rk}

Let $\a \subseteq \cR$ be an ideal and $C^\bullet \in D^\mathrm{perf} (\cR)$ a complex.
By tensoring the exact sequence $0 \to \a \to \cR \to \cR / \a \to 0$, with $C^\bullet$, we then obtain an exact triangle in $D(\cR)$ of the form
\begin{equation} \label{defining bockstein triangle}
\begin{tikzcd}
    C^\bullet \otimes_\cR^\mathbb{L} \a \arrow{r} & 
    C^\bullet \arrow{r} & 
    C^\bullet \otimes_\cR (\cR / \a) \arrow{r} & 
    ( C^\bullet \otimes_\cR^\mathbb{L} \a) [1].
\end{tikzcd}%
\end{equation}
The long exact sequence in cohomology associated with this triangle allows for the following definition.

\begin{definition} \label{bockstein map def}
Let $C^\bullet \in D^\mathrm{perf} (\cR)$ be a complex that is acyclic in degrees greater than $2$.  
The `Bockstein morphism' associated to $C^\bullet$ and an ideal $\a \subseteq \cR$ is defined to be the composite map
\begin{align*}
    \beta_{C^\bullet, \a} \: H^1 (C^\bullet \otimes^\mathbb{L}_\cR (\cR / \a)) 
    & \to H^2 (C^\bullet \otimes_\cR^\mathbb{L} \a) \xrightarrow{\simeq} H^2 (C^\bullet) \otimes_\cR \a 
\end{align*}
Here the first arrow is the connecting homomorphism of the triangle (\ref{defining bockstein triangle}) and the second arrow is the isomorphism (\ref{isomorphism base change top cohomology}) with $M  = \a$.
\end{definition}

We shall apply this construction in the setting of Definition \ref{def projection map from det}. More precisely, we now assume we are given a quotient $\cY'$ of $H^2 (C^\bullet)$ that is of the form
\begin{equation} \label{quotient of the form}
\cY' = \bigoplus_{i = 1}^{d'} (\cR / \a_i)
\end{equation}
for an integer $d' \geq 0$ and ideals $\a_1, \dots, \a_{d'}$ of $\cR$. \\
For every $i \in \{1, \dots, d'\}$, we write  $x_i^\ast \: \cY' \to \cR / \a_i$ for the projection onto the $i$-th component and use this to define the map
\[
\beta_i \: H^1 (C^\bullet \otimes_\cR (\cR / \a)) \xrightarrow{\beta_{C^\bullet, \a}}
H^2 (C^\bullet) \otimes_\cR \a \to \cY' \otimes_\cR \a \stackrel{x_i^\ast}{\longrightarrow} \a / \a_i \a.
\]
To state the main result of this subsection, we now assume that $C^\bullet$ is acyclic outside degrees 1 and 2, and that there is an $\cR$-linear isomorphism $\nu \: \cR / \a \stackrel{\simeq}{\to} \cR [\a]$. 
For every $f \in H^1 (C^\bullet)^\ast$ we may then define an associated map $f^\nu \in H^1 (C^\bullet \otimes_\cR (\cR / \a))^\ast$ as the composite map
\[
H^1 ( C^\bullet \otimes_\cR (\cR / \a )) \xrightarrow{(\ref{isom induced by j on cohomology})} \Hom_\cR ( \cR / \a, H^1 (C^\bullet)) \stackrel{f_\ast}{\longrightarrow} \Hom_\cR ( \cR / \a, \cR) \stackrel{\nu^{-1}}{\longrightarrow} \cR / \a. 
\]
Given $f \in \exprod^s_\cR H^1 (C^\bullet)^\ast$ for some integer $s \geq 0$, we write $f^\nu$ for the image of $f$ under the map $\exprod^s_\cR H^1 (C^\bullet)^\ast \to \exprod^s_{\cR / \a} H^1 (C^\bullet \otimes_\cR (\cR / \a))^\ast$ induced by sending $f_1 \wedge \dots \wedge f_s \mapsto f_1^\nu \wedge \dots \wedge f_s^\nu$. 

\begin{prop} \label{bockstein proposition}
Let $\cR$ be a quasi-normal ring and $C^\bullet \in D^\mathrm{perf} (\cR)$ a complex that admits a standard representative with respect to a surjection $\kappa \: H^2 (C^\bullet) \to \cY'$ onto a module $\cY'$ of the form (\ref{quotient of the form}), and write $\mathfrak{X}$ for its standard ordered set of generators.
We also assume $\a_i = 0$ if $i \in \{1, \dots, d\}$ for some $d \leq d'$. Let $\a$ be an ideal containing $\a_{d + i}$ for all $i \in \{1, \dots, d' - d\}$, and write $\mathfrak{B}$ (resp.\@ $\mathfrak{B}'$) for the canonical ordered $\cR$-basis (resp.\@ $\cR /\a$-basis) of $\cY \coloneqq \bigoplus_{i = 1}^d \cR$ (resp.\@ of $\cY' \otimes_\cR (\cR / \a)$). Setting $n  \coloneqq d + n_1 - (n_0 + n_2)$, $s \coloneqq d' - d$, and $\mathfrak{A} \coloneqq \prod_{i = d + 1}^{d'} \a_i$, the following claims are then valid.
\begin{liste}
    \item For every $a \in \Det_\cR ( C^\bullet)^{-1}$ and $f \in \exprod^{n}_\cR H^1 (C^\bullet)^\ast$, one has a containment
    \[ f ( \varpi_{C^\bullet, \mathfrak{B}} (a)) \in \Fitt^0_\cR (H^1(C^\bullet)_\tor^\vee)^\ast \otimes_\cR \mathfrak{A}^{\ast \ast}. 
\]
\item Assume that $\overline{\cR} \coloneqq \cR / \a$ is a one-dimensional Gorenstein ring, set $\overline{C}^\bullet \coloneqq C^\bullet \otimes_\cR^\mathbb{L} \overline{\cR}$, and suppose that $\mathrm{pd}_\cR (H^1 (\overline{C}^\bullet)_\tor^\vee) \leq 1$. 
For every $i \in \{1, \dots, s \}$ there is a morphism $\widetilde \beta_{d + i} \in \Hom_{\overline{\cR}} ( H^1 (\overline{C}^\bullet), \overline{\cR}) \otimes_{\overline{\cR}} (\a / \a_{d + i} \a)$ such that the natural map
\[
\Hom_{\overline{\cR}} ( H^1 (\overline{C}^\bullet), \overline{\cR}) \otimes_{\overline{\cR}} (\a / \a_{d + i} \a) \to 
\Hom_{\overline{\cR}} ( H^1 (\overline{C}^\bullet), \a / \a_{d + i} \a) 
\]
sends $\widetilde \beta_{d + i}$ to $\beta_{d + i}$, and the maps $( \widetilde \beta_{d + i})_{1 \leq i \leq s}$ induce a map
\[
({\exprod}_{1 \leq i \leq s} \widetilde \beta_{d + i}) \: \big( I \otimes_{\overline{\cR}} \exprod_{\overline{\cR}}^{n + s} H^1 (\overline{C}^\bullet)^\ast \big)^\ast 
\to  \big( I \otimes_{\overline{\cR}} \exprod_{\overline{\cR}}^{n} H^1 (\overline{C}^\bullet)^\ast \big)^\ast \otimes_{\overline{\cR}} (\a^s / \a \mathfrak{A}),
\]
where $I \coloneqq \Fitt^0_{\overline{\cR}} ( H^1 (\overline{C}^\bullet)_\tor^\vee)$.
\item Assume $\cR$ and $\overline{\cR}$ are one-dimensional Gorenstein rings and that (\ref{pd assumption}) is valid so that, by Lemma \ref{base change for dual of Fitt}), one has an isomorphism
\begin{equation} \label{where the congruence takes place}
\Fitt^0_\cR (H^1(C^\bullet)_\tor^\vee)^\ast \otimes_\cR ( \a^s / \a \mathfrak{A}) \cong I^\ast \otimes_{\overline{\cR}} (\a^s / \a \mathfrak{A}).
\end{equation}
Then, for every $a \in \Det_\cR ( C^\bullet)^{-1}$ and $f \in \exprod^{n + d}_\cR H^1 (C^\bullet)^\ast$ one has an equality
\[
f ( \varpi_{C^\bullet, \mathfrak{B}} (a)) = (-1)^{ns} \cdot (f^\nu \circ {\bigwedge}_{1 \leq i \leq s} \widetilde \beta_{d + i} ) (
\varpi_{\overline{C}^\bullet, \mathfrak{B}'} (\overline{a}))
\]
in (\ref{where the congruence takes place}),
where $\overline{a}$ denotes the image of $a$ under the canonical map
\[
\Det_\cR ( C^\bullet)^{-1} \to \Det_\cR ( C^\bullet)^{-1} \otimes_\cR {\overline{\cR}} \cong \Det_{\overline{\cR}} ( \overline{C}^\bullet)^{-1}.
\]
\end{liste}
 
\end{prop}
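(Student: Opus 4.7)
My overall approach is to work directly with a standard representative $F^0 \xrightarrow{\partial_0} F^1 \xrightarrow{\partial_1} F^2$ of $C^\bullet$ and exploit the explicit description of $\varpi_{C^\bullet, \mathfrak B}$ provided by Proposition \ref{integrality properties of det projection map} together with Lemma \ref{explicit description projection map lem}. Part (a) is a direct consequence of Proposition \ref{integrality properties of det projection map}\,(b): a standard property of Fitting ideals, applied to the surjection $H^2(C^\bullet) \twoheadrightarrow \cY'$, gives $\Fitt^d_\cR(H^2(C^\bullet)) \subseteq \Fitt^d_\cR(\cY') = \Fitt^0_\cR\big(\bigoplus_{i>d}\cR/\a_i\big) = \mathfrak A$. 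Taking reflexive hulls preserves this inclusion, and the claim follows.

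For part (b), the plan is to give a concrete realisation of the Bockstein. The $(d+i)$-th component of the connecting homomorphism attached to $0 \to C^\bullet \otimes^{\mathbb L}_\cR \a \to C^\bullet \to \overline C^\bullet \to 0$ can be computed on a cocycle $x \in \ker(\partial_1 \bmod \a) \subseteq F^1/\a$ by lifting it to $\widetilde x \in F^1$ and reading off $\beta_{d+i}(x) = b_{d+i}^\ast(\partial_1(\widetilde x)) \bmod \a_{d+i}\a$. In particular, $\beta_{d+i}$ is induced by the map of $\overline\cR$-modules $b_{d+i}^{\ast,\mathrm{lift}} \circ \partial_1 \bmod \a \in \Hom_{\overline\cR}(F^1/\a,\a/\a_{d+i}\a)$ which, since $\partial_1$ lands in $\a F^2$ on $\ker(\partial_1\bmod \a)$, already has values in $\a/\a_{d+i}\a$ before any reduction. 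Writing this composite as a tensor product $(b_{d+i}^\ast \circ \partial_1) \otimes 1$ in $(F^1/\a)^\ast \otimes_{\overline\cR}(\a/\a_{d+i}\a)$, the crucial point is that the hypothesis $\mathrm{pd}_{\overline\cR}(H^1(\overline C^\bullet)^\vee_\tor) \leq 1$, combined with the Gorenstein assumption on $\overline\cR$, ensures that the natural map $\Hom_{\overline\cR}(H^1(\overline C^\bullet),\overline\cR) \otimes_{\overline\cR}(\a/\a_{d+i}\a) \to \Hom_{\overline\cR}(H^1(\overline C^\bullet),\a/\a_{d+i}\a)$ hits $\beta_{d+i}$; this can be verified either by localising at height-one primes and using that $\overline\cR_\p$ is then a DVR or field, or by replicating the argument underlying Lemma \ref{base change for dual of Fitt}. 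The induced exterior-dual version is then obtained as in Proposition \ref{integrality properties of det projection map}\,(b) by applying the $\Phi^{r,s}$-formalism to the wedge $\bigwedge_{1 \le i \le s}\widetilde\beta_{d+i}$.

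For part (c), I will compute both sides on the representative level using Lemma \ref{explicit description projection map lem}. For $a = f \otimes g \otimes \bigwedge_{1\le i \le n_2} b_i^\ast \in \Det_\cR(C^\bullet)^{-1}$, part (a) of Proposition \ref{integrality properties of det projection map} expresses $\varpi_{C^\bullet, \mathfrak B}(a)$ as (up to sign) the element $(\widetilde{\lambda f} \wedge \bigwedge_{i>d}(b_i^\ast \circ \partial_1))(g)$ in $\bidual^n_\cR \ker(\partial_1)$, paired via $\Fitt^0_\cR(H^1(C^\bullet)^\vee_\tor)$ and $\wedge^n H^1(C^\bullet)^\ast$. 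On the other hand, by the description of $\beta_{d+i}$ in the previous paragraph together with the isomorphism of Lemma \ref{base change for dual of Fitt}, the image $(f^\nu \circ \bigwedge_{1\le i \le s}\widetilde\beta_{d+i})(\varpi_{\overline C^\bullet, \mathfrak B'}(\overline a))$ is computed by the very same formula once one notices that, modulo $\a$, evaluating at a class in $H^1(\overline C^\bullet)$ via $\widetilde\beta_{d+i}$ recovers $b_{d+i}^\ast \circ \partial_1$ tensored with the appropriate element of $\a/\a_{d+i}\a$. The two expressions therefore agree up to a sign, which one tracks by checking the signs implicit in the Koszul rule for reordering $\bigwedge_{i > d}(b_i^\ast \circ \partial_1)$ past $\widetilde\varphi$; this sign is precisely $(-1)^{n s}$.

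The main obstacle will be the construction of the lift $\widetilde\beta_{d+i}$ in (b) and the verification that the isomorphism of Lemma \ref{base change for dual of Fitt} identifies the two sides in (c) compatibly with the Bockstein. This is genuinely a statement about the failure of $\Hom(-,\overline\cR) \otimes (\a/\a_{d+i}\a)$ to agree with $\Hom(-,\a/\a_{d+i}\a)$, and controlling it forces us into the Gorenstein/projective-dimension hypotheses. Once this is in place, the congruence (c) becomes a formal bookkeeping of wedge products and signs, analogous to the computations carried out in \cite[\S\,10]{burns07} and \cite[\S\,5]{bks}, to which the reader may be referred for additional details.
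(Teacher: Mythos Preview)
Your strategy for parts (a) and (c) matches the paper's: (a) is exactly the combination of Proposition~\ref{integrality properties of det projection map}\,(b) with the inclusion $\Fitt^d_\cR(H^2(C^\bullet)) \subseteq \Fitt^d_\cR(\cY') = \mathfrak A$, and (c) is indeed a direct comparison of the two explicit formulae from Lemma~\ref{explicit description projection map lem}, with the sign arising from commuting $\bigwedge_{d < i \le d'}(b_i^\ast \circ \partial_1)$ past the test functional.

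There is, however, a genuine gap in your treatment of (b). Your expression ``$(b_{d+i}^\ast \circ \partial_1) \otimes 1 \in (F^1/\a)^\ast \otimes_{\overline\cR}(\a/\a_{d+i}\a)$'' does not type-check: $b_{d+i}^\ast \circ \partial_1$ is an $\cR$-linear map $F^1 \to \cR$, and its reduction modulo $\a$ lands in $\overline\cR$, not in $\a/\a_{d+i}\a$; moreover $1 \notin \a$. What is true is that $b_{d+i}^\ast \circ \partial_1$ has values in $\a_{d+i} \subseteq \a$ (since $\partial_1(F^1)$ maps to zero in $H^2(C^\bullet)$ and hence in $\cY'$), and therefore defines an element of $\Hom_{\overline\cR}(\overline Q, \a/\a_{d+i}\a)$ where $\overline Q = \overline F^1/\overline F^0$. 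But this is not yet an element of $\overline Q^\ast \otimes_{\overline\cR}(\a/\a_{d+i}\a)$, and your localisation argument does not produce a global preimage: knowing that a map is locally a simple tensor at every height-one prime does not furnish a global tensor.

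The paper closes this gap as follows. The hypothesis $\mathrm{pd}_{\overline\cR}(H^1(\overline C^\bullet)^\vee_\tor) \le 1$, together with the identification $\Ext^1_{\overline\cR}(\overline Q,\overline\cR) \cong \Ext^1_{\overline\cR}(H^1(\overline C^\bullet),\overline\cR) \cong H^1(\overline C^\bullet)^\vee_\tor$ (the first map being a pseudo-isomorphism, hence an isomorphism since $\overline\cR$ is one-dimensional), implies via Schanuel's lemma applied to the presentation $0 \to \overline Q^\ast \to (\overline F^1)^\ast \to (\overline F^0)^\ast \to \Ext^1_{\overline\cR}(\overline Q,\overline\cR) \to 0$ that $\overline Q^\ast$ is projective. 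Consequently the natural map $\overline Q^\ast \otimes_{\overline\cR} M \to \Hom_{\overline\cR}(\overline Q, M)$ is an isomorphism for every $M$. One then pulls the extended Bockstein back through this isomorphism and pushes it along the surjection $\overline Q^\ast \to H^1(\overline C^\bullet)^\ast$ to obtain $\widetilde\beta_{d+i}$; a diagram chase confirms this is a lift of $\beta_{d+i}$. This is the step where the projective-dimension hypothesis is genuinely consumed, and it should replace your localisation sketch.
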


\begin{proof}
This generalises the argument of \cite[Lem.\@ 10.2]{burns07} (see also \cite[Lem.~5.22]{bks}).
By assumption, one has a surjective map $H^2 (C^\bullet) \twoheadrightarrow \cY'$, from which it follows that $\Fitt^d_\cR (H^2 (C^\bullet))$ is contained in $ \Fitt^d_\cR ( \cY') = \mathfrak{A}$. The first claim therefore follows from Pro\-po\-si\-tion~\ref{integrality properties of det projection map}~(b).\\ 
To prove claim (b), we fix a standard representative $F^0 \xrightarrow{\partial_0} F^1 \xrightarrow{\partial_1} F_2$ of $C^\bullet$ with respect to $\kappa$ and $\mathfrak{X}$. We 
also set $\overline{F}^i \coloneqq F^i \otimes_\cR \overline{\cR}$ for every $i \in \{0, 1, 2\}$,
write $\overline{\partial}_i \: \overline{F}^i \to \overline{F}^{i + 1}$ for the map induced by $\partial_i$,
and note that $\overline{F}^0 \xrightarrow{\overline{\partial_0}} \overline{F}^1 \xrightarrow{\overline{\partial_1}} \overline{F}_2$ is a standard representative for $\overline{C}^\bullet$. 
Define $\overline{Q} \coloneqq \overline{F}^1 / \overline{F}^0$ and note that
taking $\overline{\cR}$-linear duals leads to an exact sequence
\[
0 \to \overline{Q}^\ast \to (\overline{F}^0)^\ast \xrightarrow{\overline{\partial}_0^\ast} (\overline{F}^1)^\ast \to \Ext^1_{\overline{\cR}} ( \overline{Q}, \overline{\cR}) \to 0.
\]
We have seen in the proof of Proposition \ref{integrality properties of det projection map} that the map $\Ext^1_{\overline{\cR}} ( \overline{Q}, \overline{\cR}) \to \Ext^1_{\overline{\cR}} ( H^1 (\overline{C}^\bullet), \overline{\cR})$ is a pseudo-isomorphism, and hence an isomorphism because we are assuming $\overline{\cR}$ to be one-dimensional. Our assumption that $\Ext^1_{\overline{\cR}} ( H^1 (\overline{C}^\bullet), \overline{\cR})$ is of projective dimension at most one therefore implies that the kernel of $\overline{\partial}_0^\ast$ is projective (by Schanuel's lemma). From the above exact sequence we conclude that $\overline{Q}^\ast$, and hence also $\overline{Q}^{\ast \ast}$, is $\overline{\cR}$-projective. In particular, for every finitely generated $\overline{\cR}$-module $M$, the 
central vertical map in the following commutative diagram is an isomorphism.
\begin{equation} \label{diagram used for map lifting} \begin{tikzcd}[row sep=small]
  &  \overline{\partial_1} (\overline{F}^1)^\ast \otimes_{\overline{\cR}} M  \arrow{d} \arrow{r}
& \overline{Q}^\ast \otimes_{\overline{\cR}} M  \arrow{d}{\simeq} \arrow{r} & H^1 (\overline{C}^\bullet)
    \otimes_{\overline{\cR}} M  \arrow{d} \arrow{r} & 0 \\
   0 \arrow{r} & \Hom_{\overline{\cR}} ( \overline{\partial_1} (\overline{F}^1), M) \arrow{r} & 
     \Hom_{\overline{\cR}}  ( \overline{Q}, M) \arrow{r} & \Hom_{\overline{\cR}}  ( H^1 (\overline{C}^\bullet), M) &
     \end{tikzcd}
\end{equation}%
To justify exactness of the top line of this diagram we recall that if $N$ is a finitely generated torsion-free $\overline{\cR}$-module, then $\Ext^1_{\overline{\cR}} ( N, \overline{\cR})$ vanishes because $\overline{\cR}$ is a one-dimensional Gorenstein ring. Dualising the exact sequence 
\begin{equation} \label{ses Q and H1}
    0 \to H^1 (\overline{C}^\bullet) \to \overline{Q} \to \overline{\partial_1} (\overline{F}^1) \to 0
\end{equation}
and tensoring the resulting exact sequence with $M$ then gives the top line of the above diagram. The bottom line, on the other hand, is obtained by applying the functor $\Hom_{\overline{\cR}} ( - , M)$ to (\ref{ses Q and H1}). \\
From the diagram (\ref{diagram used for map lifting}), applied with $M = \a / \a_{d + i} \a$, we see that it suffices to prove that $\beta_{d + i}$ can be lifted to $\Hom_{\overline{\cR}}  ( \overline{Q}, \a / \a_{d + 1} \a)$ in order to define $\widetilde{\beta}_{d + i}$.
To do this, we note that 
by definition $\beta_{d + i}$ coincides with the composition of the snake lemma map arising from the diagram
\begin{cdiagram}[row sep=small]
     & \a \otimes_\cR Q \arrow{r} \arrow{d}{\partial_1} & Q \arrow{d}{\partial_1} \arrow{r} & Q \otimes_\cR \overline{\cR} \arrow{r} \arrow{d}{\partial_1} & 0 \\
    0 \arrow{r} & \a \otimes_\cR F^2 \arrow{r} & F^2 \arrow{r} & F^2 \otimes_\cR \overline{\cR} \arrow{r} & 0
\end{cdiagram}%
composed with $x_{d + i}^\ast$. By condition (iv) in Definition \ref{standard representative def} one therefore has
\begin{equation} \label{lift of Bockstein}
    \beta_{d + i} ( \overline{m}) = (b_{d + i}^\ast \circ \partial_1) ( m) \mod \a_{d + i} \a
\end{equation}
if $\overline{m} \in H^1 (\overline{C}^\bullet)$ is the image of $m \in F^1$ under the surjective map $F^1 \to \overline{F}^1 \to \overline{Q}$. This definition clearly extends to $\overline{Q}$ and so by the above discussion we obtain the desired lift $\widetilde{\beta}_i$ of $\beta_{d + i}$ to $H^1 (\overline{C}^\bullet)^\ast \otimes_{\overline{\cR}} ( \a / \a_{d + i} \a)$.\\
For every $i \in \{1, \dots, s\}$ we may then write $\widetilde{\beta}_{d + i} = \sum_{j = 1}^{n_i} \psi_j \otimes c_j$ for suitable $n_i \in \N$, $\psi_j \in H^1 (\overline{C}^\bullet)^\ast$, and $c_j \in \a / \a_{d + i} \a$. Given this, we may define the map $\exprod_{1 \leq i \leq s} \widetilde{\beta}_{d + i}$ in the statement of claim (b) by means of
\[
(\exprod_{1 \leq i \leq s} \widetilde{\beta}_{d + i})( \lambda \otimes \varphi) \coloneqq \sum_{j_1 = 1}^{n_1} \dots \sum_{j_s = 1}^{n_s} \Phi ( \lambda \otimes \psi_{j_1} \wedge \dots \wedge \psi_{j_s} \wedge \varphi) \otimes \prod_{i = 1}^s c_{j_i}
\]
for all $\Phi \in ( I \otimes_{\overline{\cR}} \exprod_{\overline{\cR}}^{n + s} H^1 (\overline{C}^\bullet)^\ast)^\ast$.\\
To prove claim (c), we fix $a \in \Det_\cR (C^\bullet)^{-1}$, set $J \coloneqq \Fitt^0_\cR ( H^1 (C^\bullet)_\tor^\vee)$, and
and regard $\varpi_{C^\bullet, \mathfrak{B}} (a)$ as an element of $( J \otimes_\cR \exprod^{n + d}_\cR (F^1)^\ast)^\ast$.
If we fix an $\cR$-basis $c_1, \dots, c_{n_0}$ of $F^0$, then we can write $a$ as $(\bigwedge_{1 \leq i \leq n_0} c_i^\ast) \otimes a' \otimes ( \bigwedge_{1 \leq i \leq n_2} b_i^\ast )$ for some $a' \in \exprod^{n_1}_\cR F^1$.
By definition of $\varpi_{C^\bullet, \mathfrak{B}}$, one then has for all $\lambda \otimes \varphi \in J \otimes_\cR \exprod_\cR^{n + s} F_1^\ast$ that
\begin{align*}
\varpi_{C^\bullet, \mathfrak{B}} (a) ( \lambda \otimes \varphi) & = (-1)^{n(n_2 - d)} \cdot 
\varphi \big(\lambda \cdot {\bigwedge}_{1 \leq i \leq n_0} c_i^\ast \wedge  {\bigwedge}_{d + 1 \leq i \leq n_2} (b_i^\ast \circ \partial_1)  \big) (a') \\ 
& = (-1)^{s (n_2 - d')} \cdot (-1)^{n(n_2 - d)} \cdot  
 ( {\bigwedge}_{d + 1 \leq i \leq d'} (b_i^\ast \circ \partial_1) \wedge \varphi  ) (z)
\end{align*}
with the abbreviation
\[
z \coloneqq \big(\lambda \cdot {\bigwedge}_{1 \leq i \leq n_0} c_i^\ast \wedge  {\bigwedge}_{d' + 1 \leq i \leq n_2} (b_i^\ast \circ \partial_1) \big) (a').
\]
On the other hand, for all $\lambda \otimes \varphi \in I \otimes_{\overline{\cR}} \exprod^{n + s}_{\overline{\cR}} (\overline{F}^1)^\ast$ one has 
\begin{align*}
\varpi_{\overline{C}^\bullet, \mathfrak{B}'} (\overline{a}) ( \lambda \otimes \varphi) & = (-1)^{n'(n_2 - d')} \cdot \varphi \big( \lambda \cdot {\bigwedge}_{1 \leq i \leq n_0} c_i^\ast \wedge  {\bigwedge}_{d' + 1 \leq i \leq n_2} (b_i^\ast \circ \partial_1)  \big) (a') \mod \a
\end{align*}
with $n' \coloneqq n + s$.
 Here we have used that $\overline{F}^0  \to \overline{F}^1 \to \overline{F}^2$ is a standard representative for $\overline{C}^\bullet$ with respect to $( \overline{\kappa}, \mathfrak{B}')$, where $\overline{\kappa} \: H^2 (\overline{C}^\bullet) \to \cY'$ is the map induced by $\kappa$.
Writing $\pi$ for the canonical projection $\mathfrak{A} \cdot ( J \otimes_\cR \exprod^{n}_\cR (F^1)^\ast)^\ast \to \mathfrak{A} \otimes_{\overline{\cR}} ( I \otimes_{\overline{\cR}} \exprod^{n}_{\overline{\cR}} (\overline{F}^1)^\ast)^\ast$, we arrive at
\[ 
\pi ( \varpi_{C^\bullet, \mathfrak{B}} (a)) = (-1)^{ns} \cdot (\exprod_{1 \leq i \leq s} (b_{d + i}^\ast \circ \partial_1)) ( \varpi_{\overline{C}^\bullet, \mathfrak{B}'} (\overline{a}))
\mod \a \mathfrak{A}
\]
by comparing with the definition of $z$.
Given this, the equality claimed in (b) follows from (\ref{lift of Bockstein}) and Lemma~\ref{calculation with fj} below.
\end{proof}

\begin{rk}
The maps $\widetilde{\beta}_i$ could in principle be not uniquely specified by their properties described in Proposition \ref{bockstein proposition}\,(b). This ambiguity will however not cause any problems in any of the applications of Proposition \ref{bockstein proposition}\,(c) in the main body of the article.
\end{rk}

\begin{lem} \label{calculation with fj}
    Let $M$ be an $\cR$-module and denote the canonical projection $M \to M \otimes (\cR  / \a)$ by $\pi_M$. 
    Assume there is an $\cR$-linear isomorphism $\nu \: \cR / \a \xrightarrow{\simeq} \cR [\a]$ and, for every $f \in M^\ast$, define $f^\nu \: M \otimes_\cR (\cR /\a) \to \cR / \a$ by $f^\nu (m \otimes \pi_\cR (r)) \coloneqq \nu^{-1} ( f ( \nu ( \pi_\cR (r)) \cdot m))$. 
    Then one has
    \[
    \pi_\cR (f (m)) =  f^\nu ( \pi_M (m))  \quad \text{ in } \cR / \a
    \]
    for every $f \in M^\ast$. 
\end{lem}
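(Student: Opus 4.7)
The plan is to unwind the definitions carefully, using the fact that an $\cR$-linear map out of $\cR/\a$ is determined by its value at $\pi_\cR(1)$. Set $e \coloneqq \nu(\pi_\cR(1)) \in \cR[\a]$. By $\cR$-linearity of $\nu$, one then has $\nu(\pi_\cR(r)) = r \cdot e$ for every $r \in \cR$, and consequently $\nu^{-1}(r \cdot e) = \pi_\cR(r)$.

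Before applying this to the main computation I would briefly verify that the rule defining $f^\nu$ is $\cR$-balanced (so that $f^\nu$ is indeed a well-defined map on the tensor product $M \otimes_\cR (\cR/\a)$); this is immediate from the $\cR$-linearity of $\nu$, $f$, and $\nu^{-1}$. Once this is in place, the desired identity is a one-line calculation: using $\pi_M(m) = m \otimes \pi_\cR(1)$ together with the $\cR$-linearity of $f$, one obtains
\[
f^\nu(\pi_M(m)) = \nu^{-1}\bigl(f(e \cdot m)\bigr) = \nu^{-1}\bigl(e \cdot f(m)\bigr) = \pi_\cR(f(m)),
\]
where the last equality is the identity $\nu^{-1}(r \cdot e) = \pi_\cR(r)$ applied with $r = f(m)$.

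There is no real obstacle here; the lemma amounts to noting that the auxiliary map $f^\nu$ is built so that the isomorphism $\nu$ intertwines it with the reduction of $f$ modulo $\a$.
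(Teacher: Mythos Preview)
Your proof is correct and is essentially identical to the paper's own argument: both compute $f^\nu(\pi_M(m)) = f^\nu(m \otimes \pi_\cR(1))$ by unwinding the definition, use $\cR$-linearity of $f$ to move the scalar $\nu(\pi_\cR(1))$ outside, and then invoke $\cR$-linearity of $\nu$ to conclude. The only cosmetic difference is that you introduce the shorthand $e = \nu(\pi_\cR(1))$ and record the inverse relation $\nu^{-1}(r\cdot e) = \pi_\cR(r)$ up front, whereas the paper writes everything out inline.
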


\begin{proof}
We may calculate that
\begin{align*}
    f^\nu ( \pi_M (m)) & = f^\nu ( m \otimes \pi_\cR (1)) = \nu^{-1} ( f (  \nu ( \pi_\cR (1)) \cdot m)) 
    = \nu^{-1} ( \nu (\pi_\cR (1)) \cdot f(m)) \\
    & = \nu^{-1} ( \nu ( \pi_\cR (1) \cdot f(m))) = \pi_\cR (1 \cdot f(m)) = \pi_\cR (f (m)),
\end{align*}
    as claimed. 
\end{proof}

\paragraph{Acknowledgments}
The authors would like to extend their gratitude to
Daniel Barrera Salazar,
David Burns, Masato Kurihara, Juan-Pablo Llerena-C\'ordova, Daniel Mac\'ias Castillo, Rei Otsuki, Rob Rockwood, Takamichi Sano, and Christian Wuthrich for helpful comments and discussions.\\
The research for this article was carried out while the first and second authors held, respectively, a Doctoral Prize Fellowship funded by the UK Engineering and Physical Sciences Research Council [EP/W524335/1] at University College London and a Heilbronn Research Fellowship at Imperial College London. During the final editing of the article, the first author held a Juan de la Cierva Fellowship at Instituto Ciencias Matem\'aticas (ICMAT) and participated in the project [PID2022-142024NB-I00], both funded by 
the Spanish Ministry of Science
and Innovation [JDC2023-051626-I, MCIU/AEI/10.13039/501100011033].\\
The authors gratefully acknowledge the support of these institutions.

\addcontentsline{toc}{section}{References}
\tiny
\printbibliography

\small

(Bullach)
\textsc{Instituto de Ciencias Matem\'aticas, c/ Nicol\'as Cabrera 13-15, Campus de Cantoblanco UAM, 28049 Madrid, Spain.}\\
\textit{Email address:} \href{mailto:dominik.bullach@icmat.es}{dominik.bullach@icmat.es} 
\medskip \\ 
(Honnor) \textsc{
St Paul's School, Lonsdale Road,
London SW13 9JT, UK.
}

\end{document}